\theoremstyle{plain}
\newtheorem{theorem}{Theorem}[]
\newtheorem*{theorem*}{Theorem}
\newtheorem{definition}{Definition}[]
\newtheorem{proposition}[]{Proposition}
\newtheorem{lemma}[]{Lemma}
\newtheorem{corollary}[]{Corollary}
\theoremstyle{definition}
\newtheorem{remark}{Remark}
\newcommand*\bigcdot{\mathpalette\bigcdot@{.5}}
\newcommand*\bigcdot@[2]{\mathbin{\vcenter{\hbox{\scalebox{#2}{$\m@th#1\bullet$}}}}}
\renewcommand{\leq}{\leqslant}
\renewcommand{\geq}{\geqslant}
\newcommand{\Ind}[1]{\mathds{1}_{#1}}
\newcommand{\Z}{\mathbb{Z}}
\newcommand{\N}{\mathbb{N}}
\renewcommand{\P}{\mathbb{P}}
\newcommand{\E}{\mathbb{E}}
\newcommand{\defeq}{:=} 
\newcommand{\ZxN}{\mathbb{Z}\diamond\mathbb{N}} 
\newcommand{\setA}{\mathcal{A}} 
\newcommand{\A}{\mathbf{A}}
\newcommand{\B}{\mathbf{B}}
\newcommand{\C}{\mathbf{C}}
\newcommand{\D}{\mathbf{D}}
\newcommand{\F}{\mathbf{F}}
\renewcommand{\a}{\mathbf{a}}
\renewcommand{\b}{\mathbf{b}}
\renewcommand{\c}{\mathbf{c}}
\renewcommand{\v}{\mathbf{v}}
\newcommand{\leP}{\lhd}
\newcommand{\leT}{\preccurlyeq}
\newcommand{\Ts}{T}
\newcommand{\shaved}[1] {{\check{#1}}}
\newcommand{\aug}[1] {[#1]}
\newcommand{\htr}{\mathbf{h}}
\title{{\bf  The local limit of rooted directed animals on the square lattice}}
\author{
Olivier Hénard\thanks{LMO, Université Paris-Saclay. Email: \texttt{olivier.henard@universite-paris-saclay.fr}
}, \'Edouard Maurel-Segala\thanks{LMO, Université Paris-Saclay. Email: \texttt{edouard.maurel-segala@universite-paris-saclay.fr}}  and Arvind Singh\thanks{CNRS and LMO, Université Paris-Saclay. Email: \texttt{arvind.singh@universite-paris-saclay.fr}. Work partially supported by ANR 19-CE40-0025 ProGraM}}
\begin{document}

\maketitle
 
\medskip

\begin{center}
\includegraphics[height=6cm]{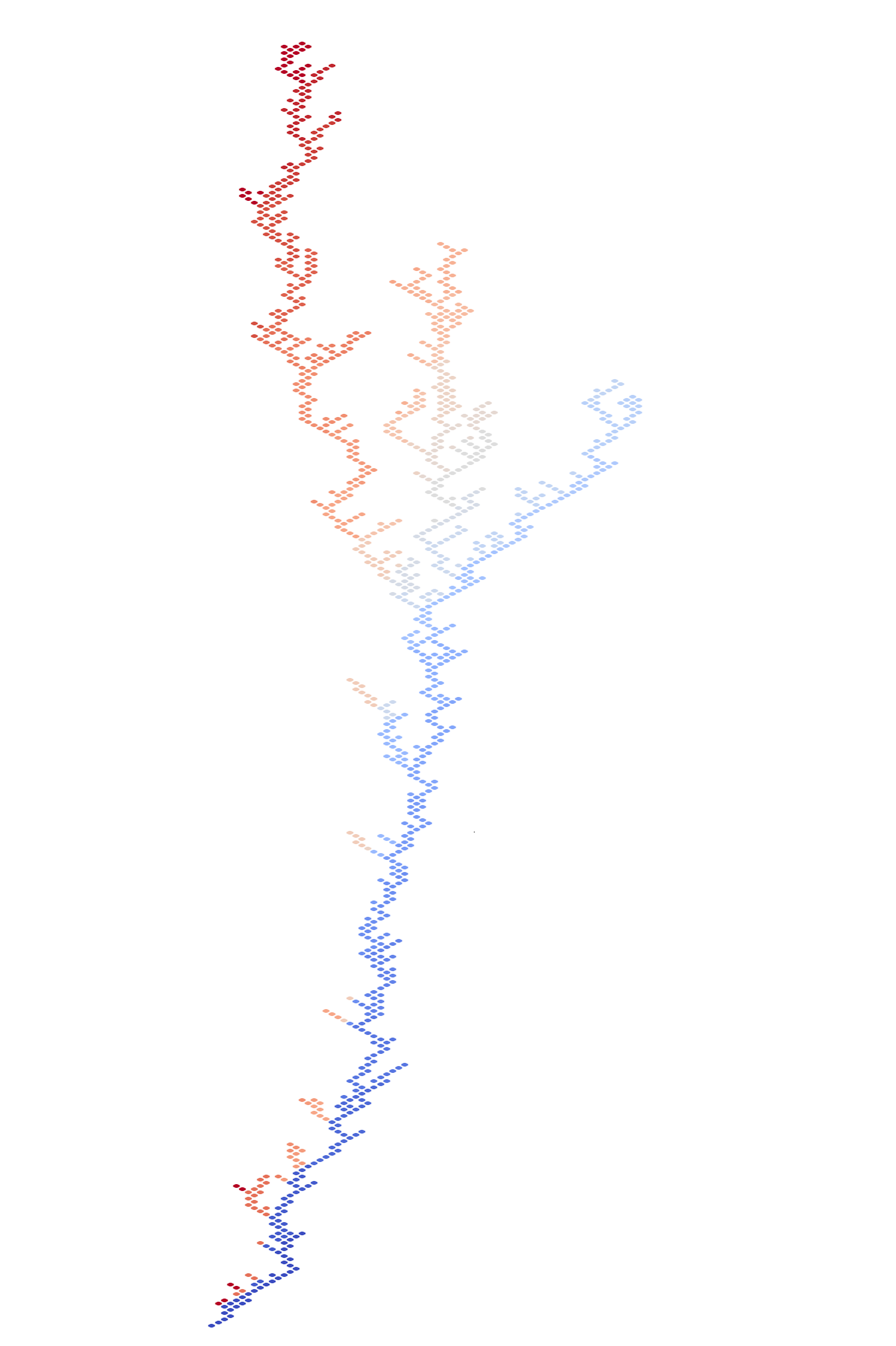}
\end{center}

\medskip

\begin{abstract}
We consider the local limit of finite uniformly distributed directed animals on the square lattice viewed from the root. Two constructions of the resulting uniform infinite directed animal are given: one as a heap of dominoes, constructed by letting gravity act on a right-continuous random walk and one as a Markov process, obtained by slicing the animal horizontally. We look at geometric properties of this local limit and prove, in particular, that it consists of a single vertex at infinitely many (random) levels. Several martingales are found in connection with the confinement of the infinite directed animal on the non-negative coordinates. 
\end{abstract}

\bigskip

\medskip

\noindent\textbf{Keywords:} directed animals; local limits; path encoding; particle system; change of measure.\\
\noindent\textbf{AMS Classification 2020:} Primary 82B41; 60K35; Secondary 60C05; 60G50.

\newpage 

\tableofcontents


\section{Introduction}

This paper is concerned with the study of directed animals: given an oriented graph, a finite subset $\A$ of its vertices  is called a \emph{directed animal} with source set $\mathbf{S}$ if we have  $\mathbf{S} \subset \A$ and every vertex in $\A \setminus \mathbf{S}$ has a neighbor in $\A$ with respect to the directed graph structure. Here, we focus on the two-dimensional lattice $\Z^2$ where horizontal edges are oriented leftward and vertical edges are oriented downward. Therefore, a directed animal $\A$ is just a set such that every non-source vertex has a neighbor directly on its left or directly below it \emph{c.f.} Figure \eqref{fig:first}~(a).

 The study of two-dimensional directed animals began with the recognition by Dhar that these sets can be counted by area (total number of vertices) \cite{DHA82, DHA82+, DHA83}, through a clever link with the hard sphere model from statistical physics. 
 
 The combinatorics community quickly took on the subject \cite{VIE85, GOU88, BET93}, pushing forward bijective proofs of Dhar’s findings. Among the methods employed, Viennot’s heap of dominoes  \cite{VIE06} quickly emerged as a powerful tool to investigate directed animals. The topic is elegantly reviewed in Bousquet-Mélou's ICM invited lecture \cite{BOU08} (section 3.4.2), see also Bétréma Penaud \cite{BET93+} or Flajolet Sedgewick \cite{FLA09}, example 1.18, for a concise and self-contained introduction. The overall message is that while the joint distribution of area and width is fairly well understood, anything beyond these two quantities is basically out of reach of current methods, with the notable exception of some delicate computations on the joint distribution of area and perimeter by Bousquet-M\'elou \cite{BOU98} and then Bacher \cite{BAC12}.

On another line of research, Le Borgne Marckert \cite{LEB07} and Albenque \cite{ALB09} achieved a probabilistic understanding of Dhar’s connection between directed animals and the hard sphere model.

Reflecting on these difficulties, we opt here for a probabilistic viewpoint on the topic, starting with a very basic question: what does a large directed animal chosen uniformly at random looks like near the origin? In other words, inspired by Benjamini Schramm work \cite{BEN96}, we investigate the weak local limit of uniform directed animals viewed from the origin on the square lattice. 

More specifically, we study the limit of uniformly sampled \emph{pyramids} (\emph{i.e.} animals with a single source at the origin) and of \emph{half-pyramids} (\emph{i.e.} pyramids included in an octant) as their size grows to infinity. In both cases, we establish that the limit exists and defines a non-trivial infinite random set which can be interpreted as a ``uniform infinite pyramid/half-pyramid''. Then, we carry on by studying some probabilistic and geometric properties of these random objects. 

\begin{figure}
\begin{center}
\subfloat[\small{a directed animal in $\Z^2$}]{\includegraphics[height=3.8cm]{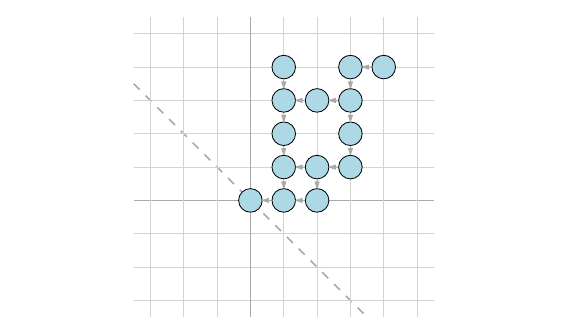}}
\subfloat[the same animal rotated by $45^\circ$]{\includegraphics[height=4.5cm]{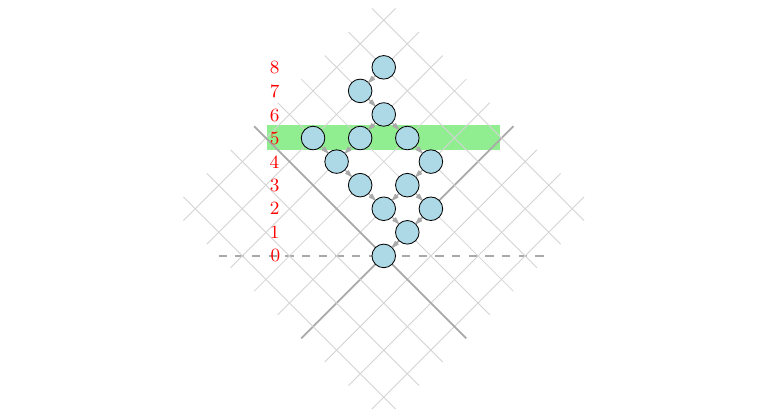}}
\end{center}
\caption{Example of a directed animal with a single source at the origin (\emph{i.e} a pyramid). The height levels are drawn in red with layer 5 is highlighted in green in the rotated picture (b) \label{fig:first}}
\end{figure}

\subsection{Outline of the paper}

We describe here the organization of the paper and give an overview of the main results. 

\medskip

We begin by revisiting the heap of dominoes technique in Section~\ref{sec:Encoding}. We introduce  a novel bijection (Proposition \ref{key-prop-walk})  between a class of integer-valued walks with increments in $ \Z_-^* \cup \{1\}$ and a class of directed animals, coined \emph{simple animals} (Definition \ref{def:simple}), that includes, in particular, all directed animals of finite size/area but it also includes other infinite directed animals. This one-to-one mapping states that any simple animal can be constructed in a unique way by piling dominoes on top of each other in a specific order, where the $x$-coordinates of the centers of the dominoes are given by the successive values in a sequence whose increments are in $ \Z_-^* \cup \{1\}$. 

Next we look at the push-forward image by this bijection of the uniform measure on the set of finite pyramids (resp. half-pyramids). In that case, the random sequence constructed via the bijection has a simple form: it has independent geometric increments given by Formula \eqref{def:mu} subject to an additional requirement that it never overshoots its current minimum by more than $1$ unit. This process can be constructed from a classical random walk (called here the \emph{animal walk}) via a ``shaving'' procedure described in Section \ref{subsec:animalwalk}. The correspondence between uniformly sampled pyramids/half-pyramids and paths of the (shaved) animal walk is summarized in Proposition~\ref{prop:pushforward}.

The advantage of encoding directed animals into one-dimensional random walks appears clearly when taking the limit as their size increases to infinity as we can now construct the local limits directly from an infinite path of the animal walk (after checking that the weak conditioning events present in the finite size setting vanish in the limit). Using the animal walk, the \emph{uniform infinite pyramid (UIP)} is defined in Definition \ref{def:UIP} and the \emph{uniform infinite non-positive pyramid (UIP-)} in Definition \ref{def:UIP-}. Below is an informal version of the main convergence results proved in Section \ref{sec:loclimit}, we refer to the quoted theorems for their precise statements.
\begin{theorem*}[Theorems  \ref{thm:loclimitUIP} and Theorem \ref{thm:loclimit_UIHP}]
\mbox{}
\begin{enumerate}
\item The law of the uniform pyramid with $n$ vertices converges, as $n \to \infty$, in the local limit sense, towards the law of the UIP which is the infinite pyramid constructed from a sample path of the shaved animal walk. 
\item The law of the non-positive uniform pyramid with $n$ vertices converges, as $n \to \infty$, in the local limit sense, towards the law of the UIP- which is the infinite non-positive pyramid constructed from a sample path of the shaved animal walk conditioned to stay non-positive at all time. 
\end{enumerate}
\end{theorem*}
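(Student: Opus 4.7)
The plan is to reduce the local convergence of the random pyramids to the convergence of the finite-dimensional distributions of their encoding walks, which one can control directly thanks to the explicit form of the push-forward measure provided in Proposition~\ref{prop:pushforward}.

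\emph{Reduction step.} For any radius $R$, I would first argue that the pattern $\A \cap B_R(0)$ of the animal in the ball of radius $R$ around the origin is determined by the initial segment of the encoding walk up to some stopping time $\tau_R$ that is almost surely finite under the law of the (infinite) shaved animal walk. This is a geometric consequence of the heap-of-dominoes construction: once enough dominoes have been stacked to ``saturate'' the region near the root, no subsequent step of the walk can alter the animal inside $B_R(0)$. Hence it is enough to prove, for every fixed $k$, the convergence in distribution of the first $k$ steps of the encoding walk.

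\emph{Convergence argument for the UIP.} By Proposition~\ref{prop:pushforward}, the encoding walk of a uniform pyramid of size $n$ has the law of the animal walk with geometric increments given by \eqref{def:mu}, conditioned to satisfy the shaving (no-overshoot-by-more-than-one) constraint and to have length exactly $n$. The UIP corresponds to the same walk law, without the size constraint. Writing $Z_n$ for the total number of shaved paths of length $n$ and $Z_{n-k}(w)$ for the number of admissible continuations of a prescribed initial word $w$ of length $k$, the probability to see $w$ under the size-$n$ measure reads
\[
\mathbb{P}_n(w) \;=\; \P(\text{animal walk begins with } w) \cdot \frac{Z_{n-k}(w)}{Z_n}.
\]
A local-limit-type estimate, together with the well-known asymptotics for the number of pyramids of size $n$, gives $Z_{n-k}(w)/Z_n \to 1$ as $n\to\infty$ for every fixed $w$, which combined with the reduction step yields the first convergence.

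\emph{The non-positive case.} The argument for the UIP- is parallel, with the extra constraint that the walk stays non-positive at all times. The limiting object is then the shaved animal walk conditioned to remain non-positive forever, constructed as the Doob $h$-transform by the appropriate harmonic (renewal-type) function on $\Z_{\leq 0}$. The task is to replace the ratio $Z_{n-k}(w)/Z_n$ above by the analogous ratio of partition functions for shaved walks confined to $\Z_{\leq 0}$, and to identify its limit as the corresponding $h$-transform weight via a Kemperman/ballot-type identity.

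\emph{Main obstacle.} The delicate point is the control of the ratio of partition functions, particularly in the half-line case. For the unconstrained walk, the geometric tails of the increments together with Dhar's closed formula for the count of pyramids make the denominator asymptotics explicit. For the confined walk, one must identify the correct harmonic function for the shaved walk killed upon entering $\Z_{>0}$, check that its tail is compatible with the shaving mechanism so that the $h$-transform limit exists, and verify that the ratios converge uniformly enough in the initial segment $w$ to give the stated local convergence. This is where most of the technical effort of Section~\ref{sec:loclimit} should be concentrated.
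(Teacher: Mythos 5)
Your blueprint matches the paper's proof at a high level: reduce local convergence of the animal to the convergence of initial segments of the encoding walk, and compare partition functions. Your reduction step is correct and the paper realizes it concretely: because the shaved walk is skip-free to the right, $\tau_R$ may be taken to be $\tau_{-r-1}$, the first hitting time of level $-r-1$, after which every domino lands above height $r$. However, you should note that $\tau_R$ is a random stopping time, not a fixed $k$; the paper therefore works with the random ratio $u(n-\tau_{-r-1})/u(n)$, where $u(n) = \P(\exists k,\ \tau_{-k}=n)$, and needs a Fatou plus total-mass argument (showing $\liminf \geq $ is forced to be equality by summing over all possible traces in $B(r)$) to pass from almost-sure convergence of the ratio to convergence of the conditional probabilities. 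Your sketch glosses over this.

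The central quantitative input you call ``a local-limit-type estimate together with the well-known asymptotics for the number of pyramids'' is the renewal asymptotics $\P(\tau_{-k}=n) \sim c\,k\, n^{-3/2}$ and $u(n) \sim (3\pi n)^{-1/2}$, which the paper extracts by singularity analysis of the excursion generating function $f(s) = \E[s^{\tau_{-1}}] = \frac{3-s-\sqrt{3(1-s)(s+3)}}{2s}$, itself obtained from a one-step recursion for the excursion. This is doable along the lines you hint at, so no gap there.

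Where you genuinely diverge is the non-positive case. You propose to identify the confined partition-function asymptotics via a Kemperman/ballot-type identity for the walk killed on $\Z_{>0}$. The paper instead exploits a combinatorial symmetry that sidesteps this analysis: the number of non-positive pyramids of size $n$ rooted at $-r$ equals, by reflection against the $y$-axis, the number of non-negative pyramids of size $n$ rooted at $r$, and via $\Psi$ this identifies the confined renewal quantity with a finite sum $\sum_{j=1}^{r+1}\P_0(\tau_{-j}=n)$ of \emph{unconfined} renewal probabilities, so no new ballot estimate is required. (This is Lemma~\ref{lem:renewal2}.) The harmonic function you allude to is $\mathbf{h}(m,x)=(|x|+1)(|m|+2)$ for the pair (running minimum, position) of the shaved walk, not merely a function of the position; it is found by solving an explicit Gambler's-ruin problem for $(\shaved{M},\shaved{S})$. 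Your route is plausible but would require a separate local limit theorem for the walk confined to $\Z_{\leq 0}$, whereas the paper's reflection trick reduces everything to the unconfined estimate already in hand. You should also be aware that the exponential decay of $\P_0(\tau_{-r-1}\wedge\tau_1 > n)$ (used to kill the tail term) is immediate here because the walk is confined to a finite strip before the first exit, and that the resulting ratio limit is $\mathbf{h}(-r-1,-r-1)/\mathbf{h}(0,0)=(r+2)(r+3)/2$, which is exactly the $h$-transform weight you anticipated.
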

Because the animal walk is centered, the conditioning event mentioned in the item \textit{2.} above has null probability, but we can  make sense of this conditioning via a Doob h-transform. By symmetry, the local limit of uniformly sampled non-negative pyramids also exists and coincides with the \emph{uniform infinite non-negative pyramid (UIP+)} defined as the mirror image of the UIP- (Definition \ref{def:UIP+} and Corollary \ref{cor:loclimit_UIP+}). However, somewhat surprisingly, because our bijection breaks the symmetry of the lattice, the UIP+ does not coincide with the random pyramid constructed from the sample path of the shaved animal walk conditioned to stay non-negative as explained in Remark~\ref{rem:thmUIP+}.

The definition of the local limits in term of piling up of dominoes along a random sequence is satisfactory from a conceptual viewpoint. Yet, it provides little quantitative insight into the structure of these infinite random sets because the piling up operation is difficult to analyze geometrically. In Section \ref{sec:MarkovProp}, we describe another construction of these objects seen as Markov processes when sliced by height (\emph{i.e.} sliced horizontally for the animal rotated by $45^{\circ}$ counter-clockwise, \emph{c.f.} Figure~\ref{fig:first}~(b)).  Again, we state below an informal version of the main results on this topic. 
\begin{theorem*}[Theorem \ref{thm:marginals} and Corollary \ref{cor:Markov}]
The traces of the UIP and UIP+ inside a finite ball around the origin have explicit distributions given by Formulas \eqref{eq:LawBall_UIP} and \eqref{eq:LawBall_UIHP}. \\
As a consequence, the processes associated with the UIP and UIP+ indexed by heights are Markov processes with kernels given by \eqref{eq:kernel-Abar} and \eqref{eq:kernel-Aplus} respectively.
\end{theorem*}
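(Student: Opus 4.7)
The plan is to deduce both statements from the walk encoding of the UIP (respectively UIP-) established in Sections~\ref{sec:Encoding} and \ref{sec:loclimit}, treating the UIP+ via its definition as the mirror image of the UIP-. Recall that the animal is constructed from the shaved animal walk by piling dominoes whose horizontal centers are the successive walk values, with vertical heights determined by gravity. The key observation is that the portion of the animal visible inside a ball $B$ of radius $R$ around the origin is determined by a finite piece of the walk: the walk values (and their order) which produce dominoes landing at heights bounded by the top of $B$ and in the relevant horizontal strip. Every other step of the walk either produces a domino placed above $B$, or corresponds to an excursion that never affects $B$.

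For the first part of the theorem, I would express $\P(\A \cap B = \alpha)$ as a sum over all walk trajectories compatible with a given configuration $\alpha$. Because the animal walk has independent geometric increments and because the shaving procedure is explicit, the compatibility constraints should decouple into finitely many conditions describing how $\alpha$ is built, together with infinitely many \emph{invisible} excursions that do not alter $\alpha$. Summing the geometric weights over the invisible excursions should telescope into a product formula which is exactly \eqref{eq:LawBall_UIP}. For the UIP-, the same count applies but with the additional requirement that each trajectory stays non-positive; through the Doob $h$-transform the probability of each configuration gets weighted by the ratio of the harmonic function evaluated at the current state of the walk and at the origin. The UIP+ trace law \eqref{eq:LawBall_UIHP} then follows by mirror reflection.

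Given these explicit trace laws at every radius, the Markov property of Corollary~\ref{cor:Markov} should follow essentially by inspection. The conditional law of the layer at height $h+1$ given the layers at heights $0,1,\ldots,h$, computed as a ratio of Formula~\eqref{eq:LawBall_UIP} at two consecutive heights, should simplify so that the dependence on the past reduces to the slice at height $h$. This reflects the vertical locality of the gravity rule: a domino at height $h+1$ can only sit on top of dominoes at height $h$, so what happens above depends on the past only through the height-$h$ skyline. The transition kernels \eqref{eq:kernel-Abar} and \eqref{eq:kernel-Aplus} can then be read off as these ratios of marginals.

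The main obstacle will be organizing the combinatorics of the compatibility constraints on walk trajectories. Because the animal walk can make arbitrarily large negative jumps, the set of trajectories compatible with a given configuration inside $B$ is infinite, and the sum of geometric weights needs to be controlled and put in closed form; the specific shape of the increment law together with the shaving operation should make these sums telescope cleanly. A secondary difficulty is handling the $h$-transform for the UIP- uniformly in the ball radius, so that the resulting Markov kernel is well-defined and consistent with the local limit, despite the recurrent nature of the underlying walk.
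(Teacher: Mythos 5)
Your overall roadmap agrees with the paper's: start from the walk encoding, express $\P(\bar\A\cap B(r)=\C)$ as a sum over compatible paths, obtain the UIP$-$ version through the Doob $h$-transform, pass to the UIP$+$ by reflection, and then read off the Markov kernels as ratios of consecutive marginals. That last step is indeed what Corollary~\ref{cor:Markov} does, and your remark about vertical locality is the right intuition.

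However, the combinatorial heart of Theorem~\ref{thm:marginals} is precisely the step you defer as ``the main obstacle,'' and it does not resolve itself by telescoping of geometric weights alone: you need a structural lemma about \emph{where} in the path the extra, non-$\C$ steps can be inserted, and what shape those insertions can take. Concretely, the paper shows that any path whose animal restricts to $\C$ on $B(r)$ must contain the encoding $(x_0,\dots,x_n)$ of $\C$ as a subsequence, and that a non-trivial excursion can only be inserted after an \emph{exposed} vertex (one with no occupied site directly above or above-right). Moreover, inside each gap between two consecutive top-layer vertices, the admissible insertions form ``cliffs'' that must be stacked, and they satisfy a cascading constraint: once one cliff fails to reach its maximal depth, all subsequent insertions in that block must be trivial. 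It is exactly this structure (encoded in the sets $\mathcal A_{i,0},\widehat{\mathcal A}_{i,0},\mathcal B_{i,j},\widehat{\mathcal B}_{i,j},\mathcal V_{i,j}$ and the decomposition $\mathcal T=\prod_i\mathcal T_i$) that makes the infinite sum collapse to the product $\prod_i 3(x_{m_i}-x_{m_{i+1}}-1)$ and yields the factor $\eta(\C_r)$. Without isolating exposed vertices and this maximality cascade, the sum of geometric weights over all compatible extensions will not visibly factor, and the claimed product form is unproved. A further simplification in the paper that your sketch misses is to first compute the marginal of the finite BHP, using that $\P(\A=\C)=3^{-|\C|}$ exactly, and only then modify the argument for the UIP and UIP$\pm$; going directly at the UIP forces you to analyse an infinite path from the outset. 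Finally, your worry about ``handling the $h$-transform uniformly in the radius'' is not a real issue: the harmonic function $\htr$ of Lemma~\ref{lemma:harmonicH} is global, so its ratio simply multiplies each excursion weight and produces the factor $(\min x(\C_r)+1)(\max x(\C_r)+2)/2$ in Formula~\eqref{eq:LawBall_UIHP}.
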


The simple fact that the formulas  \eqref{eq:kernel-Abar} and \eqref{eq:kernel-Aplus}  are indeed probability kernels (\emph{i.e.} sum to $1$) yield non-trivial equalities such as  \eqref{eq:jolie_id}. We provide alternative proofs of a more symbolic nature of these identities in an appendix, Section \ref{appendix}, that is logically independent of the rest of the paper. 

Section \ref{sec:properties} is devoted to studying properties of the local limits. In Subsection \ref{subsec:variousprop}, we leverage the explicit expressions for the kernels of the Markov chains to compute the probability of various events. In particular, we compute the joint probability of occupation of the two neighbors above a vertex at a given height (\emph{c.f.} Proposition \ref{prop:cherry}). 

In Subsection \ref{subsec:sausaging}, we prove our main result about the geometry of the UIP which states that this random set has regeneration times and can be constructed by joining, one after the other, an i.i.d. sequence of finite directed animals. Visually, this says that the UIP is a chain of ``sausages''. 
\begin{theorem*}[Theorem \ref{thm:saucissonnage}]
The UIP has the sausaging property: almost surely, there exist infinitely many heights where the UIP consists of a single vertex. However, the distance between two such consecutive heights is non-integrable. 
\end{theorem*}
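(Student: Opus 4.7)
The plan is to combine the slice--Markov--chain structure of Corollary \ref{cor:Markov} with the walk encoding of Section \ref{sec:Encoding}. A height at which the UIP has exactly one vertex is precisely a visit of the slice Markov chain to its singleton state; by the strong Markov property applied at each such visit, the portions of the UIP lying between two consecutive singleton heights will automatically be i.i.d., so the sausage decomposition follows as soon as infinitely many singleton heights are shown to exist almost surely.

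I would first establish that the singleton state is visited infinitely often a.s. Using the explicit kernel \eqref{eq:kernel-Abar}, a direct computation should give a positive lower bound on the probability of collapsing to a single vertex within one step from any slice of bounded width (for instance by selecting the subset of transitions where all but one of the upper neighbors are absent). Together with the fact that each slice of the UIP is a.s. finite --- which is immediate from the walk encoding since only finitely many dominoes rest at any fixed height in the pile --- a Borel--Cantelli / strong Markov argument then yields infinitely many singleton heights with probability one. The decomposition into an i.i.d. chain of finite sausages then follows at once by applying the strong Markov property at each singleton height.

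For the non-integrability of the gap between two consecutive singleton heights, I would transfer the question to the shaved animal walk. The animal walk introduced in Section \ref{subsec:animalwalk} has mean zero and finite variance, hence is null recurrent on $\Z$, and consequently its excursions away from the running minimum have infinite expected length --- a classical consequence of Kesten's / Spitzer's results on centered random walks. Through the bijection of Proposition \ref{key-prop-walk}, a singleton height corresponds to a specific renewal event of the shaved walk at its running minimum, and the length of a sausage can be bounded from below by the length of such an excursion. The gap then inherits the non-integrability of the excursion length of a null recurrent random walk; equivalently, a finite expected gap would, by renewal theory, force a positive density of singleton heights, whereas the average slice size of the UIP is easily seen to diverge with the height using \eqref{eq:LawBall_UIP}.

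The main obstacle is the dictionary between heights in the animal and steps of the walk: each walk step adds one domino whose height in the pile depends on all previously placed dominoes, so identifying exactly which walk events encode a singleton height requires careful use of the gravity / shaving construction. Making this correspondence quantitative enough to transfer the null-recurrent excursion estimate to the geometry of the UIP is the most technical point; the computations of Section \ref{subsec:variousprop}, in particular the cherry probability of Proposition \ref{prop:cherry}, should provide the building blocks.
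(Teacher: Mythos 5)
Your proposal has a genuine gap in the argument for existence of infinitely many singleton heights, and your route for the non-integrability is one the paper explicitly flags as unavailable.

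On the recurrence part: you argue that from any slice of \emph{bounded width} the chain has a uniformly positive probability of collapsing to a singleton in one step, and then invoke Borel--Cantelli. This is correct as far as it goes, but it does not suffice: the missing ingredient is that the width process $\Delta_n = \max \bar A_n - \min \bar A_n$ returns to a bounded set infinitely often. The fact that every individual slice is a.s.\ finite (which is true) does not imply this; the width could a priori drift to $+\infty$, in which case the one-step collapse probability $\approx |[A]|/(3^{|A|}\eta(A))$ tends to zero and Borel--Cantelli gives only finitely many singletons. Controlling the width is exactly the hard part, and the paper handles it with the martingale identities of Proposition \ref{prop:martingales} (derived from the change of measure between UIP, BHP and UIP+) combined with a Lyapunov function argument: $\ln(1+\Delta_n)$, stopped when $\Delta_n<4$, is a positive supermartingale, hence converges, which forces the stopping time to be finite. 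Note that the full layer process $(\bar A_n)$ is conjectured in Remark \ref{rmk:transl} to be \emph{transient}, so the recurrence of the centered width really does need a careful argument.

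On the non-integrability part: the paper makes a point of saying that the sausaging property ``has no simple translation in term of the shaved animal walk $\shaved{S}$'' --- precisely because a walk step drops a domino whose \emph{height} is determined non-locally by the whole pile, singleton layers do not correspond to any simple renewal event of $\shaved{S}$ (or of its running minimum). So trying to inherit non-integrability from the null-recurrent excursion lengths of $\shaved{S}$ is not a workable route. Your alternative renewal-theoretic sketch (finite expected gap would force positive density of singleton heights, contradicted by divergence of slice size) is also incomplete: positive recurrence of the width at $0$ does not by itself bound $\E[\Delta_n]$, since the stationary law of a positive recurrent chain can have infinite mean. The paper's argument is both cleaner and self-contained: if $\max\{\Delta_n : n < T\}$ were integrable, the stopped submartingale $(\Delta_{n\wedge T})$ would be uniformly integrable and the optional stopping theorem would give $2 = \E[\Delta_0] \leq \E[\Delta_T] = 0$, a contradiction; since $\max\{\Delta_n : n<T\} \leq \Delta_0 + 2T$, this forces $\E[T]=\infty$ as well.

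What is missing from your proposal, in short, is the martingale machinery: the submartingale/martingale identities for $\max \bar A_n$, $\min \bar A_n + \max \bar A_n$, and $\min \bar A_n \max \bar A_n$ (Proposition \ref{prop:martingales}), which themselves come from the observation that the kernels of BHP and UIP+ are Doob transforms of the UIP kernel. These supply both the recurrence estimate and the clean non-integrability argument.
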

The proof of this result is  purely probabilistic: it relies on martingale arguments derived from the connection between the UIP and UIP+ through an h-transform. 

Finally, in the last subsection, we turn our attention to the UIP+. We prove that it is transient to $+\infty$ and provide an explicit formula for the law of the future infimum of the rightmost vertex, \emph{c.f.} Proposition \ref{prop:transienceUIP+}.

\subsection{Additional comments}

Our approach bears some striking similarity with the standard approach for studying uniform rooted plane trees and their associated local limit from the root, also known as the Kesten tree 
\cite{KES86, ABR15}: in both cases, elements of the classes can be enumerated by number of vertices, and this can be done in a number of ways (analytic combinatorics or bijective proofs emphasizing a special coding, e.g. the contour process or the Lukasiewicz path for plane trees, see \cite{KOR16} ); besides, some additional arguments are needed to get to the local limit; for instance, defining the Kesten tree requires counting not only plane trees but also plane forests. In both settings, Doob h-transform plays a critical role in relating the various objects under consideration. We comment on this analogy throughout the paper, \emph{c.f.} Remarks \ref{rem:thmUIP}, \ref{rem:thmUIP-}, \ref{rem:thmUIP+}, \ref{rem:notlikekesten} and \ref{rem:martingales_kesten}.

On a related topic, numerical simulations seems to indicate that the local limits of directed animals are ``tree-like'' \emph{i.e} macroscopic loops are extremely unlikely (\emph{c.f.} the picture on the front page of this paper). Do directed animals converge to random trees, say for the Gromov-Hausdorff topology, after suitable renormalization ? 

Another aspect of this work which we find of particular interest and differs from previous approaches is the interplay between conditioning the random walk coding for the animal and conditioning the animal itself. Specifically, the conditioning considered here is on the event that the walk and the animal stay non-negative (meaning all vertices have non-negative $x$-coordinates).  A key observation is that the standard martingale change of measure for the walk upgrade in a simple way in a change of measures at the level of the animals, unveiling various non-trivial martingales for directed animals.

We point out that one-dimensional encodings of animals have been considered previously, see
Gouyou-Beauchamps Viennot \cite{GOU88} and Bétréma Penaud \cite{BET93, BET93+}; however, our encoding is different and arguably simpler. It serves our purpose more efficiently than the original one which relates directed animals to Motzkin paths with increments in $\{-1;0;1\}$ and “guinguois” trees, see \cite{BET93+} for details on these concepts.

We also mention that the fact that directed animals should exhibit Markovian properties is not new: the first two papers on directed animals, \cite{DHA83,NDV82}, both start with the so called transfer-matrix equation, see Equation (2) in \cite{DHA83} or Equation (15) in \cite{NDV82}, that exploits a kind of Markov property at the level of generating functions. Regarding the local limit, at the core of our computations is the possibility to enumerate directed animals with a given source and a large size; this very computation when the underlying graph is the torus is the focus of the two papers \cite{NDV82,HN83}, the latter article confirming the conjectures elaborated in the former. Regarding directed animals on $\Z$, the generating function of directed animals with a given source may be found in Proposition 3.6 of \cite{LEB07}, see also Proposition 3.7.  for the same computation where the sources are allowed to lie at distinct levels, which directly echoes our spatial Markov property, \emph{c.f.} Remark \ref{rem:spatial}.

In this paper, we only consider the local limit around the origin. Another standard approach is to look at the local limit seen from a uniformly chosen vertex within a large animal but the existence and description of such object is still an open question. Let us only mention that Bacher computes in \cite{BAC12} the number of occurrences of certain motifs in a large uniform random animal, which can be viewed as a first step in that direction. We link some of his computations with ours in Remark~\ref{rem:BacherEstimates}.

The Markov kernels associated with the local limits have remarkable properties. In a paper in preparation \cite{HMSS24+}, we delve into the intertwining relation satisfied by the kernel of the UIP, demonstrating in particular that the UIP is, in fact, the projection of a two type branching-annihilating particle system with local interaction. 

Finally, although this paper focuses solely on the square lattice, some results obtained here can be adapted to the triangular lattice, essentially because the representation of animal using heap of dominoes still holds, see \cite{VIE06, BAC12}. However, this entails a much greater technical complexity. We opted against this to maintain accessibility and control the paper's length.

\section{Encoding of an animal by a skip-free path}\label{sec:Encoding}

In this section, we describe an bijection between finite directed animals and finite paths with a particular step set. We show that this correspondence extends to a one-to-one mapping between infinite paths and a particular subset of infinite animals which are dubbed \emph{simple}. This result is instrumental to the rest of the paper as it will allow to translate questions about random directed animals into questions pertaining to a simpler one-dimensional random walk. This approach is reminiscent of the usual encoding of a planar tree by an excursion of a random walk (\emph{c.f.} \cite{LEG93, KOR16}).

\subsection{Definitions and notations for directed animals}

\begin{figure}
\begin{center}
\subfloat[{\scriptsize{An animal}}]{\includegraphics[height=3.9cm]{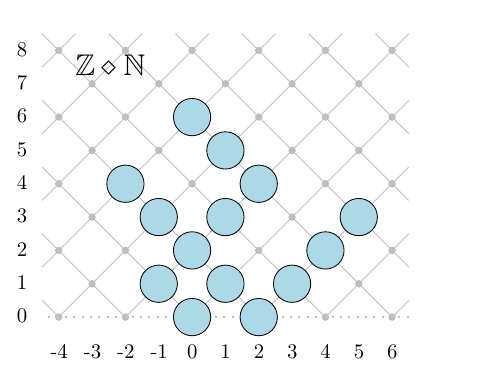}}
\subfloat[{\scriptsize{A pyramid}}]{\includegraphics[height=3.9cm]{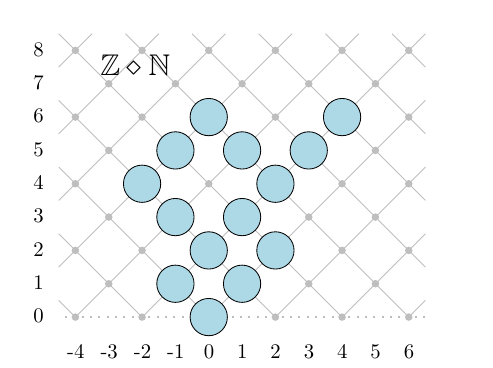}}
\subfloat[{\scriptsize{A non-negative pyramid}}]{\includegraphics[height=3.9cm]{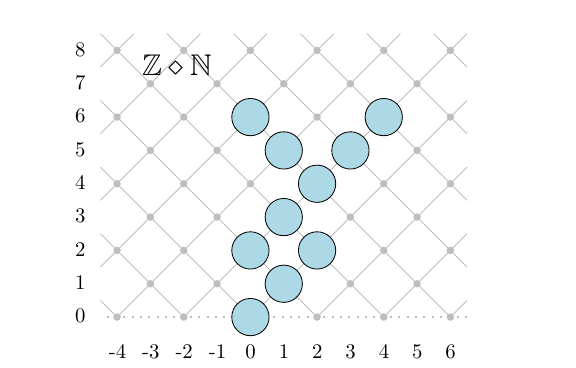}}
\end{center}
\caption{Directed animal (with 2 sources), pyramid and half-pyramid in $\ZxN$\label{fig:defanimals}}
\end{figure}

We gather here definitions and notations that will be used throughout the paper. As is customary when working with directed animals on the square lattice, we consider the lattice rotated by $45^{\circ}$ counter-clockwise and scale it by a factor $\sqrt{2}$ so we work with the graph:
$$
\ZxN \defeq \{ (x,y) \in \Z \times \N : x + y \hbox{ is even}\}.
$$
with directed edges going from $(x,y+1)$ to $(x-1,y)$ and $(x+1,y)$, \emph{c.f.} Figure~\ref{fig:defanimals}. Vertices of $\ZxN$ will be written in bold: $\a, \b \ldots$ and subsets of $\ZxN$ denoted with capital letters $\A,\B\ldots$

\begin{itemize}

\item Given $\a \in \ZxN$, we denote $x(\a)$ and $y(\a)$ the $x$-coordinate and $y$-coordinate of $\a$ (the $y$-coordinate is also called the \emph{height} of the vertex). The \emph{floor} of $\ZxN$  is the set of vertices at height $0$. 

\item Given $\A \subset \ZxN$ we define $x(\A) \defeq \{x(\a), \; \a\in \A\}$ the set of $x$-coordinates of the vertices in $\A$. We will often use the notation $A = x(\A)$ \emph{i.e.} the same letter but not in bold. 

\item Any vertex $\a \in \ZxN$ has two \emph{children}: $(x(\a)-1, y(\a)+1))$ and  $(x(\a)+1, y(\a)+1)$. When $\a$ is not located on the floor, it also has two \emph{parents} $(x(\a)-1, y(\a)-1))$ and  $(x(\a)+1, y(\a)-1))$. 

\item We say that a subset $\A \subset \ZxN$ is a \emph{directed animal} if it is non-empty and every vertex in $\A$ that is not on the floor has at least one of its parent in $\A$. Comparing with the definition given at the very beginning of the paper, this means that we consider animals with source set located on the anti-diagonal line (drawn as a dashed line in Figure~\ref{fig:first}~(a)). We point out that we allow $\A$ to be an infinite set. Notice also that, by definition of $\ZxN$, the $x$-coordinates of all vertices at the same height have the same parity (which alternates with the height).

\item We call \emph{pyramid} an animal with a single vertex on the floor (\emph{i.e.} a single source). Unless stated otherwise, a pyramid will be assumed to start at the origin $(0,0)$ \emph{c.f.} Figure~\ref{fig:defanimals}~(b).

\item We call \emph{non-negative} (resp. \emph{non-positive}) \emph{pyramid} a pyramid started from  $(0,0)$ such that all its vertices have non-negative (resp. non-positive) $x$-coordinate \emph{c.f.} Figure~\ref{fig:defanimals}~(c). 
\end{itemize}

\subsection{Ordering of the vertices of a directed animal}

Let $\A$ be a (finite or infinite) directed animal. As explained by Viennot \cite{VIE06}, we can represent $\A$ as a \emph{heap of dominoes}. More precisely, we represent each vertex of $\A$ by a domino of height $1$ and length slightly smaller than $2$. Then, by definition, an animal is a set of dominoes such that each domino is either resting on the floor or is supported by a domino directly under it. 

With this representation we can define the operation of ``pushing upward'': lifting up a domino of the animal brings along other dominoes located over it as illustrated in Figure~\ref{fig:push_up}. To make this definition precise, we define a binary relation $\leP$ on the vertices of $\A$.  For any $\a,\b \in \A$, 
\begin{equation}\label{partialorder}
\a \leP \b \quad \Longleftrightarrow \quad
\left\{
\begin{array}{l}
\hbox{there exist } \v_0, \ldots, \v_n \in \A \hbox{ with } \v_0 = \a, \v_n=\b,\\
\hbox{such that } y(\v_{i+1}) > y(\v_i) \hbox{ and } |x(\v_{i+1}) - x(\v_i)| \leq 1 \hbox{ for all } i < n.
\end{array}
\right.
\end{equation}
Clearly, this relation is a partial order. For any $\a\in \A$, the subset $\{\b\in \A : \a \leP \b\}$ is exactly the set of vertices that are carried away when ``pushing upward'' domino $\a$.

\begin{remark}
\begin{enumerate}
\item The condition $|x(\v_{i+1}) - x(\v_i)| \leq 1$ can be replaced by $|x(\v_{i+1}) - x(\v_i)| = 1$ in equivalence \eqref{partialorder} without altering the ordering relation $\leP$. We also point out the the chain $\v_0, \ldots, \v_n$ need not be unique.

\item By definition of a directed animal, the sources of $\A$ are exactly the vertices which are minimal for $\leP$, \emph{i.e.} vertices $\a$ for which there does not exist $\b \in \A$ such that $\b \leP \a$. 
\item The subset $\{ \b\in \A : \a \leP \b \}$ contains the pyramid hovering vertex $\a$ but it may also contain other vertices of $\A$ which are not direct descendants of $\a$ (see Figure \ref{fig:push_up} for an example).  
\end{enumerate}
\end{remark}

\begin{figure}
\begin{center}
\subfloat[{\scriptsize{Animal in $\ZxN$}}]{\includegraphics[height=4cm]{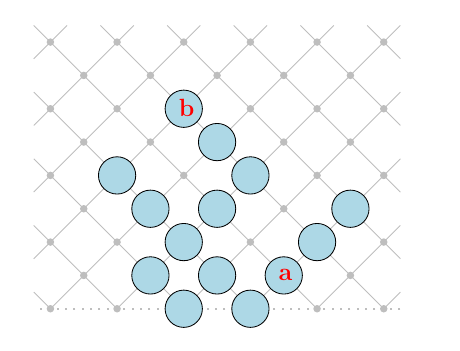}}
\subfloat[\scriptsize{Representation as a heap of piece}]{\includegraphics[height=4cm]{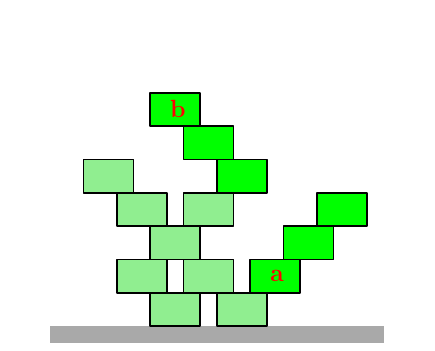}}
\subfloat[\scriptsize{Pushing up vertex~$\a$}]{\includegraphics[height=4cm]{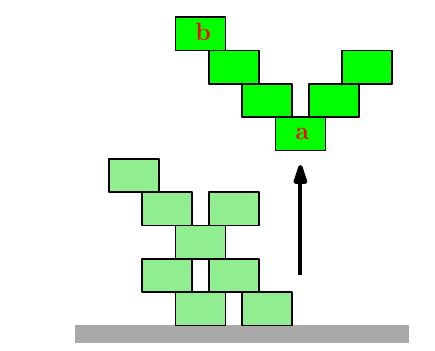}}
\end{center}
\caption{Representation of an animal as a heap of pieces and the ``pushing up'' operation at a vertex. We have $\a \leP \b$  even though $\b$ is not a direct descendant of $\a$.}\label{fig:push_up}
\end{figure}

\medskip

\noindent We can extend the partial order $\triangleleft$ to a total order $\leT$ on $\A$: for $\a,\b \in \A$, 
\begin{equation}\label{rel-totorder}
\a \leT \b \quad \Longleftrightarrow \quad (\a \leP \b) \hbox{ or ($\a$ and $\b$ are not comparable for $\leP$ and $x(\a) > x(\b)$).} 
\end{equation}
In words, vertices of $\A$ which are not ordered for $\leP$ are ordered relative to the decreasing values of their $x$-coordinates. This completion of the partial order $\leP$ is not canonical: we could replace condition $x(\a) > x(\b)$ in \eqref{rel-totorder} by $x(\a) < x(\b)$ instead to define the so-called \emph{mirror order} $\widetilde{\leT}$. The mirrored image of an animal $\A$ ordered according to $\leT$ reflected against the $y$-axis is an animal $\widetilde{\A}$ whose vertices are ordered  according to $\widetilde{\leT}$. In this paper, we will work with $\leT$ and we shall mention explicitly when we consider the mirror order $\widetilde{\leT}$.

\begin{lemma} The binary relation $\leT$ defines a total order on the elements of $\A$. 
\end{lemma}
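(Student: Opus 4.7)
The plan is to verify the four axioms of a total order: reflexivity, totality, antisymmetry, and transitivity.

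Reflexivity is immediate: taking $n = 0$ in \eqref{partialorder} yields $\a \leP \a$, hence $\a \leT \a$. A useful preliminary observation I will repeatedly invoke is that whenever $\a, \b \in \A$ satisfy $y(\a) < y(\b)$ and $|x(\a) - x(\b)| \leq 1$, the two-element sequence $(\a, \b)$ is already a chain witnessing $\a \leP \b$. In particular, if $x(\a) = x(\b)$ and $\a \neq \b$, the parity constraint in $\ZxN$ forces $y(\a) \neq y(\b)$, so $\a$ and $\b$ are $\leP$-comparable. Totality then drops out: if $\a, \b$ are incomparable for $\leP$, then $x(\a) \neq x(\b)$, so exactly one clause of \eqref{rel-totorder} applies. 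For antisymmetry, if both $\a \leT \b$ and $\b \leT \a$ hold, one cannot combine two ``incomparable'' clauses (this would give contradictory strict $x$-inequalities) nor mix the clauses, so both must reduce to $\leP$, and antisymmetry of $\leP$ gives $\a = \b$.

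The heart of the proof is transitivity. Given $\a \leT \b \leT \c$, I would split into four cases according to which clause of \eqref{rel-totorder} is used at each step. When both relations come from $\leP$, transitivity of $\leP$ closes the case immediately. The remaining three cases all rest on a discrete intermediate value argument: if $\v_0, \ldots, \v_n$ is a chain in $\A$ with $|x(\v_{i+1}) - x(\v_i)| \leq 1$ and an integer $k$ lies strictly between $x(\v_0)$ and $x(\v_n)$, then setting $j = \max\{i : x(\v_i) \leq k\}$ forces $x(\v_j) = k$. In the mixed case where $\a \leP \b$ while $\b, \c$ are incomparable with $x(\b) > x(\c)$, I would first rule out $\c \leP \a$ (which would chain to $\c \leP \b$), and then show $x(\a) > x(\c)$ by contradiction: assuming otherwise and using the preliminary observation to exclude equality, one gets $x(\a) < x(\c) < x(\b)$, so the intermediate value argument along a chain from $\a$ to $\b$ produces some $\v_j \in \A$ with $x(\v_j) = x(\c)$. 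Either $\v_j = \c$, yielding $\a \leP \c$ and contradicting the incomparability of $\a, \c$, or $y(\v_j) \neq y(\c)$, in which case the preliminary observation applied to $\c$ and $\v_j$ together with the sub-chains produces either $\c \leP \b$ (if $y(\v_j) > y(\c)$) or $\a \leP \c$ (if $y(\v_j) < y(\c)$), both contradictions. The symmetric case, where the first relation uses the $x$-clause and the second uses $\leP$, follows by the same template with the roles of $\a$ and $\c$ exchanged. The final case, both relations via the incomparable clause, gives $x(\a) > x(\b) > x(\c)$ directly and one only needs to exclude $\c \leP \a$; a chain from $\c$ to $\a$ would pass through $x(\b)$ by the intermediate value argument, forcing $\c \leP \b$ or $\b \leP \a$, each contradicting a hypothesis.

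The main obstacle is the bookkeeping in this transitivity case analysis: each sub-case must convert a same-$x$-coordinate coincidence into a $\leP$-comparability via the preliminary observation and then push it through an appropriate sub-chain to contradict a given incomparability. Once the pattern is isolated, all cases follow the same template.
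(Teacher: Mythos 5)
Your proof is correct and follows essentially the same route as the paper's: verify reflexivity and antisymmetry directly, then split transitivity into the same four cases according to which of $\a \leT \b$, $\b \leT \c$ arise from $\leP$ versus from the $x$-coordinate clause, closing the nontrivial cases with the same discrete intermediate-value argument along a unit-step chain to manufacture a forbidden $\leP$-comparability. The differences are cosmetic (you spell out reflexivity of $\leP$ and isolate the ``same-$x$ forces $\leP$-comparability'' observation as a lemma; your index formula for the intermediate-value step should be taken symmetrically when the chain runs from larger to smaller $x$-coordinate, but that is purely notational).
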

\begin{proof}
Definition \eqref{rel-totorder} is unambiguous because any pair $(\a,\b)$ of vertices which do not compare for $\leP$ must have distinct $x$-coordinates so that either $x(\a) < x(\b)$ or $x(\b)>x(\a)$. Furthermore, the reflexivity and anti-symmetry properties of $\leT$ are directly inherited from those of $\leP$. It remains to check the transitivity property. Fix $\a,\b,\c$ such that $\a \leT \b$ and $\b \leT \c$. If both $(\a,\b)$ and $(\b,\c)$ are comparable for $\leP$, then the result follows from the transitivity of $\leP$. If neither are comparable for $\leP$, then we have $x(\a) > x(\b) > x(\c)$ and $(\a,\c)$ cannot be comparable for $\leP$ (because otherwise $\b$ would be comparable with either $\a$ or $\c$) and so the result follows. There remains two cases to consider.

\medskip

\noindent \textit{Case 1:  $(\a,\b)$ not comparable for $\leP$ and $\b \leP \c$.}
If $(\a,\c)$ is comparable for $\leP$, then necessarily $\a \leP \c$ (hence $\a \leT \c$ as wanted) because the reverse relation would yield $\b \leP \c \leP \a$ and contradict the assumption $\a \leT \b$. Let us now assume the neither $(\a,\b)$ nor $(\a,\c)$ are comparable for $\leP$. We have $x(\a) > x(\b)$. If $x(\c) \leq  x(\b)$, then $x(\c) < x(\a)$ so that $\a \leT \c$ as expected. Otherwise, we have $x(\c) >  x(\b)$ and we prove by contradiction that $x(\a) \notin \llbracket x(\b), x(\c) \rrbracket$. Thanks to the assumption $\b \leP \c$, we can find a sequence $\b = \v_0, \v_1\ldots, \v_k = \c$ such that $x(\v_{i+1}) = x(\v_i) + 1$ and $y(\v_{i+1}) > y(\v_{i+1})$. Let $j$ be an index such that $x(\v_j) = x(\a)$.
\begin{itemize}
\item If $y(\a) \leq y(\v_j)$ then the sequence $\a, \v_{j+1},\ldots, \v_k$ witnesses that $\a \leP \c$ which is absurd.
\item If $y(\a) > y(\v_j)$ then the sequence $\v_0,\ldots, \v_{j-1}, \a$ witnesses that $\b \leP \a$ which is again absurd.
\end{itemize}
Thus, we have $x(\a) \notin \llbracket x(\b); x(\c) \rrbracket$ and $x(\a) > x(\b)$ therefore $x(\a) >  x(\c)$ which shows that $\a \leT \c$.

\medskip

\noindent \textit{Case 2:  $\a \leP \b$ and $(\b,\c)$ not comparable for $\leP$.} The argument is similar to that of the previous case. We first observe that if $(\a,\c)$ are comparable for $\leP$ then necessarily $\a \leP \c$ hence $\a \leT \c$. Otherwise, $x(\c)$ cannot belong to the interval $\llbracket\min(x(\a),x(\b)) ; \max(x(\a),x(\b))\rrbracket$ because we would have either $\a \leP \c$  or $\c \leP \b$ but both cases are forbidden. Therefore we conclude that $x(\c) < x(\a)$ which shows that $\a \leT \c$. 
\end{proof}

\begin{figure}
\begin{center}
\subfloat[{\scriptsize{vertices ordered with $\leT$}}]{\includegraphics[height=3.7cm]{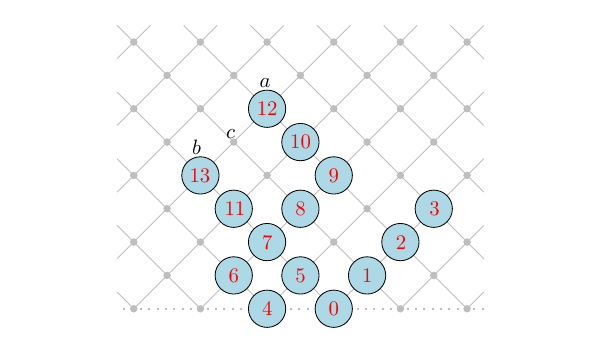}}
\subfloat[{\scriptsize{vertices ordered with $\widetilde{\leT}$}}]{\includegraphics[height=3.7cm]{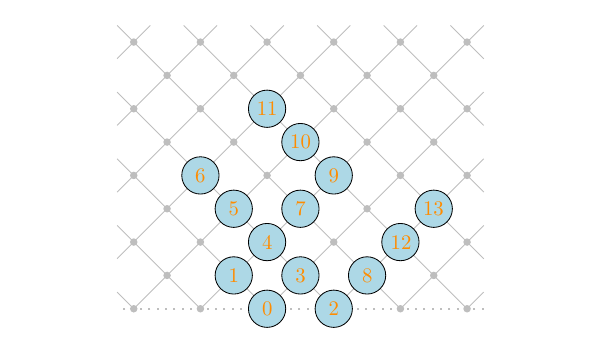}}
\end{center}
\caption{An animal ordered with $\leT$ (a) and then with the mirror order $\widetilde{\leT}$ (b). Notice that we have $a \leT b$ in Figure (a) but the order of these two vertices become reversed if we add a vertex at position $c$ (and we then get $b \leT c \leT a$).}\label{fig:order}
\end{figure}

Let us stress out that the total order $\leT$ depends on the animal $\A$ considered and is not stable by inclusion: if $\A \subset \B$ then, in general, the restriction to $\A$ of the order $\leT$ defined on $\B$ does not coincide with the order defined directly on $\A$ (see Figure \ref{fig:order} for a counter-example). However, this compatibility property holds true when the animal is grown by ``dropping dominoes".

\begin{lemma}[\textbf{Compatibility of the ordering when dropping dominoes}]\label{lemme-compat-falling}
Let $\A$ be a (finite of infinite) animal and let $\v \in \ZxN$ such that
\begin{equation*}
\left\{
\begin{array}{l}
\A \cup \{ \v \} \hbox{ is an animal }\\
\A \cap (\{ x(\v); x(\v)+1 \} \times \llbracket y(\v) + 1 ; \infty \llbracket) = \emptyset  
\end{array}
\right.
\end{equation*}
Then, the total order $\leT$ on $\A$ coincides with the total order $\leT$ on $\A \cup \{ \v \}$ restricted to $\A$.  In particular, this result holds true whenever $\v$ is a vertex added to $\A$ by dropping a domino from an infinite height at a given $x$-coordinate $x(\v)$.
\end{lemma}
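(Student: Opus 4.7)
The plan is to track exactly how the partial order $\leP$ can change when $\v$ is added to $\A$, and then to verify that every newly created $\leP$-comparison is consistent with the $x$-coordinate tiebreaker in \eqref{rel-totorder}. Since the tiebreaker depends only on the pair $(\a,\b)$ and any chain in $\A$ is also a chain in $\A \cup \{\v\}$, the only situation that can create a discrepancy between the two total orders is a pair $\a,\b \in \A$ that is $\leP$-incomparable in $\A$ but satisfies, say, $\b \leP \a$ in $\A \cup \{\v\}$ via some chain $\b = \v_0, \dots, \v_i = \v, \dots, \v_n = \a$ with $i \geq 1$ and $n > i$. Throughout the argument I would use the equivalent formulation $|x(\v_{k+1}) - x(\v_k)| = 1$ mentioned in the remark following \eqref{partialorder}.

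A first use of the hypothesis pins down the column of $\v_{i+1}$: since $\v_{i+1} \in \A$ lies at height strictly above $y(\v)$ and sits at distance one in $x$-coordinate from $\v$, the forbidden column $x(\v)+1$ is impossible, so $x(\v_{i+1}) = x(\v) - 1$. Iterating this column argument along the suffix $\v_{i+1}, \dots, \v_n$ (which stays in $\A$ at heights above $y(\v)$, hence never enters columns $x(\v)$ or $x(\v)+1$) yields $x(\v_k) \leq x(\v) - 1$ for every $k \geq i+1$, and in particular $x(\a) \leq x(\v) - 1$. A second use of the hypothesis combined with the animal property for $\v_{i+1}$ produces an alternate parent $\v_{i+1}' \defeq (x(\v) - 2, y(\v_{i+1}) - 1) \in \A$: indeed the other parent $(x(\v), y(\v_{i+1}) - 1)$ is either $\v$ itself (when $y(\v_{i+1}) = y(\v)+1$) or sits in the blocked column $x(\v)$ above $y(\v)$, so cannot belong to $\A$.

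The rest is a case analysis on $x(\v_{i-1})$. If $x(\v_{i-1}) = x(\v) - 1$, then the single step $\v_{i-1} \to \v_{i+1}'$ is a valid chain move inside $\A$ (adjacent columns, $y(\v_{i+1}') \geq y(\v) > y(\v_{i-1})$), and concatenating with $\v_{i+1}' \to \v_{i+1}$ gives $\b \leP \v_{i-1} \leP \v_{i+1}' \leP \v_{i+1} \leP \a$ entirely in $\A$, contradicting incomparability. If $x(\v_{i-1}) = x(\v) + 1$, I split on $x(\b)$. When $x(\b) \leq x(\v)$, the sequence $x(\v_0), \dots, x(\v_{i-1})$ is a unit-step path from $x(\b) \leq x(\v)$ to $x(\v)+1$ and so must hit $x(\v)$ at some intermediate index $k^* < i$; the vertex $\v_{k^*} \in \A$ lies in column $x(\v)$ below height $y(\v)$ and connects to $\v_{i+1}$ by a single step, again producing $\b \leP \a$ in $\A$ and the same contradiction. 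The only surviving case is $x(\b) > x(\v)$, which combined with $x(\a) \leq x(\v) - 1$ gives $x(\b) > x(\a)$, exactly the tiebreaker direction needed for $\b \leT \a$ to agree in both $\A$ and $\A \cup \{\v\}$.

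The main obstacle is recognising that the asymmetry of the blocking hypothesis (columns $x(\v)$ and $x(\v)+1$, not $x(\v)-1$) exactly mirrors the asymmetry of the tiebreaker $x(\a) > x(\b)$ in \eqref{rel-totorder}; this match is precisely what closes the case analysis in a single direction. The final assertion about a domino dropped from infinity at $x$-coordinate $x(\v)$ is immediate, since any piece of $\A$ sitting in the forbidden columns above the landing height would have intercepted the falling piece at a strictly higher level.
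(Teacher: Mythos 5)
Your proof is correct and follows the paper's approach: track which new $\leP$-comparabilities the addition of $\v$ can create, and check that any such newly comparable pair was already ordered the same way by the $x$-coordinate tiebreaker in $\A$. The paper compresses the key step (the inequality $x(\b)<x(\a)$) into a single assertion, whereas you supply the full case analysis — the alternate parent of $\v_{i+1}$ when the lower chain arrives from the left, and the intermediate-value argument on the lower chain when the starting vertex lies at $x$-coordinate at most $x(\v)$ — showing that in exactly those cases the chain bypasses $\v$ inside $\A$, so the pair was already comparable; this is a sound and useful elaboration of the paper's terse argument.
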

\begin{proof}
Obviously, adding vertex $\v$ does not change the $x$-coordinate of the vertices of $\A$ so the addition of $\v$ can only change the ordering two vertices $\a,\b\in \A$ if they are not comparable for $\leP$ inside $\A$ but adding $\v$ allows to create a chain witnessing that $\a$ and $\b$ are now comparable for $\leP$ (\emph{i.e.} $\v$ is one of the $\v_k$ in the r.h.s of \eqref{partialorder}). Suppose for instance that $\a\leP \b$ once $\v$ is added. Then, we must have $x(\b) < x(\a)$ because $\v$ is inside a chain from $\a$ to $\b$ and $\{ x(\v); x(\v)+1 \} \times \llbracket y(\v) + 1; \infty \llbracket) = \emptyset$. But then, this means that we already had $\a\leT \b$ for the order inside $\A$ anyway so adding $\v$ did not modify the ordering. 
For the second statement, we simply observe that if $\v$ is dropped onto $\A$ from an infinite height, because dominoes have width larger than $1$, the stronger condition $\A \cap \{ x(\v) - 1; x(\v); x(\v)+1 \} \times \llbracket y(\v); \infty \llbracket) = \emptyset$ is satisfied.
\end{proof}

We can distinguish animals according to the isomorphic class of its order $\leT$. When $\A$ is finite, then $(\A, \leT)$ is necessarily isomorphic to $(\llbracket 0; |\A|-1 \rrbracket , \leq)$. The situation is more complicated when $\A$ is infinite, as depicted in Figure \ref{fig:ex-order}.

\begin{figure}
\begin{center}
\subfloat[{\scriptsize{$\textcolor{red}{\N}$ (simple)}}]{\includegraphics[height=3.3cm]{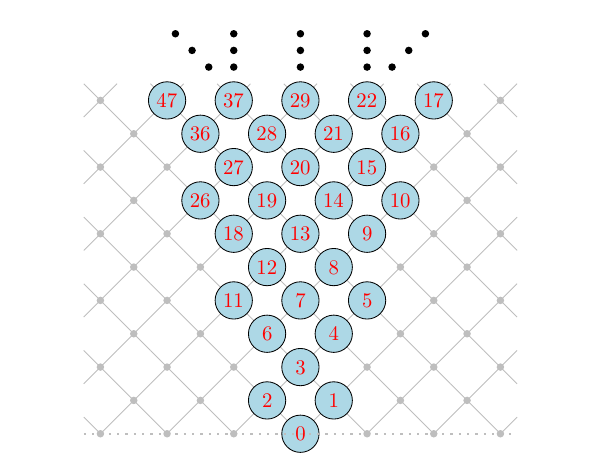}} \hspace{0.3cm}
\subfloat[{\scriptsize{$\textcolor{red}{\N}$ (simple)}}]{\includegraphics[height=3.3cm]{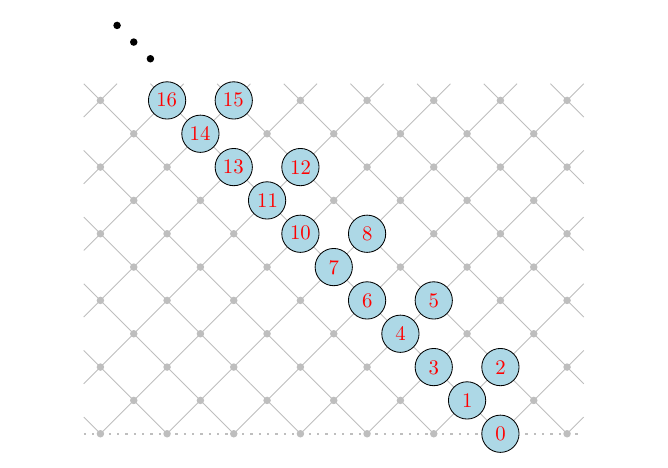}} \hspace{0.3cm}
\subfloat[{\scriptsize{$\textcolor{red}{\N} + \textcolor{blue}{\Z_{-}}$}}]{\includegraphics[height=3.3cm]{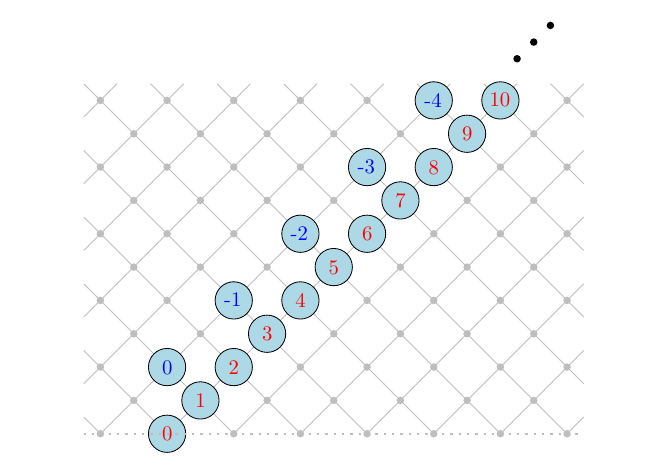}} \\
\subfloat[{\scriptsize{$\textcolor{red}{\N} + \textcolor{blue}{\N}$}}]{\includegraphics[height=3.3cm]{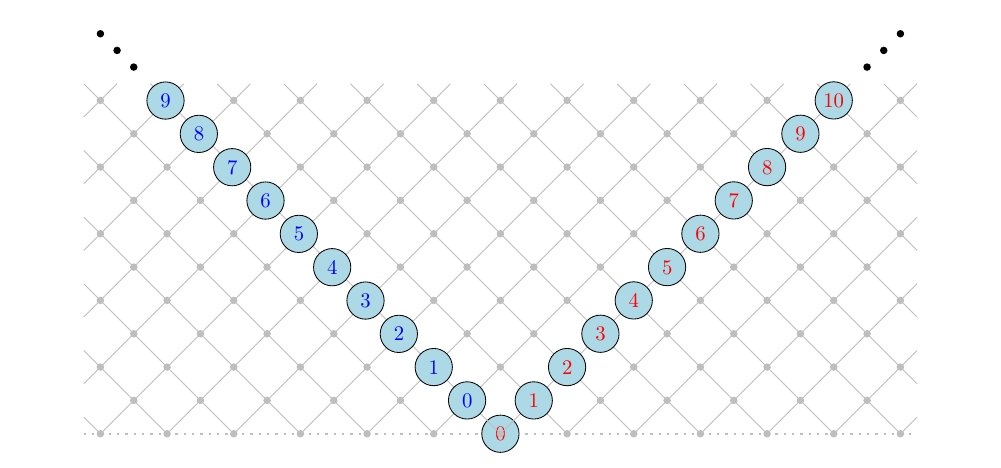}} \hspace{1.2cm}
\subfloat[{\scriptsize{$\textcolor{red}{\N} + \textcolor{blue}{\N} + \textcolor{PineGreen}{\N} + \ldots$}}]{\includegraphics[height=3.3cm]{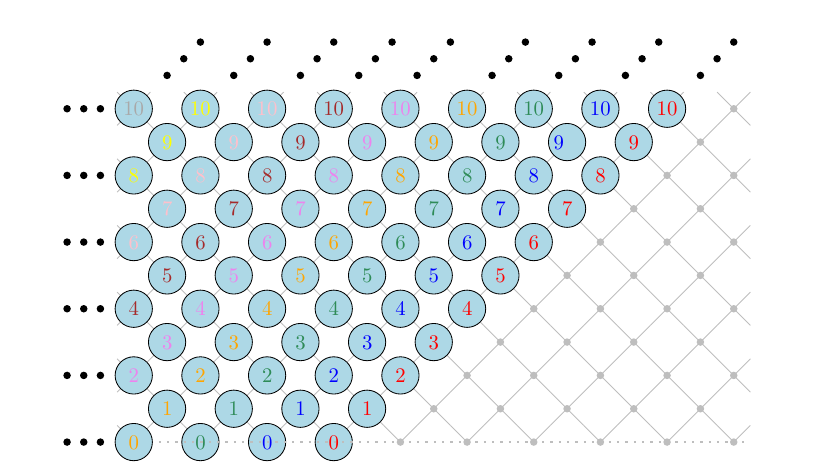}}
\end{center}
\caption{Examples of infinite animal ordering. Animals (a) and (b) are simple while animals (c), (d) and (e) are not. Notice that (c) is the mirror image of (b) w.r.t. the $y$-axis yet it is not simple for $\leT$ (but it is simple for the mirrored order $\widetilde{\leT}$). }\label{fig:ex-order}
\end{figure}

\begin{definition}[\textbf{Simplicity of a directed animal}]
\label{def:simple}
Let $\A$ be a directed animal. We say that $\A$ is \emph{simple} if $\A$ is finite (in which case $(\A, \leT)$ is isomorphic to $(\llbracket 0 ; |\A|-1 \rrbracket, \leq )$) or if $\A$ is infinite and $(\A, \leT)$ is isomorphic to $(\N, \leq)$. We denote by $\setA^s$ the set of all simple animals. 
\end{definition}

\begin{remark}\label{rem:mirrororder}
\begin{enumerate}
\item If $\A$ is a simple animal, then its set of sources have upper bounded $x$-coordinates and the minimal element for $\leT$ is the source vertex with maximal $x$-coordinate.
\item The property of being simple is not stable by reflection along the y-axis: an animal may be simple while it symmetric is not (\emph{c.f.} examples (b) and (c) in Figure \ref{fig:ex-order}). Equivalently, this means that the set of infinite animals which are simple for order $\leT$ is not the same as the set of infinite animals which are simple for the mirrored order $\widetilde{\leT}$. This fact will become important in Section \ref{sec:UIHP} when we study the local limit of half-pyramids. 
\end{enumerate}
\end{remark}

\subsection{Mapping between simple animals and skip free paths}

The following proposition is the main result of this section. It shows that any simple animal is encoded by a skip-free path with increments in $\Z^*_- \cup \{1\}$, subject to an additional condition on the position of the undershoots of its running infimum. This result may be seen as a counterpart for directed animals of the classical Lukasiewicz encoding for planar trees \emph{c.f.} \cite{KOR16}. A correspondence between finite directed animals and finite one-dimensional paths was previously described by Gouyou-Beauchamps Viennot \cite{GOU88} and also Bétréma Penaud \cite{BET93, BET93+} but differs from the one presented here as the paths considered previously had increments in $\{-1;0;+1\}$ (and thus akin to Dyck's encoding for planar trees \cite{KOR16} rather than Lukasiewicz's encoding). The one-to-one mapping we define here is simpler than the previous ones and has the advantage to readily extend into a mapping between infinite paths and infinite simple animals.

\begin{proposition}[\textbf{Path encoding of a simple animal}]\label{key-prop-walk}
Let $\A \in \setA^s$ be a simple (finite or infinite) directed animal. We enumerate its vertices in increasing order:
\begin{equation*}
\a_0 \leT \a_1  \leT \ldots \leT \a_n \leT \ldots
\end{equation*}
and we let $x_k \defeq x(\a_k)$ denote the $x$-coordinate of vertex $\a_k$. Then, the sequence $(x_0, x_1, \ldots)$  satisfies, for all $k$, 
\begin{enumerate}
\item[\textup{(a)}] $x_{k+1} - x_k \in \Z^*_- \cup \{1\}$,
\item[\textup{(b)}] $x_0\in 2\Z$ and $x_{k+1} \in 2\Z$ whenever $x_{k+1} \leq \min_{i \leq k} x_i - 2$ 
\end{enumerate}
Furthermore, the mapping 
$$\Psi : 
\begin{array}{rcl}
\setA^s &\to& \mathcal{S}^{(a),(b)}\\
\A &\mapsto& (x_k)
\end{array}
$$
defines a bijection between the set $\setA^s$ of simple animals and the set $\mathcal{S}^{(a),(b)}$ of (finite or infinite) sequences satisfying \textup{(a)} and \textup{(b)}. The inverse mapping $\Psi^{-1}$ can be constructed as follows. Given a sequence $(x_k) \in \mathcal{S}^{(a),(b)}$, we recover the $y$-coordinates $(y_k)$ of the vertices of the animal $\A$ by setting $y_0 \defeq 0$ and, by induction,
\begin{equation}\label{animalPilingY}
y_k \defeq 1 + \max\{ y_i : i < k \hbox{ and } |x_k - x_i| = 1\}\qquad\hbox{with the convention $\max \emptyset = -1$}.
\end{equation}
In other words, the animal $\A$ can be reconstructed from the sequence $(x_k)$ by dropping dominoes from infinity along the sequence of $x$-coordinates $(x_k)$. 
\end{proposition}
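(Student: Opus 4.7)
The proof is organized around three assertions: (A) for $\A \in \setA^s$, the sequence $\Psi(\A)$ satisfies (a) and (b); (B) the piling formula \eqref{animalPilingY} recovers the $y$-coordinates of $\A$ from $\Psi(\A)$; (C) piling any $(x_k) \in \mathcal{S}^{(a),(b)}$ produces a simple animal whose $\leT$-ordering is the input sequence. Together, (B) and (C) give the bijection.

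For (A), fix consecutive $\a_k, \a_{k+1}$ in $\leT$. If they are $\leP$-comparable, any witnessing chain of length $\geq 2$ would place intermediate vertices strictly between them in $\leT$, so the chain has length $1$ and $|x_{k+1} - x_k| \leq 1$; the case $x_{k+1} = x_k$ is excluded because parity then forces $y_{k+1} \geq y_k + 2$, so any required parent of $\a_{k+1}$, sitting at $(x_k \pm 1, y_{k+1} - 1)$, lies strictly between $\a_k$ and $\a_{k+1}$. If $\a_k, \a_{k+1}$ are $\leP$-incomparable, $x_{k+1} < x_k$ by the definition of $\leT$. For (b), $\a_0$ is the $\leP$-minimum source, giving $y_0 = 0$ and $x_0 \in 2\Z$; and if $x_{k+1} \leq \min_{i \leq k} x_i - 2$, any parent of $\a_{k+1}$ would be a $\leT$-predecessor at $x$-coordinate $x_{k+1} \pm 1$, violating $\min_{i \leq k} x_i \geq x_{k+1} + 2$, so $\a_{k+1}$ is a source and $x_{k+1} \in 2\Z$.

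The key tool for both (B) and (C) is: \emph{for $i < k$ with $|x_i - x_k| = 1$ we always have $y_i < y_k$}, since otherwise the direct chain $\a_k, \a_i$ would give $\a_k \leP \a_i$, contradicting $i < k$ in $\leT$. This yields (B): when $y_k > 0$ a parent of $\a_k$ is a $\leT$-predecessor at distance $1$ attaining $y_k - 1$, so \eqref{animalPilingY} evaluates to $y_k$; when $y_k = 0$ the inequality forces the maximum to be empty. For (C), setting $\A \defeq \{(x_k, y_k)\}$ via \eqref{animalPilingY}, one verifies in order: (C1) $\A \subset \ZxN$ by parity induction, noting that $y_k = 0$ is equivalent, for $k \geq 1$, to $x_k \leq \min_{i < k} x_i - 2$ (since ascending increments are $+1$), so (b) guarantees $x_k$ is even; (C2) $\A$ is an animal with distinct vertices, the latter because a return of the walk to $x_j$ at time $k > j$ creates --- either directly via $x_{j+1} = x_j + 1$, or via a re-ascent through $x_j - 1$ --- a predecessor at $x$-distance $1$ from $x_k$ with height $\geq y_j$, forcing $y_k \geq y_j + 1$.

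The main obstacle is (C3): that $\leT$ on $\A$ matches the input sequence. This reduces to the sub-lemma \emph{$j < k$ and $x_j \leq x_k$ imply $\a_j \leP \a_k$.} The case $x_j = x_k$ follows from (C2). For $x_j < x_k$, I induct on $k - j$: let $m$ be the largest index in $[j, k)$ with $x_m = x_j$; the $+1$-only ascent rule forces $x_{m+1} = x_j + 1$ (otherwise the walk could not reach $x_k > x_j$ without revisiting $x_j$, contradicting maximality of $m$), yielding $\a_j \leP \a_{m+1}$ via a direct chain, and induction closes $\a_{m+1} \leP \a_k$. Conversely, $\a_k \leP \a_j$ with $j < k$ is impossible: in any candidate chain $\a_k = \v_0, \ldots, \v_p = \a_j$, the key inequality above forces the indices $i_l$ defined by $\v_l = \a_{i_l}$ to strictly increase along the chain, contradicting $i_p = j < k = i_0$. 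Combined with the definition of $\leT$ on incomparable pairs, this shows $\a_j \leT \a_k$ for all $j < k$ inside each finite truncation $\A_n \defeq \{\a_0, \ldots, \a_n\}$. Finally, Lemma \ref{lemme-compat-falling}, whose hypothesis at each insertion is exactly the key inequality, transfers order compatibility from truncations to the full $\A$, so $(\A, \leT) \cong (\N, \leq)$ and $\Psi(\A) = (x_k)$.
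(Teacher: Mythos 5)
Your proof is correct and follows the same overall plan as the paper's (derive (a), (b) from the definition of $\leT$; verify the piling formula; show that piling an admissible sequence produces a simple animal). The one genuinely different ingredient is your sub-lemma ``$j<k$ and $x_j\le x_k$ imply $\a_j\leP\a_k$'' paired with the converse observation that along any $\leP$-witnessing chain between vertices of the constructed $\A$ the indices strictly increase. This is a clean repackaging of the paper's explicit induction showing $\a_0\leT_n\a_1\leT_n\cdots\leT_n\a_{n-1}$, and in fact it is strong enough on its own: the chains it builds live in $\{\a_0,\dots,\a_k\}$, so the sub-lemma and its converse already give $\a_j\leT\a_k$ for all $j<k$ directly in the full animal $\A$, whether finite or infinite. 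Your closing sentence is therefore both superfluous and slightly wrong: Lemma~\ref{lemme-compat-falling} only asserts step-by-step consistency of $\leT$ as one domino is dropped; it does not by itself ``transfer order compatibility from truncations to the full $\A$,'' which is why the paper passes to the infinite case via a separate finite-box locality argument. In your setup that extra step is unnecessary --- the strict monotonicity of indices along chains is exactly what makes the sub-lemma valid in all of $\A$ --- so you should either state that observation explicitly as the justification, or simply drop the appeal to Lemma~\ref{lemme-compat-falling} at the end and conclude from the sub-lemma directly.
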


\noindent Figure \ref{fig:BijPsi} illustrates how the bijection $\Psi$ works on an example. 

\begin{figure}
\begin{center}
\includegraphics[height=4.4cm]{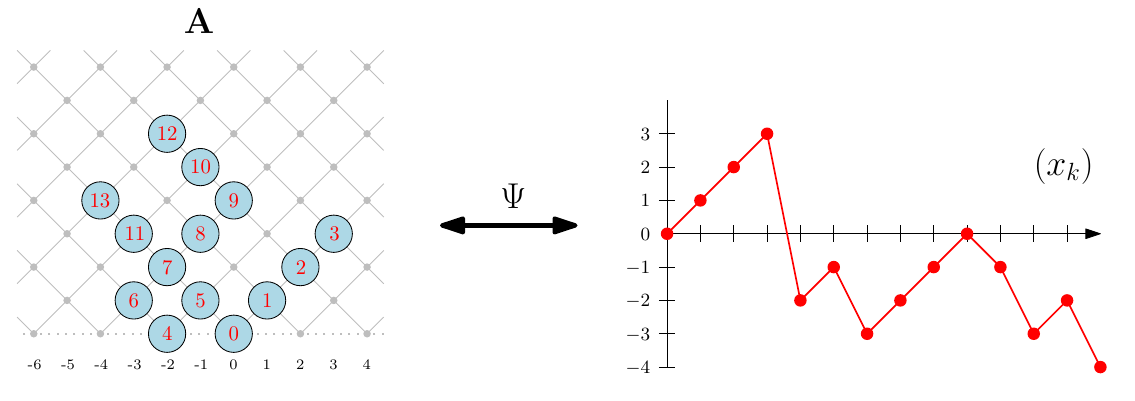}
\end{center}
\caption{\label{fig:BijPsi}Example of encoding of an animal $\A$ by a path $(x_k)$  via the mapping $\Psi$. Given the path $(x_k)$ on the left, we can reconstruct the animal $\A$ on the right by dropping dominoes from infinity along the path: the first domino falls on the floor at $x=0$, then the second domino fall on top of it at $x=1$, and then successively at $x = 2, 3, -2,-1,-3,\ldots$}
\end{figure}

\begin{proof}
Fix a simple animal $\A \in \setA^s$. We first prove that (a) holds true. Assume by contradiction that $x_{k+1} = x_{k}$ for some index $k$. 
Since $\a_k \leT \a_{k+1}$, we must have $\a_k \leP \a_{k+1}$ hence $y(\a_{k}) \leq y(\a_{k+1}) - 2$. In particular, if $\v$ denotes a direct parent of $\a_{k+1}$ (which exists because $\a_{k+1}$ is not a source), we have $\v \neq \a_k$ and $\a_{k} \leP \v \leP \a_{k+1}$ which contradicts the enumeration $(\a_k)$ of $\A$. Therefore $x_{k+1} - x_{k} \neq 0$ for all $k$. Similarly, let us assume that $x_{k+1} > x_k + 1$ for some $k$. Since $\a_k \leT \a_{k+1}$, we must have $\a_{k} \leP \a_{k+1}$ but this means that there exists $\v \in \A$ with $x(\v) = x_k+1$ such that $\a_{k} \leP \v \leP \a_{k+1}$, again contradicting the enumeration $(\a_k)$ of $\A$. Therefore $x_{k+1} - x_{k} \leq 1$ for all $k$ which completes the proof of (a).

\medskip

To see why (b) holds true, we observe that $\a_0$ and all the vertices $\a_k$ such that $x_{k+1} \leq \min_{i \leq k} x_i - 2$ are exactly the sources of the animal $\A$ thus $y(\a_k) = 0$ and $x_k$ must be even. Indeed, any vertex $\a_k$ of $\A$ which is not a source has a parent $\a_i \in \A$ such that $|x_i - x_k|  = 1$ and $y(\a_i) = y(\a_k) - 1$. In particular, $\a_i \leP \a_k$ which implies $i < k$. But when $x_{k} \leq \min_{i < k} x_i - 2$, no such index $i$ can exist which means that $\a_k$ must be a source. 

\medskip

We have proved that $\Psi$ is a well-defined mapping between simple animals and sequences satisfying (a), (b). Next, we show that, for any $k$,
\begin{equation}\label{reconstruct-y}
y(\a_k) = 1 + \max\{ y(\a_i) : i < k \hbox{ and } |x(\a_k) - x(\a_i)| = 1\}
\end{equation}
(with the convention $\max \emptyset = -1$). Fix $k$ and let $i$ be an index such that $|x(\a_i) - x(\a_k)| = 1$. Then, $\a_i$ and $\a_j$ must be comparable for $\leP$. Because the sequence $(\a_n)$ is increasing for $\leT$, we deduce that $\a_i \leP \a_k$ if $i\leq k$ and $\a_k \leP \a_i$ otherwise. In particular, this means that 
\begin{align}
\label{ineqA1}& y(\a_i) < y(\a_k)\quad \hbox{if $i < k$}, \\
\label{ineqA2}& y(\a_k) < y(\a_i)\quad \hbox{if $k < i$}.
\end{align}
On the one hand, \eqref{ineqA1} implies that $y(\a_k) \geq 1 + \max\{ y(\a_i) : i < k \hbox{ and } |x(\a_k) - x(\a_i)| = 1\}$. On the other hand, \eqref{ineqA2} combined with the fact that $\A$ is an animal implies that equality must hold (because otherwise $\a_k$ would have no parent in $\A$). Thus, we have established \eqref{reconstruct-y} which shows that $\Psi$ is an injective mapping whose inverse mapping $\Psi^{-1}$ is computed by recovering the $y$-coordinates of the sites via Formula \eqref{animalPilingY}. 

\medskip

It remains only to show that $\Psi$ is surjective onto the set $\mathcal{S}^{(a), (b)}$. Fix $\ell\in \N^*\cup\{+\infty\}$ and a sequence $(x_k, k < \ell)$ satisfying (a) and (b). Define $(y_k, k < \ell)$ via \eqref{animalPilingY} and set $\a_k = (x_k, y_k)$ for all $k<\ell$. We must show that the $\a_k$'s are all distinct vertices of $\ZxN$ and that the set $\A \defeq \{ \a_k ,\; k<\ell\}$ forms a simple animal. 

We prove by induction that for each $n < \ell$, the set $\A^{(n)} \defeq \{\a_0, \ldots, \a_{n-1}\}$ defines an animal with $n$ distinct vertices of $\ZxN$ ordered as
\begin{equation}\label{compat_order_walk}
\a_0 \leT_n \a_1  \leT_n \ldots \leT_n \a_{n-1}
\end{equation}
where $\leT_n$ denotes the total order inside $\A^{(n)}$.  The result is obvious for $n=1$. To prove the induction step, we observe that $\a_{n} = (x_{n} , y_{n})$ satisfies:
\begin{itemize}
\item $\a_{n} \in \ZxN$ thanks to \eqref{animalPilingY} and Assumption (b). 
\item $\a_{n} \neq \a_1,\ldots, \a_{n-1}$. By contradiction, assume that $\a_{n} = \a_i$ for some $i < n$. Necessarily $i < n-1$ because $x_{n} \neq x_{n-1}$. Furthermore, in view of \eqref{animalPilingY}, we observe that $\a_i$ cannot have any son (\emph{i.e.} $(x_i \pm 1 , y_i + 1) \notin \A^{(n)}$) because it would yield $y_{n}  > y_{i} + 1 > y_{n}$. This fact implies in particular that $x_{i+1} < x_i -1 = x_{n} - 1$. But because the path $x$ only performs unit increments to the right, there must exist an index $j \in \llbracket i + 2; n-1\rrbracket$ such that $x_j = x_{n} - 1$ but then $y_j = 1 + \max\{ y_k : k < j \hbox{ and } |x_j - x_k| = 1\} > y_i$ and therefore $y_{n} = 
1 + \max\{ y_k : k \leq n \hbox{ and } |x_n - x_k| = 1\}  > y_j > y_i = y_{n}$ which is absurd.
\end{itemize}
Therefore, $\A^{(n+1)} = \{\a_0, \ldots, \a_n \}$ is an animal with $n+1$ vertices. In view of Lemma~\ref{lemme-compat-falling}, we know that $\leT_{n+1}$ restricted to $\A^{(n)}$ coincides with $\leT_{n}$. Thus, we only need to show that $\a_{n-1} \leT_{n+1} \a_{n}$. Indeed, if $x_{n} = x_{n-1}+ 1$, then  $\a_{n-1} \leP_{n+1} \a_{n}$ hence $\a_{n-1} \leT_{n+1} \a_{n}$. Otherwise, we have $x_{n} < x_{n-1}$ but, by definition of $y_{n}$, vertex $\a_{n}$ cannot compare smaller than any vertex of $\A^{(n)}$ for the partial order $\leP_{n+1}$. Therefore, either $\a_{n-1} \leP_{n+1} \a_{n}$ or $\a_{n-1}$ and $\a_{n}$ are not comparable for $\leP_{n+1}$ (and we have $x_{x+1} < x_{n}$). In any case, we conclude that $\a_{n-1} \leT_{n+1} \a_{n}$ which completes the proof of the induction step.

The induction above shows that, whenever the sequence $(x_k)$ is finite (\emph{i.e.} $l<\infty$), then $\A$ is a well-defined (simple) animal. When the sequence is infinite (\emph{i.e.} $l=\infty$), $\A$ is still an animal because it is an increasing limit of animals: $\A = \cup_n \A^{(n)}$ . To check that $\A$ is simple, we observe that the relative ordering of two vertices $\a_i$ and $\a_j$ for $\leT$ inside $\A$  depends only on $\A \cap B$ where $B \defeq \llbracket \min(x(\a_i), x(\a_j)); \max(x(\a_i), x(\a_j)) \rrbracket \times \llbracket 0 ; \max(y(\a_i), y(\a_j)) \rrbracket$. Since $B$ is a finite subset, $\A^{(n)} \cap B = \A\cap B$ for all $n$ large enough and we conclude from \eqref{compat_order_walk} that $\a_i \leT \a_j$ if and only if $\a_i \leT_n \a_j$. Therefore $\A$ is simple and the proof is complete. 
\end{proof}

Proposition \ref{key-prop-walk} describes a general correspondence between simple animals and paths. In the rest of the paper, we will mostly be concerned with pyramids and half-pyramids which are the building blocks of general animals. Then, the mapping $\Psi$ becomes a bijection onto paths which are now subject to specific  conditions on their running infimum.

\begin{corollary}\label{cor:mapPsi}
\begin{enumerate} 
\item The mapping $\Psi : \A \to (x_k)$ induces a bijection between simple pyramids rooted at $0$ and sequences satisfying \textup{(a)} and 
\begin{enumerate}
\item[\textup{(c)}] $x_0=0$ and $x_{k+1} \geq \min_{i \leq k} x_i - 1$ for all $k$.
\end{enumerate}
In words, \textup{(c)} means that the path starts at $0$ and never beats its current minimum by more than $1$.
\item The mapping $\Psi : \A \to (x_k)$ induces a bijection between simple non-negative pyramids rooted at $0$ and sequences satisfying \textup{(a)} and 
\begin{enumerate}
\item[\textup{(d)}] $x_k \geq 0$ for all $k$.
\end{enumerate}
\item The mapping $\Psi : \A \to (x_k)$ maps a simple animal $\A$ to a sequence of length $|\A|$. Thus, every bijection mentioned previously induces restricted bijections between subsets of simple animals/pyramids/half-pyramids with prescribed size $n\in \N \cup \{+\infty\}$ and sequences of the same length $n$. \
\end{enumerate}
\end{corollary}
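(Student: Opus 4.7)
The plan is to derive each statement as a direct consequence of Proposition~\ref{key-prop-walk} by translating the structural constraints on the animal (being a pyramid, being non-negative) into constraints on the encoding sequence $(x_k)$. The key observation extracted from the proof of Proposition~\ref{key-prop-walk} is that the sources of a simple animal $\A$ are precisely $\a_0$ together with those $\a_k$ for which $x_k \leq \min_{i<k} x_i - 2$; this is exactly what underlies condition~(b).

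For part~(1), I would note that a pyramid rooted at $(0,0)$ is characterized by having exactly one source, located at the origin. By the source characterization above, this is equivalent to requiring $x_0=0$ (so that $\a_0=(0,0)$) and $x_{k+1} > \min_{i\leq k} x_i - 2$ for all $k\geq 0$, that is, $x_{k+1} \geq \min_{i\leq k} x_i -1$, which is exactly condition~(c). Conversely, given $(x_k)$ satisfying (a) and (c), the parity condition~(b) holds automatically: $x_0=0\in 2\Z$ and the second clause of (b) is vacuous since no $x_{k+1}$ ever beats the running minimum by~$2$. Hence Proposition~\ref{key-prop-walk} produces a simple animal with a single source at the origin, i.e., a simple pyramid rooted at~$0$.

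For part~(2), the forward direction is immediate: a non-negative pyramid rooted at $0$ has $x(\a)\geq 0$ for every vertex, so the sequence satisfies (d) (with $x_0=0$ from the rooting convention). For the converse, any sequence satisfying (a) and (d) has constant running minimum $0$, so $x_{k+1} \geq 0 > -1 = \min_{i\leq k} x_i - 1$, so condition~(c) from part~(1) is met and the associated animal is a pyramid rooted at $(0,0)$; since its $x$-coordinates are the $x_k \geq 0$, it is non-negative.

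For part~(3), the claim is immediate from the construction of $\Psi$: the sequence $(x_k)$ has exactly one term per vertex of $\A$ (the vertices are enumerated one by one in the order $\leT$), so the length of the encoding equals $|\A|$. Consequently each of the bijections above restricts, for every $n \in \N \cup \{+\infty\}$, to a bijection between simple objects of prescribed size~$n$ and sequences of length~$n$. There is no real obstacle here; the whole corollary is bookkeeping on top of Proposition~\ref{key-prop-walk}, and the only subtle point is the automatic validity of~(b) in parts (1) and (2), which in each case reduces to the absence of any source other than $\a_0$.
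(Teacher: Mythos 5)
Your proof is correct and follows essentially the same route as the paper's (very terse) argument: namely, that the source characterization from the proof of Proposition~\ref{key-prop-walk} — sources are $\a_0$ together with those $\a_k$ for which $x_k \leq \min_{i<k}x_i - 2$ — reduces each part of the corollary to unwinding what ``exactly one source at the origin'' and ``non-negative'' mean at the level of the sequence $(x_k)$. You are also right to flag, via your parenthetical ``(with $x_0=0$ from the rooting convention),'' that condition~(d) as printed does not explicitly state $x_0=0$ even though both directions of the claimed bijection require it; this is a small imprecision in the corollary's statement, not in your argument.
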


\begin{proof}
Part 1. follows from the observation established during the proof of Proposition 
\ref{key-prop-walk} that the source vertices of an animal are exactly those vertices added when the path $x$ beats its current infimum by $2$ or more. Part 2. and 3. are straightforward. 
\end{proof}

\section{Local limits of directed animals\label{sec:loclimit}}

\subsection{Push-forward measures and the animal walk  \texorpdfstring{$S$}{S}\label{subsec:animalwalk}}

\begin{figure}
\begin{center}
\includegraphics[height=6cm]{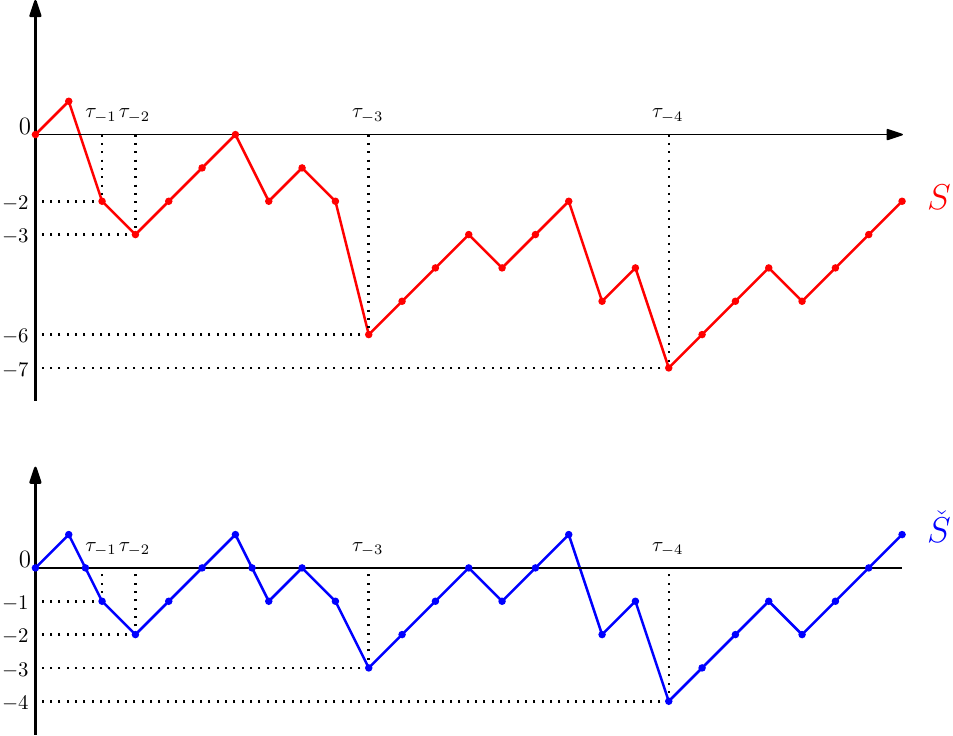}
\end{center}
\caption{\label{fig:shaved}Illustration of a path of the animal walk $S$ (in red) and the associated shaved walk $\shaved{S}$. Both processes share the same descending ladder times $(\tau_{-k})$. } 
\end{figure}

We are interested in the push-forward measure of the uniform measure on finite pyramids via the bijection $\Psi$ of Proposition \ref{key-prop-walk}. It turns out that, modulo a little twist, this probability measure factorizes in a pleasant way as a product of i.i.d. random variables so that the random path associated with an animal chosen uniformly at random is closely related to a random walk with a particularly simple step distribution. 

Let $\mu = (\mu_k)$ be the distribution of a random variable supported on $\Z_-^* \cup \{1\}$ which is equal to $1$ with probability 2/3 and to the opposite of a Geom($1/2$) random variable with probability $1/3$, that is, we set: 
\begin{equation}
\label{def:mu}
\mu_{k} := \frac{2^k}{3} \Ind{k \in \mathbb \Z_-^* \cup \{1\}}.
\end{equation}

From now on, we shall always denote by $(S_n)$ a integer-valued random walk with $S_0 = 0$ and whose increments $(S_{n+1} - S_n)$ are i.i.d. and distributed according to $\mu$. We call $S$ the \emph{animal walk}. We construct the associated \emph{shaved walk}  $\shaved{S}$ with the same increments as $S$ except at the times when the walk beats its current infimum in which case we shift $\shaved{S}$ so that it only beats its minimum by $1$. Rigourously, we set $\shaved{S}_0 \defeq S_0 = 0$ and by induction,
$$
\shaved{S}_{n+1} - \shaved{S}_{n} \defeq
\begin{cases}
S_{n+1} - S_n & \hbox{if $S_{n+1} \geq \min_{i\leq n} S_i$},\\
\min_{i\leq n} \shaved{S}_i - 1 & \hbox{if $S_{n+1} < \min_{i\leq n} S_i$}.
\end{cases}
$$
Equivalently, defining the descending ladder times $(\tau_{-k})_{k\geq 0}$ of $S$ by
\begin{equation}\label{def:tauminusk}
\left\{
\begin{array}{l}
\tau_0 \defeq 0,\\
\tau_{-k-1} \defeq \inf(n > \tau_{-k} : S_n < S_{\tau_{-k}}),
\end{array}
\right.
\end{equation}
we have the equality
$$
\tau_{-k} = \inf(n : \shaved{S}_n = -k)
$$
and $\shaved{S}$ is constructed by splitting the walk $S$ at the ladder times $\tau_{k}$, then ``shaving'' all the undershoots of its ladder height and then subsequently reassembling the path to obtain $\shaved{S}$ \emph{c.f.} Figure \ref{fig:shaved} for an illustration of the procedure. Because the distribution $\mu$ is centered ($\E[S_1] = 0$) all the ladder times are finite a.s. and the construction above is well-defined. 

The exit problem for the animal walk $S$ has a very simple expression. We will use the next results several times throughout the paper.
\begin{lemma}[\textbf{Exit problem for $S$}]\label{lem:exitproblemS}
Let $-y < 0 \leq x$. We have
\begin{equation}
\P\big(\hbox{$S$ enters $\rrbracket -\infty, -y\rrbracket$ before reaching $\{x\}$}\big) = \frac{x}{x+y+1}.
\end{equation}
As a consequence, $\mathbf{h}^{+}(z) \defeq (z+2)\Ind{z\geq 0}$ is harmonic for $S$ killed when entering $\rrbracket -\infty, -1\rrbracket$ and $\mathbf{h}^{-}(z) \defeq (|z|+1)\Ind{z \leq 0}$ is harmonic for $S$ killed when entering $\llbracket 1, +\infty \llbracket$. 
\end{lemma}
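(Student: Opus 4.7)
The plan is to apply optional stopping to the mean-zero martingale $(S_n)$ at $\tau := \inf\{n \geq 0 : S_n = x \text{ or } S_n \leq -y\}$, using the memoryless property of the geometric downward jumps to compute $\E[S_\tau]$ explicitly. Note that $S$ is skip-free upward (its only positive step is $+1$), so reaching $\llbracket x, +\infty \llbracket$ forces $S_\tau = x$; and because $\mu$ has mean zero, $\limsup S_n = -\liminf S_n = +\infty$ almost surely, hence $\tau < \infty$ a.s. Before $\tau$ the walk lies in $\rrbracket -y, x \llbracket$, and at $\tau$ the undershoot has an integrable (geometric) tail (see below), so $\sup_n |S_{n \wedge \tau}|$ is dominated by an $L^1$ variable; dominated convergence in $\E[S_{n \wedge \tau}] = 0$ then yields $\E[S_\tau] = 0$.

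The crux of the proof is the computation of $\E[S_\tau \mid S_\tau \leq -y]$. On this event, the crossing step is a downward jump of size $-G$, where $G \sim \mathrm{Geom}(1/2)$ on $\{1, 2, \ldots\}$ is conditioned on $G \geq m + y$, with $m \geq -y + 1$ the pre-crossing position. By memorylessness of the geometric distribution, the overshoot $-y - S_\tau = G - (m + y)$ is then distributed as $\mathrm{Geom}(1/2) - 1$ on $\{0, 1, 2, \ldots\}$, with mean $1$, \emph{independent of} $m$. Averaging out $m$ yields $\E[S_\tau \mid S_\tau \leq -y] = -y - 1$. Setting $p := \P(S_\tau = x)$, the identity $\E[S_\tau] = 0$ becomes $0 = x p - (y+1)(1 - p)$, which rearranges to $1 - p = x/(x + y + 1)$, as claimed.

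For the harmonic consequences, translation invariance extends the formula to any starting point $z \in \llbracket 0, x \rrbracket$: applying the main formula with $x_{\text{new}} = x - z$ and $y_{\text{new}} = z + 1$ gives
\begin{equation*}
\P_z\big( S \text{ reaches $x$ before entering } \rrbracket -\infty, -1 \rrbracket \big) = 1 - \frac{x - z}{x + 2} = \frac{z + 2}{x + 2}.
\end{equation*}
As a function of $z$ this probability is harmonic on $\llbracket 0, x - 1 \rrbracket$ for the walk killed on $\rrbracket -\infty, -1 \rrbracket$; multiplying by the constant $x + 2$ and letting $x \to \infty$ transfers harmonicity to $\mathbf{h}^+(z) = (z+2)\mathbf{1}_{z \geq 0}$ on all of $\N$. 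The statement for $\mathbf{h}^-$ follows by the symmetric exit problem (hitting $\{1\}$ versus entering $\rrbracket -\infty, -y \rrbracket$, starting from $z \in \llbracket -y+1, 0 \rrbracket$), multiplying by $y + 2$ and sending $y \to \infty$. The hardest step throughout is the memoryless decoupling of the overshoot from the pre-crossing position: it is precisely what makes the overshoot mean constant and produces the clean linear dependence on $x$ and $y$.
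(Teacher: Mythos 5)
Your proof is correct and follows essentially the same route as the paper: optional stopping applied to the mean-zero martingale $(S_n)$ at the exit time, with the memoryless geometric undershoot giving $\E[S_\tau \mid S_\tau \leq -y] = -y-1$ and hence the linear equation for the exit probability. The only difference is that the paper dismisses the harmonic consequences as ``standard arguments'' while you spell them out via translation invariance and a limit in the boundary parameter (for $\mathbf{h}^-$ one should also note that $y\,\P_z(S_1 \leq -y) \to 0$ so the killed-boundary contribution vanishes in the limit, but this is immediate from the geometric tail).
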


\begin{proof} This is just the usual Gambler's ruin martingale argument. Define the hitting time of a set $\theta_A \defeq \inf\{ n\geq 0 : S_n \in A\}$. The walk is centered so $(S_n)$  is a martingale for the natural filtration. Furthermore, $S$ makes geometric negative jumps so the undershoot $S_{\theta_{\rrbracket -\infty ; -y\rrbracket}} + y$ is also a  $-\hbox{Geom}(1/2)$ r.v. and is independent of the past. Thus, $(S_{n\wedge \theta_{\rrbracket-\infty ; m-1\rrbracket} \wedge \theta_{\{1\}}}, n\geq 0)$ is uniformly integrable and the optional sampling theorem gives
$$
0 = \E[S_{n\wedge \theta_{\rrbracket -\infty ; -y\rrbracket} \wedge \theta_{\{x\}}}] = (-y-1) \P(\theta_{\rrbracket-\infty ; -y\rrbracket} < \theta_{\{1\}}) + 1 - \P(\theta_{\rrbracket -\infty ; -y\rrbracket} < \theta_{\{1\}})
$$
which gives the stated formula. The harmonicity of $\mathbf{h}^{+}$ and $\mathbf{h}^{-}$ follows from standard arguments making use of the Markov property of the walk. 
\end{proof}

We point that, unlike $S$, the shaved walk $\shaved{S}$ is not \emph{stricto sensu}  a random walk. In fact, it is not even Markov chain. Yet, because the negative jumps of $S$ have geometric distribution, the law of $\shaved{S}$ is easily related to that of $S$.

\begin{lemma}
Let $x_0,x_1, \ldots, x_n$ be a path with $x_0 = 0$ and $x_{i+1} - x_i \in \Z_-^* \cup \{1\}$ for all $i < n$. We have 
\begin{equation}\label{ProbAnimalWalk}
\P(S_0 = x_0, \ldots, S_n = x_n) = \frac{2^{x_n}}{3^n} \cdot
\end{equation}
If we assume also that the path satisfies  $x_{i+1} \geq \min_{j \leq i} x_j -1$ for all $i < n$. Then, we have
\begin{equation}\label{ProbShavedWalk}
\P(\shaved{S}_0 = x_0, \ldots, \shaved{S}_n = x_n) = 2^{-\min_{j\leq n} x_i} \P(S_0 = x_0, \ldots, S_n = x_n) = \frac{2^{x_n - \min_{j\leq n} x_i}}{3^n} \cdot
\end{equation}
\end{lemma}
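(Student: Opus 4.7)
The first identity is essentially a direct consequence of the product form of $\mu$. Since $S$ has independent increments with law $\mu$, one has
\[
\P(S_0=x_0,\dots,S_n=x_n)=\prod_{i=0}^{n-1}\mu_{x_{i+1}-x_i}=\prod_{i=0}^{n-1}\frac{2^{x_{i+1}-x_i}}{3},
\]
since each increment $x_{i+1}-x_i$ lies in $\Z_-^*\cup\{1\}$ (where $\mu_k=2^k/3$). The exponents telescope, giving $2^{x_n-x_0}/3^n=2^{x_n}/3^n$. The fact that $\mu$ is proportional to $2^k$ on its whole support is precisely what makes this telescoping work, and it will also drive the second identity.

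For the second identity, the idea is to view $\shaved{S}$ as a deterministic function of $S$ and decompose the event $\{\shaved{S}_0=x_0,\dots,\shaved{S}_n=x_n\}$ as a disjoint union over the compatible trajectories of $S$. I will parameterize these preimages by \emph{undershoots}. Let $T\defeq\{i<n:x_{i+1}<\min_{j\leq i}x_j\}$ denote the set of ``ladder steps'' of the path $(x_i)$; by assumption $(c)$ each such $i$ satisfies $x_{i+1}=\min_{j\leq i}x_j-1$, so $|T|=-\min_{j\leq n}x_j$. A path $(S_0,\dots,S_n)$ shaves to $(x_0,\dots,x_n)$ if and only if $S_{i+1}-S_i=x_{i+1}-x_i$ at every non-ladder step $i\notin T$, and at each ladder step $i\in T$ the walk $S$ undershoots its running minimum by an arbitrary amount $U_i\in\N$, i.e. $S_{i+1}-S_i=(x_{i+1}-x_i)-U_i$. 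This description follows from the explicit construction of $\shaved{S}$ in the excerpt, which modifies $S$ only at ladder times and leaves all other increments intact.

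The last step is the summation. Using the first formula applied to the $S$-path associated with a choice of undershoots $(U_i)_{i\in T}$, one obtains
\[
\P(\shaved{S}_0=x_0,\dots,\shaved{S}_n=x_n)=\Bigl(\prod_{i\notin T}\mu_{x_{i+1}-x_i}\Bigr)\prod_{i\in T}\sum_{u\geq 0}\mu_{(x_{i+1}-x_i)-u}.
\]
The key computation is the geometric identity: for any $k\leq -1$,
\[
\sum_{u\geq 0}\mu_{k-u}=\frac{2^k}{3}\sum_{u\geq 0}2^{-u}=2\mu_k.
\]
Plugging this in produces a factor of $2$ for every ladder step, hence a global factor $2^{|T|}=2^{-\min_j x_j}$ multiplying $\prod_i\mu_{x_{i+1}-x_i}=2^{x_n}/3^n$, which is exactly the announced formula. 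The only place requiring any care is the verification that the preimage description is correct, i.e.\ that ladder steps of $\shaved{S}$ and of $S$ occur at the same time indices and that the undershoots $U_i$ can be chosen freely in $\N$; this is already contained in the construction of $\shaved{S}$, so no real obstacle arises.
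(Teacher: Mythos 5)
Your proof is correct and takes essentially the same route as the paper: the first identity is the same telescoping product computation, and the second identity rests on the same observation that at each descending ladder epoch the undershoot has geometric tails, so that summing over the possible undershoot sizes produces a factor of $2$ per ladder epoch. The paper phrases this as "the probability of the undershoot being $-1$ is $1/2$, which is the same as that of being strictly smaller than $-1$", whereas you spell out the preimage parametrization by $(U_i)_{i\in T}$ and the geometric sum $\sum_{u\geq 0}\mu_{k-u}=2\mu_k$ explicitly; the content is the same, with your version being slightly more detailed.
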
 
\begin{proof}
For the first equality, we directly compute the probability of the path using \eqref{def:mu}
:$$
\P(S_0 = x_0, \ldots, S_n = x_n) = \prod_{i=0}^{n-1}\mu_{x_{i+1} - x_i} = \prod_{i=0}^{n-1} \frac{2^{x_{i+1} - x_i}}{3}  = \frac{2^{x_n}}{3^n}\cdot
$$
The second equality follows from the definition of $\shaved{S}$ and the fact that, because $S$ performs geometric negative jumps, conditionally on an index $i_0$ being a ladder epoch, the undershoot $S_{i_0} - \min_{j < i_0} S_j$ is independent of $S_0,\ldots,S_{i_0 - 1}$ and is distributed as a Geom($1/2$) r.v. In particular, the probability of the undershoot being equal to $-1$ is $1/2$, which is the same as that of being strictly smaller than $-1$. Thus, each descending ladder epoch contributes to a factor $2$ in the ratio of probabilities $\frac{\P(\shaved{S}_0 = x_0, \ldots, \shaved{S}_n = x_n)}{\P(S_0 = x_0, \ldots, S_n = x_n)}$. This yields \eqref{ProbShavedWalk} since there are exactly $-\min_{j\leq n} x_i$ ladder epochs in the path $x_0,\ldots, x_n$.
\end{proof}

\begin{proposition}\label{prop:pushforward}
Let $n \in \N^*$ be fixed. Recall that $\Psi$ is the mapping of Proposition \ref{key-prop-walk}.
\begin{enumerate}
\item The push-forward by $\Psi$ of the uniform measure on the set $\setA_{\geq 0}^n$ of non-negative pyramids rooted at the origin with $n$ vertices has the same distribution as the $n$ first steps $(S_0,S_1,\ldots, S_{n-1})$ of the random walk $S$ conditioned on entering $\rrbracket -\infty; -1\llbracket$ for the first time at time $n$. In particular, we have
$$
|\setA_{\geq 0}^n |  =  3^n \cdot \P\big(S_j \geq 0 \hbox{ for } j < n \hbox{ and } S_{n} < 0\big).
$$
\item The push-forward by $\Psi$ of the uniform measure on the set $\setA_{\leq 0}^n$ of non-positive pyramids rooted at the origin with $n$ vertices has the same distribution as the $n$ first steps $(\shaved{S}_0,\shaved{S}_1,\ldots, \shaved{S}_{n-1})$ of the shaved walk $\shaved{S}$ conditioned on hitting a strict minimum at time $n$ and not hitting $1$ before time $n$. In particular, we have
$$
|\setA_{\leq 0}^n |  =  3^n \cdot \P\big(\shaved{S}_n = \min_{j<n} \shaved{S}_j -1 \hbox{ and } \max_{j\leq n} \shaved{S}_j  \leq 0).
$$
\item The push-forward by $\Psi$ of the uniform measure on the set $\setA_0^n$ of pyramids rooted at the origin with $n$ vertices has the same distribution as the $n$ first steps $(\shaved{S}_0,\shaved{S}_1,\ldots, \shaved{S}_{n-1})$ of the shaved random walk $\shaved{S}$ conditioned on hitting a strict minimum at time $n$. In particular, we have
$$
|\setA_0^n |  =  3^n \cdot \P\big(\shaved{S}_n = \min_{j<n} \shaved{S}_j -1\big) = 3^n \cdot \P\big( S_n < \min_{j<n} S_j\big).
$$
\end{enumerate}
\end{proposition}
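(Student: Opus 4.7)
The three items will be established by the same two-step recipe: first invoke the bijection $\Psi$ from Corollary~\ref{cor:mapPsi} (in each case with the appropriate subclass of paths and size $n$) to reduce the uniform-pyramid measure to a weighted measure on paths of length $n$; then use the explicit formulas \eqref{ProbAnimalWalk} and \eqref{ProbShavedWalk} to prove that every \emph{valid} path of length $n$, together with the prescribed ``exit'' event at time $n$, has joint probability exactly $1/3^n$. The uniform conditional description and the enumeration formula then follow by summation over valid paths.

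\textbf{Item 1.} By Corollary \ref{cor:mapPsi}(2)--(3), $\setA^n_{\geq 0}$ is in bijection with the set of paths $(x_0,\ldots,x_{n-1})$ with $x_0=0$, increments in $\Z_-^*\cup\{1\}$, and $x_k\geq 0$. For such a path, \eqref{ProbAnimalWalk} gives
\[
\P\bigl(S_0=x_0,\ldots,S_{n-1}=x_{n-1}\bigr)=\frac{2^{x_{n-1}}}{3^{n-1}}.
\]
Conditionally on $S_{n-1}=x_{n-1}\geq 0$, one computes from \eqref{def:mu} that
$\P(S_n<0\mid S_{n-1}=x_{n-1}) = \sum_{k\leq -x_{n-1}-1}\mu_k = 2^{-x_{n-1}}/3$, using the geometric tail. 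Multiplying yields $1/3^n$, independently of the path, which proves both the uniformity of the conditional distribution and the equality $|\setA_{\geq 0}^n|=3^n\,\P(S_j\geq 0,\, j<n,\, S_n<0)$.

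\textbf{Items 2 and 3.} Here $\Psi$ identifies $\setA_0^n$ (resp. $\setA_{\leq 0}^n$) with paths of length $n$ satisfying (a) and (c) (resp. (a), (c) and $x_k\leq 0$), which are precisely the admissible trajectories of $\shaved{S}$ (resp. those staying $\leq 0$). For such a path with running minimum $m = \min_{j\leq n-1}x_j$, \eqref{ProbShavedWalk} gives
\[
\P\bigl(\shaved{S}_0=x_0,\ldots,\shaved{S}_{n-1}=x_{n-1}\bigr)=\frac{2^{x_{n-1}-m}}{3^{n-1}}.
\]
The key sub-computation is the one-step conditional probability
\[
\P\bigl(\shaved{S}_n = m-1\,\big|\,\shaved{S}_0=x_0,\ldots,\shaved{S}_{n-1}=x_{n-1}\bigr)=\frac{2^{m-x_{n-1}}}{3},
\]
which is the main (and only) non-routine point. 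It follows from the observation that $\shaved{S}_n=m-1$ is exactly the event that $S$ undergoes a ladder step at time $n$ (the geometric undershoots of $S$ are memoryless and irrelevant to $\shaved{S}$), so this probability equals $\sum_{k\leq m-x_{n-1}-1}\mu_k=2^{m-x_{n-1}}/3$ by the same geometric-tail calculation as above. The product of the two displays is again $1/3^n$, giving the uniform conditional law on pyramid paths and the enumeration $|\setA_0^n|=3^n\,\P(\shaved{S}_n=\min_{j<n}\shaved{S}_j-1)$. For Item 2 one merely restricts to paths staying $\leq 0$. Finally, the alternative formula $\P(\shaved{S}_n=\min_{j<n}\shaved{S}_j-1)=\P(S_n<\min_{j<n}S_j)$ holds because ladder times of $S$ and $\shaved{S}$ coincide by construction.

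\textbf{Anticipated obstacle.} Nothing is combinatorially hard once the bijection and probability formulas are in place; the only point requiring some care is the memoryless argument identifying the conditional event $\{\shaved{S}_n=m-1\}$ with a ladder event of $S$, and verifying that the running minimum $m$ appearing in \eqref{ProbShavedWalk} cancels exactly against the $2^{m-x_{n-1}}/3$ factor coming from the ladder-jump probability. Everything else is algebraic bookkeeping.
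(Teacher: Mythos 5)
Your proof is correct and takes essentially the same approach as the paper: both reduce the claim to showing that every valid encoding path, jointly with the exit event at time $n$, has probability exactly $1/3^n$ under the animal walk (or shaved walk). The only cosmetic difference is that the paper appends the exit value $x_n$ (equal to $-1$ in Item 1, and to $\min_{j<n}x_j - 1$ in Items 2 and 3) to form an $(n{+}1)$-step path and applies \eqref{ProbAnimalWalk}, resp.\ \eqref{ProbShavedWalk}, directly, whereas you compute the $n$-step path probability and the one-step conditional exit probability separately and multiply.
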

\begin{remark}
\begin{itemize}
\item Counting of directed animal is a classical problem. The sequences $(|\setA_{\geq 0}^n|)$  and $(|\setA_0^n|)$ are well known and related to Motzkin numbers. They correspond respectively to EIS A005773 and EIS A001006. Using the identities above, we can recover the exact expressions for the generating functions of these sequences from a direct study of the random walk $S$ (see for instance \eqref{eq:genfuncanimal} in the proof of Lemma \ref{lem:renewal}).
\item The sets $\setA_{\geq 0}^n$ and $\setA_{\leq 0}^n$ are mirror of each other by a reflexion against the y-axis.  Interestingly, this symmetry is broken by the bijection $\Psi$. In particular, the walk $S$ does not have a symmetric step distribution. This fact yields surprising identities between $S$ and its shaved version $\shaved{S}$. For example, since the sets $\setA_{\geq 0}^n$ and $\setA_{\leq 0}^n$ have same size, we deduce  from Items $1.$ and $2.$ of the proposition above  that  
$$
\P\big(S_j \geq 0 \hbox{ for } j < n \hbox{ and } S_{n} < 0\big) = \P\big(\shaved{S}_n = \min_{j<n} \shaved{S}_j -1 \hbox{ and } \max_{j\leq n} \shaved{S}_j  \leq 0)
$$
yet a direct probabilistic proof is this statement seems challenging. A similar observation plays a key role in the proof of Theorem \ref{thm:loclimit_UIHP} (\emph{c.f.} Lemma \ref{lem:renewal2}). 
\end{itemize}
\end{remark}

\begin{proof}[Proof of Proposition \ref{prop:pushforward}.]
We prove 1. On the one hand, according to Corollary \ref{cor:mapPsi}, the mapping $\Psi$ is one-to-one from the set $\setA_{\geq 0}^n$ to the set $\{ (x_0,\ldots,x_{n-1}, -1)\, : \, x_0=0,\, x_i \geq 0,\, x_{i} - x_{i-1} \in \{1\}\cup\Z_-^*\}$. On the other hand, given $(x_0,\ldots, x_n = -1)$ in the set above, thanks to \eqref{ProbAnimalWalk}, the probability that the animal walk $S$ follows path $x$ equals
$$
\P(S_0 = x_0, \ldots, S_n = x_n) = \frac{1}{2.3^n} 
$$
hence it does not depend on the path $x$. This shows that the push-forward of the uniform measure on $\setA_{\geq 0}^n$ by $\Psi$ is distributed as $n$ first steps $(S_0,S_1,\ldots, S_{n-1})$ of the animal walk $S$ conditioned on entering $\rrbracket -\infty; -1\llbracket$ for the first time at time $n$ and hitting $-1$ at that time. But because the walk $S$ makes Geom($1/2$) negative jumps, the probability of hitting exactly $-1$ when it first enters $\rrbracket -\infty; -1\llbracket$ is equal to $1/2$ and is independent of the past. Thus 
$$
\P(S_0 = x_0, \ldots, S_{n-1} = x_{n-1}, S_n < 0) = 2 \, \P(S_0 = x_0, \ldots, S_n = x_n = -1) = \frac{1}{3^n}.
$$
This completes the proof of $1.$ The arguments for $2.$ and $3.$ are similar. The only difference is that we now set $x_n = \min\{x_i: 0 \leq i \leq n-1\}-1$ so that, in view of \eqref{ProbShavedWalk}, the probability that the shaved walk $\shaved{S}$ follows path $x$ depends only the number of steps of the path.
\end{proof}

We conclude this subsection by giving an alternative proof of the celebrated result of Gouyou-Beauchamps and Viennot \cite{GOU88} (see also Bétréma and Penaud \cite{BET93+}) which counts the number of directed animals with compact source. 
\begin{proposition}[Corollary $3$  of \cite{GOU88}, Corollary $14$  of \cite{BET93+}]
Let $\setA^n_c$ denote the set of directed animals with $n$ vertices and compact source (\emph{i.e.} with source set  $\mathbf{S}_p \defeq \{0, -2, \ldots, -2p\}$ for some $p\in \N$). We have
$$|\setA^n_c| = 3^{n-1}.$$
\end{proposition}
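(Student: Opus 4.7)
The plan is to use the path encoding $\Psi$ from Proposition \ref{key-prop-walk} to turn the counting of $\setA^n_c$ into a counting of lattice walks, and then to decompose these walks into pyramidal blocks to reduce to the generating function for pyramids.

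Via $\Psi$ (Corollary \ref{cor:mapPsi}), a directed animal with compact source $\{0, -2, \ldots, -2p\}$ and $n$ vertices corresponds bijectively to a sequence $(x_0 = 0, x_1, \ldots, x_{n-1})$ with increments in $\Z_-^* \cup \{1\}$ such that every undershoot of the running minimum (i.e., every $k$ with $x_{k+1} \leq \min_{i \leq k} x_i - 2$) lands exactly at $\min_{i \leq k} x_i - 2$. This comes from identifying the sources of the encoded animal (in the proof of Proposition \ref{key-prop-walk}) with the origin together with the positions immediately after undershoots: compactness of the source set then forces each undershoot to decrement the running minimum by exactly two units. I would track the state by $(h, f)$, where $h = x - m \in \N$ is the height above the running minimum $m$ and $f \in \{0, 1\}$ indicates whether $m$ equals the rightmost source's $x$-coordinate ($f = 0$) or lies strictly below it ($f = 1$). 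From state $(h, \cdot)$ there are $h + 2$ allowed transitions, all with the same effect on $h$: a step $+1$, steps $-k$ for $k \in \llbracket 1, h \rrbracket$, and one ``special'' step (compact undershoot $-(h+2)$ if $f = 0$, beat-by-one $-(h+1)$ if $f = 1$) collapsing $h$ to $0$. Once in $f = 1$, the walk cannot return to $f = 0$ since further undershoots are forbidden.

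Ignoring the $f$-coordinate, both the compact-source dynamics in $f = 0$ and in $f = 1$ (started from $(0, 1)$) reduce to the same $h$-walk as the pyramid dynamics from $(0, 0)$ — also with $h + 2$ transitions per state and matching effects on $h$. Hence, length-by-length, walks staying in $f = 0$ and walks in $f = 1$ started from $(0, 1)$ are in bijection with pyramid paths of that length, so each is counted by $|\setA_0^n|$. Decomposing each compact-source walk as either entirely in $f = 0$, or an $f = 0$ prefix followed by a beat-by-one step into $(0, 1)$ and an $f = 1$ suffix, and writing $P(z) = \sum_{n \geq 1} |\setA_0^n| z^n$ for the pyramid generating function, we obtain
\[\sum_{n \geq 1} |\setA^n_c|\, z^n = P(z) + P(z)^2 = P(z)(1 + P(z)).\]

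Plugging in the classical closed form $P(z) = \tfrac{1}{2}\bigl(\sqrt{(1+z)/(1-3z)} - 1\bigr)$ (Dhar's formula, also obtainable within this framework via the kernel method applied to the bivariate functional equation for $F_0(z, u) = \sum_{n, h} A_n(h, 0) z^n u^h$, whose kernel polynomial is $-zu^2 + (z+1)u - (z+1)$), we get $(1 + 2P)^2 = (1+z)/(1-3z)$, and therefore
\[P(1 + P) = \tfrac{1}{4}\bigl((1 + 2P)^2 - 1\bigr) = \frac{z}{1 - 3z} = \sum_{n \geq 1} 3^{n-1} z^n,\]
yielding $|\setA^n_c| = 3^{n-1}$. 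The main obstacle is the bijection between the three relevant dynamics (pyramid, compact-source in $f = 0$, compact-source in $f = 1$) via their matching $h + 2$ transitions modulo the $f$-label; obtaining $P(z)$ itself is classical but can also be performed inside this same path-encoding framework.
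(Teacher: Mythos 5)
Your approach is genuinely different from the paper's and is essentially sound, but let me first flag an issue with your opening characterization and then compare the two arguments.

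The first bijection you state is too weak as written. Requiring only that ``every undershoot lands exactly at $\min - 2$'' does not capture compactness. Take the sequence $(x_0,\dots,x_4)=(0,-2,-3,-4,-6)$: the increments are $-2,-1,-1,-2$, and both undershoots (at steps $1$ and $4$) land at $\min_{i\leq k}x_i-2$ (namely $-2$ and $-6$), yet the encoded animal has source set $\{0,-2,-6\}$, which is not compact. The missing constraint is that once a beat-by-one occurs (pushing the running minimum strictly below the rightmost source), no further undershoot of size $\geq 2$ is permitted — otherwise the new source skips a position. You do in fact incorporate this when you introduce the flag $f$ and observe that ``once in $f=1$, the walk cannot return to $f=0$,'' so the subsequent two-block decomposition and the counting $P+P^2$ are correct (I checked: $|\setA_c^1|=1$, $|\setA_c^2|=2+1=3$, $|\setA_c^3|=5+4=9$). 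But the standalone claim that the simple ``$\min - 2$'' condition already defines a bijection with $\setA_c^n$ is false and should be repaired.

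Granting that correction, your argument is valid but follows a different route from the paper's. You first show that the $h$-dynamics (height above running minimum) are identical for the pyramid walk, the $f=0$ block, and the $f=1$ block, each having $h+2$ transitions with the same effect on $h$; this yields $\sum_n |\setA^n_c| z^n = P(z)+P(z)^2$ and the conclusion follows after plugging in the closed form $P(z)=\tfrac12\bigl(\sqrt{(1+z)/(1-3z)}-1\bigr)$. This is a clean combinatorial decomposition, but it imports Dhar's generating function (you mention it is classical or derivable via the kernel method, but you do not derive it). The paper's proof is entirely probabilistic and self-contained modulo standard random-walk facts: it writes $|\mathcal{A}^n_c|=\sum_p 3^n\P(\shaved{S}_n=\min_{k<n}\shaved{S}_k-1\leq -p)$, unshaves this into $\tfrac{3^n}{2}\E[|S_n|\Ind{S_n<\min_{k<n}S_k}]$, applies time reversal of the walk, and finishes via the harmonicity of $\mathbf{h}^-$ from Lemma \ref{lem:exitproblemS}, which gives the exact value with no reference to the pyramid generating function. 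So the paper trades the kernel-method input for a martingale argument, whereas you trade the martingale argument for the known form of $P$; both are legitimate, and the identity $P(1+P)=z/(1-3z)$ you arrive at is a nice reformulation, but note that the paper's route never needs the explicit expression of $P$ at all.
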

\begin{proof}
Fix $p\in\N$. In view of Proposition \ref{key-prop-walk}, a directed animal with $n$ vertices and source set $\mathbf{S}_p$ is encoded by a $n$-step path starting from $0$, with increments in $\Z^- \cup \{+1\}$, whose $p$ first descending ladder times occur before time $n$ and have undershoots equal to $-2$ and all subsequent ladder times that occur before time $n$ have undershoots equal to $-1$. By shaving such a path (\emph{i.e} replacing each overshoot of size $-2$ by an overshoot of size $-1$), we deduce that, for each fixed $p$, there is a correspondance between directed animals with $n$ vertices and source set $\mathbf{S}_p$ and n-step shaved paths $x_0,\dots,x_{n-1}$ that satisfy $\min x_k \leq -p$. Just as in Proposition \ref{prop:pushforward}, adding a last step $x_n = \min_{k<n}x_k - 1$, we can rewrite $|\mathcal{A}^n_c|$ in term of the shaved walk $\shaved{S}$:
$$|\mathcal{A}^n_c| = \sum_{p=1}^{n} 3^n \P\big(\shaved{S}_n = \min_{k<n} \shaved{S}_k - 1 \leq -p\big) = 3^n \E\big[|\shaved{S}_n| \Ind{\shaved{S}_n = \min_{k<n} \shaved{S}_k - 1}\big] = \frac{3^n}{2} \E\big[|S_n| \Ind{S_n < \min_{k<n} S_k}\big] $$
where, for the last equality, we used that $\shaved{S}$ and $S$ have the same descending ladder times and that the corresponding undershoots for $S$ are i.i.d. $-\hbox{Geom}(1/2)$ and independent of the other increments. Now, by time reversal of a random walk, $(S_k,\; k\leq n)$ and $(S_n - S_{n-k},\; k\leq n)$ have the same law hence
$$
\E\big[|S_n| \Ind{S_n < \min_{k<n} S_k}\big] = \E\big[|S_n| \Ind{\{S_1, \ldots, S_n < 0\}}\big] = \frac{1}{3}\sum_{k=1}^\infty \frac{1}{2^k}\E_{-k}\big[|S_{n-1}| \Ind{\{S_0, S_1, \ldots, S_{n-1} < 0\}}\big].
$$
where, for the last equality,  we conditioned on the first step $S_1$ and where $\E_{-k}$ denotes the expectation with the walk $S$ starting from $-k$.  Finally, by translation invariance of the walk, Lemma \ref{lem:exitproblemS} states that $\mathbf{h}^{-}(z + 1) = |z|\Ind{z<0}$ is harmonic for the random walk $S$ killed when it enters $\llbracket 0;\infty\llbracket$. Therefore, $|S_{n-1}| \Ind{\{S_0, S_1, \ldots, S_{n-1} < 0\}}$ is a martingale and we conclude that
$$
|\mathcal{A}^n_c| = \frac{3^n}{2.3}\sum_{k=1}^\infty \frac{1}{2^k}\E_{-k}\big[|S_{n-1}| \Ind{\{S_0, S_1, \ldots, S_{n-1} < 0\}}\big] = \frac{3^{n-1}}{2}\sum_{k=1}^\infty \frac{k}{2^k} = 3^{n-1}.  \qedhere
$$
\end{proof}

\subsection{The Boltzmann half-pyramid (BHP)}

Since the mapping $\Psi$ of Proposition \ref{key-prop-walk} is a bijection, we can go the other way around and construct random directed animals from random paths. Part 1. of Proposition~\ref{prop:pushforward} states that we can construct a uniform non-negative pyramid with $n$ vertices from a positive excursion of $S$  of length $n$. Recall that the random walk $S$ is centered so the first descending ladder time 
$$\tau_{-1} \defeq  \inf(n : \shaved{S}_n = -1) = \inf(n : S_n < 0)$$
is finite a.s. This leads us to define a measure supported on the set of finite non-negative pyramids. 
\begin{definition}[\textbf{The Boltzmann half-pyramid}]
We call (non-negative) Boltzmann half-pyramid (BHP) the random non-negative pyramid $\A \defeq \Psi^{-1}(S_0,\ldots, S_{\tau_{-1}-1})$ constructed from a positive excursion of the random walk $S$.
\end{definition}

See Figure \ref{fig:boltzmann_sim} for an illustration of a Boltzmann half-pyramid. 

\begin{figure}
\begin{center}
\subfloat{\includegraphics[height=6cm]{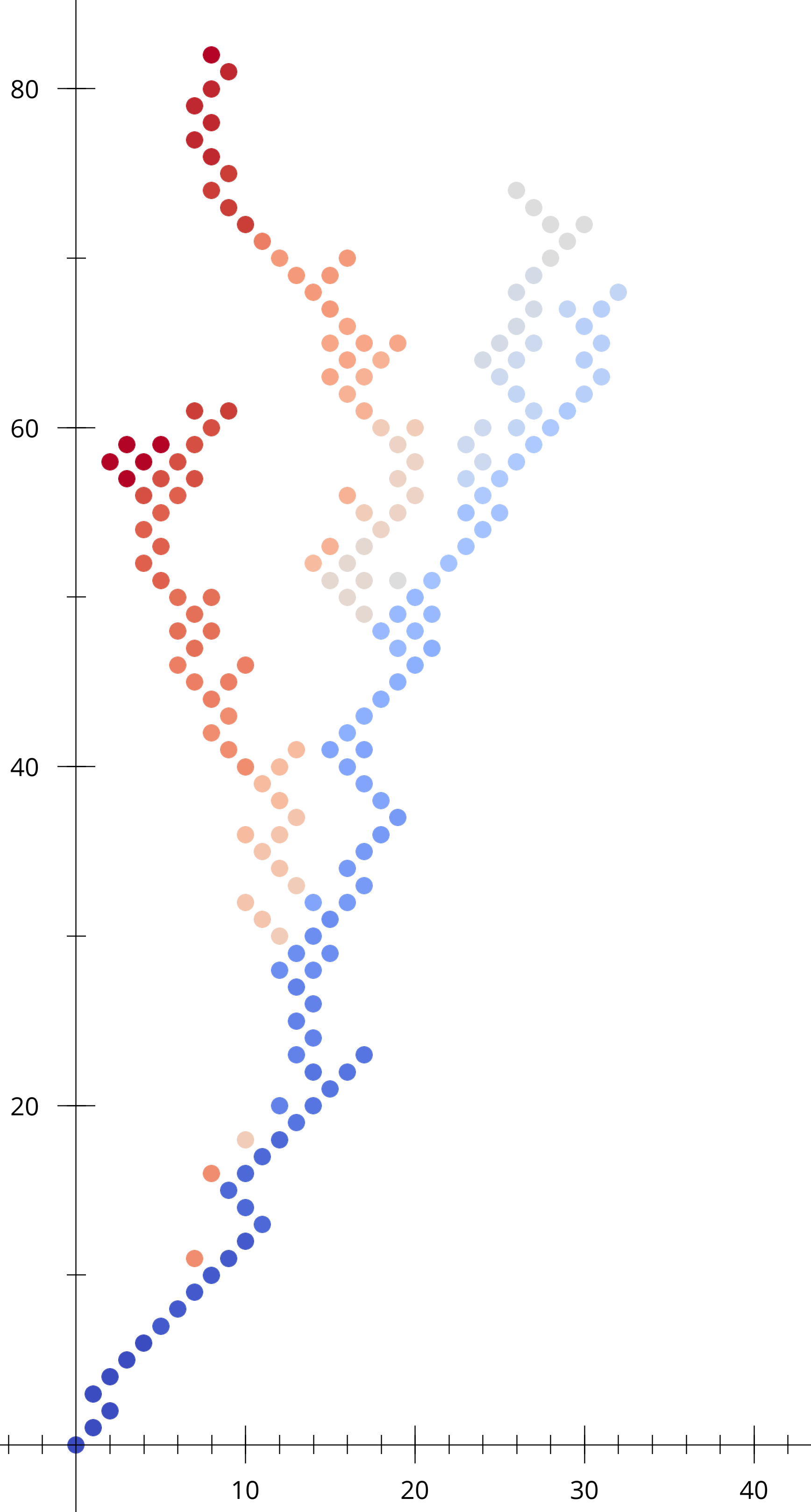}}
\hspace{1cm}
\subfloat{\includegraphics[height=2.3cm]{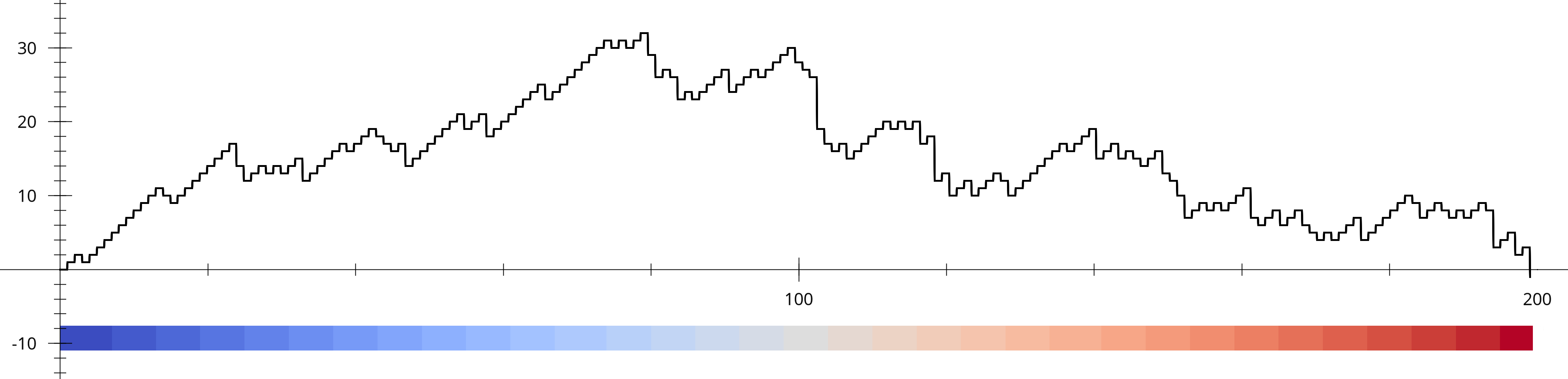}}
\end{center}
\caption{\label{fig:boltzmann_sim}A Boltzmann half-pyramid with 200 vertices and its associated random walk excursion. Color of vertices from blue to red indicate their ``arrival time'' when constructing the animal by dropping dominoes from infinity (and also correspond to their ordering for $\leT$).}
\end{figure}

\begin{remark}
\begin{itemize}
\item With the definition above, Part 1. of Proposition \ref{prop:pushforward} may be restated as follow: For any $n$, the law of the Boltzmann half-pyramid conditioned on having $n$ vertices is uniform among all non-negative half-pyramids with $n$ vertices. Furthermore, if $\A$ denotes a Boltzmann half-pyramid and $\C$ is a fixed finite non-negative pyramid, we have
\begin{equation}\label{eq:BoltzmannProba}
\P(\A = \C) = \frac{1}{3^{|\C|}}.
\end{equation}
\item By convention, we consider \emph{non-negative} Boltzmann half-pyramid but of course, we could have defined the \emph{non-positive} Boltzmann half-pyramid by taking the symmetry against the $y$-axis of a non-negative BHP or by working directly with the walk $-S$ associated with the mirrored order $\widetilde{\leT}$. However, we stress out the non-positive Boltzmann half-pyramid cannot be constructed directly from a negative excursion of $S$ or $\shaved{S}$.

\item The quantity  $W \defeq \max x(\A) = \max_{k\leq \tau_{-1}} S_k$ represents the width of the BHP $\A$. It has a simple explicit distribution: using Lemma \ref{lem:exitproblemS}, we find that
\begin{equation}\label{eq:BoltzmannWidth}
\P( W \geq k) = \P(\hbox{$S$ enters $\llbracket k; \infty\llbracket$ before entering $\rrbracket -\infty; -1\rrbracket$}) = \frac{2}{k + 2}
\end{equation}
In particular, $W$ belongs to the domain of attraction of a stable law of index $1$.  By comparison, the distribution of the height $H \defeq \max y(\A)$ has no simple expression and is more much difficult to analyze. We conjecture that $H$ is in the domain of attraction of a stable law with index $\alpha \in ]\frac{1}{2}, 1[$.  Numerical simulations suggest $\alpha \approx 0.612$, which is in accordance with the numerical exponant $0.818 \approx \frac{1}{2 * 0.612}$ obtained in \cite{NDV82} for the height of a finite pyramid).
\end{itemize}
\end{remark}

\subsection{The local limit of finite pyramids (UIP)}

We now have all the necessary tools to construct the local limit seen from the root  of the uniform measure on pyramids as the number of vertices goes to infinity.

\begin{definition}[\textbf{The Uniform Infinite Pyramid}] 
\label{def:UIP}
We call uniform infinite pyramid (UIP) the random pyramid $\bar{\A} \defeq \Psi^{-1}(\shaved{S}_0,\shaved{S}_1,\ldots)$ constructed from  the full path of the shaved walk $\shaved{S}$. 
\end{definition}
For $r>0$, we define $B(r) \defeq \ZxN \cap ( \llbracket -r ; r \rrbracket \times \llbracket 0; r\rrbracket )$ the ball of vertices around the origin with radius $r$. 
\begin{theorem}[\textbf{Local limit of finite pyramids}]\label{thm:loclimitUIP}
For each $n\in \N^*$, let $\A^{n}$ denote a random uniform pyramid with $n$ vertices and let $\bar{\A}$ denote a uniform infinite pyramid. The sequence $(\A^n)$ converges in law, for the local topology towards $\bar{\A}$. This means that $\A^n \cap B(r)$ converges in law to $\bar{\A} \cap B(r)$ for each fixed $r$.
\end{theorem}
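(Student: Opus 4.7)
The plan is to leverage Proposition \ref{prop:pushforward} (item 3) together with the bijective nature of $\Psi$: the law of $\Psi(\A^n)$ coincides with that of $(\shaved{S}_0, \ldots, \shaved{S}_{n-1})$ under the conditional measure $\P_n := \P(\cdot \mid \mathcal{E}_n)$, where $\mathcal{E}_n := \{S_n < \min_{j<n} S_j\}$, while $\Psi(\bar{\A}) = \shaved{S}$ under $\P$. The theorem thus reduces to showing convergence of the conditioned walk to the unconditioned walk in a mode strong enough to imply local convergence of the corresponding animals. I would split the argument into (i) finite-dimensional convergence of the walk, and (ii) a uniform tightness statement ensuring that $\bar{\A} \cap B(r)$ is determined, uniformly in $n$, by a bounded number of initial steps of $\shaved{S}$.

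For (i), fix $k \geq 1$ and an admissible path $(x_0, \ldots, x_k)$. By Bayes,
\[
\frac{\P_n(\shaved{S}_i = x_i,\, i \leq k)}{\P(\shaved{S}_i = x_i,\, i \leq k)} = \frac{\P(\mathcal{E}_n \mid \shaved{S}_i = x_i,\, i \leq k)}{\P(\mathcal{E}_n)}.
\]
Since $\mathcal{E}_n$ is exactly the event that $n$ is a descending ladder time of $S$, $\P(\mathcal{E}_n) = u(n)$ is the renewal mass function of $(\tau_{-r})_{r \geq 1}$. Because $S$ is centered with finite variance (an elementary computation from \eqref{def:mu} gives $\mathrm{Var}(S_1) = 8/3$), $\P(\tau_{-1} > n) \sim c n^{-1/2}$ and Erickson's theorem yields $u(n) \sim c' n^{-1/2}$. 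Conditionally on the first $k+1$ steps, the strong Markov property of $S$ shows that the numerator equals $(f_{x_k, m_k} \ast u)(n-k)$ where $f_{x_k, m_k}$ is the law of the first passage of $S$ started at $x_k$ below $m_k := \min_{i \leq k} x_i$, which is an a.s.\ finite distribution by Lemma \ref{lem:exitproblemS}. The regular variation of $u$ combined with the finiteness of $f_{x_k, m_k}$ then gives $(f_{x_k, m_k} \ast u)(n-k) \sim u(n-k) \sim u(n)$, so the ratio tends to $1$.

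For (ii), introduce the last-exit time $T^r := \sup\{n \geq 0 : \shaved{S}_n \in \llbracket -r-1, r+1 \rrbracket\}$, which is a.s.\ finite under $\P$ since $\shaved{S}_n \to -\infty$ (indeed $\shaved{S}$ reaches $-j$ at the a.s.\ finite ladder time $\tau_{-j}$). By Lemma \ref{lemme-compat-falling}, $\bar{\A} \cap B(r)$ is a deterministic function of the finite path $(\shaved{S}_0, \ldots, \shaved{S}_{T^r})$, since dominoes dropped at times $n > T^r$ lie in columns outside $\llbracket -r, r \rrbracket$ and cannot contribute to $B(r)$. A uniform estimate of the form $\lim_k \limsup_n \P_n(T^r > k) = 0$ would then allow one to write, for any admissible $\C \subset B(r)$, the event $\{\A^n \cap B(r) = \C\}$ as a cylindrical event in the first $k$ steps up to an error vanishing uniformly in $n$; letting first $n \to \infty$ (using (i)) and then $k \to \infty$ yields the stated local convergence.

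The main obstacle I foresee is this last uniform tightness. Decomposing on the last-visit time $\sigma := \sup\{j \leq n-1 : \shaved{S}_j \in \llbracket -r-1, r+1 \rrbracket\}$ and reapplying the Bayes identity of (i) reduces matters to controlling $\sum_{j \geq k} \P(\sigma = j) \, u(n-j)/u(n)$ uniformly in $n$. Since $u(n-j)/u(n)$ behaves like $\sqrt{n/(n-j)}$ as $j$ approaches $n$, a pointwise bound cannot work; the argument must combine the transience of $\shaved{S}$ in $\llbracket -r-1, r+1 \rrbracket$ (which handles the bulk contribution $j \in [k, n/2]$) with a sharper fluctuation-theoretic estimate on $\P(\sigma = j)$ for $j$ near $n$ so that the decay of the probability of visiting $\llbracket -r-1, r+1 \rrbracket$ at a very late time compensates for the Radon-Nikodym blow-up in the near-extremal regime.
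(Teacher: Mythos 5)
Your part~(i), the finite-dimensional convergence via Bayes and the regular variation of the renewal mass function $u(n)$, is essentially correct and matches what the paper does (the paper derives the $n^{-1/2}$ asymptotics of $u$ by a generating-function/singularity analysis in Lemma~\ref{lem:renewal} rather than invoking Erickson's theorem, but the content is the same). The genuine gap is your part~(ii): you have set up the problem so that you must control the \emph{last-exit} time $T^r = \sup\{n : \shaved{S}_n \in \llbracket -r-1, r+1\rrbracket\}$ under the conditioned law, and you acknowledge yourself that this leads to a hard fluctuation estimate in the near-extremal regime. The paper avoids this entirely through two ideas that your proposal misses.

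First, the right cutoff is not the $x$-coordinate-based last-exit time $T^r$ but the \emph{first passage time} $\tau_{-r-1}$ of $\shaved{S}$ to $-r-1$. This is a bona fide stopping time (so the strong Markov property applies without surgery), and moreover it is itself a descending ladder time, which makes the Radon--Nikodym factor under conditioning exactly $u(n-\tau_{-r-1})/u(n)$. The reason $\tau_{-r-1}$ is a valid cutoff is a geometric fact that is stronger than your column-based argument: because the walk is skip-free to the right, each domino dropped at a new ladder epoch $\tau_{-k}$ lands at height at least $k$ (inductively, it must sit on top of the domino placed at $\tau_{-k+1}$), and then \emph{every} domino dropped after $\tau_{-r-1}$ has height exceeding $r$ — not merely the ones with $x$-coordinate outside $\llbracket -r, r\rrbracket$. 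Thus $\Psi^{-1}(\shaved{S}_0,\ldots,\shaved{S}_{n-1})\cap B(r)$ is measurable with respect to the walk stopped at $\tau_{-r-1}$.

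Second, no uniform tightness estimate of the kind you propose is needed. After splitting on $\{\tau_{-r-1}\geq n\}$ (whose conditional probability is bounded by $u(n)^{-1}\sum_{k\le r+1}\P(\tau_{-k}=n)\to 0$, a straightforward consequence of the renewal asymptotics) and $\{\tau_{-r-1}<n\}$, the paper computes on the latter event, via the strong Markov property at the ladder time $\tau_{-r-1}$,
$$
\P\bigl(\tilde{\mathcal{E}}(\C),\, \tau_{-r-1}<n \,\big|\, \text{ladder at } n\bigr)
= \E\left[ \Ind{\tilde{\mathcal{E}}(\C)}\Ind{\tau_{-r-1}<n}\,\frac{u(n-\tau_{-r-1})}{u(n)}\right],
$$
and the ratio $u(n-\tau_{-r-1})/u(n)$ tends to $1$ almost surely by regular variation and finiteness of $\tau_{-r-1}$. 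Fatou then yields $\liminf_n \geq \P(\tilde{\mathcal{E}}(\C))$; since the $\tilde{\mathcal{E}}(\C)$ partition the sample space as $\C$ ranges over pyramids in $B(r)$, the $\liminf$ must in fact be a limit (otherwise the total mass would exceed $1$). This ``Fatou plus summing to one'' step is exactly what lets one bypass the dominated-convergence or uniform-integrability argument that you correctly identify as the hard part of your route; without it, your plan does not close.
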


\begin{remark}\label{rem:thmUIP} \begin{itemize}
\item From its construction via the shaved walk $\shaved{S}$, the UIP is by definition a simple pyramid a.s. yet the fact that the limit of finite uniformly sampled pyramids is a.s simple is not obvious a priori. In fact, we will see later that the local limit of non-negative pyramids is a.s. \emph{not} a simple animal.
\item The shaved walk $\shaved{S}$ is obtained by removing the undershoots at the successive descending ladder point of the walk $S$. This means that $\shaved{S}$ can be seen as a concatenation of independent (non-negative) excursions of the walk $S$, with the first one starting at $0$, the second one shifted to start at $-1$, the third one shifted to start at $-2$... But since $\Psi^{-1}$ applied to an excursion of $S$ defines a Boltzman half-pyramid, this shows that the UIP can be constructed directly by pilling up BHPs ``on top'' of each other. More precisely, we start with a BHP rooted at 0, then we drop from infinity an independent BHP rooted at $-1$ on top of it, and then another one rooted at $-2$... see Figure \ref{fig:HarrisDecomp-UIP} for an illustration of this procedure. 

This construction is reminiscent of the decomposition of Kesten's infinite tree \cite{KES86} defined as the local limit of rooted, uniform finite planar trees when their sizes tend to infinity. In the tree setting, the limiting object consists of a tree with a single infinite spine with critical geometric Galton-Watson trees grafted on it. In our setting, we now have a deterministic spine $0,-1,-2,\ldots$ and the critical Galton-Watson trees are replaced by BHPs which should been interpreted as ``critical half-pyramids''.

Unfortunately, contrarily to the rather simple ``grafting'' operation used to construct Kesten's tree, the ``piling up'' operation used here to build the UIP is not easily amenable to analysis because it does not preserve distances. Thus, the description of the UIP as a pilling up of independent BHPs still fails to capture several geometrical properties of the composite object.
\end{itemize}
\end{remark}

\begin{figure}
\begin{center}
\subfloat[\scriptsize{schematic representation}]{\includegraphics[height=5cm]{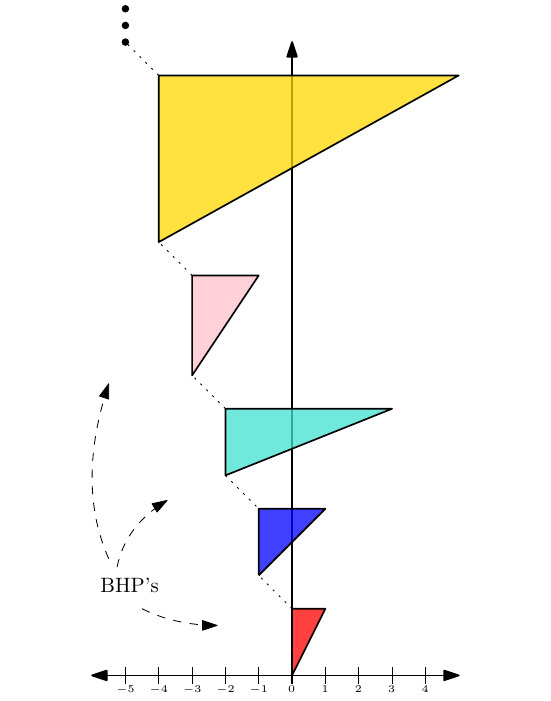}}
\hspace{-0.5cm}
\subfloat[\scriptsize{simulation}]{\includegraphics[height=5cm]{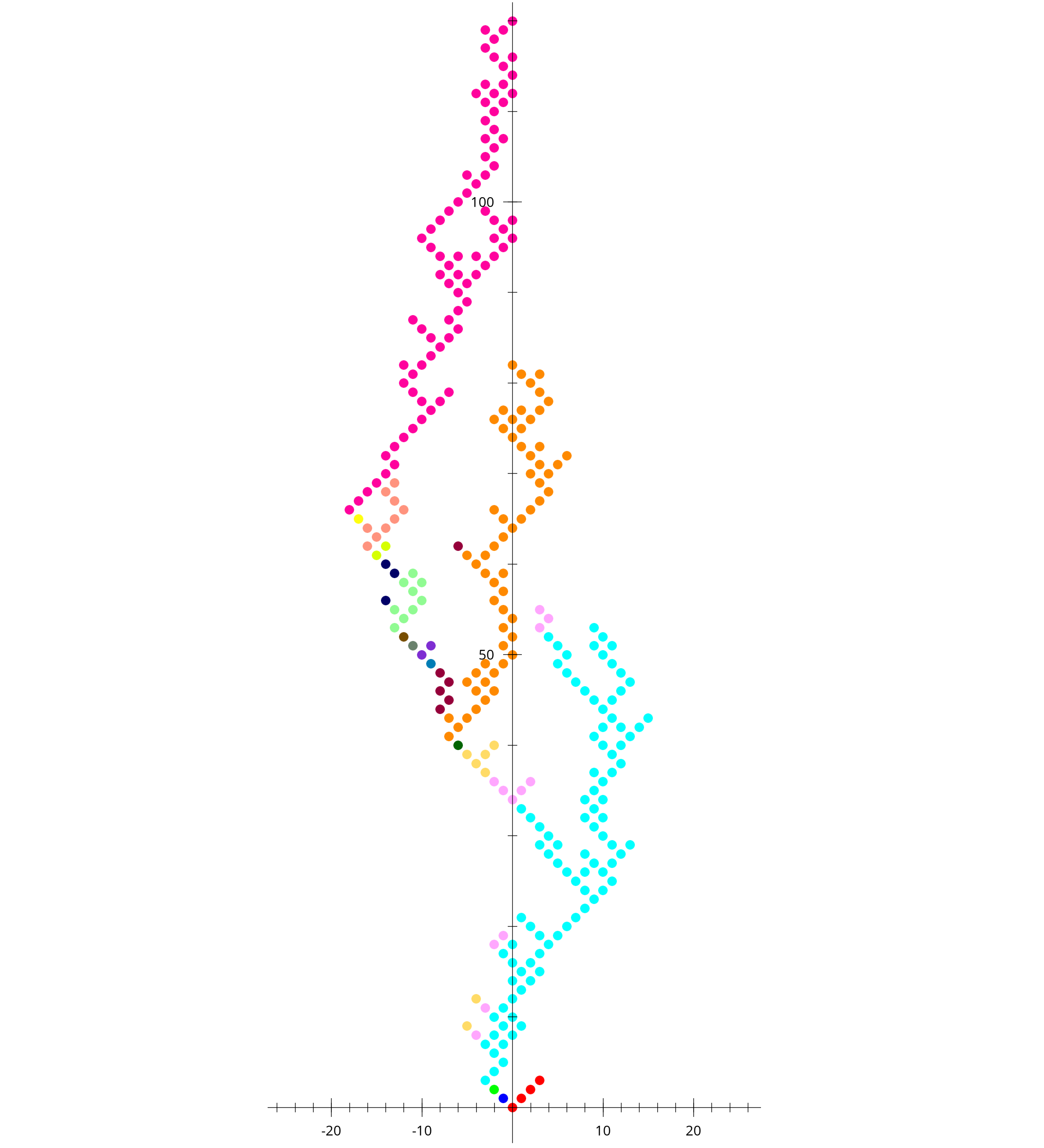}}
\hspace{-0.5cm}
\subfloat[\scriptsize{corresponding walk $\shaved{S}$}]{\includegraphics[height=2.25cm]{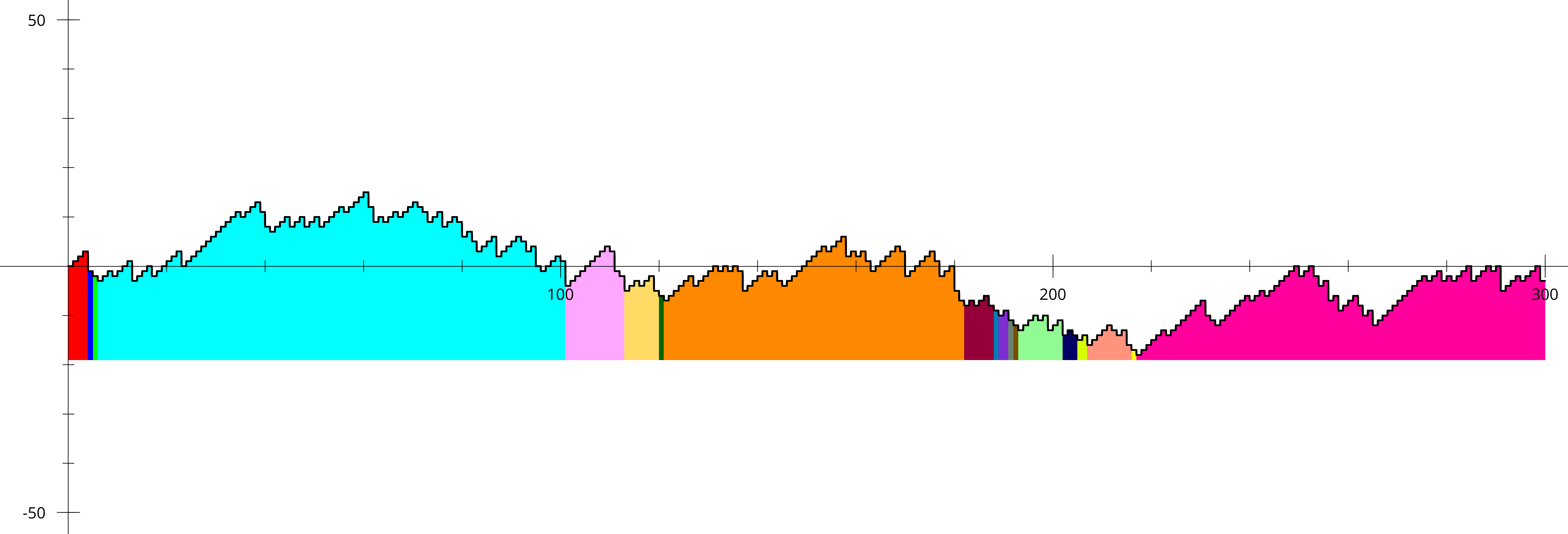}}
\end{center}
\caption{\label{fig:HarrisDecomp-UIP}A uniform infinite pyramid created from a sample path of $\shaved{S}$. The pyramid is a ``piling up'' of independent BHP's. Here, each BHP is represented with a different color in simulation (b) and corresponds to the descending ladder excursion with the same color in (c).} 
\end{figure}

\begin{proof}[Proof of Theorem \ref{thm:loclimitUIP}]
We fix $r>0$. The intersection of a pyramid rooted at 0 with $B(r)$ is again a pyramid. Let $\A^{n}$ denote a random uniform pyramid with $n$ vertices. We  must prove that, for any fixed pyramid $\C\subset B(r)$, we have
\begin{equation}\label{proofll0}
\lim_{n\to\infty} \P(\A^{n} \cap B(r) = \C) \; = \; \P\big(\Psi^{-1}(\shaved{S}_k,k\geq 0) \cap B(r) = \C\big).
\end{equation}
Recall the descending ladder times $(\tau_{-k})$ defined by \eqref{def:tauminusk}. We consider the associated renewal function
$$
u(n) \defeq \P(\exists k \in \N^*  \; \tau_{-k} = n) = \sum_{k=1}^{\infty}\P(\tau_{-k} = n).
$$     
Because $\tau_i$ does not have a first moment, the usual renewal theorem does not apply and $u(n) \to 0$ as $n\to \infty$. However, we have the following technical estimates whose proof is postponed until after the proof of the theorem. 
\begin{lemma}\label{lem:renewal}
For each $k$, we have as $n\to\infty$,
\begin{eqnarray}
\label{density_tau1}\P(\tau_{-k} = n) &\sim& \sqrt{\frac{3}{4\pi}} \; \frac{k}{\sqrt{n^{3}}},\\
\label{asympt_hn}u(n) &\sim & \frac{1}{\sqrt{3 \pi n}}\cdot
\end{eqnarray}
\end{lemma}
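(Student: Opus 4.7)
The plan is to compute the generating function $U(s) \defeq \E[s^{\tau_{-1}}]$ in closed form, extract its singular behavior at the dominant singularity $s = 1$, and apply a singularity analysis of Flajolet--Sedgewick type to reach the claimed asymptotics.

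First, I would derive a polynomial equation for $U$ via a first-step decomposition that exploits two structural features of the animal walk: $S$ is skip-free upward (positive jumps are always $+1$), and the downward jumps, conditional on being negative, are $\text{Geom}(1/2)$ on $\{-1,-2,\ldots\}$ and independent of the past. The memoryless property then implies that at the first-passage time of $S$ strictly below any level, the overshoot is again $\text{Geom}(1/2)$, independent of the past. Conditioning on the first step: with probability $1/3$ we have $X_1 \leq -1$ and $\tau_{-1} = 1$; with probability $2/3$ we have $X_1 = +1$, the walk restarts from level $1$ and first reaches $\leq 0$ in $T'$ steps with $T' \sim \tau_{-1}$, landing at $0$ with conditional probability $1/2$ (in which case we re-run an independent copy of $\tau_{-1}$) or strictly below $0$ with probability $1/2$ (in which case we are done). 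This yields
\[
U(s) = \frac{s}{3} + \frac{2s}{3}\cdot U(s)\cdot \frac{1 + U(s)}{2} = \frac{s}{3}\bigl(1 + U(s) + U(s)^2\bigr),
\]
whose branch with $U(0) = 0$ is $U(s) = \bigl(3 - s - \sqrt{(3-s)^2 - 4s^2}\bigr)/(2s)$.

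Next, I would perform the singular expansion at $s=1$. The discriminant factors as $(3-s)^2 - 4s^2 = 3(1-s)(3+s)$, so $s=1$ is a square-root branch point of $U$ and is the only singularity on a neighborhood of the closed unit disk. A direct expansion yields $U(s) = 1 - \sqrt{3(1-s)} + O(1-s)$ as $s\to 1^-$. By the strong Markov property the inter-ladder times are i.i.d.\ copies of $\tau_{-1}$, whence $\E[s^{\tau_{-k}}] = U(s)^k = 1 - k\sqrt{3(1-s)} + O(1-s)$. For the renewal function, $\sum_n u(n) s^n = \sum_{k\geq 1} U(s)^k = U(s)/(1-U(s)) \sim 1/\sqrt{3(1-s)}$ as $s \to 1^-$. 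Finally, since $U$ is algebraic with only one singularity on the closed unit disk, it extends analytically to a standard $\Delta$-domain around $s=1$; the Flajolet--Sedgewick transfer theorem then applies. Using $[s^n](1-s)^{1/2} \sim -1/(2\sqrt{\pi n^3})$ and $[s^n](1-s)^{-1/2} \sim 1/\sqrt{\pi n}$, one obtains
\[
\P(\tau_{-k} = n) \sim \frac{k\sqrt{3}}{2\sqrt{\pi n^3}} = \sqrt{\frac{3}{4\pi}}\cdot \frac{k}{\sqrt{n^3}}, \qquad u(n) \sim \frac{1}{\sqrt{3\pi n}},
\]
which are the desired asymptotics.

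The main obstacle lies in Step 1: setting up the first-step decomposition requires a careful use of the memoryless property and skip-freeness to identify the correct recursion, in particular the factor $(1 + U(s))/2$ that encodes the dichotomy between landing exactly at $0$ (requiring an extra independent run) and overshooting strictly below $0$ (terminating). Once $U$ is in closed form, the remaining steps are routine algebra and a textbook application of singularity analysis.
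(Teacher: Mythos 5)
Your proof is correct and matches the paper's approach essentially step for step: the same first-step decomposition of $\tau_{-1}$ exploiting skip-freeness and the memoryless negative jumps (the paper's Equation \eqref{eq:rec_motzkin} with the uniform three-way split is exactly your factor $\frac{s}{3}(1+U+U^2)$), the same closed form for the generating function, the same expansion of $U(s)^k$ and of $U/(1-U)$, and the same Flajolet--Sedgewick transfer theorem for the coefficient asymptotics. The one detail you make explicit that the paper leaves to the citation is the $\Delta$-analyticity needed for the transfer theorem, which is a welcome precision.
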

Let $\mathcal{E}(\C) \defeq \{ \Psi^{-1}(\shaved{S}_0, \ldots, \shaved{S}_{n-1}) \cap B(r) = \C  \}$ denote the event that the animal generated by the first $n$ steps of $\shaved{S}$ and restricted to $B(r)$ coincides with $\C$. According to 2. of Proposition \ref{prop:pushforward}, we have
\begin{eqnarray}
\label{proofll1}\P(\A^{n} \cap B(r) = \C) & = & \P\big(\mathcal{E}(\C)   | \; \exists k \in \N^* \, \tau_{-k} = n\big) \\
\nonumber &= & \P\big(\mathcal{E}(\C) \hbox{ and } \tau_{-r-1} \geq n \; | \; \exists k \in \N^* \, \tau_{-k} = n\big) \\
\nonumber && + \; \P\big(\mathcal{E}(\C) \hbox{ and } \tau_{-r-1} < n \; | \; \exists k \in \N^* \, \tau_{-k} = n\big).
\end{eqnarray}
On the one hand, Lemma \ref{lem:renewal} entails that the first term in the sum above converges to $0$:
\begin{eqnarray*}
\P\big(\mathcal{E}(\C) \hbox{ and } \tau_{-r-1} \geq n \; | \; \exists k \in \N^* \, \tau_{-k} = n\big) &\leq & \P\big( \tau_{-r-1} \geq n \; | \; \exists k \in \N^* \, \tau_{-k} = n\big) \\
& \leq & \frac{1}{u(n)}\sum_{k=1}^{r+1} \P(\tau_{-k} = n) \; \sim \;  \frac{3 \sum_{k=1}^{r+1} k}{2 n} \; \underset{n\to\infty}{\longrightarrow}\;  0.
\end{eqnarray*}
On the other hand, we observe that, since the walk $S$ (and thus also $\shaved{S}$) is right continuous, all vertices added to the animal by $\shaved{S}$ after time $\tau_{-r-1}$ must be at height at least $r+1$ so they do not belong to $B(r)$. Thus, it holds that $\Psi^{-1}(\shaved{S}_0, \ldots, \shaved{S}_{n-1}) \cap B(r) = \Psi^{-1}(\shaved{S}_0, \ldots, \shaved{S}_{\tau_{r+1}})  \cap B(r)$ on the event $\{\tau_{-r-1} < n \}$. Let us denote $\tilde{\mathcal{E}}(\C) \defeq \{ \Psi^{-1}(\shaved{S}_0, \ldots, \shaved{S}_{\tau_{r+1}}) \cap B(r) = \C  \}$, we deduce that 
\begin{multline*}
\P\big(\mathcal{E}(\C) \hbox{ and } \tau_{-r-1} < n \; | \; \exists k \in \N^* \, \tau_{-k} = n\big) = \; \P\big(\tilde{\mathcal{E}}(\C) \hbox{ and } \tau_{-r-1} < n \; | \; \exists k \in \N^* \, \tau_{-k} = n\big)\\
\begin{aligned} 
& = \; \frac{1}{u(n)} \P\big(\tilde{\mathcal{E}}(\C) \hbox{ and } \tau_{-r-1} < n \hbox{ and } \exists k \geq r+2,\; \tau_{-k} = n\big)\\
& = \; \E\left[ \Ind{\tilde{\mathcal{E}}(\C)}\Ind{\tau_{-r-1} < n}\frac{h(n-\tau_{-r-1})}{h(n)}\right],
\end{aligned}
\end{multline*}
where we used the strong Markov property at time $\tau_{-r-1}$ for the last equality. Now, because $h(n)$ is regularly varying and $\tau_{-r-1}$ is a.s. finite, the sequence of random variables $\Ind{\tau_{-r-1} < n}\frac{h(n-\tau_{-r-1})}{h(n)}$ converges almost surely to $1$ as $n$ tends to infinity. By Fatou's Lemma
$$
\liminf_n \P\big(\mathcal{E}(\C) \hbox{ and } \tau_{-r-1} < n \; | \; \exists k \in \N^* \, \tau_{-k} = n\big) \geq \P(\tilde{\mathcal{E}}(\C)).
$$
We assume by contradiction that there exists a pyramid $\C_0 \subset B(r)$ such
$$
\limsup_n \P\big(\mathcal{E}(\C_0) \hbox{ and } \tau_{-r-1} < n \; | \; \exists k \in \N^* \, \tau_{-k} = n\big) > \P(\tilde{\mathcal{E}}(\C_0)).
$$
Then summing over the finite set of all pyramids included in $B(r)$, we find that
\begin{multline*}
1 \geq \limsup_n \P\big(\tau_{r+1} < n \; | \; \exists k \in \N^* \, \tau_k = n\big) \\
=  
\limsup_n \sum_\C \P\big(\mathcal{E}(\C) \hbox{ and } \tau_{-r-1} < n \; | \; \exists k \in \N^* \, \tau_{-k} = n\big) > \sum_\C \P(\tilde{\mathcal{E}}(\C)) = 1
\end{multline*}
which is absurd. Therefore, for any $\C$, 
$$
\lim_{n\to\infty} \P\big(\mathcal{E}(\C) \hbox{ and } \tau_{-r-1} < n \; | \; \exists k \in \N^* \, \tau_{-k} = n\big) = \P(\tilde{\mathcal{E}}(\C))
$$
which combined with \eqref{proofll1} yields
$$
\lim_{n\to\infty} \P(\A^{n} \cap B(r) = \C) = \P(\tilde{\mathcal{E}}(\C)). 
$$
Finally, we use again the fact that all vertices added by $\shaved{S}$ after time $\tau_{r+1}$ do not belong to $B(r)$ to conclude that $\tilde{\mathcal{E}}(\C) = \{ \Psi^{-1}(\shaved{S}_k,  k\geq 0) \cap B(r) = \C  \}$ so that \eqref{proofll0} holds and the proof is complete.
\end{proof}

\begin{proof}[Proof of Lemma \ref{lem:renewal}]

We first observe that the ladder epoch $\tau_{-1}$ of the random walk $S$ satisfies the equality in law:
\begin{equation}\label{eq:rec_motzkin}
\tau_{-1} \; \overset{\hbox{\tiny{law}}}{=} \; 1 + \tilde{\tau}_{-1} \Ind{\xi=1} + (\tilde{\tau}_{-1} + \hat{\tau}_{-1})\Ind{\xi=2}
\end{equation}
where $(\tilde{\tau}_{-1}, \hat{\tau}_{-1}, \xi)$ are independent r.v. with $\tilde{\tau}_{-1}$ and $\hat{\tau}_{-1}$ having the same distribution as $\tau_{-1}$ and with $\xi$ having distribution $\P(\xi=0) = \P(\xi=1) = \P(\xi=2) = 1/3$. Indeed, a positive excursion of $S$ is either composed of a single negative jump (which happens with probability $1/3$) or it starts with $+1$ step. In the second case, the walk (starting from $1$) performs an excursion of length $\tilde{\tau}_{-1}$ before entering $\rrbracket -\infty ; 0 \rrbracket$. Furthermore, because the walk makes negative steps with Geom($1/2$) distribution, the undershoot is independent of $\tilde{\tau}_{-1}$. If the undershoot ends strictly below $0$ (which happens with probability $1/2$) then $\tau  = 1 + \tau_{-1}$. Otherwise, the walk starts another excursion from $0$ hence $\tau  = 1 + \tilde{\tau}_{-1} + \hat{\tau}_{-1}$ in that case. Putting these arguments together yields \eqref{eq:rec_motzkin}. Let $f(s) \defeq \E[s^{\tau_{-1}}]$. From Equality \eqref{eq:rec_motzkin} we deduce that
$$
f(s) = \frac{s}{3}(1 + f(s) + f(s)^2)
$$
and therefore
\begin{equation}\label{eq:genfuncanimal}
f(s) = \frac{3-s - \sqrt{3(1-s)(s+3)}}{2s}
\end{equation}
(the other root is discarded because it is larger than $1$ for $s<1$). As a consequence,  for $k\in \N^*$, 
$$
1 - \E[s^{\tau_{-k}}] =1-  f(s)^k \; \underset{s\to 1}{\sim} \; \sqrt{3}  k\sqrt{1-s}.
$$
A direct singularity analysis (\emph{c.f.} Corollary VI.1 p. 392 of \cite{FLA09}) yields
\begin{equation}
\label{eq:taukn}
\P(\tau_{-k} = n) \; \underset{n\to\infty}{\sim} \; \frac{-k \sqrt{3}}{\Gamma(-\frac{1}{2}) n^{3/2}}.
\end{equation}
This completes the proof of \eqref{density_tau1}. Similarly, let $I(s) = \sum_{n=0}^{\infty} h(n) s^n$. We have
$$
I(s) = \sum_{n=0}^{\infty} \sum_{k=1}^\infty \P(\tau_{-k} = n) s^k = \sum_{k=1}^{\infty} \E[s^{\tau_{-k}}] = \sum_{k=1}^{\infty} f(s)^k = \frac{f(s)}{1-f(s)}.
$$
Using again a singularity  analysis, the asymptotics $I(s) \sim \frac{1}{\sqrt{3(1-s)}}$ as $s\to 1$ implies 
$$
u(n) \; \underset{n\to\infty}{\sim} \; \frac{1}{\Gamma(\frac{1}{2})\sqrt{3n}}. 
$$
\end{proof}

\subsection{The local limit of finite half-pyramids\label{sec:UIHP}}

We now examine the limit of uniformly sampled half-pyramids when their size tends to infinity. Because of our particular choice of the ordering $\leT$ on animal vertices, non-positive pyramids are simpler to study than non-negative ones. Therefore, we focus on the local limit of non-positive pyramids first and then extend the result to non-negative pyramids using a symmetry argument. 

\subsubsection{Conditioning the shaved walk  \texorpdfstring{$\shaved{S}$}{} to stay non-positive}
 
According to Item $2$ of Proposition \ref{prop:pushforward}, we can sample a uniform non-positive pyramid with  $n$ vertices by dropping dominoes from infinity along the sequence $(\shaved{S}_0,\ldots, \shaved{S}_{n-1})$ under the conditioning that $\shaved{S}$ hits a strict minimum at time $n$ and stays non-positive up to that time. Taking $n$ to infinity and recalling that the UIP is constructed by dropping dominoes along the full path of $\shaved{S}$, it is reasonable to expect that the local limit for non-positive pyramids should be obtained again by dropping dominoes along the path $\shaved{S}$ but now with this path conditioned to stay non-positive at all times.

In order to make the above statement precise, we must first formalize what we mean by ``conditioning $\shaved{S}$ to stay non-positive at all times'' since this event has null probability. Fortunately, thanks to the negative geometric jumps of the underlying walk $S$, this conditioning is unambiguous and has a simple explicit description, both in term of h-transform and path decomposition.

As remarked previously the shaved random walk $(\shaved{S}_n)$ is not a Markov process itself but jointly with its current running infimum $\shaved{M}_n \defeq \inf_{k\leq n}\shaved{S}_k$, it defines a bi-dimensional Markov chain $(\shaved{M}_n, \shaved{S}_n)$. We use the usual notation $\P_{(m,x)}$ for a probability under which $(\shaved{M}_n, \shaved{S}_n)$ starts from $(m,x)$. We define the function 
\begin{equation}\label{def:htransform}
\htr(m,x) \defeq (|x|+1)(|m|+2)1_{m\leqslant x\leqslant 0}.
\end{equation}

\begin{lemma}\label{lemma:harmonicH} Recall the notation $\tau_{z} = \inf(n \in \N : \shaved{S}_n = z)$. 
For $-N< m \leq x \leq 0$, we have
\begin{equation}\label{eq:ExitProblemShaved}
\P_{(m,x)}(\tau_{-N}<\tau_{1})=\frac{\htr(m,x)}{(N+1)(N+2)}.
\end{equation}
As a consequence, the function $\htr$ is harmonic for the process $(\shaved{M}_n, \shaved{S}_n)$ killed when $\shaved{S}$ enters $\llbracket 1; \infty \llbracket$ in the sense that, for $m \leq x \leq 0$, $$\E_{(m,x)}\left[ \htr(\shaved{M}_n,\shaved{S}_n) \Ind{\tau_1>n}\right]=\htr(m,x), \qquad n \geq 0.$$
\end{lemma}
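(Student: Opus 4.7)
The plan is to establish the exit probability formula \eqref{eq:ExitProblemShaved} by a one-step strong Markov analysis that reduces the computation to the gambler's ruin identity for the animal walk $S$ given in Lemma~\ref{lem:exitproblemS}. The harmonicity of $\htr$ will then follow from the explicit formula by a finite localisation argument. Throughout, set $p_N(m,x) \defeq \P_{(m,x)}(\tau_{-N}<\tau_1)$ for $-N<m\leq x\leq 0$; the target is $p_N(m,x) = \htr(m,x)/[(N+1)(N+2)]$.

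The first key observation is that, starting from $(m,x)$ with $m\leq x\leq 0$ and until either $\tau_1$ occurs or $\shaved{S}$ first falls below $m$, the component $\shaved{M}$ stays constant equal to $m$ and the increments of $\shaved{S}$ coincide with those of the animal walk $S$, with the sole caveat that any undershoot of $S$ below $m$ is collapsed into a single jump of $\shaved{S}$ exactly to $m-1$ (at which point $\shaved{M}$ drops to $m-1$). Hence the event $\{\shaved{S}\text{ reaches } m-1\text{ before }1\}$ coincides with the event that a copy of $S$ started at $x$ enters $\rrbracket-\infty,m-1\rrbracket$ before $\{1\}$. Lemma~\ref{lem:exitproblemS}, applied after translation by $-x$, evaluates this probability to $(1-x)/(3-m) = (|x|+1)/(|m|+3)$. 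Applying the strong Markov property at this exit time (and noting that the $\{1\}$-exit branch contributes $0$ to $p_N$) yields the one-step recursion
\begin{equation*}
p_N(m,x) \;=\; \frac{|x|+1}{|m|+3}\, p_N(m-1,m-1).
\end{equation*}
With the trivial base case $p_N(-N,-N)=1$, iterating this along the diagonal sites $(k,k)$ for $k=m,m-1,\ldots,-N+1$ produces a telescoping product that collapses to the claimed formula, finishing the proof of \eqref{eq:ExitProblemShaved}.

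For the harmonicity, rewrite \eqref{eq:ExitProblemShaved} as $(N+1)(N+2)\,p_N(m,x)=\htr(m,x)$. Standard Markov chain arguments applied to the survival probability of the chain killed at $T\defeq \tau_{-N}\wedge\tau_1$ give, for $-N<m\leq x\leq 0$ and any $n\geq 0$,
\begin{equation*}
\htr(m,x) \;=\; \E_{(m,x)}\!\left[\htr(\shaved{M}_n,\shaved{S}_n)\Ind{T>n}\right] \;+\; (N+1)(N+2)\,\P_{(m,x)}(\tau_{-N}\leq n,\; \tau_{-N}<\tau_1).
\end{equation*}
Because $\shaved{M}$ decreases by exactly one unit at each descending ladder epoch of $S$, the deterministic bound $|\shaved{M}_n|\leq |m|+n$ holds almost surely. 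Consequently, choosing any $N>|m|+n$ makes the event $\{\tau_{-N}\leq n\}$ empty and turns $\Ind{T>n}$ into $\Ind{\tau_1>n}$ almost surely, which gives the harmonicity statement.

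The crux of the argument is the initial identification of the transitions of $(\shaved{M},\shaved{S})$ with those of $S$ on the strip $\llbracket m,0\rrbracket$: it is what allows the use of the simple Gambler's ruin identity of Lemma~\ref{lem:exitproblemS}. The remaining algebra telescopes cleanly, and the harmonicity is a straightforward consequence by localisation.
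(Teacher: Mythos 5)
Your proof is correct and follows essentially the same route as the paper: reduce the exit problem for $(\shaved{M},\shaved{S})$ on the current strip to the gambler's‐ruin identity for $S$ from Lemma~\ref{lem:exitproblemS}, telescope the one‐step recursion $p_N(m,x)=\frac{|x|+1}{|m|+3}\,p_N(m-1,m-1)$ down the diagonal to $(-N,-N)$, and deduce harmonicity from the Markov property. The only (welcome) difference is in the final step: where the paper compresses the harmonicity argument into a single one‐step Markov identity, you spell out the optional‐stopping decomposition at $T=\tau_{-N}\wedge\tau_1$ and make the localization explicit via the deterministic bound $\shaved{M}_n\geq m-n$, which lets you choose $N>|m|+n$ to annihilate the boundary term; this fills in a detail the paper leaves implicit.
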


\begin{proof}
We first prove \eqref{eq:ExitProblemShaved} in the case $-N = m - 1$. 
The key observation is that, when $(\shaved{M}_n, \shaved{S}_n)$ starts from $(m,x)$, then $\shaved{S}$ coincides with $S$ until time $\tau_{m-1} -  1$ and $S_{\tau_{m-1}} \leq \shaved{S}_{\tau_{m-1}} = m-1$. Therefore, using Lemma \ref{lem:exitproblemS}, we have
$$
\P_{(m,x)}(\tau_{m-1}<\tau_{1}) = \P_x(\hbox{$S$ enters  $\rrbracket -\infty ; m-1\rrbracket$ before hitting $\{1\}$}) =  \frac{|x|+1}{|m|+3}
$$
so \eqref{eq:ExitProblemShaved} holds when $-N = m-1$. We extend this result for $-N < m-1$, by repeatedly applying the strong Markov property combined with the formula above:
\begin{align*}
\P_{(m,x)}(\tau_{-N}<\tau_{1}) &= \P_{(m,x)}(\tau_{-m-1}<\tau_{1}) \prod_{j=m+1}^{N-1} \P_{(-j,-j)}(\tau_{-j-1}<\tau_{1}) \\
&= \frac{|x|+1}{|m|+3}  \prod_{j=|m|+1}^{N-1} \frac{j+1}{j+3} \\
&= \frac{(|x|+1)(|m|+2)}{(N+1)(N+2)}.
\end{align*}
Finally, the fact that $\htr$ is harmonic for $(\shaved{M}_n, \shaved{S}_n)$ killed when $\shaved{S}$ enters $\llbracket 1; \infty \llbracket$ is a consequence of the Markov property. Namely, for $(m,x)$ such that  $-N<m\leq x\leq 0$, it holds that
$\P_{(m,x)}(\tau_{-N}<\tau_{1}) = \E[ \P_{(\shaved{M}_1,\shaved{S}_1)}(\tau_{-N}<\tau_{1}) ]$ which implies our statement by definition of $\htr$. 
\end{proof}

\begin{definition}[\textbf{Shaved walk conditioned to stay non-positive}]
The shaved walk conditioned to stay non-positive at all times is the process $(\shaved{M}^{-}, \shaved{S}^{-})$ defined as Doob's h-transform of $(\shaved{M}, \shaved{S})$ with harmonic function $\htr$. It is the Markov process that satisfies, for any starting points $m \leq x \leq 0$, for any $n\geq 1$,
\begin{equation}\label{eq:htransfromS}
\P_{(m,x)}((\shaved{M}_k^{-}, \shaved{S}_k^{-})_{k\leq n} \in D)= \E_{(m,x)}\left[ \frac{\htr(\shaved{M}_n,\shaved{S}_n)}{\htr(m,x)} \Ind{(\shaved{M}_k^{-}, \shaved{S}_k^{-})_{k\leq n} \in D} \Ind{\tau_1>n}\right].
\end{equation}
Unless mentioned otherwise, we will tacitly assume that the process starts from $(m,x) = (0,0)$ and we will simply write $\P$ and $\E$ for $\P_{(0,0)}$ and $\E_{(0,0)}$ respectively.
\end{definition}

The definition above relates $\shaved{S}^-$ to $\shaved{S}$ via a change of measure. The next  proposition shows that $\shaved{S}^-$ also coincides with the intuitive conditioning obtained by enforcing independently each excursion of $\shaved{S}$ between the descending ladder times $\tau_{-k}$ and $\tau_{-k-1}$ not to reach $1$, \emph{c.f.} Figure \ref{fig:decom_shaved_ned} for an illustration. 

\begin{proposition}[\textbf{Path decomposition of $\shaved{S}^-$}] \label{prop:TanakaShaved} For $k\in \N$, let $\tau^-_{-k} \defeq \inf(i \in \N: \shaved{S}^-_i = -k)$ and
$$
V^k \defeq (\shaved{S}^-_i)_{\tau^-_{-k} \leq i < \tau^-_{-k-1}}.
$$
Under $\P$, the random variables $(V^k)$ are independent and $V^k$ has the same distribution as $(S_i - k)_{0\leq i < \tau_{-1}}$ conditioned on the event $\{ \sup_{i\leq\tau_{-1}}S_i \leq k\}$ (which has positive probability). 
\end{proposition}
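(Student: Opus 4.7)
The strategy is twofold: first obtain the independence of the $V^k$'s via the strong Markov property of the h-transformed chain, and then identify the law of each single excursion by a direct comparison of its finite-dimensional distributions with those of $(S-k)$ killed at $\tau_{-1}$ under the conditioning $\{\sup_{i\le\tau_{-1}}S_i\le k\}$.

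For independence, observe that since $\htr$ is harmonic for $(\shaved M,\shaved S)$ killed on entering $\llbracket 1;\infty\llbracket$ (Lemma~\ref{lemma:harmonicH}), its Doob transform $(\shaved M^-,\shaved S^-)$ is itself a time-homogeneous Markov chain on $\{(m,x):m\le x\le 0\}$ and hence satisfies the strong Markov property. The crucial observation is that, by the shaving mechanism, $\shaved S^-$ enters each new level exactly once and at its running minimum: thus $(\shaved M^-_{\tau^-_{-k}},\shaved S^-_{\tau^-_{-k}})=(-k,-k)$ is deterministic. Applying the strong Markov property at $\tau^-_{-k}$ shows that the shifted chain $((\shaved M^-_{\tau^-_{-k}+n},\shaved S^-_{\tau^-_{-k}+n}))_{n\ge 0}$ is distributed as the h-transform started from $(-k,-k)$ and is independent of $\mathcal F_{\tau^-_{-k}}$. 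Consequently $(V^k,V^{k+1},\dots)$ is independent of $(V^0,\dots,V^{k-1})$, and iterating gives mutual independence. It also reduces the problem to computing the law of $V^k$ under $\P_{(-k,-k)}$ (for which $\tau^-_{-k}=0$).

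Fix a candidate path $\mathbf{x}=(x_0=-k,x_1,\dots,x_{n-1})$ with $x_i\in\llbracket -k,0\rrbracket$ for $i<n$. On the event $\{V^k=\mathbf{x}\}$ one has $\shaved S^-_n=-k-1$, so $\shaved M^-_n=-k-1$ and hence $\htr(\shaved M_n,\shaved S_n)=(k+2)(k+3)$ while $\htr(-k,-k)=(k+1)(k+2)$. The h-transform formula \eqref{eq:htransfromS} then gives
\begin{equation*}
\P_{(-k,-k)}(V^k=\mathbf{x}) \;=\; \frac{k+3}{k+1}\,\P_{(-k,-k)}\!\Big((\shaved S_i)_{i<n}=\mathbf{x},\ \shaved S_n=-k-1\Big).
\end{equation*}
On the event in the right-hand side the shaved walk never breaks its initial minimum $-k$ before time $n$, so $\shaved S_i=S_i$ for all $i<n$, while the shaving at step $n$ merely requires $S_n<-k$ (the exact value of $S_n$ being irrelevant, as it collapses to $\shaved S_n=-k-1$). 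By translation invariance this last probability equals $\P_0(S_i=x_i+k\text{ for }i<n,\ S_n<0)$. Setting $y_i:=x_i+k\in\llbracket 0,k\rrbracket$, the event $\{S_i=y_i\text{ for }i<n,\ S_n<0\}$ simultaneously enforces $\tau_{-1}=n$ and $\sup_{i\le\tau_{-1}}S_i\le k$, hence coincides with $\{(S_i-k)_{0\le i<\tau_{-1}}=\mathbf{x},\ \sup_{i\le\tau_{-1}}S_i\le k\}$.

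It remains to verify that the constant $\frac{k+3}{k+1}$ is exactly the correct normalization. Lemma~\ref{lem:exitproblemS} applied with $x=k+1$, $y=1$ gives $\P_0\big(\sup_{i\le\tau_{-1}}S_i\le k\big)=\frac{k+1}{k+3}$, so the conditional probability $\P_0\big((S_i-k)_{i<\tau_{-1}}=\mathbf{x}\,\big|\,\sup_{i\le\tau_{-1}}S_i\le k\big)$ equals $\frac{k+3}{k+1}$ times the probability computed above, matching $\P_{(-k,-k)}(V^k=\mathbf{x})$ exactly. The main subtle point is the identification used in the third paragraph: one must carefully track that, on a generic non-shaving portion of an excursion from level $-k$, the unconditioned shaved walk coincides with the underlying animal walk $S$, and that the terminal shaving step collapses the Geom($1/2$) undershoot into the single event $\{S_n<-k\}$, so that summation over possible undershoot magnitudes is already absorbed into the marginal step probability. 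Once this bookkeeping is in place, the computation reduces entirely to the Gambler's ruin estimate of Lemma~\ref{lem:exitproblemS}.
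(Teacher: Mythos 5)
Your proof is correct and follows essentially the same route as the paper's: both rely on the Doob h-transform identity stopped at descending ladder times, the excursion structure of the shaved walk $\shaved{S}$, translation invariance, and the exit-problem estimate of Lemma~\ref{lem:exitproblemS} to convert the telescoping ratio $\htr(-k-1,-k-1)/\htr(-k,-k)=(k+3)/(k+1)$ into the normalizing constant $\P(\sup_{i\le\tau_{-1}}S_i\le k)^{-1}$. The only (minor, stylistic) difference is that you first establish mutual independence via the strong Markov property of $(\shaved M^-,\shaved S^-)$ at the deterministic entrance states $(-k,-k)$ and then identify the law of a single excursion, whereas the paper computes the joint law $\P(V^0\in D_0,\dots,V^{n-1}\in D_{n-1})$ in one expression and reads off both independence and the marginal laws from its product form.
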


\begin{proof}
Let $W^k \defeq (\shaved{S}_i)_{\tau_{-k} \leq i < \tau_{-k-1}}$ and fix $n\in\N$. Standard arguments concerning Doob h-transforms insure that we can replace the deterministic time $n$ by the hitting time $\tau^-_n$ in \eqref{eq:htransfromS}. Thus, for any sets of paths $D_0, \ldots, D_{n-1}$, we have
\begin{eqnarray*}
\P(V^0 \in D_0, \ldots, V^{n-1} \in D_{n-1}) & = & \E\left[ \frac{\htr(\shaved{M}_{\tau_{-n}},\shaved{S}_{\tau_{-n}})}{\htr(0,0)}  \Ind{W^0 \in D_0, \ldots, W^{n-1} \in D_{n-1}} \Ind{\tau_1>\tau_{-n}}  \right]\\
& = & \E\left[ \frac{\htr(-n,-n)}{\htr(0,0)} \prod_{k=0}^{n-1}
\Ind{W^k \in D_k,\, \sup_{\tau_{-k} \leq i < \tau_{-k-1}}\shaved{S}_i \leq 0}  \right].
\end{eqnarray*}
By definition of the shaved random walk, the random variables $(W^k, \sup_{\tau_{-k} \leq i < \tau_{-k-1}}\shaved{S}_i)$ are independent and, for each $k$, distributed as $( (S_i - k)_{i < \tau_{-1}}, \sup_{i< \tau_{-1}} S_i - k)$ under $\P$. Therefore, we conclude that
\begin{eqnarray*}
\P(V^0 \in D_0, \ldots, V^{n-1} \in D_{n-1}) & = &  \prod_{k=0}^{n-1} \frac{\htr(-k-1,-k-1)}{\htr(-k,-k)}\P( (S_i - k)_{i < \tau_{-1}} \in D_k, \sup_{i\leq \tau_{-1}}S_i -k \leq 0) \\
&=& \prod_{k=0}^{n-1} \P(  (S_i - k)_{i < \tau_{-1}} \in D_k \,|\, \sup_{i\leq\tau_{-1}}S_i \leq k)
\end{eqnarray*}
where we used Lemma \ref{lem:exitproblemS} to check that $\P(\sup_{i\leq\tau_{-1}}S_i \leq k) = \frac{k+1}{k+3} =\frac{\htr(-k,-k)}{\htr(-k-1, -k-1)}$.
\end{proof}

\begin{corollary}\label{cor:transienceShavedCondiNeg}
The Markov chain $(\shaved{M},\shaved{S}^-)$ is transient:
$$
\shaved{S}^-_n \underset{n\to\infty}{\longrightarrow} -\infty\quad\hbox{$\P$-a.s.}
$$
\end{corollary}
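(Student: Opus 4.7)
The plan is to leverage the explicit path decomposition from Proposition \ref{prop:TanakaShaved}. I would write $(\shaved{S}^-_n)$ as the concatenation of the independent pieces $V^0, V^1, V^2, \ldots$ where each $V^k$ starts at level $-k$ and is distributed as $(S_i - k)_{0\leq i < \tau_{-1}}$ conditioned on $\sup_{i<\tau_{-1}} S_i \leq k$. Since the conditioning event has positive probability, each $V^k$ has finite length almost surely and the ladder times $\tau^-_{-k}$ are all finite a.s., so the decomposition is well-posed and, in particular, $\liminf \shaved{S}^-_n = -\infty$ already. To upgrade this into full transience, it suffices to show that for every fixed $M \geq 1$, only finitely many indices $k$ produce an excursion $V^k$ that visits the strip $\{-M+1, \ldots, 0\}$.

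The key estimate is then a short computation using the explicit tail of the width of the Boltzmann half-pyramid given in \eqref{eq:BoltzmannWidth}. For $k \geq M$, the event that $V^k$ enters $\{-M+1, \ldots, 0\}$ is exactly $\{W \geq k-M+1\}$ conditioned on $\{W \leq k\}$, where $W = \sup_{i<\tau_{-1}} S_i$. Using $\P(W \geq j) = 2/(j+2)$, one obtains
\begin{equation*}
\P\big(V^k \text{ visits } \{-M+1, \ldots, 0\}\big) \;=\; \frac{\frac{2}{k-M+3} - \frac{2}{k+3}}{(k+1)/(k+3)} \;=\; \frac{2M}{(k-M+3)(k+1)} \;=\; O\!\left(\frac{M}{k^2}\right),
\end{equation*}
which is summable in $k$.

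By the independence of the family $(V^k)$ from Proposition \ref{prop:TanakaShaved} and the Borel--Cantelli lemma, almost surely only finitely many $V^k$ ever enter $\{-M+1, \ldots, 0\}$. Combined with the a.s. finiteness of the length of each $V^k$, this shows $\shaved{S}^-_n \leq -M$ for all $n$ large enough, $\P$-a.s.; letting $M$ range over $\N$ gives $\shaved{S}^-_n \to -\infty$ a.s. There is no serious obstacle here: the ``staying non-positive'' conditioning forces enough downward drift that the excursions $V^k$ reaching back up to a fixed bounded strip become rare at an integrable rate, and the decomposition into independent pieces makes Borel--Cantelli immediate. The only care is in reading off the conditional hitting probability from the explicit width law \eqref{eq:BoltzmannWidth}.
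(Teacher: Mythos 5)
Your proposal reproduces the paper's own argument: both use the path decomposition of Proposition~\ref{prop:TanakaShaved} into independent excursions $V^k$, compute the conditional probability that $V^k$ reaches back into a fixed bounded strip from the explicit tail of the BHP width (equivalently, from Lemma~\ref{lem:exitproblemS}), observe the resulting $O(1/k^2)$ decay, and conclude by Borel--Cantelli. The only difference is the cosmetic reparametrization of the strip ($M-1$ in place of the paper's $x$); the computation $\frac{2M}{(k-M+3)(k+1)}$ matches the paper's $\sim 2x/k^2$ estimate.
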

\begin{proof}
It suffices to prove that, for any $x\geq 0$ there are only a finite number of excursions $V^k$ defined in Proposition \ref{prop:TanakaShaved} that intersect $\llbracket -x;0 \rrbracket$ a.s. By the Borel-Cantelli lemma, for any fixed $x$, this is a consequence of 
$$
\sum_{k \geq x} \P(\hbox{$V^k$ intersects $\llbracket -x;0 \rrbracket$}) = \sum_{k \geq x} \frac{\P(\sup_{i < \tau_{-1}} S_i \in \llbracket k-x; k\rrbracket)}{\P(\sup_{i < \tau_{-1}} S_i \leq k)} = \sum_{k \geq x} \frac{\frac{k+1}{k+3} - \frac{k-x}{k-x + 2}}{\frac{k+1}{k+3}}\sim \sum_{k \geq x} \frac{2x}{k^2}< \infty
$$
where we used again Lemma \ref{lem:exitproblemS} to compute the probabilities appearing above. 
\end{proof}

\begin{figure}
\begin{center}
\includegraphics[height=3.5cm]{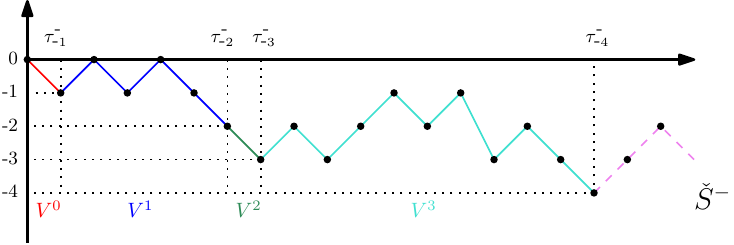}
\end{center}
\caption{\label{fig:decom_shaved_ned}Decomposition of $\shaved{S}^-$ into independent excursions $(V^k)$. For each $k$,  $V^k$ is a (shifted by $k$) positive excursion of the animal walk $S$ conditioned on having maximum height at most $k$.} 
\end{figure}

\subsubsection{Limit of non-positive pyramids (UIP-).}

We can now describe the local limit of uniformly sampled finite non-positive pyramids as their size tends to infinity. The line of arguments is similar (but a little more technical) to that used to prove Theorem \ref{thm:loclimitUIP}, with Lemma \ref{lem:renewal} now being replaced by Lemma \ref{lem:renewal2} below which relies on the (curious) asymmetry of the shaved walk $\shaved{S}$.

\begin{definition}[\textbf{The uniform infinite non-positive pyramid}] 
\label{def:UIP-}
We call uniform infinite non-positive pyramid (UIP-) the random pyramid $\bar{\A}^- \defeq \Psi^{-1}(\shaved{S}^-_0,\shaved{S}^-_1,\ldots)$ constructed from the full path of the shaved random walk conditioned to stay non-positive.
\end{definition}

\begin{theorem}[\textbf{Local limit of non-positive finite pyramids}]\label{thm:loclimit_UIHP}
For each $n\in \N^*$, let $\A^{n,-}$ denote a random uniformly distributed non-positive pyramid with $n$ vertices rooted at $0$ and let $\bar{\A}^-$ denote a uniform infinite non-positive pyramid. Then, the sequence $(\A^{n,-})$ converges in law, for the local topology towards $\bar{\A}^-$. This means that $\A^{n,-} \cap B(r)$ converges in law to $\bar{\A}^- \cap B(r)$ for each fixed~$r$. 
\end{theorem}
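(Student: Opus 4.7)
The plan is to mimic the structure of the proof of Theorem \ref{thm:loclimitUIP}, with the unconditioned walk $\shaved{S}$ replaced by its conditioning to stay non-positive and the renewal asymptotics of Lemma \ref{lem:renewal} replaced by a refined version (the announced Lemma \ref{lem:renewal2}). Starting from Item 2 of Proposition \ref{prop:pushforward}, I would write
\[
\P(\A^{n,-} \cap B(r) = \C) \;=\; \frac{\P(\mathcal{E}(\C) \cap T_n \cap \{\tau_1 > n\})}{\P(T_n \cap \{\tau_1 > n\})},
\]
where $T_n = \{n = \tau_{-k}$ for some $k \geq 1\}$ and $\mathcal{E}(\C) = \{\Psi^{-1}(\shaved{S}_0, \ldots, \shaved{S}_{n-1}) \cap B(r) = \C\}$. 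By Definition \ref{def:UIP-}, the h-transform formula \eqref{eq:htransfromS} applied at the a.s.-finite (Corollary \ref{cor:transienceShavedCondiNeg}) ladder time $\tau_{-r-1}^{-}$ rewrites the target limit as
\[
\P(\bar{\A}^- \cap B(r) = \C) \;=\; \frac{(r+2)(r+3)}{2}\,\P\bigl(\widetilde{\mathcal{E}}(\C),\; \tau_1 > \tau_{-r-1}\bigr),
\]
with $\widetilde{\mathcal{E}}(\C) = \{\Psi^{-1}(\shaved{S}_0, \ldots, \shaved{S}_{\tau_{-r-1}}) \cap B(r) = \C\}$ and prefactor $(r+2)(r+3)/2 = \htr(-r-1,-r-1)/\htr(0,0)$ coming from Lemma \ref{lemma:harmonicH}.

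Next, I would split the numerator according to whether $\tau_{-r-1} < n$ or $\tau_{-r-1} \geq n$. Right-continuity of $\shaved{S}$ forces every vertex added after $\tau_{-r-1}$ to sit above $B(r)$, so $\mathcal{E}(\C)$ coincides with $\widetilde{\mathcal{E}}(\C)$ on $\{\tau_{-r-1} < n\}$. The contribution of $\{\tau_{-r-1} \geq n\}$ is bounded by $\sum_{k=1}^{r+1}\P(\tau_{-k} = n,\, \tau_1 > n)$, and I would argue this decays exponentially in $n$ via the excursion decomposition of $\shaved{S}$: the probability $\P(\tau_{-k} = n,\, \tau_1 > n)$ is a $k$-fold convolution of the sub-densities $\psi_j(m) = \P(\tau_{-1}^{S} = m,\, \sup_{i<\tau_{-1}^{S}} S_i \leq j)$ for $j = 0, \ldots, k-1$, and each $\psi_j$ has exponential tail because confining a finite-variance excursion of $S$ in a bounded strip of height $j$ for $m$ steps has probability exponentially small in $m$. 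Since $\P(T_n \cap \{\tau_1 > n\})$ will turn out to decay only polynomially (of order $n^{-3/2}$), this term is negligible in the ratio.

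For the main contribution, applying the strong Markov property of $\shaved{S}$ at $\tau_{-r-1}$ (where $(\shaved{M},\shaved{S})$ reaches $(-r-1,-r-1)$) and translating by $r+1$, I would obtain
\[
\frac{\P(\mathcal{E}(\C),\, T_n,\, \tau_1 > n,\, \tau_{-r-1} < n)}{\P(T_n,\, \tau_1 > n)}
\;=\; \E\!\left[\Ind{\widetilde{\mathcal{E}}(\C),\,\tau_{-r-1} < n,\,\tau_1 > \tau_{-r-1}}\cdot\frac{p^{(r+2)}(n-\tau_{-r-1})}{p^{(1)}(n)}\right],
\]
where $p^{(k)}(m) \defeq \P_{(0,0)}(T_m,\, \tau_k > m)$. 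The conclusion would then follow by dominated convergence using $\Ind{\tau_1 > \tau_{-r-1}}$ as an integrable dominator (with total mass $2/[(r+2)(r+3)]$ by Lemma \ref{lemma:harmonicH}), provided the forthcoming Lemma \ref{lem:renewal2} establishes
\[
\lim_{n\to\infty}\frac{p^{(k)}(n-j)}{p^{(1)}(n)} \;=\; \frac{k(k+1)}{2} \qquad \text{for every fixed } j\in\N \text{ and } k\geq 1.
\]

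The main obstacle is precisely this asymptotic. I expect $p^{(k)}(n) \sim c\,\tfrac{k(k+1)}{2}\,n^{-3/2}$ with the geometric prefactor matching $\htr(-(k-1),-(k-1))/\htr(0,0)$. I would attack it either by extending the generating function computation \eqref{eq:genfuncanimal} with an additional variable encoding the confinement $\tau_k > n$, obtaining an algebraic equation whose dominant singularity delivers the $n^{-3/2}$ decay with the correct prefactor by singularity analysis, or, more conceptually, by exploiting the positive martingale $(\htr(\shaved{M}_n,\shaved{S}_n)\Ind{\tau_1 > n})_n$ from Lemma \ref{lemma:harmonicH} together with a local limit theorem for centered walks conditioned to remain in a half-line.
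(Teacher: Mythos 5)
Your overall architecture matches the paper's: start from Item~2 of Proposition~\ref{prop:pushforward}, split according to $\{\tau_{-r-1}<n\}$ vs.\ $\{\tau_{-r-1}\geq n\}$, dismiss the second piece as exponentially negligible against the $n^{-3/2}$ decay of the renewal denominator, and handle the main piece via strong Markov at $\tau_{-r-1}$ together with the renewal asymptotic. But there are two places where your plan diverges from (or falls short of) the paper's proof.

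First, the passage to the limit. You propose to conclude by dominated convergence with $\Ind{\tau_1>\tau_{-r-1}}$ as dominator, but this cannot work: the integrand is $\Ind{\tau_1>\tau_{-r-1}}\cdot \tfrac{p^{(r+2)}(n-\tau_{-r-1})}{p^{(1)}(n)}$ (in your notation), and the ratio is not bounded by $1$. In fact $p^{(r+2)}(m)\geq p^{(1)}(m)$ for every $m$ (the confining event $\{\tau_{r+2}>m\}$ contains $\{\tau_1>m\}$), and more seriously, on $\{\tau_{-r-1}=n-m\}$ with $m$ bounded the ratio $p^{(r+2)}(m)/p^{(1)}(n)$ grows like $n^{3/2}$. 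So there is no simple $n$-uniform integrable dominator of this form. The paper sidesteps this cleanly: Fatou gives $\liminf_n \geq \P(\tilde{\mathcal E}^-(\C))$ for every $\C$, and then summing over all non-positive pyramids $\C\subset B(r)$ (both sides sum to $1$) forces equality of $\lim$ and excludes a strict $\limsup$ for any single $\C$ — a Scheff\'e-type argument. You should replace your dominated-convergence step by this one.

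Second, and more substantively, the key asymptotic $\ell(n,r)\sim\tfrac{(r+1)(r+2)}{2}\,\P_0(\tau_{-1}=n)$ (your $p^{(k)}(n)\sim\tfrac{k(k+1)}{2}\,p^{(1)}(n)$). You propose either a bivariate generating function with a confinement parameter and singularity analysis, or a local limit theorem for walks conditioned to a half-line normalized by the harmonic function $\htr$. Both are plausible but considerably heavier than what the paper does, and neither is straightforward to execute: the generating function for the constrained ladder epoch is no longer quadratic, and the LLT route requires identifying the constant via $\htr$ rather than merely the order of decay. The paper instead exploits a purely combinatorial identity: via $\Psi$ and reflection across the $y$-axis, the \emph{confined} renewal probability $\ell(n,r)=\P_{-r}(\{\exists i>r,\,\tau_{-i}=n\}\cap\{\tau_1>n\})$ is exactly equal to the \emph{unconfined} quantity $\sum_{j=1}^{r+1}\P_0(\tau_{-j}=n)$ — both count $3^{-n}$ times the number of non-positive (equivalently, by mirror, non-negative) pyramids of size $n$ rooted at $(\mp r,0)$. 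The asymptotic then drops out of Lemma~\ref{lem:renewal} with no new analysis. This is precisely the ``curious asymmetry'' of the encoding that the paper flags in the remark after Proposition~\ref{prop:pushforward}; your plan would re-derive the same constant by analytic force, but the identity makes the lemma essentially free. I'd encourage you to look for the combinatorial route before committing to singularity analysis.
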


\begin{remark}\label{rem:thmUIP-}
Combining this theorem with the decomposition of $\shaved{S}^-$ described in Proposition \ref{prop:TanakaShaved}, we see that, just as in the case of pyramids, the infinite non-positive pyramid is a simple animal which may again be constructed by pilling up independent Boltzmann half-pyramids, rooted respectively at $0,-1, -2,\ldots$ but now also subject to the additional condition that the $k$-th BHP has diameter at most $k$ (which is an event of positive probability for all $k$). See Figure \ref{fig:HarrisDecomp-UIHP} for an illustration. 
\end{remark}

\begin{figure}
\begin{center}
\subfloat[\scriptsize{schematic representation}]{\includegraphics[height=5.2cm]{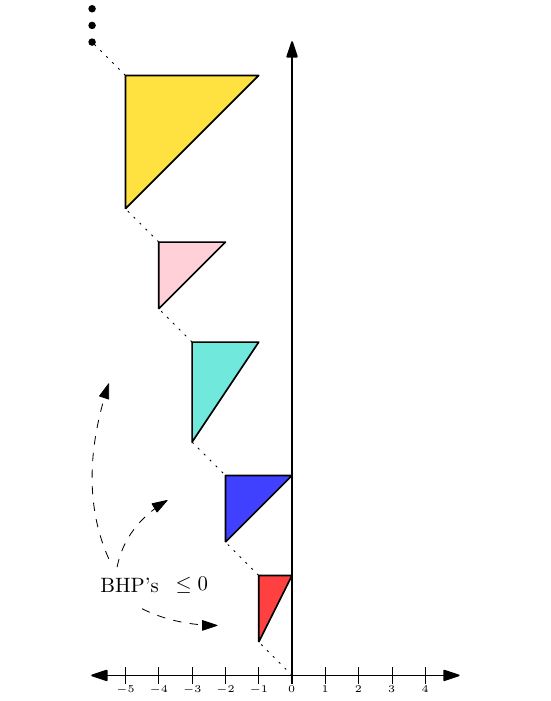}}
\hspace{-0.8cm}
\subfloat[\scriptsize{simulation}]{\includegraphics[height=5.2cm]{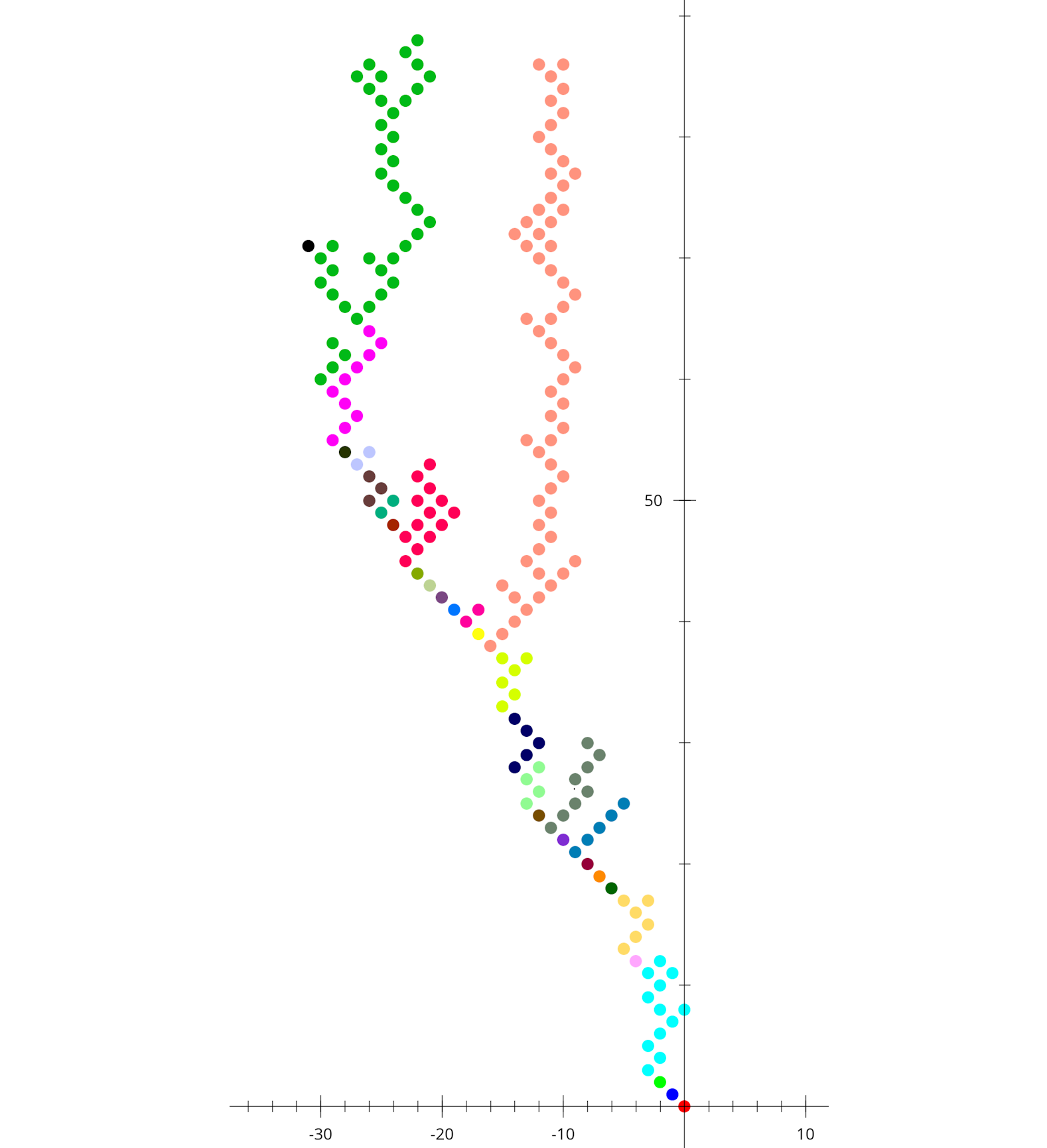}}
\hspace{-0.5cm}
\subfloat[\scriptsize{Corresponding walk $\shaved{S}^-$}]{\includegraphics[height=2.2cm]{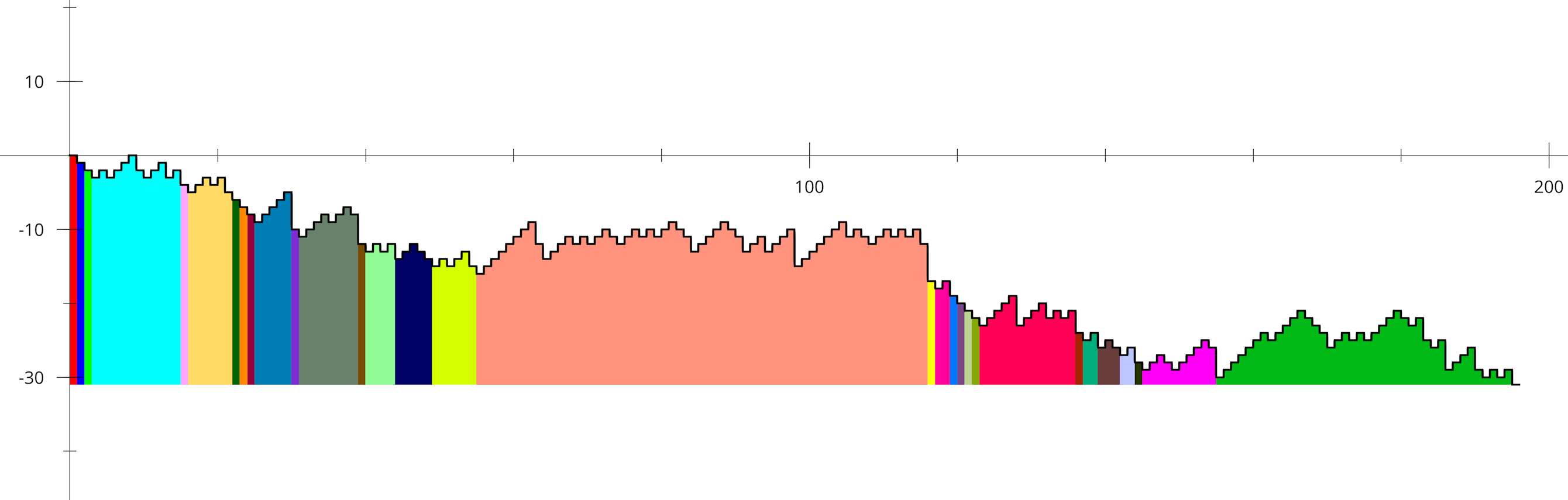}}
\end{center}
\caption{\label{fig:HarrisDecomp-UIHP}A uniform infinite non-positive pyramid created from a sample path of $\shaved{S}^-$. The pyramid is a ``piling up'' of independent BHP's conditioned never to become positive. Here, each BHP is represented with a different color in (b) and correspond to the descending ladder excursion with the same color in (c).} 
\end{figure}

\begin{proof}[Proof of Theorem \ref{thm:loclimit_UIHP}.]
Fix $r>0$. The intersection of a pyramid rooted at 0 and $B(r)$ is again a pyramid. Let $\A^{n,-}$ denote a random uniform non-positive pyramid with $n$ vertices rooted at the origin. We must prove that, for any fixed non-positive pyramid rooted at the origin $\C\subset B(r)$:
\begin{equation}\label{proofll0bis}
\lim_{n\to\infty} \P(\A^{n,-} \cap B(r) = \C) \; = \; \P\big(\Psi^{-1}(\shaved{S}^{-}_k,k\geq 0) \cap B(r) = \C\big).
\end{equation}
Recall that $\P_{(x,x)}$ denotes the probability under which the Markov chain $(\shaved{M}_n, \shaved{S}_n)$ starts from $(x,x)$. For the sake of brevity, we write $\P_{x} = \P_{(x,x)}$ so that, in particular,  $\P = \P_0 = \P_{(0,0)}$. We define the renewal function:  
$$
\ell(n,r) \defeq \P_{-r}(\{\exists i >r, \tau_{-i} = n \} \cap \{ \tau_1>n \}).
$$     
We will use the following estimates. 

\begin{lemma}\label{lem:renewal2}
Fix $r\geq 0$. We have, as $n\to\infty$,
\begin{eqnarray}
\label{density_tau2}\ell(n,r)&=&\P_{r}(\exists j \in \{-1, \ldots, r-1\}, \tau_j = n),\\
\label{asympt_hn2}\ell(n,r) & \sim & \frac{(r+1)(r+2)}{2} \P_0(\tau_{-1}=n).
\end{eqnarray}
\end{lemma}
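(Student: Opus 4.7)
The plan is to first establish the exact identity \eqref{density_tau2}, and then deduce the asymptotic \eqref{asympt_hn2} from Lemma~\ref{lem:renewal}. I would begin by exploiting the translation invariance of the shaving construction: since $(\shaved{M}, \shaved{S})$ under $\P_x$ has the same law as $(\shaved{M}, \shaved{S})+x$ under $\P=\P_0$, both sides of \eqref{density_tau2} can be rewritten as probabilities under $\P$. For the left-hand side, the shift $x=-r$ converts the starting point back to the origin and gives
\[
\ell(n,r) \;=\; \P\bigl(n \text{ is a descending ladder time of } \shaved{S},\ \max_{k\leq n}\shaved{S}_k \leq r\bigr).
\]
For the right-hand side, the shift $x=r$ combined with the disjointness of the events $\{\tau_j=n\}$ for distinct $j\in\{-1,\ldots,r-1\}$ gives $\P_r(\exists j\in\{-1,\ldots,r-1\},\ \tau_j=n)=\sum_{k=1}^{r+1}\P(\tau_{-k}=n)$. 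The claim therefore reduces to the non-trivial max/min duality
\[
\P(n \text{ is a ladder time of } \shaved{S},\ \max_{k\leq n}\shaved{S}_k \leq r) \;=\; \sum_{k=1}^{r+1}\P(\tau_{-k}=n).
\]

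To establish this equality, I would decompose $\shaved{S}$ under $\P$ into its i.i.d.\ excursions between successive ladder times: between $\tau_{-k}$ and $\tau_{-(k+1)}$, the trajectory is distributed as a Boltzmann half-pyramid walk shifted downward by $k$. Letting $T_k$ and $h_k\geq 0$ denote the duration and height above the starting level of the $k$-th excursion, one has $\max_{k\leq\tau_{-m}}\shaved{S}_k=\max_{0\leq k<m}(h_k-k)$. The left-hand side thus becomes $\sum_{m\geq 1}\P(\sum_{k<m}T_k=n,\ h_k\leq r+k\ \forall k<m)$ while the right-hand side equals $\sum_{m=1}^{r+1}\P(\sum_{k<m}T_k=n)$. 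The equality would then be verified via a generating-function identification based on the explicit law $\P(h\leq x)=(x+1)/(x+3)$ obtained from Lemma~\ref{lem:exitproblemS}, effectively showing that the ``extra'' contributions from $m>r+1$ on the left (with a constrained maximum) exactly compensate for the ``removed'' constraint on the right.

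The asymptotic \eqref{asympt_hn2} then follows directly from \eqref{density_tau2} combined with Lemma~\ref{lem:renewal}, which provides $\P(\tau_{-k}=n)\sim\sqrt{3/(4\pi)}\,k\,n^{-3/2}$ for each fixed $k$. Summing over $k=1,\ldots,r+1$ yields
\[
\ell(n,r) \;\sim\; \sqrt{\tfrac{3}{4\pi}}\cdot\frac{(r+1)(r+2)}{2}\cdot n^{-3/2} \;\sim\; \frac{(r+1)(r+2)}{2}\,\P(\tau_{-1}=n),
\]
as claimed. The main obstacle is the exact identity \eqref{density_tau2}: the generating-function identification of the two sums above is not transparent and I expect that a direct bijective or path-transformation argument (for instance a reflection-type construction exchanging paths with constrained maximum for paths with constrained number of ladder epochs) may be needed to make the proof fully rigorous and conceptually clear.
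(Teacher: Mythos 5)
Your reduction of both sides of \eqref{density_tau2} to path probabilities under $\P_0$ is correct, and the treatment of \eqref{asympt_hn2} from \eqref{density_tau1} matches the paper. However, there is a genuine gap in the proof of the exact identity \eqref{density_tau2}: you reduce it to the path equality
\[
\P\bigl(n \text{ is a descending ladder time of } \shaved{S},\ \max_{k\leq n}\shaved{S}_k \leq r\bigr) \;=\; \sum_{k=1}^{r+1}\P(\tau_{-k}=n),
\]
and then propose to verify it by an excursion decomposition and a generating-function identification that you acknowledge you cannot make transparent. That identification would indeed be non-trivial: writing $G(s,x)=\E[s^{T_0}\Ind{h_0\leq x}]$ for the joint transform of the duration and height of a single excursion, the claimed identity becomes $\sum_{m\geq 1}\prod_{k=0}^{m-1}G(s,r+k)=\sum_{m=1}^{r+1}G(s,\infty)^m$, which is not a routine telescoping and would require a separate argument.

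The paper's proof sidesteps the path combinatorics entirely. It uses the bijection $\Psi$ (via the same mechanism as Proposition~\ref{prop:pushforward}) to identify $3^n\ell(n,r)$ with the number of non-positive pyramids of size $n$ rooted at $(-r,0)$, and $3^n\sum_{k=1}^{r+1}\P(\tau_{-k}=n)$ with the number of non-negative pyramids of size $n$ rooted at $(r,0)$ (the constraint $\tau_j=n$ with $j\in\{-1,\ldots,r-1\}$ encodes that the terminal step of the walk stays $\geq -1$, which is equivalent to the pyramid being non-negative). These two counts are equal by reflection of the animals against the $y$-axis. This is exactly the ``reflection-type construction'' you conjectured should exist — but it lives at the level of animals, not of paths, because $\Psi$ breaks the mirror symmetry of the lattice (the step distribution $\mu$ of the animal walk is not symmetric). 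Indeed, the paper explicitly flags in the remark following Proposition~\ref{prop:pushforward} that a direct probabilistic/path proof of such symmetry-induced walk identities ``seems challenging'' and that this observation is precisely what Lemma~\ref{lem:renewal2} rests on. To complete your argument you would need to either prove the generating-function identity from scratch, or adopt the paper's detour through the combinatorial bijection and the mirror symmetry of pyramids.
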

\begin{proof}[Proof of Lemma \ref{lem:renewal2}]
Similarly to the proof of Proposition \ref{prop:pushforward}, using the bijection $\Psi$, the fact its push-forward measure is uniform over the set of same size animals, and the symmetry against the $y$-axis, we get, 
\begin{eqnarray*}
\ell(n,r) &=& \P_{-r}(\exists j >r , \tau_{-j} = n \hbox{ and } \tau_1 >n) \\
&=&3^{-n} \left| \left\{ \text{non-positive pyramids of size $n$ rooted at }(-r,0)  \right\} \right|\\
&=&3^{-n} \left|  \left\{ \text{non-negative pyramids of size $n$ rooted at }(r,0)  \right\}  \right|\\
&=&\P_{r}(\exists j \in \{-1, \ldots, r-1\}, \tau_j = n),\\
&=&\sum_{j=1}^{r+1}\P_{0}(\tau_{-j} = n),
\end{eqnarray*}
which establishes \eqref{density_tau2}, while \eqref{asympt_hn2} follows from the last equality and estimate \eqref{density_tau1}.
\end{proof}

Let $\mathcal{E}(\C) \defeq \{ \Psi^{-1}(\shaved{S}_0, \ldots, \shaved{S}_{n-1}) \cap B(r) = \C \}$ denote the event that the animal generated by the first $n$ steps of $\shaved{S}$ and restricted to $B(r)$ coincides with $\C$. According to 2. of Proposition \ref{prop:pushforward}, we have
\begin{eqnarray}
\label{proofll1bis}\P(\A^{n,-} \cap B(r) = \C) & = & \P\big(\mathcal{E}(\C) \; | \; \{\exists k \in \N^* \, \tau_{-k} = n\} \cap \{\tau_1 >n\}\big) \\
\nonumber &= & \P\big(\mathcal{E}(\C) \hbox{ and } \tau_{-r-1} \geq n \; | \; \{\exists k \in \N^* \, \tau_{-k} = n\} \cap \{\tau_1 >n\}\big) \\
\nonumber && + \; \P\big(\mathcal{E}(\C) \hbox{ and } \tau_{-r-1} < n \; | \; \{\exists k \in \N^* \, \tau_{-k} = n\} \cap \{\tau_1 >n\}\big).
\end{eqnarray}
We start by bounding the first term, 
\begin{multline*}
\P\big(\mathcal{E}(\C) \hbox{ and } \tau_{-r-1} \geq n \; | \; \{\exists k \in \N^* \, \tau_{-k} = n\} \cap \{\tau_1 >n\}\big) \\
\begin{aligned}
&\leq \;  \P\big( \tau_{-r-1} \geq n \; | \; \{\exists k \in \N^* \, \tau_{-k} = n\} \cap \{\tau_1 >n\} \big) \\
&\leq \; \frac{\P(\tau_{-r-1} \wedge \tau_1>n)}{\ell(n,0)}. 
\end{aligned}
\end{multline*}
Now we observe that, by the Markov property: 
$$\P(\tau_{-r-1}\wedge \tau_1 >nj) \leq \left(\max_{i \in \{-r, \ldots, 0\}} \P_i(\tau_{-r-1}\wedge \tau_1>j)\right)^n,$$
and that one may choose $j$ large enough such that the max on the rhs is strictly smaller than $1$, which implies that $\P_0(\tau_{-r-1}\wedge \tau_1 >n)$ decays exponentially in $n$, while $\ell(n,0)=\P_0(\tau_{-1}=n) \sim \sqrt{3}/\Gamma(-1/2) n^{-3/2}$ by \eqref{density_tau2} and \eqref{eq:taukn}. This proves 
that the first term in the last expression of \eqref{proofll1bis} of goes to 0.

For the second term now, we observe that, since the walk $S$ (and thus also $\shaved{S}$) is right continuous, all vertices added to the animal by $\shaved{S}$ after time $\tau_{-r-1}$ must be at height at least $r+1$ so they do not belong to $B(r)$. Thus, it holds that $\Psi^{-1}(\shaved{S}_0, \ldots, \shaved{S}_{n-1}) \cap B(r) = \Psi^{-1}(\shaved{S}_0, \ldots, \shaved{S}_{\tau_{-r-1}})  \cap B(r)$ on the event $\{\tau_{-r-1} < n \}$. Let us denote $\tilde{\mathcal{E}}(\C) \defeq \{ \Psi^{-1}(\shaved{S}_0, \ldots, \shaved{S}_{\tau_{-r-1}}) \cap B(r) = \C  \}$, we deduce that 
\begin{multline*}
\P\big(\mathcal{E}(\C) \hbox{ and } \tau_{-r-1} < n \; | \;\{\exists k \in \N^* \, \tau_{-k} = n\} \cap \{\tau_1 >n\}\big) \\
\begin{aligned}
& =\; \P\big(\tilde{\mathcal{E}}(\C) \hbox{ and } \tau_{-r-1} < n \; | \;\{\exists k \in \N^* \, \tau_{-k} = n\} \cap \{\tau_1 >n\}\big)\\
& =\; \E\left[ \Ind{\tilde{\mathcal{E}}(\C)}\Ind{\tau_{-r-1} < n}\frac{\ell(n-\tau_{-r-1}, r+1)}{\ell(n,0)}
\right],
\end{aligned}
\end{multline*}
where we used the strong Markov property at time $\tau_{-r-1}$ for the last equality. Now, since $\tau_{-r-1}$ is a.s. finite, because $\ell(n,0)$ is regularly varying and thanks to  \eqref{asympt_hn2}, it follows that the sequence of random variables $\Ind{\tau_{-r-1} < n}\frac{\ell(n-\tau_{r+1},r+1)}{\ell(n,0)}$ converges almost surely to $\frac{(r+2)(r+3)}{2} = \frac{\htr(-r-1,-r-1)}{\htr(0,0)}$ as $n$ tends to infinity. By Fatou's Lemma, setting 
$\tilde{\mathcal{E}}^{-}(\C) \defeq \{ \Psi^{-1}(\shaved{S}^{-}_0, \ldots, \shaved{S}^{-}_{\tau^-_{-r-1}}) \cap B(r) = \C  \}$, we get:
\begin{multline*}
\liminf_n \P\big(\mathcal{E}(\C) \hbox{ and } \tau_{-r-1} < n \; | \; \{\exists k \in \N^* \, \tau_{-k} = n\} \cap \{\tau_1 >n\}\big) \\
\geq \E\left[ \Ind{\tilde{\mathcal{E}}(\C)}\frac{\htr(-r-1,-r-1)}{\htr(0,0)}\right]
 = \P(\tilde{\mathcal{E}}^{-}(\C)),
\end{multline*}
(the last equality follows from the observation that the indicator $\Ind{\tau_{-r-1} < \tau_1}$ is already contained in $\Ind{\tilde{\mathcal{E}}(\C)}$ because $\C$ is a non-positive pyramid). 
We now assume by contradiction that there exists a pyramid $\C_0 \subset B(r)$ such
$$
\limsup_n \P\big(\mathcal{E}(\C_0) \hbox{ and } \tau_{-r-1} < n \; | \; \{\exists k \in \N^* \, \tau_{-k} = n\} \cap \{\tau_1 >n\}\big) > \P(\tilde{\mathcal{E}}^{-}(\C_0)).
$$
Then summing over the finite set of all non-positive pyramids included in $B(r)$, we find that
\begin{multline*}
1 \geq \limsup_n \P\big(\tau_{-r-1} < n \; | \; \{\exists k \in \N^* \, \tau_{-k} = n\} \cap \{\tau_1 >n\}\big) \\
=  
\limsup_n \sum_\C \P\big(\mathcal{E}(\C) \hbox{ and } \tau_{-r-1} < n \; | \; \{\exists k \in \N^* \, \tau_{-k} = n\} \cap \{\tau_1 >n\} \big) > \sum_\C \P(\tilde{\mathcal{E}}^{-}(\C)) = 1
\end{multline*}
which is absurd. Therefore, for any $\C$, 
$$
\lim_{n\to\infty} \P\big(\mathcal{E}(\C) \hbox{ and } \tau_{-r-1} < n \; | \; \exists k \in \N^* \, \tau_{-k} = n\big) = \P(\tilde{\mathcal{E}}^{-}(\C))
$$
which combined with \eqref{proofll1bis} yields
$$
\lim_{n\to\infty} \P(\A^{n,-} \cap B(r) = \C) = \P(\tilde{\mathcal{E}}^{-}(\C)). 
$$
Finally, we use again the fact that all vertices added after time $\tau_{-r-1}$ do not belong to $B(r)$ to conclude that $\tilde{\mathcal{E}}^{-}(\C) = \{ \Psi^{-1}(\shaved{S}^{-}_k,  k\geq 0) \cap B(r) = \C  \}$ so that \eqref{proofll0bis} holds and the proof is complete.
\end{proof}

\subsubsection{Limit of non-negative pyramids (UIP+)}

We now turn our attention to the local limit of non-negative pyramids. Non-negative and non-positive pyramids are related to each other via the symmetry against the $y$-axis. We can leverage this correspondence to define the uniform infinite non-negative pyramid (UIP+) directly from the UIP-.

\begin{definition}[\textbf{The uniform infinite non-negative pyramid}] 
\label{def:UIP+}
We call uniform infinite non-negative pyramid (UIP+) the random pyramid $\bar{\A}^+ \defeq \Psi^{-1}(-\shaved{S}^-_0,-\shaved{S}^-_1,\ldots)$ constructed by dropping dominoes along the path of $-\shaved{S}^-$. It is also the random infinite pyramid obtained as the mirror image of $\bar{\A}^-$ against the $y$-axis. 
\end{definition}
With the definition above, the next result is a trivial consequence of the previous theorem.
\begin{corollary}[\textbf{Local limit of non-negative finite pyramids}]\label{cor:loclimit_UIP+}
For each $n\in \N^*$, let $\A^{n,+}$ denote a random uniformly distributed non-positive pyramid with $n$ vertices rooted at $0$ and let $\bar{\A}^+$ denote a uniform infinite non-positive pyramid. Then, the sequence $(\A^{n,+})$ converges in law, for the local topology towards $\bar{\A}^+$. This means that $\A^{n,+} \cap B(r)$ converges in law to $\bar{\A}^+ \cap B(r)$ for each~$r$. 
\end{corollary}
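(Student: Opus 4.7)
The plan is to derive the corollary directly from Theorem~\ref{thm:loclimit_UIHP} by transporting the convergence statement through the reflection against the $y$-axis. Let $\sigma : \ZxN \to \ZxN$ denote the map $(x,y) \mapsto (-x,y)$, extended setwise to subsets of $\ZxN$ by $\sigma(\A) \defeq \{\sigma(\a) : \a \in \A\}$. The map $\sigma$ is an involution on $\ZxN$ that exchanges the left and right oriented edges, so it sends directed animals to directed animals, preserves the number of vertices, fixes the origin, and restricts to a bijection between non-positive and non-negative pyramids rooted at $0$. Moreover, $\sigma(B(r)) = B(r)$ for every $r$, so $\sigma$ is continuous for the local topology (it commutes with intersection with any ball).

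Because $\sigma$ is a bijection between the (finite) sets $\setA_{\leq 0}^n$ and $\setA_{\geq 0}^n$, the pushforward by $\sigma$ of the uniform law on $\setA_{\leq 0}^n$ is the uniform law on $\setA_{\geq 0}^n$. In other words, if $\A^{n,-}$ denotes a uniform non-positive pyramid with $n$ vertices, then $\sigma(\A^{n,-})$ has the law of a uniform non-negative pyramid $\A^{n,+}$ with $n$ vertices.

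By Theorem~\ref{thm:loclimit_UIHP}, $\A^{n,-}$ converges in law for the local topology towards $\bar{\A}^-$. Applying the continuous map $\sigma$ and using the mapping theorem (or, equivalently, the explicit identity $\sigma(\A) \cap B(r) = \sigma(\A \cap B(r))$ valid for any $\A \subset \ZxN$), we deduce that $\sigma(\A^{n,-})$ converges in law for the local topology towards $\sigma(\bar{\A}^-)$. By Definition~\ref{def:UIP+}, $\sigma(\bar{\A}^-) = \bar{\A}^+$, which yields the desired convergence $\A^{n,+} \to \bar{\A}^+$.

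The only conceptual point worth checking, and arguably the single nontrivial step, is that the two descriptions of $\bar{\A}^+$ given in Definition~\ref{def:UIP+} are consistent, namely that $\Psi^{-1}(-\shaved{S}^{-}_0, -\shaved{S}^{-}_1, \ldots)$ coincides with the mirror image $\sigma(\bar{\A}^-) = \sigma\bigl(\Psi^{-1}(\shaved{S}^{-}_0, \shaved{S}^{-}_1, \ldots)\bigr)$. This identity, however, is a deterministic statement about the bijection $\Psi$ and the mirror order $\widetilde{\leT}$ discussed in Remark~\ref{rem:mirrororder}: applying $\sigma$ to a simple animal $\A$ sends the total order $\leT$ on $\A$ to the mirror order $\widetilde{\leT}$ on $\sigma(\A)$, which in turn corresponds, under the heap construction of Proposition~\ref{key-prop-walk}, to reading the $x$-coordinates of $\sigma(\A)$ in the order $\widetilde{\leT}$; this sequence is precisely the negation of the one encoding $\A$. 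Once this is granted, the corollary is immediate from the continuous mapping theorem applied to Theorem~\ref{thm:loclimit_UIHP}.
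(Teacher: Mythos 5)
Your proposal is correct and follows essentially the same route the paper intends: the paper declares the corollary ``a trivial consequence of the previous theorem'' once Definition~\ref{def:UIP+} identifies $\bar{\A}^+$ as the mirror image of $\bar{\A}^-$, and you simply make the symmetry argument explicit (the reflection $\sigma$ is a size-preserving bijection between non-positive and non-negative pyramids that commutes with $\cap\, B(r)$, so the continuous mapping theorem transports the convergence in Theorem~\ref{thm:loclimit_UIHP}). Your closing remark correctly locates the one point that deserves care — that dropping dominoes along $-\shaved{S}^-$ yields $\sigma(\bar{\A}^-)$ even though $\sigma(\bar{\A}^-)$ is no longer simple for $\leT$ — but this is already folded into Definition~\ref{def:UIP+} in the paper, so no gap remains.
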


\begin{remark}[\textbf{Decomposition of the UIP+}]\label{rem:thmUIP+}
One may think at first sight that $\A^+$ could also be defined directly as the infinite non-negative pyramid:
\begin{equation}\label{eq:ared}
\Psi^{-1}(S^+_0,S^+_1,\ldots)
\end{equation}
where $S^+$ is the animal walk conditioned to stay non-negative at all times (rigorously defined via a h-transform). However, this is not true because the bijection $\Psi$ breaks the mirror symmetry against the $y$-axis. Indeed, the pyramid defined by \eqref{eq:ared} is a simple animal almost surely since $S^+$ has increments in $\Z_-^+ \cup \{+1\}$ whereas $\bar{\A}^+ = \Psi^{-1}(-\shaved{S}^-_0,-\shaved{S}^-_1,\ldots)$ is not a simple animal for the $\leT$ ordering (but it is simple for the mirror order $\widetilde{\leT}$ defined in \textit{2.} of Remark \ref{rem:mirrororder}). 

More precisely, $(\bar{\A}^+ , \leT)$ is a.s.  isomorphic to $(\N + \Z_-, \leq)$, similarly to Example (c) of Figure \ref{fig:ex-order}. To understand why this is so, we observe that $\bar{\A}^+$ is the local limit of Boltzmann half-pyramids as their size tends to infinity. By definition, the vertices of a BHP are constructed by dropping dominoes along a positive excursion of $S$. As the size of the excursion increases to infinity, it approaches a Brownian excursion hence, with high probability, vertices at fixed distance from the origin will correspond to steps of the walk that are located either near the beginning or near the end of the excursion. Graphically, this means that looking back at the simulation of a BHP in Figure \ref{fig:boltzmann_sim} where the vertices are colored from blue to red according to their index in the excursion (\emph{i.e.} according to $\leT$), we can expect, in the local limit, that every vertex becomes either fully blue or fully red while all color-mixed vertices ``escape'' to infinity. Again, this behavior is reminiscent of the construction of Kesten's infinite tree via two independent walks conditioned to stay non-negative which respectively encode the tree on the left side and right side of its infinite spine.  In the setting of directed animals, the following decomposition holds:

\newpage 

\begin{enumerate}
\item $\bar{\A}^+ = \bar{\A}^+_{\hbox{\tiny{blue}}} \sqcup \bar{\A}^+_{\hbox{\tiny{red}}}$.
\item $\bar{\A}^+_{\hbox{\tiny{blue}}} \defeq \Psi^{-1}(S^+_0,S^+_1,\ldots)$ is the simple infinite animal created by a path $S^+$ of the random walk $S$ conditioned to stay non-negative at all times, defined as the Doob's h-transform of the random walk $S$ with harmonic function $\mathbf{h}^+(x) \defeq (x + 2)\Ind{x\geq 0}$ (\emph{c.f.} Lemma \ref{lem:exitproblemS}).   
\item $\bar{\A}^+_{\hbox{\tiny{red}}}$ is constructed on top of $ \bar{\A}^+_{\hbox{\tiny{blue}}}$ in the following manner. For each $x\in \N$, independently and with probability $1/2$ we drop from infinity a BHP rooted at $x$ (or we do nothing with probability $1/2$). This procedure is performed in decreasing values of $x$ meaning that, for $x' > x$, the BHP rooted at $x'$ (if present) is dropped before that rooted at $x$. This construction is well-defined because there are a.s. infinitely  many $x\in\N$ such that there is no red vertex with $x$-coordinate $x$. This fact follows from \eqref{eq:BoltzmannWidth} which implies
$$\P(\hbox{there is no red vertex with $x$-coordinate equal to $x$})= \prod_{z=0}^x \Big(1 - \frac{1}{2}\frac{2}{(z+2)}\Big) = \frac{1}{x+2}$$ 
followed by a simple a second moment argument to show that these cut-points occur infinitely often  a.s. 
\end{enumerate}
See Figure \ref{fig:HarrisDecomp-NNUIP} for an illustration of this decomposition. We mention this result here for the sake of completeness but we omit the (admittedly lengthy and somewhat technical) details of the proof as we shall not need it in the rest of the paper. 
\end{remark}

\begin{figure}
\begin{center}
\subfloat[\scriptsize{schematic representation}]{\includegraphics[height=6.5cm]{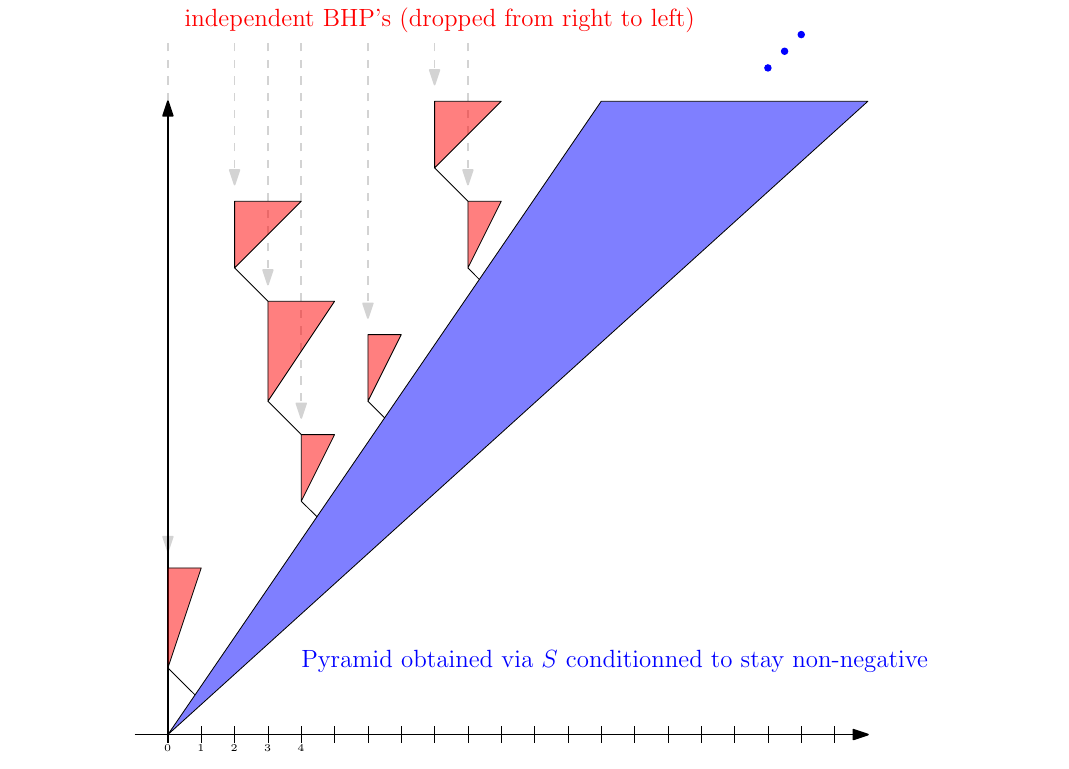}}
\subfloat[\scriptsize{simulation}]{\includegraphics[height=6.5cm]{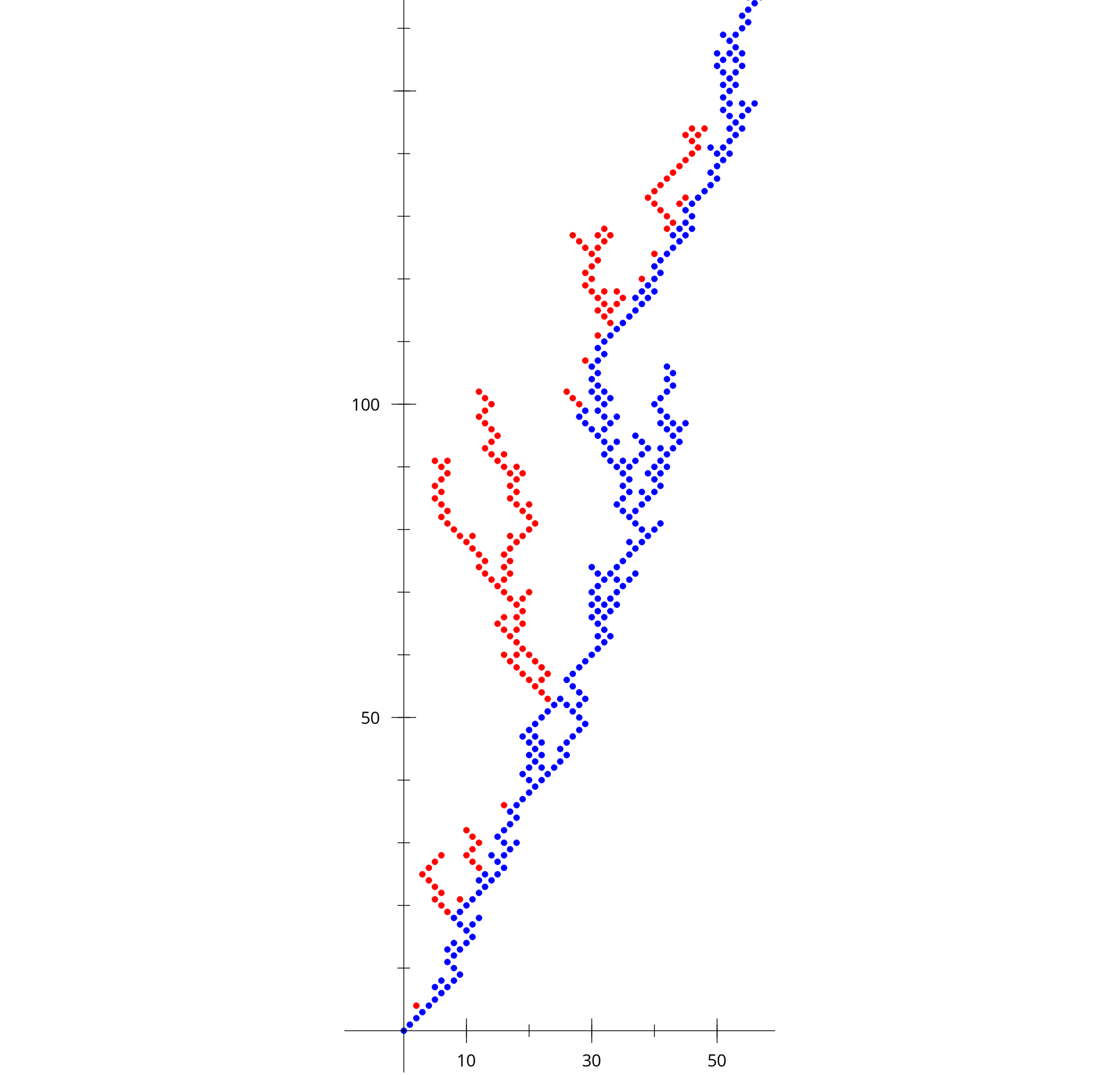}}
\end{center}
\caption{\label{fig:HarrisDecomp-NNUIP}Decomposition of the uniform infinite non-negative pyramid $\bar{\A}^+$. The blue vertices are created first from a path $S^+$ of the animal walk conditioned to stay non-negative. Subsequently, the red vertices are created by dropping, for each $x\in \N$, a BHP rooted at $x$ independently with probability $1/2$. This procedure is performed in decreasing order for $x$.}
\end{figure}

\section{Markov property\label{sec:MarkovProp}}

\subsection{Formulas for the marginals inside a ball}

Given a finite non-empty set $C = \{c_1,c_2,\ldots c_\ell\}$ of integers labeled increasingly $c_1 < c_2 < \ldots < c_\ell$, we define
\begin{equation}\label{def:eta1}
\eta(C) \defeq \prod_{i=1}^{\ell - 1}(c_{i+1} - c_i - 1)
\end{equation}
with the convention $\prod_1^0 = 1$ (equivalently $\eta(\{c\}) = 1$). When $\C \subset \ZxN$ is a non-empty finite set of vertices, we denote $x(\C)$ the set of the $x$-coordinates of the vertices in $\C$ and abbreviate $\eta(\C) = \eta(x(\C))$.  We also use the notation $\C_r = \C \cap (\Z \times \{ r \})$ for the subset of vertices at height~$r$.

The next theorem characterizes the law of the limit random animals defined in the previous section, providing simple formulas for the law of the trace of those random objects inside a finite ball. 

\begin{theorem}\label{thm:marginals} Let $r\in \N^*$. Recall the notation $B(r) \defeq (\ZxN) \cap ( \llbracket -r ; r \rrbracket \times \llbracket 0; r\rrbracket )$.
\begin{enumerate}
\item \textbf{Marginal for the BHP.}
Let $\A$ denote a Boltzmann pyramid and let $\C$ be a fixed finite non-negative pyramid with height exactly $r$ (\emph{i.e.} such that $\C \subset B(r)$ and $\C_r \neq \emptyset$). We have 
\begin{equation}\label{eq:LawBall_BHP}
\P(\A \cap B(r) = \C) \, = \, \frac{(\min x(\C_r) + 1)\eta(\C_r)}{3^{|\C| - |\C_r|}}.
\end{equation}
\item \textbf{Marginal for the UIP.}
Let $\bar{\A}$ denote a  uniform infinite pyramid and let $\C$ be a fixed finite pyramid with height exactly $r$. We have
\begin{equation}\label{eq:LawBall_UIP}
\P(\bar{\A} \cap B(r) = \C) \, = \, \frac{\eta(\C_r)}{3^{|\C| - |\C_r|}}.
\end{equation}
\item \textbf{Marginal for the UIP+.}
Let $\bar{\A}^+$ denote a uniform infinite non-negative pyramid and let $\C$ be a fixed finite non-negative pyramid with height exactly $r$.  We have
\begin{equation}\label{eq:LawBall_UIHP}
\P(\bar{\A}^+ \cap B(r) = \C) \, = \, \frac{(\min x(\C_r) + 1) (\max x(\C_r) + 2) \eta(\C_r)}{2.3^{|\C| - |\C_r|}}.
\end{equation}
\end{enumerate}
\end{theorem}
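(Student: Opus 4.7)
The three formulas will be established in sequence: Part 1 is the most direct, and Parts 2 and 3 follow via the local limit theorems of Section \ref{sec:loclimit}. For Part 1 (the BHP), I start from the explicit mass function $\P(\A=\D)=3^{-|\D|}$ given by \eqref{eq:BoltzmannProba} and write
\begin{equation*}
\P(\A \cap B(r) = \C) = 3^{-|\C|}\, R^{+}(\C_r), \qquad R^{+}(\C_r) := \sum_{\mathbf{R}} 3^{-|\mathbf{R}|},
\end{equation*}
where $\mathbf{R}$ ranges over all non-negative ``roofs'' above $\C_r$, i.e.\ finite sets of vertices at heights $>r$ such that $\C \cup \mathbf{R}$ remains a non-negative directed animal, the empty roof being counted with weight $1$. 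Formula \eqref{eq:LawBall_BHP} thus reduces to the identity $R^{+}(\C_r)=3^{|\C_r|}(\min x(\C_r)+1)\eta(\C_r)$. The crucial combinatorial step is that $R^{+}(\C_r)$ factorizes across the gaps of $\C_r=\{c_1<\ldots<c_\ell\}$ as a left-boundary factor (constrained by $x\geq 0$) equal to $3(c_1+1)$, an inner-gap factor $3(c_{i+1}-c_i-1)$ for each pair of consecutive columns, and a trivial right-boundary factor equal to $1$. Each of these factors is a first-passage generating function for the animal walk $S$, evaluated using the Gambler's ruin identity of Lemma \ref{lem:exitproblemS}; the decoupling into independent factors uses the path-encoding of Proposition \ref{key-prop-walk} and the compatibility upon dropping dominoes of Lemma \ref{lemme-compat-falling}.

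For Part 2 (the UIP), I take the limit in Theorem \ref{thm:loclimitUIP}: $\P(\bar{\A}\cap B(r)=\C)=\lim_{n\to\infty}\P(\A^n\cap B(r)=\C)$. By Proposition \ref{prop:pushforward} item 3,
\begin{equation*}
\P(\A^n\cap B(r)=\C)=\frac{N_{n-|\C|}(\C_r)}{|\setA_0^n|},
\end{equation*}
where $N_m(\C_r)$ denotes the number of \emph{unconstrained} roofs of size $m$ on $\C_r$ (no non-negativity requirement). A gap decomposition analogous to Part 1, but freed of the left-boundary factor, should yield the asymptotic $N_m(\C_r)\sim 3^{|\C_r|}\eta(\C_r)\cdot 3^m/\sqrt{3\pi m}$. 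Combined with $|\setA_0^n|\sim 3^n/\sqrt{3\pi n}$ from Lemma \ref{lem:renewal}, the limit $n\to\infty$ produces $\eta(\C_r)/3^{|\C|-|\C_r|}$, establishing \eqref{eq:LawBall_UIP}. Part 3 (the UIP+) is handled similarly using Theorem \ref{thm:loclimit_UIHP} together with the mirror symmetry UIP+ $\leftrightarrow$ UIP-: the extra factor $(\min x(\C_r)+1)(\max x(\C_r)+2)/2$ in \eqref{eq:LawBall_UIHP} is precisely the value of the harmonic function $\htr$ from \eqref{def:htransform} evaluated at the extrema of (the mirror of) $\C_r$, divided by $\htr(0,0)=2$, and it arises from the Doob $h$-transform defining the conditioned walk $\shaved{S}^{-}$, via the renewal asymptotics of Lemma \ref{lem:renewal2}.

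The main technical obstacle in all three parts is justifying the gap-based factorization of the roof counts or generating functions: namely, one must argue that the portions of the encoding walk responsible for the roof vertices lying to the left of $c_1$, between consecutive columns $c_i,c_{i+1}$, and to the right of $c_\ell$ decouple into independent excursions of $S$. This is the central probabilistic step and relies on the right-continuity of $S$ (steps in $\Z_-^*\cup\{1\}$) together with Lemma \ref{lemme-compat-falling}, which together ensure that the animal walk successively explores the intervals between sources in a memoryless fashion. Once decoupled, each excursion is a first-passage event whose generating function (or count asymptotic) is readily evaluated via Lemma \ref{lem:exitproblemS}, producing the product $\eta(\C_r)$.
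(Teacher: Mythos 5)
Your Part 1 identifies the right conceptual picture---$\P(\A\cap B(r)=\C)$ reduces to a generating-function count of ``roofs'' over $\C_r$, and that count factorizes over the gaps of $\C_r$ into $3(c_1+1)\cdot\prod_i 3(c_{i+1}-c_i-1)$---and this is indeed the structure of the paper's answer (after relabeling, with $c_1+1=\min x(\C_r)+1$ playing the role of a gap to a phantom particle at $-2$). But you assert the factorization rather than prove it, and this factorization \emph{is} the theorem. The subtlety is that roof dominoes above one gap can lean on cliffs built above a neighboring gap, so the walk's contributions to different gaps do not decouple naively. The paper's proof introduces the notion of exposed vertices, decomposes the coding path into excursions $\mathbf{t}^{i,j}$, distinguishes maximal from non-maximal cliffs (Figure~\ref{fig:pilingup}), and shows that the admissible roofs form a product set $\mathcal{T}=\mathcal{T}_1\times\cdots\times\mathcal{T}_\ell$, with each $\sum_{\mathcal{T}_i}$ evaluated via the uniform distribution of the cliff position \eqref{uniform-cliff}. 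None of this appears in your outline; invoking Lemma~\ref{lemme-compat-falling} and right-continuity names the ingredients but does not supply the argument, and the identification of each gap factor with a first-passage probability from Lemma~\ref{lem:exitproblemS} does not match how those factors actually arise (they come from summing over cliff endpoints, using the geometric-undershoot independence, not a gambler's-ruin evaluation).

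For Parts 2 and 3 you propose a genuinely different route: recover the UIP/UIP+ marginals as limits of ratios $N_{n-|\C|}(\C_r)/|\setA_0^n|$, which requires proving the new counting asymptotic $N_m(\C_r)\sim 3^{|\C_r|}\eta(\C_r)\,3^m/\sqrt{3\pi m}$. That estimate is not in the paper and is not supplied here, and proving it is essentially as hard as proving Part 2 directly. The paper avoids this entirely by working directly with the infinite walk: the same excursion decomposition as in Part 1 is applied to $\shaved{S}$ (resp.\ $\shaved{S}^-$), with the sole change that the last excursion is now infinite. This forces all excursions $\mathbf{t}^{\ell,0},\dots,\mathbf{t}^{\ell,d_\ell-1}$ to be maximal, collapsing the left-boundary factor from $3(\min x(\C_r)+1)$ to $3$ and yielding \eqref{eq:LawBall_UIP} immediately; the UIP+ formula \eqref{eq:LawBall_UIHP} then follows by rerunning the computation under the $\htr$-tilted process $\shaved{S}^-$, so the factor $\htr(\min x,\max x)/\htr(0,0)$ appears for free with no renewal estimate needed. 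So the gap in your proposal is twofold: the central factorization argument is missing in Part 1, and in Parts 2--3 you trade the paper's direct computation for an unproved transfer-theorem asymptotic.
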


\begin{proof}

We give a detailed proof for the BHP and then explain how to adapt it in the cases of the UIP and UIP+ since the proofs are very similar. Let $\C$ denote a non-negative pyramid of height $r$ and size $|\C| = n$. Its vertices $\C = \{\c_0, \c_1, \ldots, \c_n\}$ are labeled such that $\c_0 \leT \c_1 \leT \ldots \leT \c_n$ for the total order on $\C$. We write $\c_k = (x_k, y_k)$. Recall that, according to 2. of Corollary \ref{cor:mapPsi} the sequence $(x_k)$ is such that $x_{k+1} - x_k \in \{1\}\cup \Z_-^*$ for all $k$. 
 Let $m_1 < m_2 < \ldots  < m_\ell$ denote the indices, ordered increasingly, of the vertices at height exactly $r$ \emph{i.e.}
\begin{equation*}
\C_r = \{\c_{m_1}, \c_{m_2}, \ldots , \c_{m_\ell}\}. 
\end{equation*}
By assumption this set is non empty.  We say that a vertex $c = (x,y) \in \C$ is \emph{exposed} if 
\begin{equation}\label{def:exposedvertex}
\C \cap \left(\{x, x+1\} \times \llbracket y+1; +\infty \llbracket\right) = \emptyset.
\end{equation}
This is the same condition as in Lemma \ref{lemme-compat-falling}. Clearly, any vertex of $\C$  at maximum height $r$ is exposed but there may be other vertices as well. For instance, vertex $\c_n$ is necessarily exposed whether or not it has maximum height. 

For any $i \in \{1,\ldots, \ell\}$, we set $m_{i,0} \defeq m_i$ and define $m_{i,1} < m_{i,2} < \ldots < m_{i, d_i}$ to be the indices, ordered increasingly, of the vertices exposed in the interval $\llbracket m_i; m_{i+1} \llbracket$ (with the convention $m_{\ell + 1 } = n+1$). See Figure \ref{fig:proofMarg1} for an illustration. 

\begin{figure}
\begin{center}
\includegraphics[height=5.5cm]{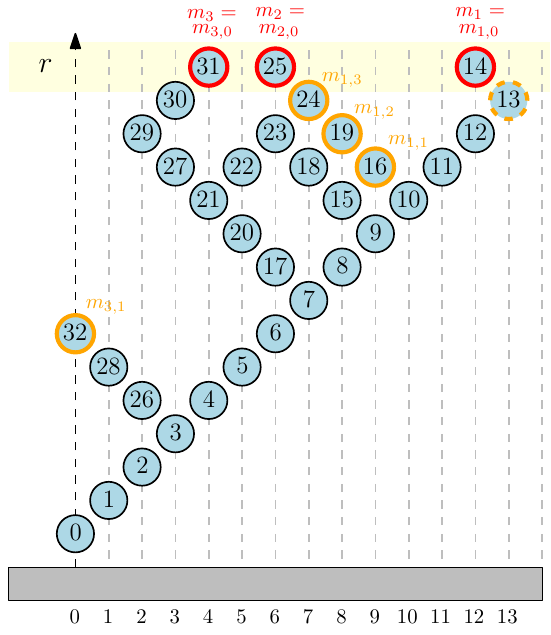}
\end{center}
\caption{\label{fig:proofMarg1}Example of a non-negative pyramid $\C$. The vertices exposed are colored differently, in red for those at maximal height $r$ (the $m_i$'s) and in orange for interior ones (the $m_{i,j}$'s for $j\geq 1$). Here, there is no exposed vertex between $m_2$ and $m_3$ \emph{i.e.} $d_2 = 0$.  Notice also that, although being exposed, vertex $13$ is not one of the $m_{i,j}$ because it is created before vertex $m_1 = 14$.}
\end{figure}

Because we can construct $\C$ by dropping vertices from infinity along the sequence $(x_k)$ and because $x_{k+1} - x_k \in \{1\}\cup \Z_-^*$ for all $k$, it follows that, the $x$-coordinates of the vertices indexed by the $m_{i,j}$'s are decreasing:
$$
x_{m_{1, 0}} > x_{m_{1, 1}} > \ldots > x_{m_{1, d_1}} > x_{m_{2, 0}} > x_{m_{2, 1}} > \ldots \ldots > x_{m_{\ell, d_\ell}} \geq 0.
$$
We want to compute $\P(\A \cap B(r) = \C)$, where $\A = \Psi^{-1}(S_0,\ldots, S_{\tau_{-1}-1})$ is the Boltzmann non-negative pyramid obtained by dropping vertices from infinity along a non-negative excursion of the animal walk $S$. Now, if $\A \cap B(r) = \C$, then $\A$ can be constructed by adding vertices on top of $\C$. This means, thanks to Lemma \ref{lemme-compat-falling} that any sequence that constructs $\A$ must contain $(x_0,\ldots, x_n)$ as a subsequence. In other words, any such sequence can be written in the form 
\begin{equation}\label{proofseq1}
x_0 ,\mathbf{s}^0, x_1 ,\mathbf{s}^1, \ldots, x_n, \mathbf{s}^n
\end{equation}
where the $\mathbf{s}^i$'s denote (possibly empty) finite sequences.

We now observe that if vertex $\c_k$ is not exposed, then necessarily $\mathbf{s}^k = \emptyset$. To see this, let us assume by contradiction that $\mathbf{s}^k$ is not empty and that $\c_k$ is not exposed so that there exists $k' > k$ with $x_{k'} \in \{ x_k; x_k+1\}$. Let $\v$ denote the vertex created by the first step of the excursion $\mathbf{s}^k$. On the one hand, we have $\v \leT \c_{k'}$ (for the order in the animal $\A$) because $\v$ is constructed before $\c_{k'}$ following sequence \eqref{proofseq1}. On the other hand, because the animal walk has step distribution in $\{1\} \cup\Z^*_-$ and because $\v$ is not in $\C$, hence not in $B(r)$, we have
$$
\v \;\in\; \rrbracket -\infty ; x_k + 1 \rrbracket \times \llbracket r+1; \infty \llbracket
$$
while
$$
\c_{k'} \;\in\; \{x_k; x_k+1 \} \times \llbracket 0; r \rrbracket.
$$
This spatial ordering of two vertices implies that $\c_{k'} \leT \v$ (whether of not these sites are comparable for the partial order $\leP$). Thus $\v = \c_{k'}$ which is absurd. Furthermore, all the excursions before time $m_1 = m_{1,0}$ must also be empty because the first vertex created by a non-empty excursion would have height $\leq r$ which is forbidden. Thus, re-indexing the excursions, and recalling that the last vertex $\c_n$ is always exposed (\emph{i.e.} $m_{\ell,d_\ell} = n$), any sequence that builds $\A$ takes the form
\begin{equation}\label{eq:exc1}
x_0, \ldots, x_{m_{1,0}} ,\mathbf{s}^{1,0}, x_{m_{1,0} + 1}, \ldots, x_{m_{1,1}} ,\mathbf{s}^{1,1}, \ldots \ldots , x_{m_{\ell,d_\ell}} ,\mathbf{s}^{\ell,d_\ell}.
\end{equation}

Recall that $\mu$ defined by \eqref{def:mu} denotes the law of a step of the animal walk $S$. We keep the same notation for the transition kernel:
$$
\mu(z_{1}, \ldots , z_k) \defeq \prod_{i=1}^{k-1} \mu_{z_{i+1} - z_i}.
$$
Recalling that $S_{\tau_{-1}}$ and $(S_0,\ldots, S_{\tau_{-1}-1})$ are independent because $S$ makes Geom($1/2$) negative jumps, we can decompose
\begin{eqnarray}
\nonumber\P(\A \cap B(r) = \C) & = &  \P(\Psi^{-1}(S_0,\ldots, S_{\tau_{-1}-1})\cap B(r) = \C)\\
\label{eq:exc2}& = & 4 \; \P (\Psi^{-1}(S_0,\ldots, S_{\tau_{-1}-1})\cap B(r) = \C \hbox{ and } S_{\tau_{-1}} = -2)\\
\nonumber & = &  4 \; \sum \mu(x_0, \ldots, x_{m_{1,0}} ,\mathbf{s}^{1,0}, x_{m_{1,0} + 1}, \ldots, x_{m_{1,1}} ,\mathbf{s}^{1,1}, \ldots \ldots , x_{m_{l,d_l}} ,\mathbf{s}^{\ell,d_\ell}, -2)
\end{eqnarray}
where the sum above runs over all families of excursions $(\mathbf{s}^{1,0}, \ldots, \mathbf{s}^{\ell,d_\ell})$ such that $\A$ constructed from \eqref{eq:exc1} is a non-negative pyramid such that  $\A \cap B(r) = \C$. In particular, choosing $\mathbf{s}^{1,0} = \ldots =  \mathbf{s}^{\ell,d_\ell} = \emptyset$ corresponds to $\A = \C$.  In that case, we have, according to \eqref{ProbAnimalWalk}
$$
\mu(x_0, \ldots , x_{m_{\ell,d_\ell}} , -2) = \frac{1}{4. 3^{|\C|}}.
$$
From now on, we use the convention $m_{i, d_i + 1} \defeq m_{i+1}$ and $m_{\ell, d_\ell + 1} \defeq m_{\ell+1} = n+1$. We also define $x_{n+1} \defeq x_{m_{\ell, d_\ell + 1}} = x_{m_{\ell,d_\ell} + 1} = -2$. Introducing the notation $\mathbf{t}^{i,j} \defeq x_{m_{i,j}} ,\mathbf{s}^{i,j}, x_{m_{i,j} + 1}$, we can write
\begin{eqnarray*}
\P(\A \cap B(r) = \C) \! \! & = & \frac{1}{3^{|\C|}} \sum \frac{\mu(x_0, \ldots, x_{m_{1,0}} ,\mathbf{s}^{1,0}, x_{m_{1,0} + 1}, \ldots, x_{m_{1,1}} ,\mathbf{s}^{1,1}, \ldots \ldots , x_{m_{\ell,d_\ell}} ,\mathbf{s}^{\ell,d_\ell}, -2)}{\mu(x_0, \ldots , x_{m_{\ell,d_\ell}} , -2)}\\
& = & \frac{1}{3^{|\C|}} \sum \prod_{i=1}^\ell \prod_{j=0}^{d_i} \frac{\mu(x_{m_{i,j}} ,\mathbf{s}^{i,j}, x_{m_{i,j} + 1})}{\mu(x_{m_{i,j}}, x_{m_{i,j} + 1})}\\
& =  & \frac{1}{3^{|\C|}} \sum \prod_{i=1}^\ell \prod_{j=0}^{d_i} \frac{\mu(\mathbf{t}^{i,j} )}{\mu(x_{m_{i,j}}, x_{m_{i,j} + 1})}.
\end{eqnarray*}
We now characterize the excursions $\mathbf{t}^{1,0}, \ldots, \mathbf{t}^{\ell,d_\ell}$ (equivalently $\mathbf{s}^{1,0}, \ldots, \mathbf{s}^{l,d_l}$) that appear in the sum, \emph{i.e.} the excursions such that \eqref{eq:exc1} builds a non-negative pyramid $\A$ with $\A \cap B(r) = \C$. The basic observation is that, since each vertex added during such an excursion is at height $>r$, it must be built on top of a another vertex, either created previously during an excursion or on top of a vertex of $\C_r$. This motivates the following definition of a \emph{cliff}.

 Let $\mathbf{z} = (z_0,\ldots,z_k)$ denote a sequence with increments in $\{1\} \cup \Z_-^*$. We say that \emph{$\mathbf{z}$ is a cliff  up to position $b$} if 
$$
b = \min_{i \leq k} z_i \quad\hbox{and}\quad  z_i \geq  \min_{j< i}z_j - 1 \quad \hbox{for all $i\leq  k$.}
$$
In other words, the sequence $\mathbf{z}$ builds a pyramid whose left-most vertex has $x$-coordinate equal to $b$.

Fix $i\in \{1,\ldots,\ell\}$ and let us first consider $\mathbf{t}^{i,0}$. Clearly, this excursion must be a cliff up to some $b$ followed by a jump to $x_{m_{i,0} + 1}$. Furthermore, we must have $b \geq x_{m_{i,1}} + 2$ otherwise vertex $c_{m_{i,1}}$ could not be constructed afterward. We say that this excursion is \emph{maximal} if its cliff is located exactly at $b = x_{m_{1,1}} + 2$. Next, we consider the excursion $\mathbf{t}^{i,1}$ (assuming it exists). We observe that:
\begin{itemize}
\item If $\mathbf{t}^{i,0}$ is not maximal, then $\mathbf{t}^{i,1}$ is necessarily trivial (\emph{i.e.} $\mathbf{s}^{i,1} = \emptyset$). 
Indeed, because $\mathbf{t}^{i,0}$ is not maximal, when the excursion $\mathbf{t}^{i,1}$ begins, all vertices already constructed that have height $\geq r$ must have $x$-coordinate $\geq x_{m_{i,1}} + 3$. Therefore, if $\mathbf{t}^{i,1}$ was not empty, it first vertex would have height $\leq r$ which is forbidden. 
\item If $\mathbf{t}^{i,0}$ is maximal, then $\mathbf{t}^{i,1}$ can use the previous excursion ``to built itself on top of it''. Thus, $\mathbf{t}^{i,1}$ is either trivial or it must start with a $+1$ steps followed by a cliff up to some $b$ and then end with a jump to $x_{m_{i,1} + 1}$. Just as before, we must have $b \geq x_{m_{i,2}} + 2$ because otherwise vertex $c_{m_{i,2}}$ could not be constructed afterward. 
\end{itemize}
\begin{figure}
\begin{center}
\subfloat[]{\includegraphics[height=3.5cm]{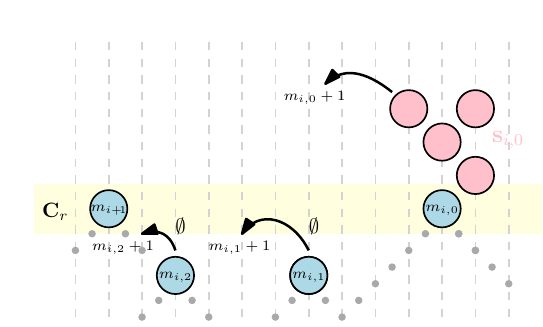}}
\hspace{1cm}
\subfloat[]{\includegraphics[height=3.5cm]{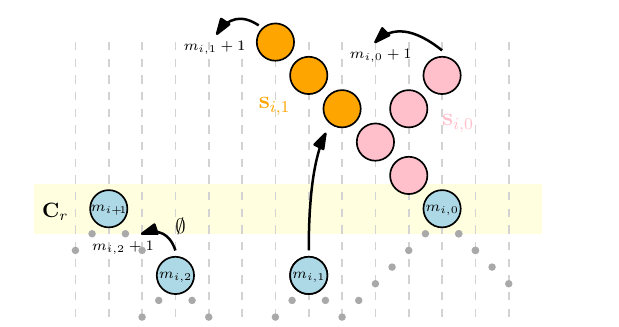}}
\end{center}
\caption{\label{fig:pilingup}Schematic representation of excursion piling up. In Figure (a), $\mathbf{t}_{i,0}$ is not maximal hence $\mathbf{t}_{i,1}$ and $\mathbf{t}_{i,2}$ are trivial (\emph{i.e.} $\mathbf{s}_{i,1}=\mathbf{s}_{i,2}=\emptyset$). In Figure (b), $\mathbf{t}_{i,0}$ is maximal hence $\mathbf{t}_{i,1}$ can build itself on top of it by starting with a $+1$ step but since it is itself not maximal, the subsequent excursion $\mathbf{t}_{i,2}$ is necessarily trivial.}
\end{figure}
See Figure \ref{fig:pilingup} for an illustration. Repeating this argument, we conclude that the excursions $(\mathbf{t}^{1,0}, \ldots, \mathbf{t}^{\ell,d_\ell})$ that build a non-negative pyramid $\A$ such that $\A \cap B(r) = \C$ are exactly those such that, for each $i\in \{1,\ldots, \ell\}$,
\begin{enumerate}
\item[(i)] The sequence $\mathbf{t}^{i,0}$ is a cliff up to $b \in \llbracket x_{m_{i,1}}  + 2 ; x_{m_{i,0}}\rrbracket$ followed by a jump to $x_{m_{i,0} + 1}$.
\item[(ii)] For $j \in \{1,\ldots, d_i\}$, the excursion $\mathbf{t}^{i,j}$ is either trivial or starts with a $+1$ step followed by a cliff up to $b \in \llbracket x_{m_{i,j + 1}}  + 2 ; x_{m_{i,j}}+1\rrbracket$ and then ends with a jump to $x_{m_{i,j} + 1}$. Furthermore, the previous excursion 
$\mathbf{t}^{i,j-1}$ is not maximal, then $\mathbf{t}^{i,j},\mathbf{t}^{i,j+1}\ldots, \mathbf{t}^{i,d_i}$  must all be trivial. 
\end{enumerate}
Let us note that an excursion $\mathbf{t}^{i,0}$ can be simultaneously trivial and maximal (provided $x_{m_{i,0}}= x_{m_{i,1}}-2$) because the single vertex $x_{m_{i,0}}$ defines a cliff by itself. This is not possible for $\mathbf{t}^{i,j}$ when $j\geq 1$. We also point out that our previous (seemingly arbitrary) choice of conditioning the excursion of the animal walk $S$ to end at $-2$ in Equation \eqref{eq:exc2} now comes in handy because it allows us to treat the last excursion $\mathbf{t}^{\ell, d_\ell}$ just like the other ones: it still satisfies (ii) although the reason why $b \geq x_{m_{\ell,d_\ell}+1} + 2 = 0$ now follows from the requirement that $\A$ is a non-negative pyramid.

The analysis above shows that the set $$\mathcal{T} \defeq \{ (\mathbf{t}^{1,0}, \ldots, \mathbf{t}^{\ell,d_\ell}) \; : \; \hbox{$\A$ is a non-negative pyramid and $\A \cap B(r) = \C$} \}$$
is a product
$$
\mathcal{T} = \mathcal{T}_1 \times \mathcal{T}_2 \times \ldots \times \mathcal{T}_\ell
$$
with 
$$
\mathcal{T}_i \defeq \{ (\mathbf{t}^{i,0}, \ldots, \mathbf{t}^{i,d_i}) \; : \; \hbox{ (i) and (ii) hold true} \}.
$$
Therefore, we find that: 
\begin{eqnarray}
\nonumber \P(\A \cap B(r) = \C) & =  & \frac{1}{3^{|\C|}} \sum_{\mathcal{T}} \prod_{i=1}^\ell \prod_{j=0}^{d_i} \frac{\mu(\mathbf{t}^{i,j} )}{\mu(x_{m_{i,j}}, x_{m_{i,j} + 1})}\\
\label{proof-kernel0} & = & \frac{1}{3^{|\C|}} \prod_{i=1}^\ell \left(\sum_{\mathcal{T}_i} \prod_{j=0}^{d_i} \frac{\mu(\mathbf{t}^{i,j} )}{\mu(x_{m_{i,j}}, x_{m_{i,j} + 1})}\right).
\end{eqnarray}
It remains to compute the quantity inside the big parenthesis. Let us fix $i\in \{1,\ldots, \ell\}$ and define the sets
\begin{eqnarray*}
\mathcal{A}_{i,0} &\defeq& \{\hbox{$\mathbf{t}^{i,0}$ is a cliff up to $b \in \llbracket x_{m_{i,1}}  + 3 ; x_{m_{i,0}}\rrbracket$ followed by a jump to $x_{m_{i,0} + 1}$}\},\\
\widehat{\mathcal{A}}_{i,0} &\defeq& \{\hbox{$\mathbf{t}^{i,0}$ is a cliff up to $b = x_{m_{i,1}}  + 2$ followed by a jump to $x_{m_{i,0} + 1}$}\}, 
\end{eqnarray*}
and for $ j\in \{1, \ldots d_i \}$,
\begin{eqnarray*}
\mathcal{B}_{i,j} &\defeq& 
\Big\{
\begin{array}{l}
\hbox{$\mathbf{t}^{i,j}$ is either trivial or starts with a $+1$ step followed by a cliff}\\
\hbox{up to $b \in \llbracket x_{m_{i,j+1}}  + 3 ; x_{m_{i,j}}+1\rrbracket$ followed by a jump to $x_{m_{i,j} + 1}$}
\end{array}
\Big\}, \\
\widehat{\mathcal{B}}_{i,j} &\defeq&
\Big\{
\begin{array}{l}
\hbox{$\mathbf{t}^{i,j}$ starts with a $+1$ step followed by a cliff  up to $b = x_{m_{i,j}} + 2$}\\
\hbox{followed by a jump to $x_{m_{i,j} + 1}$}
\end{array}
\Big\}, \\
\mathcal{V}_{i,j} &\defeq& \{\hbox{$\mathbf{t}^{i,j}$ is trivial }\}.
\end{eqnarray*}
With these notation, translating statements (i) and (ii) means that the set $\mathcal{T}_i$ can be written as the disjoint union
$$
\mathcal{T}_i = \mathcal{T}_{i,0} \sqcup \mathcal{T}_{i,1} \sqcup \ldots \sqcup  \mathcal{T}_{i,d_i}
$$
with 
\begin{eqnarray*}
\mathcal{T}_{i,0} & \defeq &  \mathcal{A}_{i,0} \times \Big(\prod_{k=1}^{d_i} \mathcal{V}_{i,k}\Big),\\
\mathcal{T}_{i,j} & \defeq & \widehat{\mathcal{A}}_{i,0} \times \Big(\prod_{k=1}^{j-1} \widehat{\mathcal{B}}_{i,k}\Big) \times \mathcal{B}_{i,j} \times \Big(\prod_{k=j+1}^{d_i} \mathcal{V}_{i,k}\Big)\qquad\hbox{for $1 \leq j \leq d_i - 1$},\\
\mathcal{T}_{i,d_i} & \defeq & \widehat{\mathcal{A}}_{i,0} \times \Big(\prod_{k=1}^{d_i-1} \widehat{\mathcal{B}}_{i,k}\Big) \times (\mathcal{B}_{i,d_i} \sqcup \widehat{\mathcal{B}}_{i,d_i}).
\end{eqnarray*}
Therefore, since $\mu(\mathcal{V}_{i,j}) = \mu(x_{m_{i,j}}, x_{m_{i,j} + 1})$, we get
\begin{multline}\label{proof-kernel1}
\sum_{\mathcal{T}_i} \prod_{j=0}^{d_i} \frac{\mu(\mathbf{t}^{i,j} )}{\mu(x_{m_{i,j}}, x_{m_{i,j} + 1})} \; = \\ 
\frac{\mu(\mathcal{A}_{i,0})}{\mu(x_{m_{i,0}}, x_{m_{i,0} + 1})} + \frac{\mu(\widehat{\mathcal{A}}_{i,0})}{\mu(x_{m_{i,0}}, x_{m_{i,0} + 1})}\sum_{j=1}^{d_i}
\left(\prod_{k=1}^{j-1}\frac{\mu(\widehat{\mathcal{B}}_{i,k})}{\mu(x_{m_{i,k}}, x_{m_{i,k} + 1})}\right)\frac{\mu(\mathcal{B}_{i,j}) + \Ind{j = d_i}\mu(\widehat{\mathcal{B}}_{i,j})}{\mu(x_{m_{i,j}}, x_{m_{i,j} + 1})}.
\end{multline}
Let $\theta_a \defeq \inf(n\geq 0, S_{n} \leq a)$ denote the hitting time of $\llbracket-\infty; a\rrbracket$ for the animal walk $S$. Because $S$ performs Geom($1/2$) negative jumps, the events $(\{ S_{\theta_a} = a \})_{a\leq 0}$ are independent and have probability $1/2$. Therefore, given $a+2 \leq b \leq 0$, we have
\begin{multline}
\P\{\hbox{$S_{\theta_{a}} = a$ and  $(S_0, \ldots, S_{\theta_{a}-1})$ is a cliff up to $b$}\} \\
\begin{aligned}
& = \P\{\hbox{$S_{\theta_{x}} = x$ for $x\in \{a\} \cup \llbracket b; -1\rrbracket$ and  $S_{\theta_{x}} \neq x$ for $x\in \llbracket a+1; b-1\rrbracket$}\}\\
& = \prod_{x \in \{a\} \cup \llbracket b; -1\rrbracket} \P(S_{\theta_{x}} = x) \prod_{x \in \llbracket a+1; b-1\rrbracket} \P(S_{\theta_{x}} \neq x)\\
& = 2^a.
\end {aligned}
\label{uniform-cliff}
\end{multline}
From this equality, we deduce immediately, by translation invariance of the walk that
$$
\begin{array}{rcccl}
\mu(\widehat{\mathcal{A}}_{i,0}) &=&  2^{x_{m_{i,0}} - x_{m_{i,0}+1}} & = & 3 \mu(x_{m_{i,0}} , x_{m_{i,0}+1}),\\
\mu(\mathcal{A}_{i,0}) &=&  (x_{m_{i,0}} - x_{m_{i,1}}  - 2) 2^{x_{m_{i,0}} - x_{m_{i,0}+1}} & =  & 3 (x_{m_{i,0}} - x_{m_{i,1}}  - 2) \mu(x_{m_{i,0}} , x_{m_{i,0}+1}),\\
\mu(\widehat{\mathcal{B}}_{i,j}) &=& \frac{2^{x_{m_{i,j}} - x_{m_{i,j}+1}}}{3} & = & \mu(x_{m_{i,j}} , x_{m_{i,j}+1}),\\
\mu(\mathcal{B}_{i,j}) &=& \frac{(x_{m_{i,j}} - x_{m_{i,j+1}})2^{x_{m_{i,j}} - x_{m_{i,j}+1}}}{3} & = &(x_{m_{i,j}} - x_{m_{i,j+1}}) \mu(x_{m_{i,j}} , x_{m_{i,j}+1}).
\end{array}
$$
Substituting these values in \eqref{proof-kernel1}, we get, recalling that $m_{i,0} = m_i$ and $m_{i,d_i + 1} =  m_{i+1}$, 
\begin{eqnarray*}
\sum_{\mathcal{T}_i} \prod_{j=0}^{d_i} \frac{\mu(\mathbf{t}^{i,j} )}{\mu(x_{m_{i,j}}, x_{m_{i,j} + 1})} & = & 
3 (x_{m_{i,0}} - x_{m_{i,1}}  - 2) + 3 \sum_{j=1}^{d_i}
(x_{m_{i,j}} - x_{m_{i,j+1}} + \Ind{j = d_i})\\
& = & 3 (x_{m_i} - x_{m_{i + 1}} - 1).
\end{eqnarray*}
Finally, substituting this equality in \eqref{proof-kernel0} and recalling that $|\C_r| = \ell$ and $x_{m_{\ell} } = \min(C_r)$ and $x_{m_{\ell + 1}} = -2$, we conclude that
\begin{eqnarray*}
\nonumber \P(\A \cap B(r) = \C) & =  & 
\frac{1}{3^{|\C|}} \prod_{i=1}^\ell (3 (x_{m_i} - x_{m_{i + 1}} - 1))\\
& = & \frac{1}{3^{|\C| - \ell}} (x_{m_\ell} - x_{m_{\ell + 1}} - 1) \prod_{i=1}^{\ell-1} (x_{m_i} - x_{m_{i + 1}} - 1)\\
& = & \frac{1}{3^{|\C| - |\C_r|}} (\min x(\C_r) + 1) \eta(\C_r)
\end{eqnarray*}
which completes the proof in the Boltzmann case. 

The proof of the formula for the UIP is similar except that the last excursion $\mathbf{s}^{\ell, d_\ell}$ is now infinite. For this to be possible without this excursion adding any vertex at an height $\leq r$, all the excursions $\mathbf{t}^{\ell, 0}, \ldots, \mathbf{t}^{\ell, d_{\ell-1}}$ must be maximal (so that the last infinite excursion can ``climbs'' on it). This additional requirement effectively makes the last term previously equal to $3 (x_{m_\ell} - x_{m_{\ell + 1}} - 1)$ in the Boltzmann case to become simply $3$ (because now, we have a single choice for the configuration of the cliffs whereas before we had $x_{m_\ell} - x_{m_{\ell + 1}} - 1$ distinct choices). This explains the disappearance of the $(\min x(\C_r) + 1)$ factor in the formula for the UIP.

 Finally, we derive the formula for the uniform infinite non-negative pyramid (UIP+) by studying the uniform infinite non-positive pyramid (UIP-) instead since it has a simpler representation in term of the shaved animal walk conditioned to stay non-positive $\shaved{S}^-$. By symmetry against the $y$-axis, we need to prove that, when $\bar{\A}^-$ is a UIP-, we have
\begin{equation}\label{proofKernelNPUIP}
\P(\bar{\A}^- \cap B(r) = \C) \, = \, \frac{(|\min x(\C_r)| + 2) (|\max x(\C_r)| + 1) \eta(\C_r)}{2.3^{|\C| - |\C_r|}}
\end{equation}
for any fixed  non-positive pyramid $\C$ with height exactly $r$. The analysis of all the paths that build $\bar{\A}^-$ and coincide with $\C$ up to height $r$ is exactly the same as that done for the UIP. The only difference occurs at the end of the proof when computing the probabilities of the events $\widehat{\mathcal{A}}_{i,0}, \mathcal{A}_{i,0},\widehat{\mathcal{B}}_{i,j},\mathcal{B}_{i,j}$. These events are now relative to $\shaved{S}^-$ instead of $\shaved{S}$. However, the conditioned random walk $\shaved{S}^-$ is the Doob h-transform of $\shaved{S}$ with transform $\htr$ given by \eqref{def:htransform} hence we can again carry the computation of the probabilities of these events in the same fashion, but now with the added ratios of $\htr$, to find that
$$
\P(\bar{\A}^- \cap B(r) = \C) \, = \, \frac{\htr(\min x(\C_r), \max x(\C_r))}{\htr(0,0)} \frac{\eta(\C_r)}{3^{|\C| - |\C_r|}}
$$
which is exactly Formula \eqref{proofKernelNPUIP}.
\end{proof}

\begin{remark}[\textbf{Extension of Theorem \ref{thm:marginals} to more general domains}] \label{rem:spatial} The proof above is robust in the sense that the exact same arguments can be used  to obtain an explicit formula for the law of $\A \cap B$ for domains other than $B = B(r)$. We mention the result below for the sake of completeness but we will not make use of it in the paper. Given a vertex $\v = (x,y) \in \ZxN$, we denote 
$$V(\v) \defeq \{(x + a, y + b) \in \ZxN : b > 0, |a|\leq b\}$$ 
the cone of all vertices strictly over $\v$ (but not including it). Given a set of vertices $\D$, we set $V(\D) \defeq \cup_{\v\in\D}V(v)$ and its complement $B(\D) \defeq (\ZxN) \setminus V(\D)$. We define the inner boundary $\partial B(\D)$  as the set of vertices of $B(\D)$ which are adjacent to $V(\D)$ \emph{i.e.}
$$
\partial B(\D) \defeq \{ (x,y) \in B(\D) \, :\, (x\!-\!1,y\!+\!1) \in V(\D) \hbox{ or }  (x\!+\!1,y\!+\!1) \in V(\D)\}.
$$
Given $\D \subset \C$, we say that $\D$ is a \emph{proper boundary subset} of $\C$ if 
$$
\partial B(\D) \cap \C = \D.
$$
Following exactly the same line of arguments as in the proof of Theorem \ref{thm:marginals} (only with slightly more cumbersome notations), we find that, if $\bar{\A}$ denotes a uniform infinite pyramid and $\C$ is a fixed finite pyramid and $\D$ is a proper boundary subset of $\C$, 
\begin{equation}\label{eq:genMarkovFormula}
\P(\bar{\A} \cap B(\D) = \C) = \frac{\eta(\D)}{3^{|\C| - |\D|}}
\end{equation}
(because $\D$ is a proper boundary set, all its vertices must have distinct $x$-coordinates hence $\eta(\D)$ is unambiguous). See Figure \ref{fig:generalizeMarkov} for illustration of this result. In the case $\D = \C_r$, we recover from the formula for $\P(\bar{\A} \cap B(r))$ of Theorem \ref{thm:marginals}. Similar extensions are also valid for the BHP $\A$ as well as for the UIP+ $\bar{\A}^+$.

\begin{figure}
\begin{center}
\subfloat[\scriptsize{$\P(\bar{\A} \cap B(\D) = \C) = \frac{4.2.3}{3^{15}}$}]{\includegraphics[height=4cm]{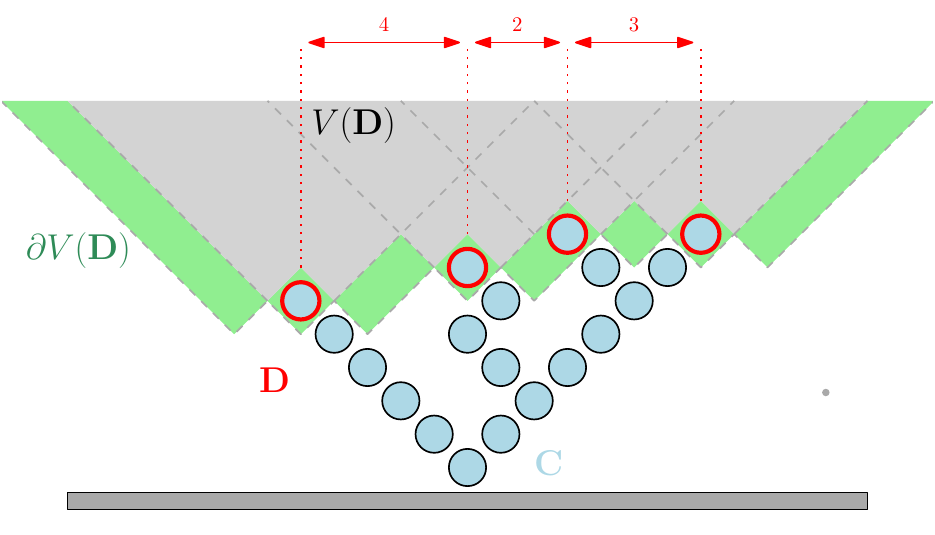}}
\hspace{0.5cm}
\subfloat[\scriptsize{$\P(\bar{\A} \cap B(\D) = \C) = \frac{7}{3^{17}}$}]{\includegraphics[height=4cm]{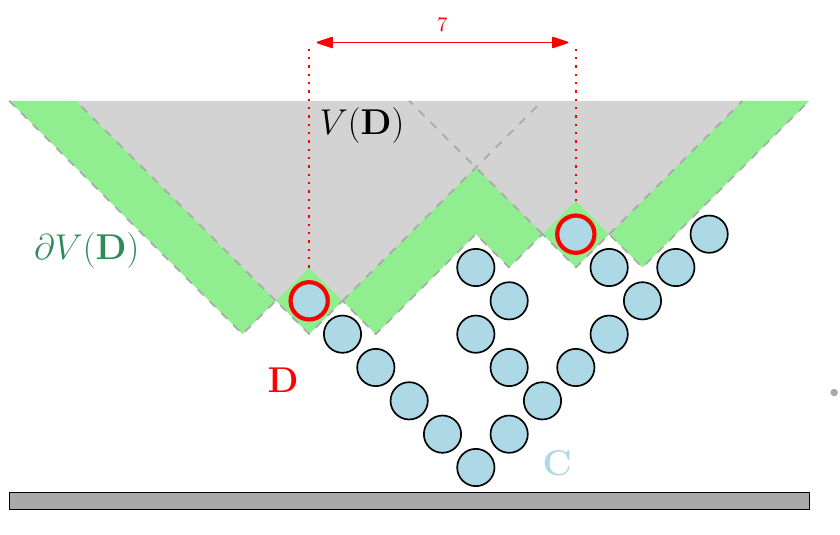}}
\end{center}
\caption{\label{fig:generalizeMarkov}Examples of the generalized formula \eqref{eq:genMarkovFormula} for a given finite pyramid $\C$ but two different proper boundary sets $\D$. The vertices of $\D$ are in red and the cones $V(v)$ for $v\in\D$ are represented by the grayed regions which is where $\bar{\A}$ is allowed to grow.}
\end{figure}
\end{remark}

Another byproduct of Theorem \ref{thm:marginals} is a spatial Markov property valid for arbitrary domains.

\begin{corollary}[\textbf{Spatial Markov property}] Let $\bar\A$ denote a UIP. 
Given a finite set $\F\subset \ZxN$, we denote its complement $\F^c \defeq (\ZxN) \setminus \F$ and its inner boundary 
$$\partial \F \defeq \{\v\in \F : \hbox{$\v$ has a  child or a parent in $\F^c$}\}.$$
Then, conditionally on $\bar\A\cap\partial \F$, the random sets $\bar\A\cap\F^c$ and $\bar\A\cap\F$ are independent. The same result also holds with the UIP+ $\bar \A^+$ in place of the UIP $\bar \A$.
\end{corollary}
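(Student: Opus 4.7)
The strategy is to exploit the explicit marginal formula \eqref{eq:genMarkovFormula} from Remark~\ref{rem:spatial}, which exhibits the law of $\bar{\A}$ as a Gibbs-like measure on pyramids with purely local interactions: on any region of the form $B(\D)$, the probability of a configuration depends only on its boundary $\D$ and its total size. By a standard monotone class argument it suffices to verify the conditional independence on cylinder events of the form $\{\bar{\A}\cap B(R) = \C\}$, so the first step is to fix $R$ large enough that $\F \subset B(R)$ and $R$ strictly exceeds every height of $\F$. Any valid pyramid $\C \subset B(R)$ then splits as $\C = A_1 \sqcup A_2$ with $A_1 \defeq \C \cap \F$ and $A_2 \defeq \C \cap (B(R)\setminus \F)$, and the choice of $R$ forces the top row $\C_R$ to lie entirely in $A_2$.

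The second step is to feed this decomposition into \eqref{eq:LawBall_UIP}, which (since $\C_R \subset A_2$) yields the clean factorization
\begin{equation*}
\P(\bar{\A}\cap B(R)=\C) \;=\; \frac{1}{3^{|A_1|}}\cdot \frac{\eta(\C_R)}{3^{|A_2|-|\C_R|}}\cdot \Ind{\C \text{ is a valid pyramid}},
\end{equation*}
a product of a function of $A_1$ alone and a function of $A_2$ alone, multiplied by the pyramid-validity indicator. The third, decisive step is to argue that, conditionally on the boundary data $\D_0 \defeq \C \cap \partial \F$, the validity indicator itself factorizes as a product of events bearing separately on $(A_1, \D_0)$ and on $(A_2, \D_0)$. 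Once this is in hand, summing the factored probability over all compatible extensions yields $\P(A_1, A_2\,|\,\D_0) = \P(A_1\,|\,\D_0)\cdot \P(A_2\,|\,\D_0)$ on cylinder events, and a $\pi$--$\lambda$ argument upgrades this to the stated conditional independence of $\bar{\A}\cap \F$ and $\bar{\A}\cap \F^c$ given $\bar{\A}\cap\partial\F$. The same scheme, with \eqref{eq:LawBall_UIHP} in place of \eqref{eq:LawBall_UIP}, treats $\bar{\A}^+$.

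The main obstacle I expect is the third step: the factorization of the validity indicator given $\D_0$. The pyramid constraint decomposes into elementary conditions, one per non-source vertex of $\C$, requiring that at least one of its two parents lies in $\C$. The definition of $\partial \F$ as the set of vertices of $\F$ adjacent (via a child or a parent relation) to $\F^c$ is tailored so that any elementary condition straddling the $\F$--$\F^c$ interface only involves a vertex of $\partial \F$ together with parents of it, and can therefore be read off from $(A_1,\D_0)$ or from $(A_2,\D_0)$ separately. Nonetheless, the careful bookkeeping needed to rule out any residual dependence between $A_1 \setminus \D_0$ and $A_2$ not mediated by $\D_0$ is the genuinely technical part of the proof and the place where the local structure of directed animals must be used in full.
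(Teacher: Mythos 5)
Your proof follows the paper's approach closely: you invoke the marginal formula \eqref{eq:LawBall_UIP} to obtain a product-form expression for the trace inside a large ball, argue the pyramid-validity constraint decouples given the boundary data $\D_0 = \C\cap\partial\F$, and pass to a $\pi$--$\lambda$ argument. The paper realizes the same idea by splitting a configuration into ${\bf G}\subset\F\setminus\partial\F$, ${\bf H}\subset\partial\F$, ${\bf K}\subset B(r)\cap\F^c$ and asserting that the validity constraint reduces, once ${\bf H}$ is fixed, to the two disjoint conditions (a) (on ${\bf G}$) and (b) (on ${\bf K}$).

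You correctly flag the factorization of the validity indicator as the technical heart of the argument; there is however a concrete subtlety that both your outline and the paper's conditions (a), (b) appear to miss, namely the validity of vertices in ${\bf H}$ itself. A vertex $\v\in\partial\F$ can have one parent $\mathbf{u}_1\in\F\setminus\partial\F$ and the other parent $\mathbf{u}_2\in\F^c$: take for instance $\v=(2,4)$, $\mathbf{u}_1=(1,3)$, $\mathbf{u}_2=(3,3)$ with $\F=\{(x,y)\in\ZxN : |x|\leq 2,\ 0\leq y\leq 4\}$, where $(1,3)\notin\partial\F$ since all four of its neighbours lie in $\F$. Given $\v\in{\bf H}$, the requirement that $\v$ have a parent in $\bar\A$ is the disjunction ``$\mathbf{u}_1\in\bar\A$ or $\mathbf{u}_2\in\bar\A$'', which couples $\bar\A\cap\F$ and $\bar\A\cap\F^c$ and is not mediated by ${\bf H}$. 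With ${\bf H}=\{(2,2),(2,4)\}$, both alternatives ``$\mathbf{u}_1\in\bar\A$, $\mathbf{u}_2\notin\bar\A$'' and ``$\mathbf{u}_1\notin\bar\A$, $\mathbf{u}_2\in\bar\A$'' occur with positive conditional probability (witnessed by the layer trajectories $\bar A_2=\{2\}$, $\bar A_3=\{1\}$, $\bar A_4=\{2\}$ and $\bar A_2=\{2\}$, $\bar A_3=\{3\}$, $\bar A_4=\{2\}$ respectively), while the event that neither holds has zero conditional probability; this rules out the claimed conditional independence. This is exactly the ``residual dependence'' you worried about. It is not absorbed by the stated definition of $\partial\F$, though the factorization does go through if $\partial\F$ is enlarged to include the outer boundary (the vertices of $\F^c$ having a parent or a child in $\F$), since then the status of $\mathbf{u}_2$ becomes part of the conditioning.
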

\begin{proof}
We choose $r$ such that $\F\subset B(r-1)$. We also pick ${\bf G}\subset \F\setminus \partial\F$ and ${\bf H}\subset \partial\F$ and ${\bf K}\subset B(r)\cap\F^c$ such that ${\bf G}\cup {\bf H} \cup {\bf K}$ is an animal of height exactly $r$. We can express the conditions to insure that ${\bf G}\cup {\bf H} \cup {\bf K}$ is indeed an animal of height $r$ separately for $\bf G$ and $\bf K$:
\begin{enumerate}
\item[(a)] Every vertex in ${\bf G}$ is either on the floor or has a parent in ${\bf G}$ or in ${\bf H}$.    
\item[(b)] Every vertex in ${\bf K}$ is either on the floor or has a parent in ${\bf K}$ or in ${\bf H}$ and at least one vertex of ${\bf K}$  is of height $r$.
\end{enumerate}
Now, Theorem \ref{thm:marginals} allows us to compute:
$$\P(\bar\A\cap \F\setminus \partial\F={\bf G},\bar\A\cap \partial\F={\bf H},\bar\A\cap\F^c\cap B(r)={\bf K})=\eta({\bf K}_r)3^{|{\bf K}_r|-|{\bf G}|-|{\bf H}|-|{\bf K}|}.$$
The product structure of the formula and the fact that the conditions (a) and (b) are disjoint when ${\bf H}$ is fixed imply the conditional independence of $\bar{\A}\cap \F\setminus \partial\F$ and $\bar{\A}\cap\F^c\cap B(r)$ given $\bar{\A}\cap \partial\F$.
A classical application of Dynkin's $\pi-\lambda$ Theorem allows us to remove $B(r)$ to get the full result. The proof for $\bar\A^+$ in place of  $\bar\A$ is identical.
\end{proof}

\subsection{Directed animals interpreted as systems of particles\label{subsec:particlessystem}}

We say that a set of integers $A\subset \Z$ is an \emph{admissible set} if $A$ is non-empty, finite and either $A\subset 2\Z$ or $A\subset 2\Z+1$. Given an admissible set $A$, we define 
$$\aug{A} \defeq (A \!+\! 1)\cup(A \!-\! 1)$$ 
which is again an admissible set. The next result describes the Markovian structure of the BHP, the UIP and the UIP+ when considered as processes indexed by height.
\begin{corollary}\label{cor:Markov}\mbox{ }
\begin{enumerate}
\item Let $\A$ be a BHP and let $A_n = x(\A_n)$ the set of $x$-coordinates of the vertices at height $n$. The process $(A_n, n\in\N)$ is a Markov chain absorbed at $\emptyset$ with transition kernel $Q$ defined by
\begin{equation}\label{eq:kernel-A}
Q(A,B) \defeq \P(A_{n+1} = B \, | \, A_0, \ldots, A_n = A) = 
\begin{cases}
\frac{1}{(\min A + 1)\eta(A) 3^{|A|}} & \hbox{if $B = \emptyset$,}\\
\frac{(\min B + 1)\eta(B)}{(\min A + 1)\eta(A) 3^{|A|}}& \hbox{otherwise}
\end{cases}
\end{equation}
for every non-negative admissible set $A \subset \N$ and every  $B \subset \aug{A} \cap \N$. 
\item Let $\bar{\A}$ be a UIP and let $\bar{A}_n = x(\bar{\A}_n)$. The process $(\bar{A}_n, n\in\N)$ is an irreducible Markov chain with transition kernel $\bar{Q}$ defined by
\begin{equation}\label{eq:kernel-Abar}
\bar{Q}(A,B) \defeq \P(\bar{A}_{n+1} = B \, | \, \bar{A}_0, \ldots, \bar{A}_n = A) = \frac{\eta(B)}{\eta(A) 3^{|A|}}
\end{equation}
for every admissible set $A$ and every $B \subset \aug{A}$, $B\neq \emptyset$.
\item Let $\bar{\A}^+$ be a UIP+ and let $\bar{A}^+_n = x(\bar{\A}^+_n)$. The process $(\bar{A}^+_n, n\in\N)$ is an irreducible Markov chain with transition kernel $\bar{Q}^+$ defined by
\begin{equation}\label{eq:kernel-Aplus}
\bar{Q}^+ (A,B) \defeq \P(\bar{A}^+_{n+1} = B \, | \, \bar{A}^+_0, \ldots, \bar{A}^+_n = A) = 
\frac{(\min B + 1)(\max B + 2)\eta(B)}{(\min A + 1)(\max A + 2)\eta(A) 3^{|A|}}
\end{equation}
for every non-negative admissible set $A \subset \N$ and every $B \subset \aug{A} \cap \N$, $B\neq \emptyset$.
\end{enumerate}
\end{corollary}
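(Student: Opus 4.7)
The plan is to derive the Markov property and the three kernel formulas as a direct consequence of Theorem \ref{thm:marginals}, exploiting the striking feature that the marginal formulas \eqref{eq:LawBall_BHP}, \eqref{eq:LawBall_UIP} and \eqref{eq:LawBall_UIHP} all factor as a function of the top slice $\C_r$ times the prefactor $3^{-(|\C|-|\C_r|)}$. This factorization is tailor-made for a height-indexed Markov property: taking a ratio of two such marginals at heights $n$ and $n+1$ will make all ``below the top'' contributions cancel out, leaving an expression that depends only on the two consecutive slices. The preliminary observation is that a pyramid $\C\subset B(n)$ is entirely determined by its sequence of slices $(x(\C_0),x(\C_1),\ldots,x(\C_n))$, since a vertex at height $k$ is parameterized by its $x$-coordinate alone; hence conditioning on $\{\bar\A\cap B(n)=\C\}$ and conditioning on $\{\bar A_0=A_0,\ldots,\bar A_n=A_n\}$ with $A_k\defeq x(\C_k)$ define the same event.

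The main step is the ratio computation for the UIP. Fix $n$, a valid slice sequence $(A_0,\ldots,A_n)$ with $A_n=A$, and a candidate next slice $B\subset\aug{A}$; let $\C$ be the pyramid associated to $(A_0,\ldots,A_n)$ and let $\C'\defeq\{(x,n+1):x\in B\}$, so that $\C\cup\C'$ corresponds to $(A_0,\ldots,A_n,B)$ and satisfies $|\C\cup\C'|=|\C|+|B|$. Plugging into \eqref{eq:LawBall_UIP} yields
\[
\P(\bar A_{n+1}=B\mid \bar A_0,\ldots,\bar A_n=A)\;=\;\frac{\P(\bar\A\cap B(n+1)=\C\cup\C')}{\P(\bar\A\cap B(n)=\C)}\;=\;\frac{\eta(B)\,3^{-|\C|}}{\eta(A)\,3^{-|\C|+|A|}}\;=\;\frac{\eta(B)}{\eta(A)\,3^{|A|}},
\]
and the right-hand side depends on the past only through $A$, which simultaneously establishes the Markov property and the kernel formula \eqref{eq:kernel-Abar}. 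The constraint $B\subset\aug{A}$ is forced because every vertex at height $n+1$ of a pyramid must have a parent at height $n$. The identical computation based on \eqref{eq:LawBall_UIHP} produces the kernel $\bar Q^+$ in \eqref{eq:kernel-Aplus}, the extra prefactors $(\min A+1)(\max A+2)$ and $(\min B+1)(\max B+2)$ being transported directly from the marginal formula.

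For the BHP, the same ratio of \eqref{eq:LawBall_BHP} handles the non-trivial transitions $B\neq\emptyset$. The transition to the absorbing state $\emptyset$ has to be treated separately: one observes that $\{A_{n+1}=\emptyset\}\cap\{\A\cap B(n)=\C\}$ coincides with $\{\A=\C\}$, which has probability $3^{-|\C|}$ by the Boltzmann weight \eqref{eq:BoltzmannProba}, and dividing by $\P(\A\cap B(n)=\C)=(\min A+1)\eta(A)\,3^{-(|\C|-|A|)}$ gives precisely the first line of \eqref{eq:kernel-A}. Irreducibility of $\bar Q$ and $\bar Q^+$, understood modulo the unavoidable parity alternation of the successive slices, is transparent from the kernels: every non-empty $B\subset\aug{A}$ (resp.\ $B\subset\aug{A}\cap\N$) is reached in one step with positive probability, and one can enlarge, shrink or translate admissible sets at will in a bounded number of steps. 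The main (minor) subtlety to flag is non-absorption at $\emptyset$ for the UIP and UIP+: this is not needed for the Markov property itself, since the conditional probabilities above are well-defined whenever the conditioning event has positive probability, but it is needed for the chains to be non-trivially defined at all times, and it follows from the constructions via the infinite walks $\shaved{S}$ and $\shaved{S}^-$ combined with the fact that both pyramids almost surely contain vertices at arbitrarily high heights.
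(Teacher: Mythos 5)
Your proof is correct and follows essentially the same route as the paper's: you take ratios of the explicit marginal formulas from Theorem~\ref{thm:marginals} at consecutive heights, note that the $3^{-(|\C|-|\C_r|)}$ prefactor and the sub-top contributions cancel, and treat the BHP's absorption at $\emptyset$ separately via \eqref{eq:BoltzmannProba}. The only added material is your brief discussion of irreducibility (modulo parity) and non-absorption for the UIP and UIP+, which the paper takes as implicit; this is a reasonable supplement but not a different argument.
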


\begin{proof}
We prove $1.$ Let $\A$ be a Boltzmann half-pyramid and fix a non-negative finite pyramid $\C$ with height exactly $n+1$. 
Using 1. of Theorem \ref{thm:marginals}, we compute 
\begin{multline*}
\P(A_{n+1} = x(\C_{n+1}) \, | \, A_0 = x(\C_0),\ldots, A_n = x(\C_n))  \\
\begin{aligned}
&=  \frac{\P( \A \cap B(n+1) = \C)}{\P(\A \cap B(n) = \C\cap B(n))}\\
&=  \frac{(\min x(\C_{n+1}) + 1)\eta(\C_{n+1)}}{3^{|\C| - |\C_{n+1}|}} 
 \frac{3^{|\C| - |\C_{n+1}| - |\C_n|}} {(\min x(\C_{n}) + 1)\eta(\C_{n)}}\\
&=  \frac{(\min x(\C_{n+1}) + 1)\eta(\C_{n+1)}}{(\min x(\C_{n}) + 1)\eta(\C_{n})3^{|\C_n|}} \end{aligned}
\end{multline*}
which is exactly  \eqref{eq:kernel-A} for $B\neq \emptyset$. The formula for the transition of the chain toward state $\emptyset$ is computed similarly but using instead that $\P(\A \cap B(n+1) = \C) = \frac{1}{3^{|\C|}}$ when $\C_{n+1} = \emptyset$, in accordance with \eqref{eq:BoltzmannProba}. The proofs of 2. and 3. are similar using now 2. and 3. of Theorem \ref{thm:marginals}.
\end{proof}

The fact that \eqref{eq:kernel-A}, \eqref{eq:kernel-Abar} and \eqref{eq:kernel-Aplus} are Markov kernels (\emph{i.e.} the r.h.s. in these formulas sums to $1$) is not obvious. We provide an algebraic proof of this fact in the appendix. These formulas yield
remarkable identities. For example, considering \eqref{eq:kernel-Abar} with the set $A = \{ 0,2,4, \ldots, 2n\}$ and making a change of variable in the summation, we obtain the  eye-catching equality valid for every $n\in\N$ (using the convention $\prod_1^{0} = 1$),
\begin{equation}\label{eq:jolie_id}
\sum_{\substack{k\in \N^* \\0\leq x_1 < \ldots < x_k \leq n}} 
\prod_{i=1}^{k-1}(2(x_{i+1} - x_i) - 1) \; = \; 3^n.
\end{equation}

Corollary \ref{cor:Markov} tells us that the animals defined here can be interpreted as  systems of particles evolving with time. For instance, consider the uniform infinite pyramid $\bar{\A}$ and its associated Markov process $(\bar{A}_n)$. Then, $\bar{A}_n$ is a set of particles alive at time $n$ which, at time $n+1$, die after giving birth to new particles located on their immediate left and right. The new configuration $\bar{A}_{n+1}$ is selected among all non-empty subsets of $[\bar{A}_n]$ with a probability proportional to the ``energy'' $\eta(\bar{A}_n)$. The particular product form of $\eta(\cdot)$ means particles interact only with their direct left and right neighbors, not with particles further away (see Proposition \ref{prop:independance} for a rigorous statement).  Moreover, the strength of each interaction is related to the distance between the corresponding neighbor particles. 

In the case of the Boltzmann pyramid $\A$, the energy associated with a configuration $A_n$ is equal to $(\min A_n + 1)\eta(A_n) = \eta(A_n \cup \{-2\})$. This means that the system evolves in a similar way as the uniform infinite pyramid $\bar{\A}$ when adding an immortal and unmovable phantom particle at position $-2$. We already tacitly exploited this analogy during the proof of Theorem  \ref{thm:marginals} when we chose to condition the walk $S$ on the seemingly arbitrary event $\{S_{\tau_{-1}} = -2\}$. 

In the case of the infinite non-negative pyramid $\bar{\A}^+$, the energy of a configuration $\bar{A}^+_n$ is equal to $\eta(\bar{A}^+_n \cup \{-2\}) (\max \bar{A}^+_n + 2)$. The additional factor $\max \bar{A}^+_n + 2$ may be interpreted as a conditioning of the BHP never to die out (recall that $\mathbf{h}^+(x) \defeq (x + 2)\Ind{x\geq 0}$ defined in Lemma~\ref{lem:exitproblemS} is the $h$-transform associated with conditioning the walk $S$ to stay non-negative, as already mentioned in Remark \ref{rem:thmUIP+}) . This factor acts as a repulsive force that pushes the right-most particle away from the origin. 

Finally, we point out that the interactions in these particle systems are ``long range'' since the energy depends on the distance between neighbor particles, even when they are arbitrary far away. However, in the case of the BHP and the UIP, they is a way to add invisible ``anti-particles'' to the model that mediate the force between the original particles in such way that all interactions become purely local. This property is the manifestation of an intertwining relationship at the level of the kernels that we study it in detail in a forthcoming paper~\cite{HMSS24+}. 

\subsection{Local limit of directed animals with a given source}

Up to now, we only considered the local limit of uniformly sampled pyramids and half-pyramids. In view of the Markov property of the UIP and UIP+ discussed above, it is also natural to construct the local limit of uniformly sampled directed animals starting from an arbitrary source set. This turns out to be a by-product of our analysis.

\begin{corollary}\label{cor:LocalLimitGenSourceSet}
Let $\D_0 \subset \ZxN$ be a finite set of vertices on the floor. 
\begin{enumerate}
    \item For each $n$, let $\B^n$ denote a directed animal chosen uniformly at random among all directed animals with $n$ vertices and with source $\B^n_0 = \D_0$. The sequence $(\B^n)$ converges in law, in the local limit sense, towards the infinite random animal with Markov kernel $\bar{Q}$ given by  \eqref{eq:kernel-Abar} starting from the initial configuration $\D_0$.
    \item Assume that all vertices of $\D_0$ have non-negative $x$-coordinates. For each $n$, let $\B^{n,+}$ denote a directed animal chosen uniformly at random among all non-negative animals with $n$ vertices and with source $\B^{n,+}_0 = \D_0$. The sequence $(\B^{n,+})$ converges in law, in the local limit sense, towards the infinite random non-negative animal with Markov kernel $\bar{Q}^+$ given by \eqref{eq:kernel-Aplus} starting from the initial configuration $\D_0$.
\end{enumerate}
\end{corollary}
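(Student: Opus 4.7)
The plan is to extend the arguments of Theorems \ref{thm:loclimitUIP} and \ref{thm:loclimit_UIHP} to accommodate an arbitrary source set. Write $\D_0 = \{(a_1,0),\ldots,(a_k,0)\}$ with $a_1 > a_2 > \ldots > a_k$. By Proposition \ref{key-prop-walk}, every finite animal with source $\D_0$ is simple and corresponds via $\Psi$ to a path $(x_0,x_1,\ldots)$ with $x_0 = a_1$ such that the first $k-1$ strict descents of the running infimum (i.e.\ undershoots by at least two units below the current minimum) land exactly at $a_2,\ldots,a_k$, after which the path behaves as a shaved walk anchored at $a_k$. This encoding is the natural generalization of the pyramid case ($k=1$, $a_1=0$) treated in the main theorems.

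First I would identify the candidate limit path $\bar{X}$. Let $S$ be the animal walk started at $a_1$, and let $\mathcal{E}$ be the event that for each $i=1,\ldots,k-1$, the first strict descent after the walk reaches $a_i$ lands exactly at $a_{i+1}$. Using the geometric law of negative jumps of $S$ together with Lemma \ref{lem:exitproblemS}, $\P(\mathcal{E})$ is positive and explicitly computable. Conditionally on $\mathcal{E}$, the process obtained by concatenating $S$ up to the hitting time of $a_k$ with an independent shaved walk $\shaved{S}$ shifted to start at $a_k$ (for part 1), or with an independent $\htr$-h-transformed shaved walk $\shaved{S}^{-}$ after the $y$-axis mirror sending $\D_0\subset \N\times\{0\}$ to its reflection (for part 2), yields the desired infinite path $\bar{X}$. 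Dropping dominoes along $\bar{X}$ produces an infinite simple animal whose marginal law on $B(r)$ can then be computed by the same cliff-decomposition argument as in the proof of Theorem \ref{thm:marginals}; this identifies it with the Markov chain of kernel $\bar{Q}$ (respectively $\bar{Q}^+$) started at $\D_0$.

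Next, the plan is to establish the convergence $\P(\B^n \cap B(r) = \C) \to \P(\bar{\B} \cap B(r) = \C)$ for each admissible pattern $\C$. As in Proposition \ref{prop:pushforward}, the push-forward by $\Psi$ of the uniform measure on animals with source $\D_0$ and $n$ vertices identifies with the law of $S$ started at $a_1$, conditioned on $\mathcal{E}$ and on having its next strict descent (a drop to $a_k - 1$) occur at time $n$. Since $S$ is right-continuous, the event $\{\B^n \cap B(r) = \C\}$ depends only on the walk up to the first time its minimum reaches $a_k - r - 1$; an analogue of the Fatou-plus-summation argument of Theorem \ref{thm:loclimitUIP}, combined with the renewal asymptotics of Lemma \ref{lem:renewal} (respectively Lemma \ref{lem:renewal2}) applied to the portion of the walk after $\tau_{a_k}$, then shows that the contribution of the event that this ladder time exceeds $n$ is negligible and that the probability on the complement converges to the announced limit.

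The main obstacle is essentially notational: one has to bookkeep the event $\mathcal{E}$ as a product of $k-1$ independent, positive-probability events (thanks to the geometric undershoots of $S$) so that the renewal asymptotics, originally stated for the walk started at $0$, apply verbatim to the walk restarted from $a_k$. Part 2 additionally requires the $y$-axis reflection together with the $\htr$-h-transform of Lemma \ref{lemma:harmonicH}, exactly as in Theorem \ref{thm:loclimit_UIHP}; no new probabilistic idea is needed beyond handling the initial path segment that encodes the source set $\D_0$.
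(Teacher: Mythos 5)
Your proposal is correct in outline, but it takes a genuinely different and considerably more laborious route than the paper. You re-derive everything at the level of the walk encoding: you extend $\Psi$ to animals with source $\D_0$, identify the candidate limit path as a hybrid process ($S$ started at $a_1$ conditioned on its strict descents landing at $a_2,\ldots,a_k$, concatenated with an independent shaved walk from $a_k$), verify that this hybrid walk pushes forward to the uniform measure (one can check, as in Proposition~\ref{prop:pushforward}, that the probability of each valid $n$-step path under the hybrid walk is the constant $2^{a_k-a_1}/3^n$, so this step does go through), and then repeat both the Fatou-plus-summation convergence argument of Theorems~\ref{thm:loclimitUIP}--\ref{thm:loclimit_UIHP} and the cliff-decomposition computation of Theorem~\ref{thm:marginals} to identify the limiting kernel. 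In contrast, the paper's proof is a short coupling argument: it picks a fixed finite pyramid $\C$ whose top layer $\C_r$ sits at the positions of $\D_0$, stacks the pattern $\D$ on top of $\C$ to form a pyramid $\tfrac{\D}{\C}$, and uses the elementary fact that the restriction of a uniform measure to a subset is uniform to write $\P(\B^n\cap B(h)=\D)$ as a ratio of two marginal probabilities for the uniform pyramid $\A^m$. Both the numerator and denominator then converge by Theorems~\ref{thm:loclimitUIP} and~\ref{thm:marginals}, which have already been established; the telescoping of the $\eta$-factors immediately yields the Markov kernel $\bar Q$ started from $\D_0$. The paper's route therefore requires no new renewal estimate, no new uniformity-of-pushforward check, and no new marginal computation, whereas your route would require carefully re-working all of those (and in part~2, the extra interplay between the $y$-axis reflection and the $\htr$-transform for the conditioned segment up to $a_k$, which you gloss over). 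Your approach does buy a self-contained path-level description of the limit object, but at the cost of redoing most of the technical machinery.
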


\begin{proof}
We use a coupling argument. Fix a directed animal $\D$ of height $h \in \N$. There exist $r\in\N$ and a finite pyramid $\C$ of height $r$ such that $x(\C_r) = x(\D_0)$ \emph{i.e.} the vertices on the last layer of $\C$  are at the same position as the source of $\D$. We denote by $\frac{\D}{\C}$ the pyramid obtained by stacking $\D$ on top of $\C$ (merging layers $\C_r$ and $\D_0$ together so that the resulting pyramid has height $r + h - 1$ and has $|\C| + |\D| - |\C_r|$ vertices in total).

For each $n$, let $\A^n$ denote a uniform random pyramid with $n$ vertices and let $\B^n$ denote a uniform random directed animal with $n$ vertices and source set $\D_0$. Since the restriction of the uniform measure on a subset remains uniform, we find that, thanks to 2. of Theorem \ref{thm:marginals}, 
\begin{multline*}
 \P( \B^n \cap B(h) = \D) \\
= \frac{\P( \A^{n + |\D| - |\D_0|} \cap B(r+h-1) =  \frac{\D}{\C})}{\P( \A^n \cap B(r) = \C)}  
\quad \underset{n\to\infty}{\longrightarrow} \quad
\frac{\eta(\D_h) 3^{|\D_h| - |\frac{\D}{\C}|} }{\eta(\C_r) 3^{|\C_r| - |\C|}}
= \frac{\eta(\D_h)3^{|\D_h| - |\D|}}{\eta(\D_0)}.
\end{multline*}
This proves Item 1. The proof of 2. is similar.
\end{proof}

\section{Some properties of the local limits\label{sec:properties}} 

\subsection{Computation of some marginals for the UIP \label{subsec:variousprop}}

Recall the notation $\bar{\A}$ for the UIP and $\bar{A}_n  \defeq x(\bar{\A}_n)$ for the set of positions (\emph{i.e.} $x$-coordinates) of the particles at the $n$-th layer. Recall also the definition of an admissible set $A$ and the notation $[A] \defeq (A-1) \cup (A+1)$ defined at the top of Section \ref{subsec:particlessystem}. 
\begin{proposition} 
Let  $B \subset A$ be an admissible set. It holds:
\begin{equation}
\label{eq:subset-marg}
\P(\bar{A}_{n+1} \subset [B] \, | \, \bar{A}_n=A) = \frac{\eta(B)}{\eta(A)} 3^{|B|-|A|}.
\end{equation}
\end{proposition}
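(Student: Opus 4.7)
The plan is to reduce the statement to a direct summation over the Markov kernel $\bar Q$ of Corollary \ref{cor:Markov}, and then recognize the resulting sum as the normalization identity for $\bar Q$ starting from configuration $B$ rather than $A$. The preliminary observation is that since $B\subset A$ we have $\aug{B}\subset \aug{A}$, so every subset $C\subset \aug{B}$ is automatically an admissible next state from $A$.

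First I would write, using the definition \eqref{eq:kernel-Abar} of the Markov kernel,
\begin{equation*}
\P(\bar{A}_{n+1} \subset \aug{B} \, | \, \bar{A}_n=A)
\; = \; \sum_{\substack{C\subset \aug{B}\\ C\neq \emptyset}} \bar{Q}(A,C)
\; = \; \frac{1}{\eta(A)\,3^{|A|}} \sum_{\substack{C\subset \aug{B}\\ C\neq \emptyset}} \eta(C).
\end{equation*}
This step just uses the Markov property and the explicit form of $\bar Q$.

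Next I would identify the inner sum. Writing the same normalization that expresses the fact that $\bar{Q}(B,\cdot)$ is a probability kernel (which is exactly the content of the identity behind \eqref{eq:jolie_id}, proved algebraically in the appendix), we have
\begin{equation*}
\sum_{\substack{C\subset \aug{B}\\ C\neq \emptyset}} \bar{Q}(B,C)  = 1 \qquad \Longleftrightarrow \qquad \sum_{\substack{C\subset \aug{B}\\ C\neq \emptyset}} \eta(C) = \eta(B)\,3^{|B|}.
\end{equation*}
Substituting this into the previous display immediately yields $\P(\bar{A}_{n+1} \subset \aug{B} \, | \, \bar{A}_n=A)=\frac{\eta(B)}{\eta(A)}\,3^{|B|-|A|}$, which is the claimed formula.

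The main (and essentially only) obstacle is ensuring that the inner sum matches exactly the normalization for $\bar Q(B,\cdot)$; the key point is that the set of non-empty subsets of $\aug{B}$ is intrinsic to $B$ (independent of the ambient set $A$), so the same combinatorial identity applies. A quick sanity check: taking $B=A$ gives probability $1$, as expected, and taking $B$ to be a singleton $\{b\}$ recovers the fact that $\bar A_{n+1}$ is supported in $\{b-1,b+1\}$ with total mass $\eta(\{b\})3^{1-|A|}/\eta(A)=3^{1-|A|}/\eta(A)$, matching a direct computation from $\bar Q(A,\cdot)$.
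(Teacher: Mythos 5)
Your proof is correct and follows essentially the same route as the paper: expand the conditional probability using the explicit kernel $\bar Q(A,\cdot)$ from \eqref{eq:kernel-Abar}, then recognize the inner sum $\sum_{\emptyset\neq C\subset [B]}\eta(C)=\eta(B)3^{|B|}$ as the same combinatorial identity that makes $\bar Q(B,\cdot)$ a probability kernel. The paper states the closed form of the sum directly, while you phrase it as the normalization of $\bar Q(B,\cdot)$, but these are the same observation.
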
 

\begin{proof}
By definition of the Markov kernel of the UIP, eq. \eqref{eq:kernel-Abar}, the sum obtained by expanding the probability under consideration has a closed form:
\begin{equation*}
\P(\bar{A}_{n+1} \subset [B] \;|\; \bar{A}_n = A) = \sum_{\emptyset \neq B' \subset [B]}   \frac{\eta(B')}{3^{|A|}\eta(A)} = \frac{\eta(B)}{\eta(A)}3^{|B|-|A|}.
\end{equation*}
\end{proof}

An important property of the UIP is that the distribution of particles on two (sufficiently) disjoints intervals at a given layer are independent conditionally on the existence of a particle that separates these intervals at the previous layer. 
\begin{proposition}[\textbf{Independence Property}]\label{prop:independance} Let $A$ be an admissible set and let $a\in \Z$ be such that $a$ or $a+1$ is in $A$. Then, conditionally on the event $\{A_n = A\}$, the two random sets $\bar A_{n+1}\cap \rrbracket-\infty;a-1\rrbracket$ and $\bar A_{n+1}\cap \llbracket a+2;\infty\llbracket$ are independent. Furthermore:
\begin{align*}
&\P( \bar A_{n+1}\cap \rrbracket-\infty;a-1\rrbracket \in \bigcdot \;|\; A_n = A) = \P( \bar A_{n+1}\cap \rrbracket-\infty;a-1\rrbracket \in \bigcdot \;|\; A_n = A \cap \rrbracket-\infty;a+1\rrbracket),\\    
&\P( \bar A_{n+1}\cap \llbracket a+2;\infty\llbracket \in \bigcdot \;|\; A_n = A) = \P( \bar A_{n+1}\cap \llbracket a+2;\infty\llbracket \in \bigcdot \;|\; A_n = A \cap \llbracket a;\infty\llbracket).
\end{align*}
\end{proposition}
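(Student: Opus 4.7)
The plan is to compute the joint conditional distribution directly from the kernel formula $\bar{Q}(A,B) = \eta(B)/(\eta(A)\,3^{|A|})$ and exploit the multiplicative structure of $\eta$. Since $A$ is admissible, at most one of $a, a+1$ lies in $A$; by hypothesis exactly one does, call it $c$. I will treat the case $c=a$ in detail (the case $c=a+1$ is handled identically after swapping the roles of $a$ and $a+1$). Write $[A] = L \sqcup M \sqcup R$ with
$$
L \defeq [A]\cap \rrbracket-\infty;a-1\rrbracket, \qquad M \defeq [A]\cap\{a,a+1\} = \{a+1\}, \qquad R \defeq [A]\cap \llbracket a+2;\infty\llbracket,
$$
so that every candidate $B\subset[A]$ splits uniquely as $B=B_-\sqcup B_0\sqcup B_+$. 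A short parity check shows $L$ is determined by $A_L\defeq A\cap\rrbracket-\infty;a+1\rrbracket$ and $R$ by $A_R\defeq A\cap\llbracket a;\infty\llbracket$; the two overlap only at $c$.

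The heart of the proof is the factorization
$$
\sum_{B_0\subset M} \eta(B_-\cup B_0\cup B_+) \;=\; F(B_-)\,G(B_+),
$$
where $F(\emptyset)=G(\emptyset)=1$, and $F(B_-) \defeq \eta(B_-\cup\{a+2\})$, $G(B_+) \defeq \eta(\{a\}\cup B_+)$ otherwise. I verify this by case analysis on the emptiness of $B_\pm$. In the generic case where both are nonempty, setting $\alpha \defeq a - \max B_-$ and $\gamma \defeq \min B_+ - a$, the two contributions from $B_0\in\{\emptyset,\{a+1\}\}$ sum to $[(\gamma+\alpha-1) + \alpha(\gamma-2)]\,\eta(B_-)\eta(B_+)$, and the elementary identity $(\gamma+\alpha-1) + \alpha(\gamma-2) = (\alpha+1)(\gamma-1)$ yields the desired product; the factors $(\alpha+1)\eta(B_-)$ and $(\gamma-1)\eta(B_+)$ are then recognized as ``virtual particle'' insertions at $a+2$ and $a$ respectively. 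The boundary cases (one or both of $B_\pm$ empty) are handled directly, with the only point of care being that when $B_-=B_+=\emptyset$ the forbidden state $B=\emptyset$ is automatically excluded, forcing $B_0=\{a+1\}$ and giving $\eta(\{a+1\})=1$, consistently with $F(\emptyset)G(\emptyset)$.

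Dividing by $\eta(A)\,3^{|A|}$ will produce the joint formula
$$
\P\bigl(\bar A_{n+1}\cap L = B_-,\ \bar A_{n+1}\cap R = B_+\mid \bar A_n = A\bigr) \;=\; \frac{F(B_-)\,G(B_+)}{\eta(A)\,3^{|A|}},
$$
whose product form is exactly the conditional independence. To pin down the marginals, I will apply the same factorization to the chain started from the smaller set $A_L$: since $R$ is then empty, the formula collapses to $\P(\bar A_{n+1}\cap L = B_-\mid \bar A_n = A_L) = F(B_-)/(\eta(A_L)\,3^{|A_L|})$, which depends on the original $A$ only through $A_L$. Matching this with the $B_-$-marginal of the joint law reduces, after cancellation, to the normalization identities $\eta(A) = \eta(A_L)\eta(A_R)$ (the gaps of $A$ split cleanly at $c$) and $|A_L|+|A_R|=|A|+1$, both immediate. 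The right marginal is symmetric. The only substantive obstacle is the small algebraic identity at the heart of the factorization; everything else is bookkeeping.
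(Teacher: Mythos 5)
Your proof is correct and takes essentially the same route as the paper's: both sum the kernel weight $\eta(B_-\cup B_0\cup B_+)$ over the middle part $B_0$ and obtain the product form $F(B_-)\,G(B_+)/(\eta(A)\,3^{|A|})$, which is the whole argument for conditional independence (the paper runs the computation assuming $a+1\in A$, you assume $a\in A$; this is immaterial). The one imprecision is at the end: matching the $B_-$-marginal of the joint law with the law started from $A_L$ does \emph{not} reduce only to the two bookkeeping identities $\eta(A)=\eta(A_L)\eta(A_R)$ and $|A_L|+|A_R|=|A|+1$ — after those substitutions one still needs the non-trivial evaluation $\sum_{B_+\subset R}G(B_+)=\eta(A_R)\,3^{|A_R|-1}$, which you have not established. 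The cleanest way to close this is to bypass it entirely: you have shown that both $\P(\bar A_{n+1}\cap L=B_-\mid A)$ and $\P(\bar A_{n+1}\cap L=B_-\mid A_L)$ are probability distributions on $2^L$ proportional to $F(B_-)$, hence they are equal, with no further identity needed.
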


This remarkable property should also hold for the BHP (due to its close connection to the UIP) but should not be expected for the UIP+.

\begin{proof} Recall that $Q$ denotes the kernel of the UIP defined in \eqref{eq:kernel-A}. Let us assume for example that $a+1 \in A$ (hence $a\notin A$ because $A$ is admissible), then for any $B'\subset A\cap \rrbracket-\infty;a-1\rrbracket$, $B''\subset A\cap\llbracket a+2;\infty \llbracket$,
\begin{align*}
    &\bar Q(A,\{B:B\cap \rrbracket-\infty;a-1\rrbracket = B',B\cap\llbracket a+2;\infty \llbracket = B''\})\\
    &=\frac{\eta(B'\cup B'')+\eta(B'\cup \{a\}\cup B'')}{3^{|A|}\eta(A)}\\
    &=\frac{\eta(B')\eta(B'')((\min(B'')-\max(B')-1)+(\min(B'')-a-1)(a-\max(B')-1))}{3^{|A|}\eta(A)}\\
    &=3\frac{\eta(B')(a-\max(B'))}{3^{|A\cap \rrbracket -\infty;a+1\rrbracket|}\eta(A\cap \rrbracket -\infty;a+1\rrbracket)}\frac{\eta(B'')(\min(B'')-a)}{3^{|A\cap\llbracket a;\infty\llbracket|}\eta(A\cap\llbracket a;\infty\llbracket)}.
\end{align*}
By symmetry, the case $a\in A$ is identical.
\end{proof}

As as direct consequence, we can compute the probabilities that the extremal particles at a given layer move further away at the next layer. 
\begin{proposition}\label{prop:minmaxmoves}
It holds that
$$\P(\max \bar{A}_{n+1} - \max \bar{A}_{n}= +1 \; | \;  \bar{A}_n)= \P(\min \bar{A}_{n+1} - \min  \bar{A}_{n} = -1 \; | \; \bar{A}_n) = \frac 2 3 \cdot$$
Furthermore, if $\bar{A}_{n}$ has a least $2$ elements, then the events $\{\max \bar{A}_{n+1} - \max \bar{A}_{n}= +1\}$ and $\{\min \bar{A}_{n+1} - \min  \bar{A}_{n} = -1\}$ are independent conditionally on $\bar{A}_n$.
\end{proposition}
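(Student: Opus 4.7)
Condition on $\bar A_n = A$ and write $M = \max A$, $m = \min A$. Because $\bar A_{n+1}\subset[A]$ and $M+1 = \max[A]$, the event $\{\max\bar A_{n+1}=M+1\}$ coincides with $\{M+1\in\bar A_{n+1}\}$, and symmetrically $\{\min\bar A_{n+1}=m-1\}=\{m-1\in\bar A_{n+1}\}$. The first step is to reduce these probabilities to singleton computations by means of Proposition~\ref{prop:independance}. Applying it with $a = M-1$ (which satisfies $a+1 = M \in A$), the conditional law of $\bar A_{n+1}\cap\llbracket M+1;+\infty\llbracket$ given $\bar A_n = A$ equals its conditional law given $\bar A_n = A\cap\llbracket M-1;+\infty\llbracket$. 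Admissibility of $A$ forbids $M-1\in A$, so this latter set is exactly $\{M\}$. The symmetric choice $a = m$ achieves the same reduction for the minimum.

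Next, the kernel formula \eqref{eq:kernel-Abar} applied at $\bar A_n = \{M\}$ gives exactly three possible values for $\bar A_{n+1}$, namely the non-empty subsets of $\{M-1,M+1\}$. Each has $\eta$-weight one (singletons by definition, and $\eta(\{M-1,M+1\}) = (M+1)-(M-1)-1 = 1$), while $\eta(\{M\})\cdot 3^{1} = 3$, so each of the three configurations has conditional probability $1/3$. The event $\{M+1\in\bar A_{n+1}\}$ is realized in two of them, giving $2/3$. The computation for $\{m-1\in\bar A_{n+1}\}$ is identical after reducing to $\bar A_n = \{m\}$.

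For the conditional independence when $|A|\geq 2$, I apply Proposition~\ref{prop:independance} once more with $a = m \in A$. Admissibility together with $|A|\geq 2$ forces $M \geq m+2$, so $M+1 \geq m+3 > m+2$; consequently $\{M+1\in\bar A_{n+1}\}$ is measurable with respect to $\bar A_{n+1}\cap\llbracket m+2;+\infty\llbracket$, while $\{m-1\in\bar A_{n+1}\}$ is measurable with respect to $\bar A_{n+1}\cap\rrbracket-\infty;m-1\rrbracket$. Proposition~\ref{prop:independance} then yields the asserted conditional independence directly. No substantive obstacle is anticipated: once the kernel formula \eqref{eq:kernel-Abar} and the spatial independence of Proposition~\ref{prop:independance} are in hand, the argument reduces to selecting the correct separating index $a$ and invoking admissibility to exclude unwanted elements of $A$.
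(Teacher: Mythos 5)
Your proof is correct and follows essentially the same route as the paper's: split via Proposition~\ref{prop:independance} at the extremal particle, reduce to the singleton configuration $\{M\}$ (resp.\ $\{m\}$), evaluate the kernel there, and invoke the same spatial independence once more for the product structure. The only difference is that you spell out the singleton kernel computation and the measurability bookkeeping that the paper leaves implicit.
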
 

\begin{proof} We simply apply Proposition \ref{prop:independance}, splitting at $a+1 \defeq \max A_n$:
    $$\P(\max \bar{A}_{n+1} - \max \bar{A}_{n}= +1  |  \bar{A}_n, \max(\bar{A}_n) \! = \! a+1)=\P(\bar{A}_{n+1}\cap \llbracket a+2;\infty\llbracket= \{a+2\}  |   \bar{A}_n\! = \! \{a+1\})=\frac{2}{3}.$$
    The proof for the $\min$ is the same and the independence is also a direct consequence of the previous proposition. 
    \end{proof}

Proposition \ref{prop:independance} can be expressed as a restriction property for the Markov kernel of the UIP: given an integer interval $I$ with  particles present on both side of $I$, the configuration of particles outside $I$ has no influence on the configuration of particle strictly inside $I$ at the next generation.
\begin{proposition} 
\label{prop:restriction}
Let $a<b $ be two elements of an admissible subset $A$. Then, for any subset $B \subset [A] \cap \llbracket a+1 ; b-1 \rrbracket$, we have
$$\P(\bar{A}_{n+1} \cap \llbracket a+1 ; b-1 \rrbracket  = B \;  | \;  \bar{A}_{n}=A) = \P(\bar{A}_{n+1}  \cap \llbracket a+1 ; b-1 \rrbracket =B  \;  | \;  \bar{A}_{n} = A \cap \llbracket a ; b \rrbracket).$$
\end{proposition}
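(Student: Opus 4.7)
The plan is to derive Proposition \ref{prop:restriction} by applying Proposition \ref{prop:independance} twice: first with a splitting point sitting just to the left of $a$, then with a splitting point at $b$. Each application lets one forget the portion of $A$ lying beyond the splitting point, so two applications together strip away everything outside $\llbracket a; b\rrbracket$.

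First, I would invoke Proposition \ref{prop:independance} with splitting point $c = a-1$, which is legitimate since $c+1 = a \in A$. It yields that the conditional law of $\bar A_{n+1}\cap \llbracket a+1; \infty\llbracket$ given $\bar A_n = A$ depends on $A$ only through $A \cap \llbracket a-1;\infty\llbracket$. Since $A$ is admissible and $a \in A$, the integer $a-1$ has opposite parity to the elements of $A$, hence $A\cap\llbracket a-1;\infty\llbracket = A\cap\llbracket a;\infty\llbracket$. The event $\{\bar A_{n+1}\cap\llbracket a+1; b-1\rrbracket = B\}$ is measurable with respect to $\sigma(\bar A_{n+1}\cap \llbracket a+1;\infty\llbracket)$, so
$$\P(\bar A_{n+1}\cap \llbracket a+1; b-1\rrbracket = B \mid \bar A_n = A) = \P(\bar A_{n+1}\cap \llbracket a+1; b-1\rrbracket = B \mid \bar A_n = A\cap \llbracket a;\infty\llbracket).$$

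Next, I would apply the symmetric form of Proposition \ref{prop:independance} to the (still admissible) set $A' \defeq A\cap \llbracket a;\infty\llbracket$ with splitting point $c' = b \in A'$. This shows that the conditional law of $\bar A_{n+1}\cap \rrbracket -\infty; b-1\rrbracket$ given $\bar A_n = A'$ depends on $A'$ only through $A' \cap \rrbracket -\infty; b+1\rrbracket$, which, by the same parity argument applied to $b+1$, equals $A \cap \llbracket a; b\rrbracket$. Since $\bar A_{n+1}\cap \llbracket a+1; b-1\rrbracket$ is a function of $\bar A_{n+1}\cap \rrbracket -\infty; b-1\rrbracket$, chaining the two reductions delivers the desired identity.

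No step here is genuinely difficult: Proposition \ref{prop:independance} encapsulates all the probabilistic content, and the only care required is the bookkeeping parity check that converts the half-line restrictions $\llbracket a-1;\infty\llbracket$ and $\rrbracket -\infty; b+1\rrbracket$ into the symmetric bracketed interval $\llbracket a; b\rrbracket$.
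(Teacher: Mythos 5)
Your proof is correct and follows the same route as the paper: two applications of Proposition \ref{prop:independance}, at the splitting pairs $\{a-1,a\}$ and $\{b,b+1\}$, chained together. The paper's proof is a one-liner stating exactly this; your write-up just makes the parity bookkeeping explicit.
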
 
\begin{proof}
We just apply Proposition \ref{prop:independance} twice: splitting at $\{a-1;a\}$ and $\{b;b+1\}$.
\end{proof}
This restriction property makes it possible to compute local events by enumerating all possible configurations. For instance, we can compute the distribution of the progeny of a given vertex in the UIP.
\begin{proposition} 
\label{prop:cherry}
Assume $A = \{ \ldots < a <b <c  < \dots \}$ is an admissible set with at least 3 elements, and set $j:=\max{\{b- a; 4\}}$ and  $k := \max{\{c- b; 4\}}$. Then: 
\begin{align*}
\P( b-1 \in \bar{A}_{n+1}, b+1 \in \bar{A}_{n+1}    |   \bar{A}_n=A) & = \frac{1}{3}\frac{(j-2)(k-2)}{(j-1) (k-1) }\\
\P(b-1 \notin \bar{A}_{n+1}, b+1 \in \bar{A}_{n+1} \, |  \bar{A}_n =A) & = \frac{1}{3}\frac{j (k-2)}{(j-1) (k-1) }\\
\P(b-1 \in \bar{A}_{n+1}, b+1 \notin \bar{A}_{n+1}  | \, \bar{A}_n=A) & = \frac{1}{3}\frac{(j-2)k}{(j-1) (k-1) }\\
\P(b-1 \notin \bar{A}_{n+1}, b+1 \notin \bar{A}_{n+1}   \,  | \, \bar{A}_n=A) & =  \frac{1}{3}\frac{(j+k)-1}{(j-1) (k-1) }\cdot
\end{align*}
In particular, 
\begin{align*}
\P(b+1 \in \bar{A}_{n+1}  \, | \,\bar{A}_n=A) & = \frac{2}{3}\frac{k-2}{k-1},
\end{align*}
and the events $\{b-1 \in \bar{A}_{n+1}\}$  and $\{b+1 \in \bar{A}_{n+1} \}$ are \textit{not} independent conditionally on $\bar{A}_n$.
\end{proposition}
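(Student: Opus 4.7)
The plan is to reduce to the case $A = \{a,b,c\}$ and then enumerate. All four events of interest are determined by $\bar A_{n+1}\cap\llbracket a+1,c-1\rrbracket$, so Proposition \ref{prop:restriction} (applied at both ends) lets me replace $A$ by $A\cap\llbracket a,c\rrbracket = \{a,b,c\}$. Under this reduction, $\bar A_{n+1}$ is a non-empty subset of $[A]=\{a-1,a+1,b-1,b+1,c-1,c+1\}$ with probabilities $\bar Q(A,B)=\eta(B)/(27\,\eta(A))$ and $\eta(A)=(j-1)(k-1)$ in the non-degenerate case $b-a,c-b\geq 4$.

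For the first formula, I would sum $\eta(B)$ over $B\subset[A]$ containing both $b-1$ and $b+1$. Writing such a $B$ uniquely as $B = B_L \sqcup \{b-1,b+1\} \sqcup B_R$ with $B_L\subset\{a-1,a+1\}$ and $B_R\subset\{c-1,c+1\}$, the splitting identity
\[
\eta(\{x_1<\ldots<x_p\}) \;=\; \eta(\{x_1,\ldots,x_i\})\cdot\eta(\{x_i,\ldots,x_p\})
\]
applied at $b-1$ and then at $b+1$ (using $\eta(\{b-1,b+1\})=1$) yields $\eta(B) = \eta(B_L\cup\{b-1\})\cdot\eta(\{b+1\}\cup B_R)$. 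A direct enumeration over the four choices of $B_L$ gives $\sum_{B_L}\eta(B_L\cup\{b-1\}) = 1+(j-1)+(j-3)+(j-3) = 3(j-2)$ and analogously $3(k-2)$ on the right, whose product divided by $27(j-1)(k-1)$ is the announced expression. The two joint probabilities involving exactly one of $b\pm 1$ follow by the same scheme (the factor containing $b\pm 1$ now runs over $B_L\cup\{b\pm 1\}$ without requiring the opposite vertex), producing $3j$ or $3k$ in place of $3(j-2)$ or $3(k-2)$; the fourth probability $\P(b-1\notin,\,b+1\notin\mid A)$ is most quickly obtained from $1-$(sum of the other three), which also cross-checks the computations.

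From the four joint probabilities, the marginal is
\[
\P(b+1\in\bar A_{n+1}\mid\bar A_n = A) \;=\; \frac{(j-2)(k-2)+j(k-2)}{3(j-1)(k-1)} \;=\; \frac{2(k-2)}{3(k-1)}.
\]
Comparing the product of the two marginals $\frac{4(j-2)(k-2)}{9(j-1)(k-1)}$ with the joint probability $\frac{(j-2)(k-2)}{3(j-1)(k-1)}$ shows that their ratio is the constant $3/4\neq 1$, so $\{b-1\in\bar A_{n+1}\}$ and $\{b+1\in\bar A_{n+1}\}$ are not conditionally independent given $\bar A_n$.

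The only subtle point is the degenerate cases $b-a=2$ (where $a+1=b-1$) and $c-b=2$ (where $c-1=b+1$): some positions in $[A]$ coincide and the above sums must be redone by hand, taking the overlaps into account. A direct verification shows that the resulting formulas agree with those obtained by substituting $j=4$ or $k=4$ into the non-degenerate expressions, which is precisely why the statement uses $j=\max\{b-a,4\}$ and $k=\max\{c-b,4\}$. I expect the main obstacle to be this parity bookkeeping rather than any conceptual difficulty, since the combinatorial content reduces to the factorization of $\eta$ across a separating vertex combined with Proposition \ref{prop:restriction}.
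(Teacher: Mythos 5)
Your proposal is correct and follows essentially the same approach as the paper's proof: reduce to $A = \{a,b,c\}$ via Proposition~\ref{prop:restriction}, then enumerate the $\eta$-weights of the admissible subsets of $[A]$ for each of the four events (the paper likewise obtains $3(j-2)\cdot 3(k-2)$, $3(j-2)\cdot 3k$, $3j\cdot 3(k-2)$ by direct enumeration, and $3^2(j+k-1)$ for the last; you obtain the last by complementation, which is a trivial variation). The degenerate cases $b-a=2$ or $c-b=2$ are, as you note, a short by-hand check, and indeed they reduce to the $j=4$ or $k=4$ specialisation of the generic formulas.
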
 

\begin{remark}\mbox{}
\label{rem:BacherEstimates}
In the case that $\bar{A}_n$ has at least two consecutive vertices, we deduce:
$$\P(a \in \bar{A}_{n+1} \,  | \, \bar{A}_n = \{ \ldots, a-1, a+1, \ldots\}) = \frac 4 9 \cdot $$
This estimate has to be compared with the following result of Bacher \cite{BAC12}:  the expected density of pairs of neighbours in a large random pyramid (pairs of vertices of the type $(x,y), (x+2,y)$ that both belong to the animal) has limit $1/4$, while that of ``cherries'' (triple of vertices of the type $(x,y), (x+1,y+1), (x+2,y)$ that all belong to the animal) has limit $1/9$.         
\end{remark}

\begin{proof}[Proof of Proposition \ref{prop:cherry}]
Thanks to Proposition \ref{prop:restriction}, we may assume wlog that $A = \{a <b <c\}$ (and not only $A =\{\ldots< a <b <c< \ldots\}$). Now, the combinatorics of the model when $A = \{a <b <c\}$ has $3$ vertices is easy, so we can simply compute the probabilities of every configurations of $\bar{A}_{n+1}$ agreeing with the event under consideration;  for instance in the case $j \geq 4$ and $k \geq 4$, we have that:
\begin{enumerate}
\item The event $\{b-1 \in \bar{A}_{n+1}, b+1 \in \bar{A}_{n+1}\}$ corresponds to $\bar{A}_{n+1} $ being one of the 16 subsets of $\{a-1,a+1, b-1,b+1 , c-1,c+1\}$ containing $b-1$ and $b+1$, and the sum of the weights $\eta$ of each of these 16 configurations subsets is then easily seen to evaluate to:
$$(j-1+ j-3 + j-3 + 1) \cdot 1 \cdot (k-1 + k-3 + k-3 + 1) = 3 (j-2) \cdot 3 (k-2).$$
\item Similarly, the event  $\{b-1 \in \bar{A}_{n+1}, b+1 \notin \bar{A}_{n+1} \}$ corresponds to $\bar{A}_{n+1}$ being one of the 16 subsets of $\{a-1,a+1, b-1 , c-1,c+1\}$ containing $b-1$,  whose sum of the weights is 3 $(j-2) \cdot 3 k$.
\item Exchanging the roles of $j$ and $k$, the event  $\{b-1 \notin \bar{A}_{n+1}, b+1 \in \bar{A}_{n+1}\}$  has weight $3 j \cdot 3 (k-2)$.
\item Last, the event $\{b-1 \notin \bar{A}_{n+1}, b+1 \notin \bar{A}_{n+1}\}$ has weight $3^2 \cdot( j+k-1)$ (using for instance the summation property of the kernel \eqref{eq:kernel-Abar}).
\end{enumerate}
The sum of the four weights above is $3^3 (j-1) (k-1)$, which concludes the proof in this case. The other cases are treated similarly.
\end{proof}

Perhaps the reader may feel (as the authors of this paper do) that the manipulations on the Markov kernel used above lack the explanatory power of a direct probabilistic interpretation. This is why we also provide below a proof based on the random walk description of the UIP with a given source which, while a bit longer, shows how the exploration enhanced in this work can be used to understand the restriction property and its corollaries. 

\begin{proof}[Proof of Propositions \ref{prop:minmaxmoves}, \ref{prop:restriction} and \ref{prop:cherry} using the random walk exploration]
 Our argument relies on the decomposition of the path of the animal walk exactly as in the proof of Theorem \ref{thm:marginals}. Fix an admissible set $C = ( z_{\ell} < z_{\ell-1}  < \ldots < z_1)$. 
 We want to study 
 $\bar{Q}(C, \bigcdot) = \P(\bar{A}_{r+1} \in \bigcdot\ |\; \bar{A}_r = C)$. 
 By translation invariance, we assume wlog that $z_\ell = 0$. Because of the Markov property of the UIP, the pyramid $\C$ below that constructs $C$ does not matter (as long as $C_r = C$) and can be chosen freely.  We take advantage of this fact. Recall the definition of an exposed vertex given in \eqref{def:exposedvertex} during the proof of Theorem \ref{thm:marginals}. The key observation is that we can always find $r$ and a finite pyramid $\C = (\c_0 \leT \ldots \leT \c_n)$ of height exactly $r$ such that $C_r = C$ and the only exposed vertices are those at maximum height $r$, \emph{c.f.} Figure \ref{fig:proofMargSource} for an illustration. 
 \begin{figure}
\begin{center}
\includegraphics[height=5cm]{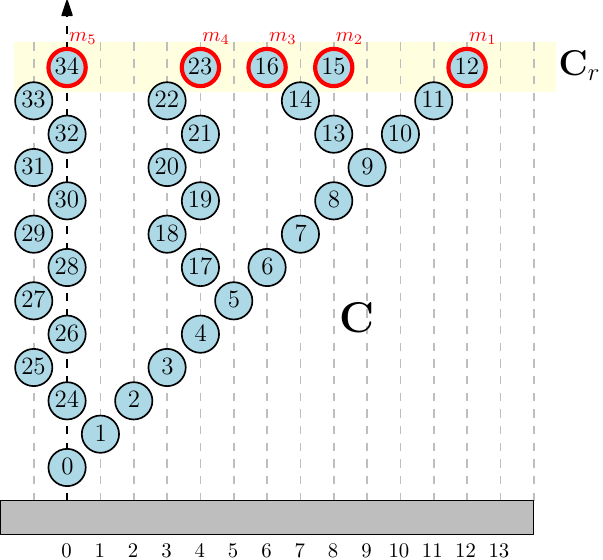}
\end{center}
\caption{\label{fig:proofMargSource}A pyramid $\C$ with $C_r = (z_5=0,z_4=4,z_3=6,z_2=8,z_1=12)$ and no interior exposed vertices (compare with Figure \ref{fig:proofMarg1}). Notice also how the vertex with index $m_i + 1$ is exactly below vertex $m_{i + 1}$ (possibly being equal).}
\end{figure}

In line with the notations introduced during the proof of Theorem \ref{thm:marginals}, we denote $x_k = x(\c_k)$ and define the indices $m_1,\ldots, m_l$ such that  $\C_r = \{ \c_{m_\ell} < \ldots  < \c_{m_1} \}$  (hence $x_{m_{k}} = z_k$ for all $1 \leq k \leq \ell$). Consider an infinite animal path that constructs a pyramid that coincides with $\C$ up to level $r$, the requirement that $\C$ has no interior exposed vertex implies that the decomposition of the path given in \eqref{eq:exc1} now takes the simpler form:
 $$
 x_0, \ldots, x_{m_1}, \mathbf{s}^1, x_{m_1+1}, \ldots x_{m_2}, \mathbf{s}^2, x_{m_2 + 1}, \ldots \ldots, x_{m_\ell}, \mathbf{s}^\ell.
 $$
Furthermore, because there is no interior exposed vertex, for each $k < \ell$, we have $x_{m_k + 1} = z_{k+1}$ (see again Figure \ref{fig:proofMargSource}). Therefore, the analysis carried in proof of Theorem \ref{thm:marginals} exactly tells us, in this case, that:
\begin{enumerate}
\item[(i)] For $k < \ell$, the excursion $\mathbf{t}^{k} = (x_{m_k}, \mathbf{s}^k, x_{m_k+1})$ is a cliff of the animal walk $S$ from $z_k$ to up to some $f_k \in \llbracket z_{k+1} + 2 ; z_k\rrbracket$ followed by a jump to $z_{k+1}$. Furthermore, $f_k$ has uniform distribution according to \eqref{uniform-cliff}.
\item[(ii)] The last excursion $\mathbf{t}^{\ell} = (x_{m_\ell}, \mathbf{s}^\ell)$ has no requirement: it just an infinite path of the shaved animal walk $\shaved{S}$.
\item[(iii)] All these excursions are independent.
\end{enumerate}

For $ k < \ell$, the excursion $\mathbf{t}^{k}$ is a cliff so, by definition, it splits in a unique way as a concatenation of smaller excursions  $$\mathbf{t}^{k} = (x_{m_k}, \mathbf{s}^k, x_{m_k+1}) = (
\mathbf{t}^{k,0},\mathbf{t}^{k,1}, \ldots, \mathbf{t}^{k,z_k-f_k},x_{m_k+1})$$ 
where each 
$\mathbf{t}^{k,i} = (\mathbf{t}^{k,i}(s), 0 \leq s \leq s^{k,i})$ describes the portion of the exploration path $(x_k)$ from the hitting time of $z_k-i$ to the first time it goes below it, namely: $\mathbf{t}^{k,i}(s)  \geq z_k-i$
for $0 \leq s \leq s^{k,i}$,  with equality for $s=0$. Similarly, the last excursion is a path of the shaved walk so it splits in a infinite number of excursions, $\mathbf{t}^{\ell} = (\mathbf{t}^{\ell,0},\mathbf{t}^{\ell,1}, \ldots)$ satisfying the same requirements. Furthermore, in view of (i), (ii) and (iii), conditionally on $(f_1, \ldots, f_{\ell-1}, f_{\ell} \defeq -\infty)$, the array 
$$(\mathbf{t}^{k,i} - z_k+i,\; 1 \leq k \leq \ell,\; 0\leq i \leq z_k-f_k)$$
is composed of i.i.d. excursions of the animal walk started at $0$ and killed when entering $\rrbracket -\infty; -1\rrbracket$.

A few simple observations about these excursions conclude this preparation. Let us say that $\mathbf{t}^{k,i}$ is trivial if its length $s^{k,i}$ is null, that is, if it consists in a single element, $z_{k}-i$, and non-trivial otherwise.
Then, the random excursion $\mathbf{t}^{k,i}$ is non-trivial with probability $2/3$; it is non-trivial and visits $0$ at time $0$ only with probability $1/3$; it  visits $0$ at time $0$ only with probability $2/3$.

The three propositions are now direct consequences of the decomposition in excursions.  First, consider the event that $z_{k}-1$ belongs to $\bar{A}_{r+1}$, it corresponds:
\begin{itemize}
\item in case $z_{k}-z_{k+1} \geq 4$, to the fact that the excursion $\mathbf{t}^{k,0}$ visits $z_k$ at time $0$ only (probability $2/3$) and $f_k < z_k$ (an independent event with probability $\frac{z_{k}- z_{k+1}-2}{z_{k}- z_{k+1}-1}$).
\item in case $z_{k}-z_{k+1} =2$, to the fact that the excursion $\mathbf{t}^{k+1,0}$ is non-trivial  (probability $2/3$) and $\mathbf t^{k,0}$ visits $z_k$ at time $0$ only (an independent event with probability $2/3$).
\end{itemize}
Also, the event that $z_{k}+1$ belongs to $\bar{A}_{r+1}$ corresponds:
\begin{itemize}
\item in case $z_{k-1}-z_{k} \geq 4$, to the fact the excursion $\mathbf{t}^{k,0}$ is non-trivial (probability $2/3$) and $f_{k-1}>z_{k}+2$ (an independent event with probability $\frac{z_{k-1}- z_{k}-2}{z_{k-1}- z_{k}-1}$). 
\item in case $z_{k-1}-z_{k} =2$, to the fact that the excursion $\mathbf{t}^{k,0}$ is non-trivial  (probability $2/3$) and $\mathbf t^{k-1,0}$ visits $z_{k-1}$ at time $0$ only (an independent event with probability $2/3$).
\end{itemize}

The probabilities of each of the four events in the system generated by $\{z_{k}-1 \in \bar{A}_{r+1}\}$ and $\{z_{k}+1 \in \bar{A}_{r+1}\}$ can be computed straightaway from the above description: this is the content of Proposition \ref{prop:cherry}, which comes now with little computations. To  give just one example, consider for instance the event $\{z_{k}-1 \in \bar{A}_{r+1}, z_{k}+1 \in \bar{A}_{r+1}\}$ in the case $z_{k-1}-z_{k}\geq 4$ and $z_{k}-z_{k+1}\geq 4$: it corresponds to $\mathbf{t}^{k,0}$ being
non-trivial and visiting $z_k$ at time $0$ only, with furthermore $f_k <z_k$ and $f_{k-1}>z_{k}+2$; the probability of the intersection of these three independent events is:
$$\frac{1}{3} \frac{(z_{k}- z_{k+1}-2)(z_{k-1}- z_{k}-2)}{(z_{k}- z_{k+1}-1)(z_{k-1}- z_{k}-1)}.$$

Also, observe that we may couple the UIP with source $A$ with the one with source $A \cap \{z_{k+1} ; z_k ; z_{k-1}\}$ by using the same excursions
$\mathbf{t}^{k-1,0},\ldots, \mathbf{t}^{k+1,0}$ with the same $f_{k-1}$ and $f_{k}$, 
so that each of the four events in the system generated by $\{z_{k}-1 \in \bar{A}_{r+1}\}$ and $\{z_{k}+1 \in \bar{A}_{r+1}\}$ is unchanged. In particular, their probabilities are the same: this is a special case of the restriction property, Proposition \ref{prop:restriction}.
The general case is not more difficult: a coupling of the UIP  with source $A$ with another one with source $A \cap \llbracket z_{m} ; z_k \rrbracket$, $k<m$, is achieved by taking the same excursions $\mathbf{t}^{k,0},\ldots,\mathbf{t}^{m,0}$ with the same $f_{k}, \ldots, f_{m-1}$ and then the  events in the system generated by 
$$\{z_{m}+1 \in \bar{A}_{r+1}\}, \{z_{m-1}-1 \in \bar{A}_{r+1}\}, , \ldots, \{z_{k+1}+1 \in \bar{A}_{r+1}\},  \{z_{k}-1 \in \bar{A}_{r+1}\}$$ are unchanged.

Proposition \ref{prop:minmaxmoves} on the extremal particles is also direct: the event $\{\max{\bar{A}_{r+1}} -\max{\bar{A}_r} = +1\}$ is the event that $\mathbf{t}^{1,0}$ is non-trivial (probability 2/3); the event  $\{\min{\bar{A}_{r+1}} - \min{\bar{A}_r} = -1\}$ is the event that $\mathbf{t}^{\ell,0}$  visits $z_\ell$ at time $0$ only (probability 2/3, as it should be). The two events are independent as soon as $\ell>0$.

\end{proof}

\subsection{Sausaging of the UIP\label{subsec:sausaging}}

We study here a geometrical property of the UIP. Again, we denote by $\bar{A}$ a UIP  with associated Markov layer process $(\bar{A}_n)$. We define the width of the layer at height $n$  by 
$$\Delta_n\defeq\max(\bar A_n)-\min(\bar A_n).$$
We say that $n \geq 0$ is a  \emph{pinching layer} if $\Delta_n=0$ which is equivalent to $|\bar A_n| = 1$. We denote by $\Ts$ the height of the first pinching layer (apart from the origin):
$$\Ts\defeq\inf\{n>0 : \Delta_n=0\}.$$
The following is the main result of this section:

\begin{theorem}[\textbf{Sausaging property of the UIP}]
\label{thm:saucissonnage}
Let $\bar\A$ be a uniform infinite pyramid. Then
$$\P(\Ts<\infty) = 1.$$
However, neither the pinching height nor the maximum width up to the first pinching layer are integrable:
$$\E[\Ts] \; = \; \E[\max\{\Delta_n:\, n<\Ts\}] \; = \; \infty.$$
\end{theorem}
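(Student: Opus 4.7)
The theorem splits into three claims---the almost-sure pinching $\P(T<\infty)=1$ and the non-integrability of $T$ and of $\max_{n<T}\Delta_n$---which I will treat in two parts.

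\textbf{Part 1: $\P(T<\infty)=1$.} The central tool is the $h$-transform relation between UIP and UIP+. Setting $h(A) := (\min A + 1)(\max A + 2)$ for admissible $A \subset \N$, Corollary \ref{cor:Markov} yields $\bar Q^+(A,B) = (h(B)/h(A))\,\bar Q(A,B)$, which shows that $h$ is harmonic for $\bar Q$ killed when exiting $\{A \subset \N\}$ and that UIP+ is the corresponding $h$-transform. The classical identity
$$
\P_A(\tau_- > n) \;=\; h(A)\,\E_A^+\!\left[\tfrac{1}{h(\bar A^+_n)}\right] \;\xrightarrow[n\to\infty]{}\; 0, \qquad \tau_- := \inf\{n : \min \bar A_n < 0\},
$$
then follows by dominated convergence: the integrand $1/h(\bar A^+_n)$ is bounded above by $1/2$ and tends to $0$ almost surely by the transience of UIP+ (Proposition \ref{prop:transienceUIP+}). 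Iterating via the strong Markov property and translation invariance yields $\liminf_n \min \bar A_n = -\infty$ almost surely, and by mirror symmetry of $\bar Q$, $\limsup_n \max \bar A_n = +\infty$ almost surely. To pass from this two-sided oscillation of the extremes to the pinching event, the plan is: consider the successive times $(\sigma_k)$ at which $\min \bar A_n$ hits a new record low (a.s.~infinitely many by the above), and show, using the spatial Markov / independence property (Proposition \ref{prop:independance}) together with the explicit kernel \eqref{eq:kernel-Abar}, that between two such record times there is a uniformly positive probability that the configuration passes through a singleton state. A Borel--Cantelli argument over these record epochs then gives $T<\infty$ almost surely, and, by the strong Markov property, the existence of infinitely many pinching layers.

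\textbf{Part 2: $\E[T]=\E[\max_{n<T}\Delta_n]=\infty$.} For the expected pinching time, I will exploit the random walk encoding of Theorem \ref{thm:loclimitUIP}: the UIP is built by piling dominoes along $\shaved S$, and the Harris-type decomposition (Remark \ref{rem:thmUIP}) represents the UIP as a pileup of independent BHPs glued along the deterministic spine $0,-1,-2,\ldots$ The goal is to establish a lower bound of the form $T \geq f(\tau_{-1})$ for an unbounded $f$, where $\tau_{-1}$ is the first descending ladder time of $S$. Since $\P(\tau_{-1} = n) \sim c n^{-3/2}$ by Lemma \ref{lem:renewal}, this gives $\E[T]=\infty$. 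For the maximum width, I combine the previous bound with Proposition \ref{prop:minmaxmoves}: when $|\bar A_n|\geq 2$, the extremes move outward by $1$ with probability $2/3$ each, conditionally independently; hence $\Delta_n$ stochastically dominates a random walk with positive drift, giving $\max_{k\leq n}\Delta_k \geq c n$ with high probability on $\{T>n\}$. Integrating in $n$ against the distribution of $T$ from the previous step then yields $\E[\max_{n<T}\Delta_n] \geq c\,\E[T] = \infty$.

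\textbf{Main obstacle.} The hardest point is the transition from oscillation of the extremes to the pinching conclusion in Part 1. Knowing that $\min \bar A_n$ and $\max \bar A_n$ each cross every integer level does not immediately imply their equality at some finite time, and the uniform lower bound on the ``singleton visit'' probability between consecutive record times of $\min \bar A_n$ will rely on careful spatial Markov manipulations controlling the interaction of the ``middle'' particles with both extremes. The explicit kernel formula \eqref{eq:kernel-Abar} and the restriction identities of Propositions \ref{prop:independance}--\ref{prop:cherry} should play a central role here.
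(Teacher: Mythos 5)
Your plan diverges substantially from the paper's proof, and at several points it contains real gaps or errors rather than just unfilled details.

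For $\P(T<\infty)=1$, you correctly use the $h$-transform and transience of the UIP+ to get $\liminf_n \min\bar A_n = -\infty$ and $\limsup_n \max\bar A_n = +\infty$, but you honestly flag that the step from oscillation of the extremes to an actual pinching layer is unproven, and this is not a gap that closes with "careful spatial Markov manipulations" between record epochs of $\min\bar A_n$. The difficulty is that the width $\Delta_n$ is a submartingale with a drift of order $1/\Delta_n$ (Proposition~\ref{prop:martingales}), so whether $\Delta_n$ returns to zero is genuinely delicate, analogous to recurrence versus transience of a Bessel-type process. The paper resolves this with a Lyapunov argument: it shows $\ln(1+\Delta_n)$ stopped before $\Delta_n < 4$ is a nonnegative supermartingale by combining the compensator bound $\E[\Delta_{n+1}-\Delta_n|\mathcal F_n]\le \tfrac{2}{9(\Delta_n-1)}$ with the variance lower bound $\E[(\Delta_{n+1}-\Delta_n)^2|\mathcal F_n]\ge 16/9$ from Proposition~\ref{prop:minmaxmoves}, then deduces recurrence of the translated chain $\bar A_n - \min\bar A_n$. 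Nothing like this appears in your plan, and without it the pinching is not established.

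For the non-integrability, both of your proposed routes fail. First, the paper explicitly observes (Remark after Theorem~\ref{thm:saucissonnage}) that the sausaging property has no simple translation in terms of the shaved animal walk $\shaved S$ or the BHP decomposition; a bound of the form $T\ge f(\tau_{-1})$ is not available because layers of the composite animal mix contributions from many excursions of $\shaved S$ in a complicated way. Second, and more seriously, your claim that "$\Delta_n$ stochastically dominates a random walk with positive drift" is false: the compensator $\frac{2}{3^{|\bar A_n|}\eta(\bar A_n)}$ of the submartingale $\Delta_n$ tends to $0$ as $\Delta_n$ grows, so the drift vanishes at infinity. Were your domination claim true, $\Delta_n$ would drift to $+\infty$ a.s.\ and there would be no pinching layers, contradicting the very first assertion of the theorem. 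The paper instead proves non-integrability of $\max\{\Delta_n: n<T\}$ by a one-line uniform integrability / optional sampling argument applied to the submartingale $(\Delta_{n\wedge T})$ started from $\{0,2\}$: if the maximum were integrable, then $\E[\Delta_T]\ge\E[\Delta_0]=2$, contradicting $\Delta_T=0$. Non-integrability of $T$ then follows from $\max\{\Delta_n : n<T\}\le \Delta_0 + 2T$ since increments of $\Delta_n$ are bounded above by $2$. You should replace your Part~2 with this martingale argument, which you already have the ingredients for via Proposition~\ref{prop:martingales}.
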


\begin{remark}
We stress out that the sausaging property of the UIP has no simple translation in term of the shaved animal walk $\shaved{S}$ that encode the UIP nor in term of the spine decomposition of the UIP (in term of BHPs) given in Remark \ref{rem:thmUIP}.  
\end{remark}

\begin{remark}[Sausaging conjecture for the UIP+]
We conjecture Theorem \ref{thm:saucissonnage} still holds after replacing the UIP by the UIP+, but we have been unable to prove it so far. 
\end{remark}

\begin{remark}[Transience conjecture for the UIP]\label{rmk:transl}
Theorem \ref{thm:saucissonnage} proves that $(\bar A_n-\min(\bar A_n))_{n \geq 0}$, which is again a Markov chain by translation invariance of the kernel of the UIP, is recurrent: it visits every admissible subset of $\N$ infinitely often a.s. In particular, there exist infinitely many pinching layers a.s. and we can define by induction $T_0 \defeq 0$ and $T_{k+1} \defeq \min\{ n> T_k : \Delta_n = 0\}$, so that $T_1=T$.
Now, the Markov property of the UIP implies that the successive locations of the pinching layers $X_k := \bar A_{T_{k}}$ (slightly abusing notation) define a symmetric random walk with i.i.d. non-degenerate increments, and such a walk is necessarilly oscillating because of Kolmogorov's $0-1$ law \emph{i.e.} $\limsup_{k} X_k=-\liminf_{k} X_k = \infty$ almost surely. This implies, by the intermediate value theorem and because the increments of $\max \bar A_n$ and $- \min \bar A_n$ are bounded from above by $1$, that, for any $x\in \Z$,
$$
\P(\hbox{$x \in \bar A_n$ for infinitely many $n$})  = \P(\hbox{$x \notin \llbracket\min(\bar A_n);\max(\bar A_n)\rrbracket$ for infinitely many $n$})  = 1.
$$
However, this statement does not answer the question of whether the  process $(\bar A_n)_{n \geq 0}$ is recurrent. In fact we believe the alternative holds true: we conjecture that the process of layers of the UIP $(\bar A_n)_{n \geq 0}$ is transient: every admissible subset of $\Z$ is visited only finitely many times a.s.
\end{remark}

\begin{remark}\label{rem:notlikekesten}
We commented on several occasions on the analogy between the UIP and Kesten's tree (defined as the local limit of uniformly distributed planar rooted trees). However, Theorem~\ref{thm:saucissonnage} now shows a very different behavior between directed animals and trees as it is well-known (\cite{AN71}, pp. 56-59) that the sequence of generation sizes in Kesten's tree grows to infinity a.s. (hence the tree has only finitely many pinching points a.s.). 
\end{remark}

The main tool in the proof of Theorem \ref{thm:saucissonnage} is the identification of martingales in the process of layers.

\begin{proposition}[\textbf{UIP martingales}]\label{prop:martingales}
Let $\bar\A$ be the UIP. Define the filtration $\mathcal{F}_n \defeq \sigma(\bar A_0, \bar A_1,\dots,\bar A_n)$. Then, with respect to this filtration:
\begin{enumerate}
\item
$(\max(\bar A_n))_{n\in\N}$ is a submartingale and more precisely:
$$\E[\max(\bar A_{n+1})|\mathcal{F}_n] = \max(\bar A_n)+\frac{1}{3^{|\bar A_n|}\eta(\bar A_n)} .$$
\item
$(\max( \bar A_n)+\min( \bar A_n))_{n\in\N}$ is a martingale.
\item
$(\min(\bar A_n)\max( \bar A_n))_{n\in\N}$ is a submartingale and more precisely:
$$\E[\min(\bar A_{n+1})\max( \bar A_{n+1})|\mathcal{F}_n] = \min(\bar A_n)\max( \bar A_n)+\frac{1}{3^{|\bar A_n|}\eta(\bar A_n)}.$$
\item
$(\Delta_n)_{n\in \N}$ is a submartingale:
$$\E[\Delta_{n+1}|\mathcal{F}_n] = \Delta_n+\frac{2}{3^{|\bar A_n|}\eta(\bar A_n)}.$$
\end{enumerate}
\end{proposition}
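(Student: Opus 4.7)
The plan is to compute each of the four conditional expectations directly from the explicit UIP Markov kernel $\bar Q(A,B)=\eta(B)/(3^{|A|}\eta(A))$ given by \eqref{eq:kernel-Abar}. Writing $a\defeq \min A$, $b\defeq \max A$, and using that $\sum_{\emptyset\neq B\subseteq [A]}\eta(B)=3^{|A|}\eta(A)$ (the fact that $\bar Q$ is a probability kernel, stated in Corollary~\ref{cor:Markov}), all four claims of the proposition reduce after multiplication by $3^{|A|}\eta(A)$ to the three core identities
\begin{align*}
I_1(A) &\defeq \sum_{\emptyset \neq B \subseteq [A]} (\max B - b)\eta(B) = 1,\\
I_2(A) &\defeq \sum_{\emptyset \neq B \subseteq [A]} (\min B - a)\eta(B) = -1,\\
I_3(A) &\defeq \sum_{\emptyset \neq B \subseteq [A]} (\max B - b)(\min B - a)\eta(B) = 1 + \Delta_n.
\end{align*}
Items~2 and~4 follow from $I_1(A)$ and $I_2(A)$ by summing and subtracting, and item~3 follows from $I_3(A)$ together with $I_1(A)$ and $I_2(A)$ after expanding the product $(\max B-b)(\min B-a)$ and performing a short linear combination.

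All three identities can be proved by a common elementary splitting, according to whether $b+1\in B$ or not. When $b+1\in B$, write $B=\{b+1\}\cup B'$ with $B'\subseteq [A]^{-}\defeq [A]\setminus\{b+1\}$, and use the elementary factorization
\[
\eta(\{b+1\}\cup B') = \begin{cases} 1 & \text{if } B'=\emptyset,\\ (b-\max B')\eta(B') & \text{if } B'\neq \emptyset,\end{cases}
\]
together with $\max(\{b+1\}\cup B')=b+1$ and $\min(\{b+1\}\cup B')=\min B'$ when $B'\neq \emptyset$. For $I_1(A)$, this isolates a term $1$ (the $B'=\emptyset$ contribution) plus $\sum_{\emptyset\neq B'\subseteq [A]^-}(b-\max B')\eta(B')$, which exactly cancels the companion sum $\sum_{\emptyset\neq B\subseteq [A]^-}(\max B-b)\eta(B)$ coming from the case $b+1\notin B$. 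The identity $I_2(A)$ is the mirror version obtained by splitting on $a-1$ instead. For $I_3(A)$, the same splitting isolates the singleton contribution $(b+1-a)$ from $B=\{b+1\}$, while the two remaining sums over $\emptyset\neq B'\subseteq [A]^-$ merge into $\sum(\min B'-a)\eta(B')\bigl[(b-\max B')+(\max B'-b)\bigr]=0$ by direct cancellation, leaving only $b+1-a=1+\Delta_n$.

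There is no substantive obstacle: the main conceptual content is guessing that the right-hand sides $1$, $-1$ and $1+\Delta_n$ are the correct targets, after which the telescoping is nearly automatic and requires no induction or special treatment of $|A|=1$. As a side remark, the increment $1/(3^{|A|}\eta(A))$ appearing in items 1, 3 and 4 admits a pleasing probabilistic interpretation: comparing \eqref{eq:kernel-A} and \eqref{eq:kernel-Abar}, it is exactly the probability that a Boltzmann half-pyramid sitting at configuration $A$ dies at the next layer, which is consistent with the submartingale nature of these three quantities for the UIP (there is no such death under $\bar Q$, and the ``missing mass'' under $Q$ redistributes into the drift).
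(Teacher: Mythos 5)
Your proof is correct and takes a genuinely different route from the paper's. The paper's proof in Section~\ref{sec:properties} relies on the Doob $h$-transform relations between kernels, namely $Q(A,B)=\frac{\min B+1}{\min A+1}\bar Q(A,B)$ and $\bar Q^+(A,B)=\frac{(\min B+1)(\max B+2)}{(\min A+1)(\max A+2)}\bar Q(A,B)$, together with the fact that $Q$ is a sub-probability kernel and $\bar Q^+$ a probability kernel; in other words, the martingale identities are read off from the change-of-measure connecting the UIP to the BHP and the UIP+, without any further computation. (The paper also gives a second, purely algebraic route in the Appendix via Lemma~\ref{lem:gencomb}, but that argument proceeds by induction on $|F|$ through the auxiliary quantity $\alpha(F)$ and is considerably longer.) Your argument is a third route: you first normalize to the three polynomial identities $I_1=1$, $I_2=-1$, $I_3=1+\Delta_n$, and then verify each by a single split on whether $\max[A]=b+1$ (resp.\ $a-1$) lies in $B$. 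The key gain is that the factor $\max B-b$ collapses to $1$ exactly in the branch $b+1\in B$ while the factorization $\eta(\{b+1\}\cup B')=(b-\max B')\eta(B')$ makes the remaining sum cancel against its companion term by term, so no induction on $|A|$ is needed and no reference to the BHP or UIP+ processes is required. What you give up relative to the paper's main proof is the conceptual interpretation of the compensator $1/(3^{|A|}\eta(A))$ as the BHP death probability — although you in fact recover it as a side remark. Both are valid; yours is arguably the most self-contained of the three.
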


\begin{remark}\label{rem:martingales_kesten}
We obtain these (sub)martingales by considering the Radon-Nikodym derivatives of the BHP and the UIP+ w.r.t. the UIP. They are in turn useful to prove the sausaging property of the UIP. This is somehow reminiscent of \cite{LLP95} where the martingale change of measure between the Galton-Watson tree and its version conditioned on non-extinction is used to give a conceptual proof of the Kesten-Stigum theorem for Galton--Watson processes under the original measure. Another purely algebraic route to the same result is given in Corollary \ref{cor:directcombproof} of the Appendix.
\end{remark}

\begin{proof}
 
 Looking at Corollary \ref{cor:Markov} we see that, for any admissible set  $A\subset \N^*$  and  $B\subset [A]$, $B\neq \emptyset$:
$$Q(A,B)=\frac{\min B+1}{\min A+1}\bar Q(A,B).$$
Therefore, we have
\begin{align*}
\E[\min( \bar A_{n+1})+1|\mathcal{F}_n,\bar A_n=A]&=\sum_{\emptyset\neq B\subset[A]}(\min(B)+1)\bar Q(A,B)\\
&=(\min(A)+1)\sum_{\emptyset\neq B\subset[A]}Q(A,B)\\
&=(\min(A)+1)(1-Q(A,\emptyset))\\
&=\min(A)+1-\frac{1}{3^{|A|}\eta(A)}.
\end{align*}
Thus, we proved that for any admissible set $A \subset \N^*$,
$$\E[\min \bar A_{n+1}|\mathcal{F}_n,\bar A_n=A]=\min(A)-\frac{1}{3^{|A|}\eta(A)}.$$
But, since the kernel $\bar Q$ and $\eta$  are invariant by translation, this remains true for any admissible $A$.
This gives us the first result of the proposition by symmetry:
$$\E[\max \bar A_{n+1}|\mathcal{F}_n,\bar A_n=A]=\max(A)+\frac{1}{3^{|A|}\eta(A)}.$$
Summing these two equalities, we also deduce that $\max (\bar A_n)+\min(\bar A_n)$ is a martingale.

We proceed in a similar fashion for the last property. Observe that for any admissible set  $A\subset \N^*$  and  $B\subset [A]$, $B\neq \emptyset$:
\begin{equation}\label{eq:htransform_Abar_Abar+}
\bar Q^+(A,B)=\frac{(\min(B)+1)(\max(B)+2)}{(\min(A)+1)(\max(A)+2)}\bar Q(A,B).
\end{equation}
Therefore, we have
\begin{align*}
\E[(\min( \bar A_{n+1})+1)(\max( \bar A_{n+1})+2)|\mathcal{F}_n,\bar A_n=A]&=\sum_{\emptyset\neq B\subset[A]}(\min(B)+1)(\max(B)+2)\bar Q(A,B)\\
&=(\min(A)+1)(\max(A)+2)\sum_{\emptyset\neq B\subset[A]}\bar Q^+(A,B)\\
&=(\min(A)+1)(\max(A)+2).
\end{align*}
Using again the invariance by translation, the positivity assumption on $A$ may be removed. This proves that $(\min( \bar A_{n})+1)(\max( \bar A_{n})+2)$ is a martingale and finally the identity
$$\min( \bar A_{n})\max( \bar A_{n})=(\min( \bar A_{n})+1)(\max( \bar A_{n})+2)-\max(\bar A_n)-2\min(\bar A_n)-2$$
allows us to derive the formula for $\min( \bar A_{n})\max( \bar A_{n})$ from the other ones.
\end{proof}

\begin{proof}[Proof Theorem \ref{thm:saucissonnage}] We first prove that $\Ts < \infty$ as.  We start with the following upper bound on the compensator of the non-negative submartingale
$(\Delta_{n})_{n \geq 0}$:
$$\E[\Delta_{n+1}|\mathcal{F}_n] = \Delta_n+\frac{2}{3^{|\bar A_n|}\eta(\bar A_n)} \leqslant \Delta_n+\frac{2}{3^{2}(\Delta_n-1)}1_{\Delta_n>0}+\frac{2}{3}1_{\Delta_n=0}.$$
To wit, observe that, given $\Delta_n>0$, the minimal value $3^{|\bar A_n|}\eta(\bar A_n)$ is obtained when $\bar A_n$ consists of two particles at distance $\Delta_n$.
The upper bound is reminiscent of a discrete Bessel process and the recurrence property should depend on the comparison between the variance of a step and the strength of the drift. Following Lamperti \cite{LAM60} (but whose result cannot be directly applied here), we first lower bound the variance. Observe that, by Proposition \ref{prop:minmaxmoves}, when $|\bar A_n|\geq 2$,
$$\P(\Delta_{n+1}=\Delta_n+2|\mathcal{F}_n)=\P(\max(\bar A_{n+1})=\max(\bar A_n)+1,\min(\bar A_{n+1})=\min(\bar A_n)-1)=\frac{4}{9}$$
and therefore
$$\E[(\Delta_{n+1}-\Delta_n)^2|\mathcal{F}_n]\geqslant 4\P(\Delta_{n+1}=\Delta_n+2|\mathcal{F}_n)=\frac{16}{9}.$$
Then, we consider the Lyapunov function $\ln(1+x)$ and define $Y_n= \ln(1+\Delta_n)$.  Recall the inequality $\ln(1+x)\leqslant x-\frac{1}{4}x^2$ valid for $-1<x<1$. On the event $\Delta_n>0$, we have
\begin{align*}
\E[Y_{n+1}|\mathcal{F}_n]&=Y_n+\E[\ln(1+\frac{\Delta_{n+1}-\Delta_n}{1+\Delta_n})|\mathcal{F}_n]\\
&\leqslant Y_n+\frac{\E[\Delta_{n+1}-\Delta_n|\mathcal{F}_n]}{1+\Delta_n}-\frac{1}{4}\frac{\E[(\Delta_{n+1}-\Delta_n)^2|\mathcal{F}_n]}{(1+\Delta_n)^2}\\
&\leqslant Y_n + \frac{2}{9 (1+\Delta_n)(\Delta_n-1)}-\frac{4}{9(1+\Delta_n)^2}\\
&\leqslant Y_n+\frac{6-2\Delta_n}{9(1+\Delta_n)(\Delta_n^2-1)}
\end{align*}
and the second term in the last sum is negative as soon as $\Delta_n \geqslant 4$. One may assume wlog that $\bar A_0$ is such that $\Delta_0\geq 4$. Then setting $\tilde \Ts=\inf\{n>0:\Delta_n < 4\}$, we get that the stopped process $Y_{n\wedge \tilde \Ts}=\ln(1+\Delta_{n \wedge \tilde \Ts})$ is a positive super-martingale, hence converges almost surely. This implies that $\Delta_{n\wedge \tilde \Ts}$ is constant for $n$ large enough almost surely. 
But since $\P(\Delta_{n+1}=\Delta_n+2|\mathcal{F}_n)=\frac{4}{9}$ this is only possible if $\tilde \Ts<\infty$ almost surely.
Therefore the irreducible Markov chain $\bar A_n-\min(\bar A_n)$ (see Remark \ref{rmk:transl}) almost surely comes back to one of the two states $\{0\}$, $\{0;2\}$. Therefore it is recurrent and $\Ts<\infty$ a.s.

\medskip

It remains to prove the statements on the non-integrability of $\Ts$ and $\max\{\Delta_n:n<\Ts\}$. We start with the second statement. Assume by contradiction that $\max\{\Delta_n:n<\Ts\}$ is integrable and that $\bar A_0=\{0,2\}$. 
Then the submartingale $(\Delta_n)_{n \geq 0}$ would be uniformly integrable, hence:
$$2=\E[\Delta_0]\leqslant \E[\Delta_{n\wedge \Ts}]\leqslant\lim_n \E[\Delta_{n\wedge \Ts}] = \E[\Delta_{\Ts}]=0,$$
using $L^1$ convergence at the last but one equality: this is a contradiction.
The maximal width $\max\{\Delta_n:n<\Ts\}$ is therefore not integrable starting from $\bar A_0=\{0;2\}$, and the same holds true for any initial configuration by irreducibility.

Finally, using that the increments of $\Delta_n$ are bounded from above by $2$, we get the bound: $\max\{\Delta_n:n<\Ts\}\leqslant \Delta_0+2\Ts$ from which the non-integrability of $\Ts$ follows.
\end{proof}

\begin{remark}
    Several similar non-integrability statements for  quantities related to the first ``sausage'' of the UIP may be devised: here is one for the maximum $x$-coordinate of a vertex under the first pinching layer, based on the symmetry of the distribution of the UIP,
\begin{align*}
    \infty = \E[\max\{\Delta_n:n<\Ts\}]&\leqslant \E[\max\{\max(\bar A_n):n<\Ts\}]- \E[\min\{\min(\bar A_n):n<\Ts\}]\\
    &=2\E[\max\{\max(\bar A_n):n<\Ts\}].
\end{align*}

\end{remark}

\subsection{Transience of the UIP+\label{subsec:transienceUIP+}}

We conclude this section by looking at the asymptotic behavior of the UIP+. Even though we do not answer whether the process sausages, we can show that it is transient  and describe the trajectory of the $x$-coordinate of the future left-most vertex. 
\begin{proposition}\label{prop:transienceUIP+}
The non-negative infinite pyramid $\bar{\A}^+ = (\bar{A}^+_n, n\geq 0)$ is transient to $+\infty$:
$$
 \min \bar{A}^+_n \underset{n\to\infty}{\longrightarrow} +\infty\quad\hbox{a.s.}
$$
Furthermore, for any starting admissible set $C \subset \N$ and any integer $b \leq \min C$, we have
\begin{equation}\label{eq:futurinf}
\P_{C}\left(\hbox{$\min \bar{A}^+_n \geq b$ for all $n\in\N$} \right) = \frac{(\min C + 1 - b)(\max C + 2 -b)}{(\min C + 1)(\max C + 2)}
\end{equation}
where $\P_C$ denotes a probability under which the Markov process $(\bar{A}^+_n, n\geq 0)$ starts from $\bar{A}^+_0 = C$. 
\end{proposition}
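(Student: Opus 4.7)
The plan is to derive the formula for $\P^+_C(\tau^+_b = \infty)$, where $\tau^+_b := \inf\{n : \min \bar{A}^+_n < b\}$, via a Doob-$h$-transform and change-of-measure argument, and to recover the transience statement as the special case $b = 0$. Write $h(A) := (\min A + 1)(\max A + 2)$ and, for $0 \leq b \leq \min C$, $h_b^+(A) := (\min A + 1 - b)(\max A + 2 - b)\,\Ind{\min A \geq b}$, so that $h_0^+ = h$. Translating by $b$ the harmonicity of $h$ for $\bar{Q}$ restricted to $\{\min \geq 0\}$ (which underlies the definition \eqref{eq:htransform_Abar_Abar+} of $\bar{Q}^+$) yields that $h_b^+$ is harmonic for $\bar{Q}$ restricted to $\{\min \geq b\}$. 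Consequently the ratio $g(A) := h_b^+(A)/h(A)$ is harmonic for $\bar{Q}^+$ at every $A \subset \N$ with $\min A \geq b$, takes values in $[0,1]$, and vanishes on $\{\min A < b\}$.

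The stopped process $\bigl(g(\bar{A}^+_{n \wedge \tau^+_b})\bigr)_{n\geq 0}$ is therefore a bounded $[0,1]$-martingale under $\P^+_C$, converging a.s. to some $M_\infty$ with $\E^+_C[M_\infty] = g(C) = h_b(C)/h(C)$; since $M_\infty = 0$ on $\{\tau^+_b < \infty\}$, this already yields the easy inequality $h_b(C)/h(C) \leq \P^+_C(\tau^+_b = \infty)$. For the reverse inequality I introduce the auxiliary Markov chain $\tilde{X}$ defined as the Doob $h$-transform of the UIP by $h_b^+$ (heuristically, the UIP conditioned to keep $\min \geq b$ forever), whose kernel is $\tilde{Q}(A,B) := \bar{Q}(A,B)\,h_b^+(B)/h_b^+(A)$ on $\{A : \min A \geq b\}$. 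Expressing the finite-dimensional distributions of both $\bar{A}^+$ and $\tilde{X}$ in terms of $\bar{Q}$ yields the mutual absolute continuity
\begin{equation*}
\frac{d\tilde{\P}_C}{d\P^+_C}\Bigg|_{\mathcal{F}_n\,\cap\,\{\tau^+_b > n\}} \;=\; \frac{g(\bar{A}^+_n)}{g(C)},\qquad \text{equivalently,}\qquad \P^+_C(\tau^+_b > n) \;=\; g(C)\,\tilde{\E}_C\!\left[\frac{1}{g(\tilde{X}_n)}\right].
\end{equation*}
The ratio $1/g = h/h_b^+$ is uniformly bounded on $\{A \subset \N : \min A \geq b\}$ by $(b+1)(b+2)/2$, the maximum being attained at the singleton $A = \{b\}$; dominated convergence therefore reduces the formula \eqref{eq:futurinf} to the claim that $g(\tilde{X}_n) \to 1$ almost surely under $\tilde{\P}_C$, which is equivalent to the transience $\min \tilde{X}_n \to +\infty$ a.s.

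Proving this transience of $\tilde{X}$ is the main obstacle. A natural strategy is a Lyapunov-function argument in the spirit of Theorem~\ref{thm:saucissonnage}: one looks at a candidate such as $V(A) := -1/(\min A + 1 - b)$, which lies in $[-1,0]$, and computes via the $h$-transform that $V(\tilde{X}_n)$ is a bounded supermartingale with a quantitative compensator; a Lamperti-type comparison, combined if needed with the corresponding submartingale control of $\max \tilde{X}_n$ coming from Proposition \ref{prop:minmaxmoves}, should then force $V(\tilde{X}_n) \to 0$ a.s., i.e.\ $\min \tilde{X}_n \to \infty$. Alternatively, one invokes Doob-Martin boundary theory after verifying that $h_b^+$ is a minimal positive harmonic function for the UIP killed upon entering $\{\min < b\}$. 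Once transience of $\tilde{X}$ is established, the displayed identity yields the formula \eqref{eq:futurinf}, and the transience statement of the proposition is recovered as the special case $b = 0$, for which $\tilde{X} = \bar{A}^+$.
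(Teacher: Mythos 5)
Your framework for proving the formula \eqref{eq:futurinf} is essentially sound: the harmonicity of $h_b^+$ for $\bar{Q}$ restricted to $\{\min \geq b\}$ (by translation invariance of $\bar{Q}$), the resulting $\bar{Q}^+$-harmonic function $g=h_b^+/h$, the bounded-martingale inequality, and the change-of-measure identity $\P_C(\tau_b^+>n)=g(C)\,\tilde{\E}_C[1/g(\tilde{X}_n)]$ with $1/g \leq (b+1)(b+2)/2$ all check out, and in spirit this matches the paper's own Radon--Nikodym computation together with its expansion of $h(\bar{A}_n+b)$. Note, by the way, that since $h_b^+(\cdot)=h(\cdot-b)$ and $\bar{Q}$ is translation-invariant, your auxiliary chain $\tilde{X}$ is simply a copy of $\bar{A}^+$ translated by $b$, so there is really a single transience statement at stake, not a family of them.

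The genuine gap is precisely that transience statement. You reduce the whole proposition to ``$\min\tilde{X}_n\to+\infty$ a.s.'', acknowledge this is ``the main obstacle'', and then only sketch two strategies without carrying either of them out. Your Lyapunov suggestion is not a cosmetic variant of Theorem \ref{thm:saucissonnage}: that theorem establishes \emph{recurrence} of the recentered UIP, the opposite qualitative behavior, and a Lamperti-type transience criterion for $\bar{Q}^+$ would require new one-step drift and variance estimates that you have not computed (nor is it clear that $V(A)=-1/(\min A+1-b)$ is even a supermartingale under $\bar{Q}^+$). The paper sidesteps this entirely: transience of $\bar{A}^+$ is read off directly from the walk encoding, since $\bar{\A}^-=\Psi^{-1}(\shaved{S}^-_0,\shaved{S}^-_1,\ldots)$ and Corollary \ref{cor:transienceShavedCondiNeg} shows $\shaved{S}^-_n\to-\infty$ a.s.\ by a short Borel--Cantelli argument, so each $x$-coordinate is occupied by only finitely many vertices. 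You need an independent proof of transience \emph{before} the change-of-measure step; as written, your argument leaves its central input unestablished.
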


\remark{In case the set $C$ is reduced to a single particle at location $x$, letting $x\to \infty$, the position of the future infimum rescaled by $x$ converges  towards a Beta(1,2) random variable, with density $2 (1-t)\Ind{[0,1]}(t)$; this should be compared with the well-known fact that the future infimum of a three-dimensional Bessel process started at $x$ is the uniform distribution  
over $[0,x]$, see \textit{e.g} \cite{RY91}, Chapter VI Corollary (3.4).}

\begin{proof}
The transience of $\bar{\A}^+$ to $+\infty$ (equivalently of $\bar{\A}^-$ to $-\infty$) is a direct consequence of the transience of the shaved walk $\shaved{S}^-$ conditioned to stay non-positive stated in Corollary \ref{cor:transienceShavedCondiNeg}. Indeed, by construction, $\bar{\A}^- = \Psi^{-1}(\shaved{S}^-_0, \shaved{S}^-_1, \ldots)$ so that the number of vertices of $\bar{\A}^-$ with $x$-coordinates equal to $x$ is the number of visits to $x$ by $\shaved{S}^-_i = x$ which is finite a.s. 

We now prove \eqref{eq:futurinf}. We use the notation $h(C) = (\min C + 1)(\max C + 2)$. Fix an admissible set $C \subset \N$ and an integer $b\leq \min C$ and let $C_0 = C - b$. Then, $C_0$ is a non-negative admissible set and according to 3. of Corollary \ref{cor:Markov}, we have
$$
\P_{C_0}(\bar{A}^+_1 = C_1, \ldots, \bar{A}^+_n = C_n) = \prod_{i=1}^{n}\P_{C_{i-1}}(\bar{A}^+_1 = C_i)  =  \frac{h(C_n)\eta(C_n)}{h(C_0)\eta(C_0) 3^{|C_0|+\ldots + |C_{n-1}|}}$$
and similarly, using that $\eta(C_i + b) = \eta(C_i)$ and $|C_i + b| = | C_i|$, 
$$
\P_{C_0 + b}(\bar{A}^+_1 = C_1 + b, \ldots, \bar{A}^+_n = C_n + b) =  \frac{h(C_n + b)\eta(C_n)}{h(C_0 + b)\eta(C_0) 3^{|C_0|+\ldots + |C_{n-1}|}}.$$
Therefore, we have the Radon-Nikodym derivative
$$
\P_{C}(\bar{A}^+ - b\in \bigcdot) = \frac{h(\bar{A}^+_n + b) h(C_0)}{h(\bar{A}^+_n)h(C)} \P_{C_0}(\bar{A}^+ \in\bigcdot ).
$$
Now, let $\bar{\A} = (\bar{A}_n, n\geq 0)$ denote a UIP. According the \eqref{eq:htransform_Abar_Abar+}, the UIP+ $\bar{A}^+$ is the h-transform of the UIP $\bar{A}$ with h-function $h$, hence
$$
\P_{C}(\bar{A}^+_1 ,\ldots, \bar{A}^+_n \geq b) =  \E_{C_0}\left[\frac{h(\bar{A}^+_n + b)h(C_0)}{h(\bar{A}^+_n)h(C)} \Ind{\bar{A}^+_1 ,\ldots, \bar{A}^+_n \geq 0}\right] = \frac{1}{h(C)}
 \E_{C_0}[h(\bar{A}_n + b) \Ind{\bar{A}_1 ,\ldots, \bar{A}_n \geq 0}].
$$
We can write, by expanding the product
\begin{eqnarray*}
h(\bar{A}_n + b) &=& h(\bar{A}_n) + b^2 + b(\max \bar{A}_n + 2) + b(\min \bar{A}_n + 1)\\
&=& h(\bar{A}_n) + b^2\frac{h(\bar{A}_n)}{(\max \bar{A}_n + 2)(\min \bar{A}_n + 1)} + b \frac{h(\bar{A}_n)}{(\min \bar{A}_n + 1)} + b\frac{h(\bar{A}_n)}{(\max \bar{A}_n + 2)}.
\end{eqnarray*}
Therefore, using again that $\bar{A}^+$ is the h-transform of $\bar{A}$ (but in the other direction)
\begin{multline*}
\frac{1}{h(C_0)}\E_{C_0}\left[h(\bar{A}_n + b) \Ind{\bar{A}_1 ,\ldots, \bar{A}_n \geq 0}\right] \\= 1 + b^2\E_{C_0}\left[\frac{1}{(\max \bar{A}^+_n + 2)(\min \bar{A}^+_n + 1)}\right] + b \E_{C_0}\left[\frac{1}{(\min \bar{A}^+_n + 1)}\right] +b \E_{C_0}\left[\frac{1}{(\max \bar{A}^+_n + 2)}\right].
\end{multline*}
All the expectations on the r.h.s tend to $0$ by dominated convergence because the quantities inside the  expectations are in $[0,1]$ and go to $0$ a.s. because we established that the infinite non-negative pyramid is transient. Putting everything together, we conclude that
\begin{multline*}
\P_{C}\left(\hbox{$\min \bar{A}^+_n \geq b$ for all $n$} \right) = \lim_{n\to\infty}\P_{C}(\bar{A}^+_1 ,\ldots, \bar{A}^+_n \geq b)\\
 =   \lim_{n\to\infty} \frac{h(C_0)}{h(C)}\frac{1}{h(C_0)}
 \E_{C_0}[h(\bar{A}_n + b) \Ind{\bar{A}_1 ,\ldots, \bar{A}_n \geq 0}]
  =  \frac{h(C_0)}{h(C)}
\end{multline*}
which is exactly \eqref{eq:futurinf}.
\end{proof}

\newpage 

\section{Appendix\label{appendix}}

We show in this last section how formulas derived from the probabilistic nature of the objects under study can be recovered independently using algebraic manipulations. This section is completely self-contained.

Our starting point is a collection of abstract polynomial identities. 
\begin{lemma}\label{lem:gencomb}
Let $n\geq 1$ and let $F = \{f_1,  \ldots , f_n \}$ be distinct elements in a unital commutative ring. Given a subset $B = \{ f_{i_1}, \ldots, f_{i_k} \} \subset F$ enumerated in the same order as $F$ (\emph{i.e.} 
$i_1 < \ldots < i_k$), we define
$$
\min B \defeq f_{i_1} \qquad\hbox{ and } \qquad \max B \defeq f_{i_k}.
$$
We also define
$$
\eta(B) \defeq \prod_{i=1}^{k-1} (f_{i_{j+1}} - f_{i_j} - 1) \qquad\hbox{ and }\qquad \eta^+(B) \defeq \prod_{i=1}^{k-1} (f_{i_{j+1}} - f_{i_j} + 1),
$$
with the usual convention that an empty product equals $1$ \emph{i.e.} $\eta(B) = \eta^+(B) = 1$ when $|B| = 1$.
We have the equalities 
\begin{align}
& \sum_{\emptyset\neq B\subset F}\eta(B) \;=\;\eta^+(F), \label{equ:F_K_UIP} \\
& \sum_{\emptyset\neq B\subset F}\eta(B)\max(B) \;=\; 1+(\max(F)-1)\eta^+(F), \label{equ:F_K_BHP}\\
& \sum_{\emptyset\neq B\subset F}\eta(B)\min(B)\max(B) \;=\; 1+(\max(F)-1)(\min(F)+1)\eta^+(F).\label{equ:F_K_UIP+}
\end{align}
Furthermore, if  $n=|F|\geqslant 2$, then
\begin{align}
& \!\!\! \sum_{\substack{B\subset F\\ \max(B)= f_n}} \!\! \eta(B) \;=\; (f_n-f_{n-1})\prod_{j=2}^{n-1}(f_j-f_{j-1}+1)\label{equ:Fmax},
\end{align}
and if  $n=|F|\geqslant 3$,
\begin{align}
\sum_{\substack{B\subset F\\ \min(B)=f_1\\\max(B)=f_n}}\eta(B) = (f_{2}-f_1)(f_n-f_{n-1})\prod_{j=3}^{n-1}(f_j-f_{j-1}+1).\label{equ:Fmaxmin}
\end{align}
\end{lemma}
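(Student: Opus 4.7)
The plan is to prove (\ref{equ:F_K_UIP}), (\ref{equ:F_K_BHP}) and (\ref{equ:F_K_UIP+}) by a simultaneous induction on $n=|F|$, and then to derive (\ref{equ:Fmax}) as a direct corollary and (\ref{equ:Fmaxmin}) by inclusion--exclusion from (\ref{equ:Fmax}). The base case $n=1$ is immediate: the three identities reduce to the trivial equalities $1=1$, $f_1=f_1$ and $f_1^2=f_1^2$.

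For the inductive step, suppose (\ref{equ:F_K_UIP})--(\ref{equ:F_K_UIP+}) hold for $F=\{f_1,\ldots,f_n\}$ and adjoin a new element $g$ appended at the end of the enumeration (so that $\min$ is unchanged and $\max$ becomes $g$). Writing $S(F)$, $T(F)$, $U(F)$ for the three left-hand sides, and splitting non-empty subsets of $F\cup\{g\}$ according to whether they contain $g$, the identity $\eta(B\cup\{g\})=\eta(B)\,(g-\max(B)-1)$ yields the linear recurrences
\begin{align*}
S(F\cup\{g\}) &= g\,S(F) + 1 - T(F),\\
T(F\cup\{g\}) &= (1-g)\,T(F) + g + g(g-1)\,S(F),\\
U(F\cup\{g\}) &= (1-g)\,U(F) + g^2 + g(g-1)\,V(F),
\end{align*}
where $V(F)\defeq\sum_{\emptyset\neq B\subset F}\eta(B)\min(B)$. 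Plugging the induction hypothesis into the right-hand sides and using $\eta^+(F\cup\{g\})=(g-f_n+1)\,\eta^+(F)$ together with the key algebraic identity
\[
(1-g)(f_n-1) + g(g-1) = (g-1)(g-f_n+1)
\]
causes the recurrences for $S$ and $T$ to collapse to the statements of (\ref{equ:F_K_UIP}) and (\ref{equ:F_K_BHP}) for $F\cup\{g\}$. For (\ref{equ:F_K_UIP+}), one additionally needs the dual identity $V(F)=(\min F+1)\,\eta^+(F)-1$, which follows from (\ref{equ:F_K_BHP}) applied to the relabelled set $\{-f_n,\ldots,-f_1\}$: both $\eta$ and $\eta^+$ are invariant under this sign-flipping reversal while $\max$ gets exchanged with $-\min$. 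Once $V(F)$ is known, the same cancellation closes the induction for $U$.

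For (\ref{equ:Fmax}), write each $B$ with $\max(B)=f_n$ as $\{f_n\}\cup B'$ with $B'\subset\{f_1,\ldots,f_{n-1}\}$ (possibly empty), which gives
\[
\sum_{\max(B)=f_n}\eta(B) = 1 + (f_n-1)\,S(F\setminus\{f_n\}) - T(F\setminus\{f_n\}),
\]
and plugging in (\ref{equ:F_K_UIP}) and (\ref{equ:F_K_BHP}) for $F\setminus\{f_n\}$ makes the constant terms cancel, leaving $(f_n-f_{n-1})\,\eta^+(F\setminus\{f_n\})$. Identity (\ref{equ:Fmaxmin}) then follows from (\ref{equ:Fmax}) by inclusion--exclusion on the minimum: the subsets with $\max(B)=f_n$ and $\min(B)>f_1$ are exactly the subsets of $F\setminus\{f_1\}$ with max equal to $f_n$, so applying (\ref{equ:Fmax}) to both $F$ and $F\setminus\{f_1\}$ and subtracting factors out their common part and leaves the residual factor $(f_2-f_1+1)-1=f_2-f_1$.

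The main obstacle is purely bookkeeping: the three sums $S$, $T$, $U$ cannot be treated one at a time but must be carried simultaneously through the induction, and one has to remember the dual identity for $V(F)$ in order to close the recurrence for $U$. No combinatorial insight beyond the telescoping of the recurrences is needed.
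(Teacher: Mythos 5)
Your proof is correct, and it takes a genuinely different route from the paper's. The paper first defines the auxiliary quantity $\alpha(F)\defeq\sum_{\max B=\max F}\eta(B)$ (which is precisely \eqref{equ:Fmax}) and proves a product formula for it by a short induction; the remaining identities \eqref{equ:F_K_UIP}, \eqref{equ:F_K_BHP}, \eqref{equ:Fmaxmin} and \eqref{equ:F_K_UIP+} are then obtained by telescoping sums of $\alpha$ over prefixes of $F$, together with an Abel-type summation for the $\max$-weighted sum. You instead run the induction directly and simultaneously on the three global sums $S,T,U$ (plus $V$), derive the linear three-term recurrences they satisfy upon appending a new maximum $g$, and close them with the algebraic cancellation $(1-g)(f_n-1)+g(g-1)=(g-1)(g-f_n+1)$; \eqref{equ:Fmax} and \eqref{equ:Fmaxmin} then fall out as corollaries. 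Both routes are elementary inductions on $|F|$. The paper's is slightly more economical in that the single quantity $\alpha(F)$ with its explicit product form does all the work, whereas your approach must carry four sums in parallel and invoke the reversal symmetry $\tilde f_j=-f_{n+1-j}$ to deduce the formula for $V(F)$ from that for $T(F)$ — a neat observation that the paper instead proves directly as its equation for the $\min$-weighted sum. Everything in your argument checks out: the three recurrences are correct, the cancellation identity is exact, the reversal preserves $\eta$ and $\eta^+$ and swaps $\max$ with $-\min$, and the derivations of \eqref{equ:Fmax} by stripping off $f_n$ and of \eqref{equ:Fmaxmin} by inclusion--exclusion on whether $f_1\in B$ are both valid.
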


\medskip

\begin{proof}
Define
$$\alpha(F)\defeq\sum_{\substack{B\subset F\\ \max(B)=\max(F)}}\eta(B).$$
We first prove \eqref{equ:Fmax} which can be restated as:
\begin{equation}
\alpha(F)=(f_n-f_{n-1})\prod_{j=2}^{n-1}(f_j-f_{j-1}+1).
\label{eqmax}
\end{equation}
If $n=|F|=1$, then  $\alpha(\{f\})=\eta(\{f\})=1$. If $n=|F|=2$, then
$$\alpha(\{f_1;f_2\})=\eta(\{f_1;f_2\})+\eta(\{f_2\})=(f_2-f_1-1)+1=f_2-f_1,$$
and if $F$ has at least three elements we can work by induction, considering separately the sets $B$ that do not contain $f_{n-1}$ and those that do  contain $f_{n-1}$. We have 
$$\alpha(F)=\alpha(F\backslash\{f_{n-1}\})+(f_n-f_{n-1}-1)\alpha(F\backslash\{f_{n}\}).$$
Proceeding by induction, for $n=|F|\geqslant 3$:
\begin{align*}
\alpha(F)&=
\alpha(F\backslash\{f_{n-1}\})+(f_n-f_{n-1}-1)\alpha(F\backslash\{f_{n}\})\\
&=(f_n-f_{n-2})\prod_{j=2}^{n-2}(f_j-f_{j-1}+1)+(f_n-f_{n-1}-1)(f_{n-1}-f_{n-2})\prod_{j=2}^{n-2}(f_j-f_{j-1}+1)\\
&=(f_n-f_{n-1})(f_{n-1}-f_{n-2}+1)\prod_{j=2}^{n-2}(f_j-f_{j-1}+1)
\end{align*}
which proves equation \eqref{eqmax} (hence \eqref{equ:Fmax}). Observe as a consequence that
$$\alpha(F)=\eta^+(F)-\eta^+(F\backslash\{f_n\}).$$
Therefore,
$$\sum_{\emptyset\neq B\subset F}\eta(B)=\sum_{i=1}^n\alpha(\{f_1,\dots,f_i\})=\eta^+(F)-\eta^+(\{f_1\})+\alpha(\{f_1\})=\eta^+(F)$$
which proves \eqref{equ:F_K_UIP}. Similarly, we can write
$$\sum_{\substack{B\subset F\\ \min(B)=f_1\\ \max(B)=f_n}}\eta(B) =\alpha(F)-\alpha(F\backslash\{f_1\})$$
which gives \eqref{equ:Fmaxmin}. Obtaining Formula \eqref{equ:F_K_BHP} is slightly more involved:
\begin{align*}
\sum_{\emptyset\neq B\subset F}\eta(B)\max(B) &=\sum_{i=1}^n f_i\alpha(\{f_1,\dots,f_i\})\\
&=\sum_{i=1}^n f_i (\eta^+(\{f_1,\dots,f_i\})-\eta^+(\{f_1,\dots,f_{i-1}\}))\\
&=f_n\eta^+(F)-\sum_{i=1}^{n-1}\eta^+(\{f_1,\dots,f_i\})(f_{i+1}-f_i)\\
&=f_n\eta^+(F)-\sum_{i=1}^{n-1}\alpha(\{f_1,\dots,f_{i+1}\})\\
&=f_n\eta^+(F)-(\eta^+(F)-1).
\end{align*}
A similar reasoning also gives a formula for the $\min$ instead of the $\max$: 
\begin{equation}\label{eq:euh}
\sum_{\emptyset\neq B\subset F}\eta(B)\min(B) =-1+(\min(F)+1)\eta^+(F).
\end{equation}
Now, from \eqref{equ:F_K_BHP} we deduce
\begin{align}
  \sum_{\substack{B\subset F\\ \min(B)=f_1}}\eta(B)\max(B)&=\sum_{\emptyset\neq B\subset F}\eta(B)\max(B)-\sum_{\emptyset\neq B\subset F\backslash(\{f_1\})}\eta(B)\max(B)\nonumber\\
  &= (\max(F)-1)(f_{2}-f_1)\prod_{j=3}^{n}(f_j-f_{j-1}+1).\label{eq:F_KK}
\end{align}
Finally, we have
\begin{align*}
\sum_{\emptyset\neq B\subset F}\eta(B)\min(B)\max(B) &= f_n^2+\sum_{h<n}f_h\sum_{\substack{B\subset F\\ \min(B)=f_h}}\eta(B)\max(B)\\
&=f_n^2+\sum_{h<n}f_h(f_n-1)(f_{h+1}-f_h)\prod_{j=h+2}^{n}(f_j-f_{j-1}+1)\\
&=f_n^2+(f_n-1)\sum_{\substack{B\subset F\\ \min(B) \neq f_n}}\eta(B)\min(B)\\
&=f_n^2+(f_n-1)(-1+(f_1+1)\eta^+(F)-f_n)\\
&=(f_n-1)(f_1+1)\eta^+(F)+1 
\end{align*}
where we used \eqref{eq:F_KK} for the second equality and \eqref{eq:euh} for the fourth one. This proves \eqref{equ:F_K_UIP+}.
\end{proof}

We stated Lemma \ref{lem:gencomb} in an abstract setting because we feel it is of independent interest, regardless its connection with directed animals. However, we will only make use it here  when $F = \{f_1,\ldots, f_n\}$ is a subset of $\Z$ and we will now assume that the elements of $F$ are enumerated according to the usual order $f_1 < f_2 < \ldots < f_n$. Doing so insures that  the definition of $\eta(\cdot)$ given in Lemma \ref{lem:gencomb} coincides with the previous definition given in \eqref{def:eta1} and also that the notations $\max B$ and $\min B$ of Lemma \ref{lem:gencomb} match their usual definition on an ordered set.

Recall that a set $A$ is admissible if it is finite, non-empty, and $A\subset 2\Z$ or $A\subset 2\Z+1$. Furthermore,  
if $A$ is an admissible set, we can define another admissible set $[A] \defeq (A-1)\cup(A+1)$. The next result explains the appearance of $\eta^+$ in the context of directed animals on $\Z^2$.
\begin{lemma}\label{lem:3eta=eta+} Let $A$ be an admissible set. We have
$$3^{|A|}\eta(A) =  \eta^+( [A] ).$$
\end{lemma}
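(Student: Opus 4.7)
I would prove this by a direct case-by-case accounting of the factors of $\eta^+([A])$. Write $A = \{a_1 < a_2 < \dots < a_n\}$ and set $d_i \defeq a_{i+1}-a_i$ for $1 \le i \le n-1$. Because $A$ is admissible, all $a_i$ share a common parity, so each $d_i$ is an even integer $\ge 2$. The elements $a_i-1$ and $a_i+1$ belong to $[A]$ and have the opposite parity, hence they differ by $2$ with no element of $[A]$ strictly between them. Thus each pair $\{a_i-1,a_i+1\}$ contributes a genuine consecutive increment of size $2$ in $[A]$, which I will call the \emph{within-pair} increment of index $i$.

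Next, I would analyze the transition between pair $i$ and pair $i+1$: the relevant elements are $a_i+1$ and $a_{i+1}-1$, whose difference is $d_i-2 \ge 0$. Two cases arise: if $d_i = 2$ then $a_i+1 = a_{i+1}-1$ and the two pairs share an element, so no \emph{between-pair} increment is created; if $d_i > 2$ then $a_i+1 < a_{i+1}-1$ are consecutive in $[A]$ and produce a between-pair increment of size $d_i - 2$. Letting $s$ denote the number of indices with $d_i = 2$, this bookkeeping yields $|[A]| = 2n - s$ and identifies all $(2n-s-1)$ increments contributing to $\eta^+([A])$ as: $n$ within-pair increments (each of size $2$, contributing a factor $3$) and $n-1-s$ between-pair increments (each of size $d_i - 2$, contributing a factor $d_i-1$). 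Therefore
\[
\eta^+([A]) \;=\; 3^n \prod_{i : d_i > 2}(d_i-1).
\]

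To finish, observe that $\eta(A) = \prod_{i=1}^{n-1}(d_i - 1)$, and the factors corresponding to indices with $d_i = 2$ are all equal to $1$, so they can be dropped without changing the product:
\[
\eta(A) \;=\; \prod_{i : d_i > 2}(d_i-1).
\]
Multiplying by $3^{|A|} = 3^n$ gives $3^{|A|}\eta(A) = \eta^+([A])$, as desired.

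The whole argument is mechanical; the only thing to be careful about is the merging phenomenon when consecutive elements of $A$ are at distance $2$, which must be correctly reflected both in the count of elements of $[A]$ and in the count of increments. Organizing the factors of $\eta^+([A])$ into ``within-pair'' and ``between-pair'' categories makes this bookkeeping transparent, and no deeper obstacle is anticipated.
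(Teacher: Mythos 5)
Your proof is correct. It takes a genuinely different — and in fact more streamlined — route than the paper's. The paper's argument proceeds in two stages: first it removes the ``small holes'' of $A$ (the indices with $a_{i+1}-a_i=4$, in your notation) by showing that inserting them leaves $3^{|A|}\eta(A)$ and $[A]$ unchanged, and only then does it decompose the hole-free set $A^*$ into maximal blocks of step-$2$ runs and compute both sides block by block. Your proof bypasses the preprocessing step entirely: it identifies each factor of $\eta^+([A])$ as either a ``within-pair'' increment of size $2$ (contributing a $3$, one per element of $A$) or a ``between-pair'' increment of size $d_i-2$ (contributing $d_i-1$, and vanishing exactly when $d_i=2$), and handles the $d_i=4$ case uniformly with all others. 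What the paper's two-step reduction buys is a slightly more modular presentation in which the final block computation only sees gaps of size $\ge 6$; what your approach buys is a shorter and more transparent single-pass accounting where the merging phenomenon at $d_i=2$ is the only delicate point, which you identify and handle correctly (both in the element count $|[A]|=2n-s$ and in the increment count). The bookkeeping in your proof is complete and the argument closes cleanly.
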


\begin{proof}
We say, that $h$ is a \emph{small hole} in $A$ if $h-2$ and $h+2$ are in $A$ but $h\notin A$. If $h_1<\dots<h_k$ are the small holes in $A$ we define its completion $ A^* \defeq A\cup\{h_1,\dots,h_k\}$. It is clear that $[A^*]=[A]$ and $|A^*|=|A|+k$. Besides, we have
$$\frac{\eta(A)}{\eta(A^*)}=\prod_i \frac{h_i+2-(h_i-2)-1}{(h_i+2-h_i-1)(h_i-(h_i-2)-1)}=3^k$$
hence $$3^{|A|}\eta(A)=3^{|A^*|}\eta(A^*).$$ 
By construction, $A^*$ has no small hole so we only need to prove the result for sets without small holes. Now, if $A$ has no small hole, then, on the one hand, $A$ can be written as a disjoint union $A=\bigcup_{i=1}^m \{a_i, a_i + 2, \ldots, a_i+2\ell_i\}$ with $a_i+2\ell_i+6\leqslant a_{i+1}$ and therefore
$$
3^{|A|}\eta(A)=3^{\sum_{i=1}^{m} (\ell_i+1)}\prod_{i=1}^{m-1} (a_{i+1}-(a_{i}+2\ell_i)-1).
$$
On the other hand, because  $a_i+2\ell_i+6\leqslant a_{i+1}$, we can also decompose $[A]$ as the disjoint union $[A]=\bigcup_{i=1}^m \{a_i-1, a_i+1,\ldots, a_i + 2\ell_i+1\}$ and therefore
\begin{align*}
\eta^+([A]) &= \prod_{i=1}^m\eta^+\big(\{a_i-1, a_i+1,\ldots, a_i + 2\ell_i+1\}\big) \prod_{i=1}^{m-1} ( (a_{i+1} - 1) - (a_i + 2\ell_i+1) + 1)\\
&= \prod_{i=1}^m 3^{\ell_i+1} \prod_{i=1}^{m-1} ( a_{i+1} - (a_i + 2\ell_i) - 1)\\
&= 3^{|A|}\eta(A).
\end{align*}
\end{proof}
Recall that expression for the Markov kernel of the UIP defined in \eqref{eq:kernel-Abar}: 
\begin{equation*}
\bar Q(A,B)=\frac{\eta(B)\Ind{\emptyset\neq B\subset [A]}}{3^{|A|}\eta(A)}.
\end{equation*}
Combining Lemma \ref{lem:gencomb} and Lemma \ref{lem:3eta=eta+} and using that $\max([A])-1=\max(A)$ and $\min([A])+1=\min(A)$, we  immediately recover an equivalent version of Proposition \ref{prop:martingales} concerning martingales associated with the UIP.

\begin{corollary}[\textbf{Kernels identities and UIP martingales}]\label{cor:directcombproof}
For any admissible $A$:
\begin{align*}
    &\sum_{B\neq\emptyset,B\subset [A]}\bar Q(A,B)=1,\\
    &\sum_{B\neq\emptyset,B\subset [A]}\bar Q(A,B)\max(B)=\max(A)+\frac{1}{3^{|A|}\eta(A)},\\
    &\sum_{B\neq\emptyset,B\subset [A]}\bar Q(A,B)\min(B)\max(B)=\min(A)\max(A)+\frac{1}{3^{|A|}\eta(A)}.\\
\end{align*}
\end{corollary}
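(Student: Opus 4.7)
\medskip

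\noindent\textbf{Proof plan.} The three identities all have the same structure: the denominator $3^{|A|}\eta(A)$ in $\bar Q(A,B)$ cancels against a sum of products of $\eta(B)$'s over $\emptyset\neq B\subset [A]$. So the plan is to factor out the denominator and apply Lemma~\ref{lem:gencomb} to the set $F=[A]$, using Lemma~\ref{lem:3eta=eta+} to identify $\eta^+([A])$ with $3^{|A|}\eta(A)$. Concretely, I will rewrite
\[
\sum_{\emptyset\neq B\subset[A]}\bar Q(A,B)\,\varphi(B) \;=\; \frac{1}{3^{|A|}\eta(A)}\sum_{\emptyset\neq B\subset[A]}\eta(B)\,\varphi(B)
\]
for $\varphi(B)\in\{1,\max(B),\min(B)\max(B)\}$, and plug in \eqref{equ:F_K_UIP}, \eqref{equ:F_K_BHP}, \eqref{equ:F_K_UIP+} respectively.

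\medskip

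\noindent The first identity is immediate: \eqref{equ:F_K_UIP} with $F=[A]$ gives $\sum_{\emptyset\neq B\subset[A]}\eta(B)=\eta^+([A])$, and Lemma~\ref{lem:3eta=eta+} identifies this with $3^{|A|}\eta(A)$, so the ratio is $1$. For the other two identities, the key elementary observation is that
\[
\max([A])=\max(A)+1,\qquad \min([A])=\min(A)-1,
\]
which follows straight from $[A]=(A-1)\cup(A+1)$. Applying \eqref{equ:F_K_BHP} to $F=[A]$ yields a numerator equal to $1+\max(A)\cdot\eta^+([A])=1+\max(A)\cdot 3^{|A|}\eta(A)$, which after division gives the stated formula. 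Identically, \eqref{equ:F_K_UIP+} applied to $F=[A]$ produces $1+\max(A)\min(A)\cdot 3^{|A|}\eta(A)$ in the numerator, yielding the third identity.

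\medskip

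\noindent There is no real obstacle here since the combinatorial work has already been carried out in Lemmas~\ref{lem:gencomb} and~\ref{lem:3eta=eta+}; the corollary is essentially a dictionary translation. The only care needed is in checking the matching of arguments: one must apply Lemma~\ref{lem:gencomb} to $F=[A]$ (not to $A$ itself), and note that the enumeration of $[A]$ by increasing integers coincides with the natural ordering, so the abstract $\min$, $\max$, $\eta$, $\eta^+$ of Lemma~\ref{lem:gencomb} match the usual ones.
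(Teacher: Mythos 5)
Your proof is correct and follows exactly the same route as the paper: factor out $3^{|A|}\eta(A)$, apply \eqref{equ:F_K_UIP}, \eqref{equ:F_K_BHP}, \eqref{equ:F_K_UIP+} to $F=[A]$, use Lemma~\ref{lem:3eta=eta+} to identify $\eta^+([A])=3^{|A|}\eta(A)$, and simplify with $\max([A])-1=\max(A)$, $\min([A])+1=\min(A)$. This is precisely the paper's argument, spelled out in more detail.
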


\begin{remark}
The first formula exactly says that $\bar Q$ is a Markov kernel (\emph{i.e.} sums to $1$). Similarly the next two formulas can be used to recover that the kernel $Q$ of the BHP defined in \eqref{eq:kernel-A} and the kernel $\bar{Q}^+$ of the UIP+ defined in \eqref{eq:kernel-Aplus} both sum to $1$:  we just reverse the chain of arguments given in the proof of Proposition  \ref{prop:martingales} where we started from the kernel property to find the martingales.
\end{remark}

\bibliographystyle{abbrv}
\bibliography{animauxbib}

\end{document}